\def\XXint#1#2#3{{\setbox0=\hbox{$#1{#2#3}{\int}$ }
\vcenter{\hbox{$#2#3$ }}\kern-.6\wd0}}
\newcommand{\A}{\mathbb{A}}
\newcommand{\B}{\mathbb{B}}
\newcommand{\C}{\mathbb{C}}
\newcommand{\G}{\mathbb{G}}
\newcommand{\N}{\mathbb{N}}
\renewcommand{\P}{\mathbb{P}}
 \newcommand{\Q}{\mathbb{Q}}
 \newcommand{\R}{\mathbb{R}}
 \newcommand{\Z}{\mathbb{Z}}
\newcommand{\fa}{\mathfrak{a}}
\newcommand{\fb}{\mathfrak{b}}
\newcommand{\fm}{\mathfrak{m}}
\newcommand{\hf}{\widehat{\f}}
\renewcommand{\hom}{\mathrm{hom}}
\newcommand{\cE}{\mathcal{E}}
\newcommand{\cH}{\mathcal{H}}
\newcommand{\cL}{\mathcal{L}}
\newcommand{\cO}{\mathcal{O}}
\newcommand{\cZ}{\mathcal{Z}}
\newcommand{\cX}{\mathcal{X}}
\newcommand{\nef}{\mathrm{nef}}
\renewcommand{\a}{\alpha}
\renewcommand{\b}{\beta}
\renewcommand{\d}{\delta}
\newcommand{\e}{\varepsilon}
\newcommand{\f}{\varphi}
\newcommand{\unipar}{\varpi}
\newcommand{\g}{\gamma}
\newcommand{\la}{\lambda}
\newcommand{\om}{\omega}
\newcommand{\Ga}{\Gamma}
\newcommand{\p}{\psi}
\newcommand{\eg}{{\rm e.g.\ }} 
\newcommand{\ie}{{\rm i.e.\ }} 
\newcommand{\loccit}{\textit{loc.\,cit.}}
\newcommand{\inter}{\cdot\ldots\cdot}
\newcommand{\an}{\mathrm{an}}
\newcommand{\gf}{\mathrm{gf}}
\DeclareMathOperator{\en}{E}
\DeclareMathOperator{\env}{P}
\DeclareMathOperator{\Zun}{Z^1}
\DeclareMathOperator{\Zunb}{Z^1_{\mathrm{b}}}
\DeclareMathOperator{\Numb}{N^1_{\mathrm{b}}}
\DeclareMathOperator{\Carb}{Car_{\mathrm{b}}}
\DeclareMathOperator{\Carbp}{Car^+_{\mathrm{b}}}
\DeclareMathOperator{\Nefb}{Nef_{\mathrm{b}}}
\DeclareMathOperator{\Cz}{C^0}
\DeclareMathOperator{\Div}{Div}
\DeclareMathOperator{\codim}{codim}
\DeclareMathOperator{\MA}{MA}
\DeclareMathOperator{\Spec}{Spec}
\DeclareMathOperator{\Amp}{Amp}
\DeclareMathOperator{\supp}{supp}
\DeclareMathOperator{\Pic}{Pic}
\DeclareMathOperator{\id}{id}
\DeclareMathOperator{\ord}{ord}
\DeclareMathOperator{\Nef}{Nef}
\DeclareMathOperator{\Psef}{Psef}
\DeclareMathOperator{\Bg}{Big}
\DeclareMathOperator{\Mov}{Mov}
\DeclareMathOperator{\PSH}{PSH}
\DeclareMathOperator{\CPSH}{CPSH}
\DeclareMathOperator{\redu}{red}
\DeclareMathOperator{\PL}{PL}
\DeclareMathOperator{\RPL}{\R\mathrm{PL}}
\DeclareMathOperator{\Hnot}{H^0}
\DeclareMathOperator{\Nz}{N}
\DeclareMathOperator{\te}{T}
\newcommand{\Num}{\operatorname{N^1}}
\DeclareMathOperator{\Vect}{Vect}
\newcommand{\ddc}{\operatorname{dd^c}\!}
\newcommand{\supstar}{\mathrm{sup}^\star }
\newcommand{\Gm}{\mathbb{G}_{\mathrm{m}}}
\renewcommand{\div}{\mathrm{div}}
\newcommand{\psef}{\mathrm{psef}}
\newcommand{\triv}{\mathrm{triv}}
\newcommand{\lin}{\mathrm{lin}}
\newcommand{\hto}{\hookrightarrow}
\newcommand{\D}{\Delta}
\newcommand{\cro}[1]{[\![#1]\!]}
\newcommand{\lau}[1]{(\!(#1)\!)}
\newcommand{\simto}{\overset\sim\to}
\numberwithin{equation}{section}       
\newtheorem{prop} {Proposition} [section]
\newtheorem{thm}[prop] {Theorem} 
\newtheorem{defi}[prop] {Definition}
\newtheorem{lem}[prop] {Lemma}
\newtheorem{cor}[prop]{Corollary}
\newtheorem{prop-def}[prop]{Proposition-Definition}
\newtheorem*{thmA}{Theorem A} 
\newtheorem*{thmB}{Theorem B}
\newtheorem{exam}[prop]{Example}
\newtheorem{rmk}[prop]{Remark}
\newtheorem{qst}[prop]{Question}
\theoremstyle{remark}
\title{Non-Archimedean Green's functions and Zariski decompositions}
\date{\today}
\author{S{\'e}bastien Boucksom \and Mattias Jonsson}
\address{Sorbonne Universit\'e and Universit\'e Paris Cit\'e\\
CNRS\\
IMJ-PRG\\
F-75005 Paris\\
France}
\email{sebastien.boucksom@imj-prg.fr}
\address{Dept of Mathematics\\
  University of Michigan\\
  Ann Arbor, MI 48109-1043\\
  USA}
\email{mattiasj@umich.edu}
\begin{document}

\begin{abstract}
  We study the non-Archimedean Monge--Amp\`ere equation on a smooth variety over a discretely or trivially valued field. First, we give an example of a Green's function, associated to a divisorial valuation, which is not $\Q$-PL (\ie not a model function in the discretely valued case).
  Second, we produce an example of a function whose Monge--Amp\`ere measure is a finite atomic measure supported in a dual complex, but which is not invariant under the retraction associated to any snc model. This answers a question by Burgos Gil et al in the negative. Our examples  are based on geometric constructions by Cutkosky and Lesieutre, and arise via base change from Green's functions over a trivially valued field; this theory allows us to efficiently encode the Zariski decomposition of a pseudoeffective numerical class.
\end{abstract}

\dedicatory{To the memory of Jean-Pierre Demailly, with admiration}

\maketitle

\setcounter{tocdepth}{1}
%
%
%
%

\section*{Introduction}
In the seminal paper~\cite{Yau78}, Yau studied the Monge--Amp\`ere equation
\begin{equation}
  (\om+\ddc\f)^n=\mu\tag{MA}
\end{equation}
on a compact $n$-dimensional K\"ahler manifold $(X,\om)$, where $\mu$ is a smooth, strictly positive measure on $X$ of mass $\int\om^n$, and $\f$ a smooth function on
$X$ such that the $(1,1)$-form $\om+\ddc\f$ is positive. Yau proved that there exists a smooth solution $\f$, unique up to a constant. If $\om$ is rational class, say $\om=c_1(L)$ for an ample line bundle $L$, then $\f$ can be viewed as a positive metric on $L$, and $(\om+\ddc\f)^n$ its the curvature measure.

As observed by Kontsevich, Soibelman, and Tschinkel~\cite{KS06,KT}, when studying degenerating 1-parameter families of K\"ahler manifolds, it can be fruitful to use non-Archimedean geometry in the sense of Berkovich over the field $\C\lau{\unipar}$ of complex Laurent series. In this context, a 
Monge--Amp\`ere operator was introduced by Chambert--Loir~\cite{CL06}, and a version of~(MA) was solved by the authors and Favre~\cite{nama}; see below.  Uniqueness of solutions was proved earlier by Yuan and Zhang~\cite{YZ17}.

Now, the method in~\cite{nama} is variational in nature, inspired by~\cite{BBGZ} in the complex case. It has the advantage of being able to deal with more general measures $\mu$, but the drawback of providing less regularity information on the solution. In fact,~\cite{nama} only gives a continuous solution, and is thus closer in spirit to~\cite{Kol98} than to~\cite{Yau78}.

It is therefore interesting to ask whether we can say more about the regularity of $\f$ in~(MA), at least for special measures $\mu$. In the non-Archimedean setting, there are many possible regularity notions; to describe the one we are focusing on, we first need to make the non-Archimedean version of (MA) more precise, following~\cite{nama,siminag}.

Let $X$ be a smooth projective variety over $K=\C\lau{\unipar}$ of dimension $n$. Consider a simple normal crossing (snc) model $\cX$ of $X$,  over the valuation ring $K^\circ=\C\cro{\unipar}$. The dual complex $\Delta_\cX$ embeds in the Berkovich analytification $X^\an$, and there is a continuous retraction $p_{\cX}\colon X^\an\to\Delta_\cX$.

A semipositive closed (1,1)-form on $X^\an$ in the sense of~\loccit\ is represented by a nef relative numerical class $\om\in\Num(\cX/\Spec K^\circ)$ for some snc model $\cX$. We assume that the image $[\om]$ of $\om$ in $\Num(X)$ is ample. In this case, there is a natural space $\CPSH(\om)=\CPSH(X,\om)$ of continuous $\om$-plurisubharmonic (psh) functions, and a Monge--Amp\`ere operator taking a function $\f\in\CPSH(\om)$
to a positive Radon measure $\f\to(\om+\ddc\f)^n$ on $X^\an$ of mass $[\om]^n$; see also~\cite{CLD} for a local theory. When $[\om]$ is rational, so that $[\om]=c_1(L)$ for an ample ($\Q$-)line bundle $L$ on $X$, we can view any $\f\in\CPSH(\om)$ as a semipositive metric on $L^\an$, with curvature measure $(\om+\ddc\f)^n$.

 As in~\cite{nama}, let us normalize the Monge--Amp\`ere operator and write
 \[
   \MA_\om(\f):=\tfrac1{[\om]^n}(\om+\ddc\f)^n.
 \]
 The main result in~\cite{nama} is that if $\mu$ is a Radon probability measure on $X^\an$ supported in some dual complex, then there exists $\f\in\CPSH(\om)$, unique up to an additive real constant, such that $\MA_\om(\f)=\mu$. More precisely, this was proved assuming that $X$ is defined over an algebraic curve, an assumption that was later removed in~\cite{BGJKM20}. Here we want to study whether for special measures $\mu$, the solution is regular in some sense.
 
 We first consider the class of \emph{piecewise linear} (PL) functions. A function $\f\in\Cz(X^\an)$ is ($\Q$-)PL if it is associated to a vertical $\Q$-divisor on some snc model, and PL functions are also known as \emph{model functions}. The set $\PL(X)$ of PL functions is a dense $\Q$-linear subspace of $\Cz(X^\an)$, and it is closed under taking finite maxima and minima.

 If $\f\in\PL(X)\cap\CPSH(\om)$, then the measure $\mu=\MA_\om(\f)$ is a rational divisorial measure, \ie a rational convex combination of Dirac masses at divisorial valuations. For example, when $[\om]=c_1(L)$ is rational, then the space $\PL(X)\cap\CPSH(\om)$ can be identified with the space of semipositive  \emph{model metrics} on $L^\an$, represented by a nef model $\cL$ of $L$, and $\MA_\om(\f)$ can be computed in terms of intersection numbers of $\cL$.

 Assuming $\om$ rational, one may ask whether, conversely, the solution to $\MA_\om(\f)=\mu$, with $\mu$ a rational divisorial measure, is necessarily PL\@.
 Here we focus on the case when $\mu=\d_w$ is a Dirac measure, where $w\in X^\div$ is a divisorial valuation. In this case, it was proved in~\cite{nama} that the solution $\f_w\in\CPSH(\om)$ to the Monge--Amp\`ere equation
 \begin{equation}
   \MA_\om(\f_w)=\d_w,\quad \f_w(w)=0\tag{$\star$}
 \end{equation}
 is the \emph{Green's function} of $w$, given by $\f_w=\sup\{\p\in\CPSH(\om)\mid \p(w)\le 0\}$.
 \begin{thmA}
   Assume that $\om$ is a rational semipositive closed (1,1)-form with $[\om]$ ample, and that $w\in X^\div$ is a divisorial valuation.
   Let $\f_w\in\CPSH(\om)$ be the Green's function satisfying~($\star$) above. Then:
   \begin{itemize}
   \item[(i)]
     in dimension 1, $\f_w\in\PL(X)$;
   \item[(ii)]
     in dimension $\ge 2$, it may happen that $\f_w\not\in\PL(X)$.
   \end{itemize}
 \end{thmA}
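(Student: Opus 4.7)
For part (i), my plan is to exploit the fact that on a curve the Monge--Amp\`ere operator is linear in $\f$: indeed $(\om + \ddc\f)^1 = \om + \ddc\f$. I would choose an snc model $\cX$ of $X$ on which the divisorial valuation $w$ corresponds to a vertex of $\D_\cX$; such a model exists by definition. The space of $\PL$ functions associated to vertical $\Q$-divisors supported on the special fibre of $\cX$ is a finite-dimensional $\Q$-vector space indexed by the vertices of $\D_\cX$, and $\MA_\om$ sends it into $\Q$-linear combinations of Dirac masses at those vertices. A linear-algebra computation exploiting the negative semi-definiteness of the intersection form on vertical fibres (Zariski's lemma) then produces a solution $\f \in \PL(X) \cap \CPSH(\om)$ to $\MA_\om(\f) = \d_w$ with $\f(w) = 0$, and the uniqueness of solutions to~(MA) with a divisorial right-hand side identifies this $\f$ with $\f_w$.

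For part (ii), the strategy is to reduce to a trivially valued setting and then import a pathological example from birational geometry. I would first set up the base-change dictionary: starting from a smooth projective variety $X_0$ over $\C$, equipped with the trivial valuation, and a divisorial valuation $v \in X_0^\div$, the base change $X = X_0 \otimes_\C K$ inherits a natural divisorial valuation $w \in X^\div$, and a rational semipositive class $\om_0$ on $X_0$ extends to such a class $\om$ on $X$. The key claim is that $\f_w \in \PL(X)$ holds if and only if the analogous trivially-valued Green's function $\f_v$ on $X_0^\an$ is $\Q$-$\PL$, by matching the model data on the two sides.

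I would then invoke the paper's central dictionary: the trivially-valued Green's function $\f_v$ encodes the Zariski decomposition of a pseudoeffective numerical class built from $\om_0$ and $v$, and $\f_v$ is $\Q$-$\PL$ precisely when this Zariski decomposition is rational and finitely generated in the appropriate sense. Combined with a Cutkosky-type construction of a smooth projective variety of dimension $\ge 2$ carrying a pseudoeffective class whose Zariski decomposition is irrational, this produces a divisorial valuation $v$ on some such $X_0$ for which $\f_v$ fails to be $\Q$-$\PL$. Base change then yields $\f_w \notin \PL(X)$, proving~(ii).

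The main obstacle is establishing the dictionary between trivially-valued Green's functions and Zariski decompositions. Constructing it requires the full machinery of non-Archimedean pluripotential theory over a trivially valued base---semipositive forms, plurisubharmonic functions, Monge--Amp\`ere measures, and a variational solution to~(MA)---together with a precise identification of the extremal envelope $\sup\{\p \in \CPSH(\om_0) \mid \p(v) \le 0\}$ with the positive part of a Zariski decomposition. Once that dictionary is in place, the base-change step in~(ii) becomes largely formal and Cutkosky's example slots in directly, while (i) reduces to the finite-dimensional linear algebra sketched above.
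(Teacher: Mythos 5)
Your proposal is correct and follows essentially the same route as the paper: part~(i) is the intersection-form argument on the special fibre that the paper itself points to as an alternative to citing Thuillier, and part~(ii) is exactly the paper's strategy of base change from the trivially valued field, the compatibility $\sigma^\star\PL(X_K)=\PL(X)$ for the Gauss extension, the dictionary between Green's functions and Zariski decompositions/pseudoeffective thresholds, and an example where the relevant threshold is irrational. The only cosmetic differences are that the paper's dimension-two example is an abelian surface with irrational pseudoeffective threshold (Cutkosky's construction is used for the $\P^3$ example), and the precise criterion is rationality of $\te(\Sigma)=\sup\f_\Sigma$ rather than finite generation of the Zariski decomposition.
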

 Writing $[\om]=c_1(L)$, Theorem~A says that the metric on $L^\an$ corresponding to $\f_w$ is a model metric in dimension 1, but not necessarily in dimension 2 and higher. This answers a question in~\cite{nama}, see Remark~8.8 in~\loccit
 
 Here~(i) is well known, for example from the work of Thuillier~\cite{thuillierthesis}; we give a proof in~\S\ref{sec:green}. As for~(ii), we present one example where $X$ is an abelian surface, and another one where $X=\P^3$. See Examples~\ref{exam:Green1} and~\ref{exam:Green3}.
 
We will discuss the structure of these examples shortly, but mention here that they are both \emph{$\R$-PL}, \ie they belong to the space the smallest $\R$-linear subspace $\RPL(X)$ of $\Cz(X^\an)$ containing $\PL(X)$ and stable under max and min.
The question then arises whether 
also in higher dimension, the solution $\f_w$ to~($\star$) is $\R$-PL for any divisorial valuation $v$. While we don't have a counterexample to this exact question (with $\omega$ rational, but see~Example~\ref{exam:Les1}), we prove that the situation can be quite complicated in dimension three and higher.

Namely, let us say that a function $\f\in\Cz(X^\an)$ is \emph{invariant under retraction} if $\f=\f\circ p_\cX$ for some (and hence any sufficiently high) snc model $\cX$. For example, a function in $X^\an$ is $\R$-PL iff it is invariant  under retraction and its restriction to any dual complex $\D_\cX$ is $\R$-PL in the sense that it is affine on the cells of some subdivision of $\D_\cX$ into real simplices.

If $\f\in\CPSH(\om)$ is invariant under retraction, say $\f=\f\circ p_\cX$, then the Monge--Amp\`ere measure $\MA_\om(\f)$ is supported in $\Delta_\cX$.
However, if $\mu$ is supported in $\Delta_\cX$, then the solution $\f$ to $\MA_\om(\f)=\mu$ may not satisfy $\f=\f\circ p_\cX$, see~\cite[Appendix~A]{GJKM}. Still, one may ask whether $\f$ is invariant under retraction, that is, $\f=\f\circ p_{\cX'}$ for any sufficiently high snc model $\cX'$, see Question~2 in~\loccit.  A version of this question (see Remark~\ref{rmk:compprop}) in the context of Calabi--Yau varieties plays a key role in the recent work of Yang Li~\cite{Li20}, see also~\cite{HJMM,Li23,AH23}. Our next result provides a negative answer in general.
\begin{thmB}
  Let $X=\P^3_K$, with $K=\C\lau{\unipar}$, and let $\om$ be the closed (1,1)-form associated to the numerical class of $\cO(1)$ on $\P^3_{K^\circ}$. Then there exists $\f\in\CPSH(\om)$ such that $\MA_\om(\f)$ has finite support in some dual complex, but $\f$ is not invariant under retraction. In particular, $\f\not\in\RPL(X)$.
\end{thmB}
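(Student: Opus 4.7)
The plan is to construct $\f$ by base change from a Green's function over the trivially valued field $\C$, leveraging the correspondence developed earlier in the paper between $\om$-psh functions on the trivially valued analytification and Zariski decompositions of pseudoeffective numerical classes on birational models of $\P^3_\C$, combined with an example of Lesieutre producing a pseudoeffective class on a birational model of $\P^3$ whose divisorial Zariski decomposition has infinitely many negative components.

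First I would work on the trivially valued side. Following Lesieutre, fix a very general configuration of countably many points on a smooth curve in $\P^3_\C$ and form an associated inverse system of blow-ups carrying a big $\R$-divisor class $D$ whose divisorial Zariski decomposition $D = P + N$ has $N$ supported on infinitely many prime divisors. Via the paper's dictionary, the positive part $P$ corresponds to a continuous $\om_0$-psh function $\f_0$ on $(\P^3_\C)^{\an}_{\triv}$, where $\om_0$ is the trivially valued form attached to $\cO(1)$; by a suitable choice of $D$ one arranges that the trivially valued Monge--Amp\`ere measure $\MA_{\om_0}(\f_0)$ is a finite atomic measure (for instance, concentrated at the trivial valuation).

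Next I would base change to $K = \C\lau{\varpi}$: the trivially valued analytification $(\P^3_\C)^{\an}_{\triv}$ embeds in $X^{\an}$ as the fibre over the Gauss point of the map $X^{\an} \to \A^{1,\an}$, and the $\Gm$-invariant homogenization of $\f_0$ extends to a continuous $\om$-psh function $\f$ on $X^{\an}$. Compatibility of the Monge--Amp\`ere operator with base change then yields that $\MA_{\om}(\f)$ is a finite atomic measure supported in the dual complex of some snc model of $X$, as required.

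It remains to show that $\f$ is not invariant under retraction; the stronger conclusion $\f \notin \RPL(X)$ follows automatically, since every $\R$-PL function is invariant under $p_{\cX'}$ for some sufficiently high snc model $\cX'$. Suppose for contradiction that $\f = \f \circ p_{\cX'}$ for some snc model $\cX'$ of $X$. Unwinding the base-change/homogenization correspondence, such an invariance translates into a description of $\f_0$ in terms of the finitely many vertical prime divisors of $\cX'$, and hence into a Zariski decomposition of $D$ on some birational model of $\P^3_\C$ supported on finitely many prime divisors, contradicting the choice of $D$. The main obstacle is precisely this last translation step: one must show rigorously that invariance of $\f$ under an snc retraction on the discretely valued side forces a finiteness/polyhedrality statement for the Zariski decomposition of $D$ on the trivially valued side. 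Once the dictionary between snc models of $X$ over $K^{\circ}$ and birational models of $\P^3_\C$, established in the earlier part of the paper, is in place, Lesieutre's non-polyhedrality produces the desired contradiction without any further geometric input.
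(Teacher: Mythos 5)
Your overall skeleton (construct a Green's function over trivially valued $\C$ from Lesieutre's example, pull back by $\pi^\star$, use compatibility of $\MA$ with base change) matches the paper's strategy, but the crux of the argument---why $\f$ cannot be invariant under retraction---is missing, and the contradiction you aim for is not one that Lesieutre's example can supply. You propose to show that retraction-invariance of $\f$ would force ``a Zariski decomposition of $D$ on some birational model of $\P^3_\C$ supported on finitely many prime divisors'' and to contradict this by an example whose ``divisorial Zariski decomposition has infinitely many negative components.'' No such example exists: by Nakayama's finiteness (Lemma~\ref{lem:finitediv} in the paper), the negative part $\Nz_X(\theta)$ of \emph{any} psef class has at most $\rho(X)$ components, so finiteness of the negative part is automatic and cannot be contradicted. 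What is special about Lesieutre's class is something else entirely: on the blowup $Y\to\P^3$ at nine very general points there is an $\R$-divisor $D=\sum_{i=1}^9c_iE_i$ supported on the exceptional divisors such that the \emph{diminished base locus} $\B_-(\pi^\star\cO(1)-D)$ is Zariski dense (in particular this class lies on the boundary of the psef cone and is not big, contrary to your ``big $\R$-divisor class'', and your description of the construction via countably many blowups is not Lesieutre's example). You also flag the translation from retraction-invariance to such a finiteness/polyhedrality statement as the ``main obstacle'' without supplying it; as stated it is both unproved and aimed at the wrong target.

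The mechanism the paper actually uses is the \emph{center} of a psh function: if $\f$ is invariant under retraction then $Z_{\cX_\triv}(\f)$ is Zariski closed and proper (Lemma~\ref{lem:centerpsh}, Theorem~\ref{thm:centpsh2}), and via Proposition~\ref{prop:gfcent} this descends to the trivially valued side (Corollary~\ref{cor:critinvretr}). On the other hand, taking $\Sigma=\{c_i^{-1}\ord_{E_i}\}$ to be the Rees valuations of Lesieutre's divisor, Corollary~\ref{cor:Greenactive} shows $Z_X(\f_\Sigma)$ contains the image of $\B_-(\pi^\star\cO(1)-\la_\psef D)$, hence is Zariski dense; this is the contradiction. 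Two further points in your construction need care: the atoms of $\MA_{\om_0}(\f_0)$ must be the (real divisorial) points of $\Sigma$---not the trivial valuation, whose Green's function is constant and retraction-invariant---and the fact that $\sigma_\star$ of such a measure is supported in a dual complex requires that Gauss extensions of real divisorial valuations are quasimonomial (Lemma~\ref{lem:Gaussdiv}), which your argument does not address.
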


Let us now say more about the examples underlying Theorem~B and Theorem~A~(ii). They all arise in the \emph{isotrivial case}, when the variety $X$ over $K$ is the base change of a smooth projective variety $Y$ over $\C$, and the $(1,1)$-form is defined by the pullback of an ample numerical class $\theta\in\Num(Y)$ to the trivial (snc) model $Y_{K^\circ}$ of $X=Y_K$. In this case, we can draw on the global pluripotential theory over a trivially valued field developed in~\cite{trivval}, a theory which interacts well with algebro-geometric notions such as diminished base loci and Zariski decompositions of pseudoeffective classes.

Specifically, given a smooth projective complex variety $Y$, and an ample numerical class $\theta\in\Num(Y)$, we have a convex set $\CPSH(\theta)=\CPSH(Y,\theta)\subset\Cz(Y^\an)$ of continuous $\theta$-psh functions, where $Y^\an$ now denotes the Berkovich analytification of $Y$ with respect to the \emph{trivial} absolute value on $\C$. A \emph{divisorial valuation} on $Y$ is of the form $v=t\ord_E$, where $t\in\Q_{\ge0}$ and $E\subset Y'$ is a prime divisor on a smooth projective variety $Y'$ with a proper birational morphism $Y'\to Y$. When instead $t\in\R_{\ge0}$,  we say that $v$ is a \emph{real divisorial valuation}.
If $\Sigma\subset Y^\an$ is a finite set of real divisorial valuations, then we consider the Green's function of $\Sigma$, defined as
  \[
    \f_\Sigma:=\sup\{\f\in\CPSH(Y,\theta)\mid \f|_\Sigma\le 0\}.
  \]
  By~\cite{trivval}, $\f_\Sigma\in\CPSH(Y,\theta)$, and the Monge--Amp\`ere measure of $\f_\Sigma$ is supported in $\Sigma$.

  The base change $X=Y_{\C\lau{\unipar}}\to Y$ induces a surjective map $\pi\colon X^\an\to Y^\an$, and this map admits a canonical section $\sigma\colon Y^\an\to X^\an$, called \emph{Gauss extension}, and whose image consists of all $\C^\times$-invariant points in $X^\an$. For any $\f\in\CPSH(Y,\theta)$ we have $\pi^\star\f\in\CPSH(X,\om)$, and
  \[
    \MA_\om(\pi^\star\f)=\sigma_\star\MA_\theta(\f).
  \]
  In particular, if $v\in Y^\div$, then $\pi^\star\f_{\{v\}}$ is the Green function for $w:=\sigma(v)\in X^\div$. As both $\pi^\star$ and $\sigma^\star$ preserve the classes of $\Q$-PL and $\R$-PL functions, we see that in order to prove Theorem~A~(ii), it suffices to find a surface $Y$ and $v\in Y^\div$, such that $\f_v:=\f_{\{v\}}$ is not $\Q$-PL.

  Further, to prove Theorem~B, it suffices to find a finite set $\Sigma$ of real divisorial valuations on $Y$ such that $\pi^\star\f_\Sigma$ fails to be invariant under retraction. Indeed, the Gauss extension map $\sigma$ takes real divisorial valuations to Abhyankar valuations, and these are exactly the ones that lie in a dual complex. We then use the following criterion. Define the \emph{center} of any function $\f\in\PSH(Y,\theta)$ by 
  \[
    Z_Y(\f):=c_Y\{\f<\sup\f\},
  \]
  where $c_Y\colon Y^\an\to Y$ is the center map, see~\S\ref{sec:centpsh}. We show that if  $\pi^\star\f$ is invariant under retraction, then $Z_Y(\f)\subset Y$ is a strict Zariski closed subset, see Corollary~\ref{cor:critinvretr}.
It therefore suffices to find a Green's function $\f_\Sigma$ whose center is Zariski dense.
  
\smallskip
Our analysis of the Green's functions $\f_\Sigma$ is based on a relation between $\theta$-psh functions and families of b-divisors. Namely, we can pick a strict birational morphism $\rho\colon Y'\to Y$, with $Y'$ smooth, prime divisors $E_i\subset Y'$, and $c_i\in\R_{>0}$ such that $\Sigma=\{c_i^{-1}\ord_{E_i}\}$. If we set $D:=\sum_ic_i^{-1}E_i$, then we can express $\f_\Sigma$ in terms of the \emph{$b$-divisorial Zariski decomposition} of the numerical class $\rho^\star\theta-\la[D]$, for $\la\in(-\infty,\la_\psef]$, where $\la_\psef\in\R$ is the largest $\la$ such that this class is pseudoeffective (psef), see Theorem~\ref{thm:Greenb}. The analysis of the Zariski decomposition of a psef class $\theta$ in terms of $\theta$-psh functions is of independent interest.

Let us first consider the case of dimension two. The Zariski decomposition of  $\rho^\star\theta-\la D$ then depends in an $\R$-PL way on $\lambda$, and this implies that the Green's function $\f_\Sigma$ is $\R$-PL\@. On the other hand, $\f_\Sigma$ need not be $\Q$-PL\@. In fact, we prove in Theorem~\ref{thm:Greensurf} that $\f_\Sigma$ is $\Q$-PL iff the quantity
\[
  \te(\Sigma):=\sup\f_\Sigma
\]
is a rational number.
To prove Theorem~A~(ii), it therefore suffices to find a divisorial valuation $v$ on a surface $Y$ such that $\te(v)$ is irrational, and such examples can be found with $Y$ an abelian surface, and $v=\ord_E$ for a prime divisor $E$ on $Y$.

\smallskip
Using a geometric construction by Cutkosky~\cite{Cut}, we also give an example of a divisorial valuation $v$ on $Y=\P^3$ such that $\f_v$ is $\R$-PL but not $\Q$-PL for $\theta=c_1(\cO(1))$, see Example~\ref{exam:Cut}. Being $\R$-PL, this example is invariant under retraction. As explained above, in order to prove Theorem~B, it suffices to find $\Sigma$ such that $c_Y(\f_\Sigma)$ is a Zariski dense subset of $Y$. Using the notation above, we show that the center contains the image on $Y$ of the diminished base locus of the pseudoeffective class $\rho^\star\theta-\la_\psef [D]$ on $Y'$. We can then use a construction of Lesieutre~\cite{Les}, who showed that if $Y=\P^3$, $\theta=c_1(\cO(1))$, and $\rho\colon Y'\to Y$ is the blowup at nine very general points, then there exists an effective $\R$-divisor $D$ on $Y'$ supported on the exceptional locus on $\rho$, such that the diminished base locus of $\rho^\star\theta-D$ is Zariski dense. If we write $D=\sum_{i=1}^9c_iE_i$, then we can take $\Sigma=\{c_i^{-1}\ord_{E_i}\}$.


 \bigskip
\subsection*{Structure of the paper}
The article is organized as follows. In~\S\ref{sec:prelim} we recall some facts from birational geometry and pluripotential theory over a trivially valued field. This is used in~\S\ref{sec:pshbdiv} to relate $\theta$-psh functions and suitable families of $b$-divisors, after which we study the center of a $\theta$-psh function in~\S\ref{sec:centpsh}. In~\S\ref{sec:extremal} we define the extremal function $V_\theta\in\PSH(\theta)$ associated to a psef class: by evaluating this function at divisorial valuations we recover the minimal vanishing order of $\theta$ along a valuation. The extremal function is also closely related to various notions of Zariski decomposition of a psef class, as explored in~\S\ref{sec:Zariski}. After all this, we are finally ready to study Green's functions in~\S\ref{sec:greenzar} and~\S\ref{sec:exgreen}. Finally, in~\S\ref{sec:nontriv} and~\S\ref{sec:iso} we turn to the discretely valued case and prove Theorems~A and~B.
%
%
\subsection*{Notation and conventions}
A \emph{variety} over a field $F$ is a geometrically integral $F$-scheme of finite type.
We use the abbreviations
\emph{usc} for `upper semicontinuous',
\emph{lsc} for `lower semicontinuous',
and \emph{iff} for `if and only if'. 
%
%
\subsection*{Acknowledgement}
  The authors would like to thank Jos\'e Burgos, Antoine Ducros, Gerard Freixas, Walter Gubler, John Lesieutre and Milan Perera for useful exchanges related to this work. This article is dedicated to the memory of Jean-Pierre Demailly, whose extraordinary contributions to complex analytic and algebraic geometry have had a tremendous influence on our own research.

  The second author was partially supported by NSF grants DMS-1900025 and DMS-2154380.

%
%
%

%
%
\section{Preliminaries}\label{sec:prelim}
%
%
Throughout the paper---except in~\S\ref{sec:nontriv}---$X$ denotes a smooth projective variety over an algebraically closed field $k$ of characteristic $0$.
%
%
\subsection{Positivity of numerical classes and base loci}\label{sec:pos} 
We denote by $\Num(X)$ the (finite dimensional) vector space of numerical equivalence classes $\theta=[D]$ of $\R$-divisors $D$ on $X$. It contains the following convex cones, corresponding to various positivity notions for numerical classes:
\begin{itemize}
\item the \emph{pseudoeffective cone} $\Psef(X)$, defined as the closed closed cone generated by all classes of effective divisors;
\item the \emph{big cone} $\Bg(X)$, the interior of $\Psef(X)$;
\item the \emph{nef cone} $\Nef(X)$, equal to the closed convex cone generated by all classes of basepoint free line bundles;
\item the \emph{ample cone} $\Amp(X)$, the interior of $\Nef(X)$; 
\item the \emph{movable cone} $\Mov(X)$, the closed convex cone generated by all classes of line bundles whose base locus have codimension $2$.
\end{itemize} 
These cones satisfy
$$
\Nef(X)\subset\Mov(X)\subset\Psef(X),
$$
where the first (resp.~second) inclusion is an equality when $\dim X\le 2$ (resp.~$\dim X\le 1$), but is in general strict for $\dim X>2$ (resp.~$\dim X>1$). We will make use of the following simple property:
\begin{lem}\label{lem:movrestr} If $\theta\in\Num(X)$ is movable, then $\theta|_E\in\Num(E)$ is pseudoeffective for any prime divisor $E\subset X$. 
\end{lem}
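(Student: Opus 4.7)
The plan is to reduce to generators of the movable cone and exploit the continuity of restriction together with the closedness of the pseudoeffective cone.

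By definition, $\Mov(X)$ is the closed convex cone in $\Num(X)$ generated by classes $[L]$ of line bundles whose (stable) base locus has codimension $\geq 2$. The restriction map
\[
r\colon \Num(X)\to\Num(E),\qquad \theta\mapsto \theta|_E,
\]
is $\R$-linear, hence continuous, and $\Psef(E)\subset\Num(E)$ is a closed convex cone. Therefore, it is enough to establish the conclusion on a set of generators: I would prove that $[L]|_E\in\Psef(E)$ whenever $L$ is a line bundle on $X$ whose base locus has codimension $\geq 2$, and then obtain the general case by taking positive $\R$-linear combinations and passing to the limit.

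For the generating case, the key observation is essentially dimensional: the prime divisor $E$ has codimension $1$, so it cannot be contained in the base locus of $L$ (which has codimension $\geq 2$). Hence there exists $m\geq 1$ and a global section $s\in H^0(X,mL)$ that does not vanish identically on $E$. Its restriction $s|_E$ is then a nonzero section of $mL|_E$, witnessing the effectivity of $mL|_E$, and hence the pseudoeffectivity of $[L]|_E=\tfrac{1}{m}[mL|_E]$.

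I do not expect any genuine obstacle here: the only point to be slightly careful about is the precise meaning of "base locus" in the definition of $\Mov(X)$; whether one interprets it as the base locus of $|L|$ or as the stable base locus $\mathbb{B}(L)=\bigcap_{m\geq 1}\mathrm{Bs}(|mL|)$, the dimension argument above still produces some $m\geq 1$ for which a section of $mL$ does not vanish along $E$, and the conclusion follows in the same way.
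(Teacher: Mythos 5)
Your proof is correct. Note that the paper states this lemma without proof (it is offered as a "simple property" in \S\ref{sec:pos}), so there is no argument of the authors to compare against; yours is the expected one. The reduction is sound: restriction $\Num(X)\to\Num(E)$ is linear, hence continuous, $\Psef(E)$ is a closed convex cone, so it suffices to treat the generating classes $[L]$ with base locus of codimension $\ge 2$, and there the codimension count shows $E$ is not contained in the base locus, so some section of $mL$ restricts to a nonzero section of $mL|_E$, whose zero scheme is an effective Cartier divisor on the integral scheme $E$, giving $[L]|_E\in\Psef(E)$. Your closing remark is also right that the ambiguity between the base locus of $|L|$ and the stable base locus is harmless, since in either reading one finds $m\ge 1$ with $E\not\subset\mathrm{Bs}(|mL|)$.
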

The \emph{asymptotic base locus} $\B(D)\subset X$ of a $\Q$-divisor $D$ is defined as the base locus of $\cO_X(mD)$ for any $m\in\Z_{>0}$ sufficiently divisible. The \emph{diminished} (or \emph{restricted}) \emph{base locus} and the \emph{augmented base locus} of an $\R$-divisor $D$ are respectively defined as
$$
\B_-(D):=\bigcup_A\B(D+A)\quad\text{and}\quad \B_+(D):=\bigcap_A\B(D-A),
$$
where $A$ ranges over all ample $\R$-divisors such that $D-A$ (resp.~$D+A$) is a $\Q$-divisor. Since ampleness is a numerical property, these loci only depend on the numerical class $\theta=[D]\in\Num(X)$, and will be denoted by 
$\B_-(\theta)\subset\B_+(\theta)$. 

The augmented base locus $\B_+(\theta)$ is Zariski closed, and satisfies
\begin{equation*}
  \theta\in\Bg(X)  \Leftrightarrow\B_+(\theta)\ne X
  \quad\text{and}\quad
  \theta\in\Amp(X) \Leftrightarrow\B_+(\theta)=\emptyset. 
\end{equation*}
The diminished base locus satisfies 
\begin{equation}\label{equ:Bminplus}
\B_-(\theta)=\bigcup_{\e\in\Q_{>0}}\B_+(\theta+\e\om)
\end{equation} 
for any $\om\in\Amp(X)$. It is thus an at most countable union of subvarieties, which is not Zariski closed in general, and can even be Zariski dense (see~\cite{Les}). We further have
\begin{align*}
\theta\in\Psef(X) & \Leftrightarrow \B_-(\theta)\ne X;\\
\theta\in\Nef(X) & \Leftrightarrow \B_-(\theta)=\emptyset;\\
\theta\in\Mov(X) &\Leftrightarrow \codim\B_-(\theta)\ge 2.
\end{align*}

%
\subsection{The Berkovich space}\label{sec:Berko}
We use~\cite[\S 1]{trivval} as a reference. The \emph{Berkovich space} $X^\an$ is defined as the Berkovich analytification of $X$ with respect to the trivial absolute value on $k$~\cite{BerkBook}. We view it as a compact (Hausdorff) topological space, whose points are \emph{semivaluations}, \ie valuations $v\colon k(Y)^\times\to\R$ for some subvariety $Y\subset X$. We denote by $v_{Y,\triv}\in X^\an$ the trivial valuation on $k(Y)$, and set $v_\triv=v_{X,\triv}$. These trivial semivaluations are precisely the fixed points of the scaling action $\R_{>0}\times X^\an\to X^\an$ given by $(t,v)\mapsto tv$. 

We denote $X^\div\subset X^\an$ the (dense) subset of \emph{divisorial valuations}, of the form $v=t\ord_E$ with $t\in\Q_{\ge 0}$ and $E$ a prime divisor on a birational model $\pi\colon Y\to X$ (the case $t=0$ corresponding to $v=v_\triv$, by convention). In the present work, where $\R$-divisors arise naturally, it will be convenient to allow $t$ to be real, in which case we will say that $v=t\ord_E$ is a \emph{real divisorial valuation}. We denote by 
$$
X^\div_\R=\R_{>0} X^\div
$$
the set of real divisorial valuations. It is contained in the space $X^\lin\subset X^\an$ of \emph{valuations of linear growth} (see~\cite{BKMS} and~\cite[\S 1.5]{trivval}).
%
%
\subsection{Rational and real piecewise linear functions}\label{sec:PL}
In~\cite{trivval}, various classes of $\Q$-PL functions on $X^\an$  were introduced, and the purpose of what follows is to discuss their $\R$-PL counterparts. First, any ideal $\fb\subset\cO_X$ defines a homogeneous function 
$$
\log|\fb|\colon X^\an\to[-\infty,0]
$$
such that $\log|\fb|(v):=-v(\fb)$ for $v\in X^\an$.

Second, any \emph{flag ideal} $\fa$, \ie a coherent fractional ideal sheaf on $X\times\A^1$ invariant under the $\G_m$-action on $\A^1$ and trivial on $X\times\G_m$, defines a continuous function
\[
  \f_\fa\colon X^\an\to\R
\]
given by
$\f_\fa(v)=-\sigma(v)(\fa)$, where $\sigma\colon X^\an\to (X\times\A^1)^\an$ is the \emph{Gauss extension}. Concretely, we can write $\fa=\sum_{\la\in\Z}\fa_\la\unipar^{-\lambda}$ for a decreasing sequence of ideals $\fa_\la\subset\cO_X$ such that $\fa_\la=\cO_X$ for $\la\ll0$ and $\fa_\la=0$ for $\la\gg0$, and then $\f_\fa=\max_\la(\log|\fa_\la|+\la)$.

We then denote by: 
\begin{itemize}
\item $\PL^+_\hom(X)$ the set of $\Q_+$-linear combinations of functions of the form $\log|\fb|$ with $\fb\subset\cO_X$ a nonzero ideal; 
\item $\PL^+(X)$ the set of functions  $\f\in\Cz(X^\an)$ of the form $\f=\max_i\{\p_i+\la_i\}$ for a finite family $\p_i\in\PL^+_\hom(X)$ and $\la_i\in\Q$; equivalently, functions of the form $\f=\frac1m\f_\fa$ for a flag ideal $\fa$ and $m\in\Z_{>0}$;
\item $\PL(X)$ the set of differences of functions in $\PL^+(X)$, called \emph{rational piecewise linear functions} (\emph{$\Q$-PL functions} for short).  
\end{itemize}
The sets $\PL^+_\hom(X)$ are stable under addition and max, while $\PL(X)$ is a $\Q$-vector space, stable under max, and is dense in $\Cz(X^\an)$. 
\smallskip

As in~\cite[\S3.1]{trivval}, we denote by $\PL(X)_\R$ the $\R$-vector space generated by $\PL(X)$. It is not stable under max anymore; to remedy this, we further introduce: 
\begin{itemize}
\item the set $\PL^+(X)_\R$ of $\R_+$-linear combinations of functions in $\PL^+(X)$; 
\item the set $\RPL^+(X)$ of finite maxima of functions in $\PL^+(X)_\R$; 
e\item the set $\RPL(X)$ of differences of functions in $\RPL^+(X)$; we call its elements  \emph{real piecewise linear functions} (\emph{$\R$-PL functions} for short). 
\end{itemize}
As one immediately sees, the sets $\PL^+(X)_\R$ and $\RPL^+(X)$ are convex cones in $\Cz(X^\an)$, and $\RPL(X)$ is thus an $\R$-vector space. Further, $\RPL^+(X)$, and hence $\RPL(X)$, are clearly stable under max. Thus $\RPL(X)$ is the smallest $\R$-linear subspace of $\Cz(X^\an)$ that is stable under max and contains $\PL(X)$. 

Finally, introduce the convex cone $\PL^+_\hom(X)_\R$ of $\R_+$-linear combinations of functions in $\PL^+_\hom(X)$ (this is again not stable under max anymore). We then have:  

\begin{lem}\label{lem:RPL} A function $\f\in\Cz(X^\an)$ lies in $\RPL^+(X)$ iff $\f=\max_i\{\p_i+\la_i\}$ for a finite family $\p_i\in\PL^+_\hom(X)_\R$ and $\la_i\in\R$.
\end{lem}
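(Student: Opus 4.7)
The plan is to unravel the definitions on both sides, the only nontrivial ingredient being the identity
\[
\sum_l t_l\,\max_m A_{l,m}=\max_{\vec m}\sum_l t_l\,A_{l,m_l},\qquad t_l\in\R_{\ge 0},
\]
expressing distributivity of max over $\R_+$-linear combinations, where $\vec m=(m_l)_l$ ranges over tuples of indices. This reduces everything to bookkeeping.

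For the ``if'' direction, given $\f=\max_i\{\p_i+\la_i\}$ with $\p_i\in\PL^+_\hom(X)_\R$ and $\la_i\in\R$, it suffices to show each summand $\p_i+\la_i$ lies in $\PL^+(X)_\R$, as $\f\in\RPL^+(X)$ then holds by definition. The inclusion $\PL^+_\hom(X)\subset\PL^+(X)$ (take a singleton max with rational constant $0$) gives $\PL^+_\hom(X)_\R\subset\PL^+(X)_\R$, so $\p_i\in\PL^+(X)_\R$. It remains to check every real constant $\la$ lies in $\PL^+(X)_\R$: the constants $\pm 1$ both belong to $\PL^+(X)$ (taking $\psi=\log|\cO_X|=0\in\PL^+_\hom(X)$ and $\mu=\pm 1\in\Q$), so $\la\cdot(+1)$ for $\la\ge 0$ or $|\la|\cdot(-1)$ for $\la<0$ expresses $\la$ as an $\R_+$-linear combination of elements of $\PL^+(X)$. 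Since $\PL^+(X)_\R$ is a convex cone, $\p_i+\la_i\in\PL^+(X)_\R$, as wanted.

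For the ``only if'' direction, write $\f=\max_k f_k$ with $f_k\in\PL^+(X)_\R$. Expanding yields $f_k=\sum_l t_{k,l} g_{k,l}$ with $t_{k,l}\in\R_{\ge 0}$ and $g_{k,l}\in\PL^+(X)$, and in turn $g_{k,l}=\max_m\{\psi_{k,l,m}+\mu_{k,l,m}\}$ with $\psi_{k,l,m}\in\PL^+_\hom(X)$ and $\mu_{k,l,m}\in\Q$. Applying the distributivity identity above gives
\[
f_k=\max_{\vec m}\Bigl(\sum_l t_{k,l}\psi_{k,l,m_l}+\sum_l t_{k,l}\mu_{k,l,m_l}\Bigr),
\]
where $\sum_l t_{k,l}\psi_{k,l,m_l}\in\PL^+_\hom(X)_\R$ and $\sum_l t_{k,l}\mu_{k,l,m_l}\in\R$. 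Taking the max jointly over $k$ and $\vec m$ realizes $\f$ in the required form.

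The main obstacle is purely cosmetic: the subtlety that $\PL^+(X)_\R$ is an $\R_+$-cone rather than an $\R$-vector space forces one to exhibit real constants as nonnegative combinations of elements of $\PL^+(X)$ (which the sign-symmetric constants $\pm 1$ allow), and then to track carefully how distributivity converts sums-of-maxes into maxes-of-sums. No deeper input is needed.
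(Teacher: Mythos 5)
Your ``only if'' direction (from $\f\in\RPL^+(X)$ to the max representation) is correct and is exactly the paper's argument: expand each element of $\PL^+(X)_\R$ as an $\R_+$-combination of maxima of homogeneous functions plus rational constants, and distribute the max over the sum.

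The ``if'' direction, however, has a genuine gap. Your key step is the inclusion $\PL^+_\hom(X)\subset\PL^+(X)$ obtained by viewing $\p\in\PL^+_\hom(X)$ as a singleton max with constant $0$. This is false: by definition $\PL^+(X)\subset\Cz(X^\an)$, whereas a function such as $\p=\log|\fb|$ with $\fb\subsetneq\cO_X$ a nonzero ideal takes values in $[-\infty,0]$ and actually attains $-\infty$ (for instance at the trivial semivaluation $v_{Z,\triv}$ of any subvariety $Z\subset V(\fb)$, where $v(\fb)=+\infty$); it is therefore not an element of $\PL^+(X)$. Consequently $\p_i\in\PL^+_\hom(X)_\R$ need not lie in $\PL^+(X)_\R$, nor does $\p_i+\la_i$, so you cannot conclude $\f=\max_i\{\p_i+\la_i\}\in\RPL^+(X)$ directly from the definition. (Your remark that real constants lie in $\PL^+(X)_\R$ via $\pm1\in\PL^+(X)$ is fine and indeed needed; the problem is only with the $\p_i$.) This is precisely the content of the lemma in this direction: an individual piece $\p_i+\la_i$ may be $-\infty$ somewhere even though the overall max is continuous. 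The paper's remedy is a truncation argument: write $\p_i=\sum_j t_{ij}\p_{ij}$ with $\p_{ij}\in\PL^+_\hom(X)$, use that $\f$ is bounded (being continuous on the compact space $X^\an$) to choose a sufficiently negative $c\in\Q$, and check that replacing each $\p_{ij}$ by $\max\{\p_{ij},c\}$ does not change the overall maximum; since $\max\{\p_{ij},c\}\in\PL^+(X)$, each truncated function $\f_i:=\sum_j t_{ij}\max\{\p_{ij},c\}+\la_i$ lies in $\PL^+(X)_\R$, and $\f=\max_i\f_i\in\RPL^+(X)$. Some such use of the boundedness of $\f$ is unavoidable; as written, your ``if'' direction does not go through.
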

\begin{proof} Since any function in $\RPL^+(X)$ is a finite max of functions $\f\in\PL^+(X)_\R$, it suffices to show that $\f$ is of the desired form. Write $\f=\sum_{i=1}^r t_i \f_i$ with $t_i\in\R_{>0}$ and $\f_i\in\PL^+(X)$, \ie $\f_i=\max_j\{\p_{ij}+\la_{ij}\}$ with $\p_{ij}\in\PL^+_\hom(X)$ and $\la_{ij}\in\Q$. Then 
$$
\f=\max_{j_1,\dots,j_r}\sum_{i=1}^r t_i\left(\p_{i j_i}+\la_{ij_i}\right). 
$$
Since each $\sum_it_i\p_{i j_i}$ lies in $\PL^+_\hom(X)_\R$, this shows that $\f$ is of the desired form. 

Conversely, assume $\f=\max_i\{\p_i+\la_i\}$ for a finite family $\p_i\in\PL^+_\hom(X)_\R$ and $\la_i\in\R$. For each $i$, write $\p_i=\sum_j t_{ij}\p_{ij}$ with $\p_{ij}\in\PL^+_\hom(X)\le 0$. Pick $v\in X^\an$ and $i$ such that $\f(v)=\p_i(v)+\la_i$. Since $\f$ is bounded, we can find $c\in\Q$ such that $\p_{ij}(v)\ge c$ for all $j$. This shows that $\f=\max_i\f_i$ with $\f_i:=\sum_j t_{ij}\max\{\p_{ij},c\}+\la_i$. For all $i,j$, $\max\{\p_{ij},c\}$ lies in $\PL^+(X)$, thus $\f_i\in\PL^+(X)_\R$, and hence $\f\in\RPL^+(X)$. 
\end{proof}
%
%
\subsection{Homogeneous functions vs.~$b$-divisors}\label{sec:bdiv}
We use~\cite[\S 1]{BdFF} and~\cite[\S 6.4]{trivval} as references for what follows. Recall that 
\begin{itemize}
\item a \emph{(real) $b$-divisor over $X$} is a collection $B=(B_Y)$ of $\R$-divisors on all (smooth) birational models $Y\to X$, compatible under push-forward as cycles, \ie an element of the $\R$-vector space 
$$
\Zunb(X)_\R:=\varprojlim_Y \Zun(Y)_\R;
$$
\item
  a $b$-divisor $B=(B_Y)$ is \emph{effective} if $B_Y$ is effective for all $Y$; if $B,B'$ are $b$-divisors, then we write $B\le B'$ iff $B'-B$ is effective;
\item a $b$-divisor $B\in\Zunb(X)_\R$ is said to be \emph{$\R$-Cartier} if there exists a model $Y$, called a \emph{determination} of $B$, such that $B_{Y'}$ is the pullback of $B_Y$ for all higher birational models $Y'$; thus the space of $\R$-Cartier $b$-divisors can be identified with
$$
\varinjlim_Y \Zun(Y)_\R.
$$
\end{itemize}
\begin{exam} Any $\R$-divisor $D$ on a model $Y\to X$ determines an $\R$-Cartier $b$-divisor $\overline{D}\in\Carb(X)_\R$, obtained by pulling back $D$ to all higher models, and any $\R$-Cartier $b$-divisor is of this form.
\end{exam}

For any $B\in\Zunb(X)_\R$ and $v\in X^\div$, we define $v(B)\in\R$ as follows: pick a prime divisor $E$ on a birational model $Y\to X$ and $t\in\Q_{\ge 0}$ such that $v=t\ord_E$, and set 
$$
v(B):=t\ord_E(B_Y).
$$
This is independent of the choices made, and the function $\p_B\colon X^\div\to\R$ defined by 
$$
\p_B(v):=v(B)
$$ 
is homogeneous (with respect to the scaling action of $\Q_{>0}$).

\begin{defi}\label{defi:divtype} We say that a homogeneous function $\p\colon X^\div\to\R$ is of \emph{divisorial type} if $\p(\ord_E)=0$ for all but finitely many prime divisors $E\subset X$.
\end{defi}
The next result is straightforward: 

\begin{lem}\label{lem:Bdivtype} The map $B\mapsto\p_B$ sets up a vector space isomorphism between $\Zunb(X)_\R$ and the space of homogeneous functions of divisorial type on $X^\div$. Moreover, $B\in\Zunb(X)_\R$ is effective iff $\p_B\ge0$.
\end{lem}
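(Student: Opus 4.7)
The plan is to construct an explicit inverse to $B\mapsto\p_B$, making both directions of the isomorphism transparent. $\R$-linearity of $B\mapsto\p_B$ is immediate from the definition, and $\Q_{>0}$-homogeneity of $\p_B$ comes from $\p_B(tv)=(tv)(B)=t\,v(B)$. Next, $\p_B$ is of divisorial type because $\p_B(\ord_E)=\ord_E(B_X)$ for each prime divisor $E\subset X$, and $B_X\in\Zun(X)_\R$, being a finite formal $\R$-sum of prime divisors, has nonzero coefficient on only finitely many $E$. Injectivity is also automatic: if $\p_B\equiv 0$, then $\ord_E(B_Y)=0$ for every prime divisor $E\subset Y$ and every model $Y$, so $B_Y=0$ for all $Y$.

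For surjectivity, given a homogeneous function $\p$ of divisorial type, I would set
\[
B_Y:=\sum_{E\subset Y}\p(\ord_E)\cdot E
\]
on each smooth birational model $Y\to X$, the sum ranging over prime divisors of $Y$. The crucial point is that this is a finite sum. The prime divisors of $Y$ fall into two classes: the divisors exceptional over $X$, of which there are only finitely many since the proper birational morphism $Y\to X$ is of finite type; and the strict transforms of prime divisors on $X$. For a strict transform $E\subset Y$ of $E_X\subset X$, the valuations $\ord_E$ and $\ord_{E_X}$ agree on $k(X)$, so by the divisorial type assumption only finitely many such $E$ satisfy $\p(\ord_E)\ne 0$.

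Compatibility of the family $(B_Y)$ under pushforward along a morphism $f\colon Y'\to Y$ of models is then a termwise check: if $E\subset Y'$ is $f$-exceptional, $f_\star E=0$; otherwise $f_\star E=f(E)\subset Y$ and $\ord_E=\ord_{f(E)}$ as valuations, so the coefficients of $f_\star B_{Y'}$ and $B_Y$ match on every prime divisor of $Y$. This produces an element $B\in\Zunb(X)_\R$ with $\p_B=\p$ by construction, so the two maps are mutually inverse. For the effectivity statement, $B$ is effective iff each $B_Y$ has nonnegative coefficients iff $\p_B(\ord_E)\ge 0$ for every prime divisor $E$ over $X$; by $\Q_{>0}$-homogeneity this is equivalent to $\p_B\ge 0$ on all of $X^\div$. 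The only non-trivial input is the finiteness of exceptional divisors of a proper birational morphism of finite type, invoked in the construction of $B_Y$; everything else is bookkeeping.
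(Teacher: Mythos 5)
Your proof is correct: the paper states this lemma without proof (``straightforward''), and your explicit construction of the inverse $\p\mapsto(B_Y)_Y$ with $B_Y=\sum_{E\subset Y}\p(\ord_E)E$, together with the finiteness check (finitely many $Y\to X$-exceptional primes plus divisorial type for strict transforms) and the pushforward compatibility via $\ord_E=\ord_{f(E)}$, is exactly the intended argument. The effectivity equivalence via homogeneity is also fine, so nothing is missing.
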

We endow $\Zunb(X)_\R$ with the topology of pointwise convergence on $X^\div$. If $\Omega$ is a topological space, then a map $f\colon\Omega\to\Zunb(X)$ is thus continuous iff $v\circ f\colon\Omega\to\R$ is continuous for all $v\in X^\div$.
We will also say that  $f\colon\Omega\to\Zunb(X)$ is lsc (resp.\ usc) iff $v\circ f\colon\Omega\to\R$ is lsc (resp.\ usc) for all $v\in X^\div$.

If $\Omega$ is a convex subset of a real vector space, then we say that $f\colon\Omega\to\Zunb(X)_\R$ is convex if $v\circ f$ is
is convex for all $v\in X^\div$.
This amounts to $f((1-t)x_0+tx_1)\le(1-t)f(x_0)+tf(x_1)$ for $x_0,x_1\in\Omega$, $0\le t\le 1$.
We say that $f$ is concave if $-f$ is convex.

Finally, if $\Omega\subset\R$ is an interval, then $f\colon\Omega\to\Zunb(X)_\R$ is increasing (resp.\ decreasing) if $v\circ f$ is increasing (resp.\ decreasing) for each $v\in X^\div$.


Next we will generalize~\cite[Theorem~6.32]{trivval} to real coefficients.

\begin{defi} We denote by $\Carbp(X)_\R$ the convex cone of divisors $B\in\Carb(X)_\R$ that are antieffective and relatively semiample over $X$. 
\end{defi}

\begin{prop}\label{prop:CarPL} The map $B\mapsto\p_B$ induces an isomorphism between $\Carb(X)_\R$ and the $\R$-vector space generated by (the restrictions to $X^\div$ of) all functions $\log|\fb|$ with $\fb\subset\cO_X$ a nonzero ideal. This isomorphism restricts to a bijection
  $$
\Carbp(X)_\R\simto\PL^+_\hom(X)_\R.
$$
\end{prop}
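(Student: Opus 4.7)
The approach is to reduce to the rational analogue~\cite[Theorem~6.32]{trivval}, which establishes a $\Q$-linear isomorphism $B\mapsto\p_B$ from $\Carb(X)_\Q$ onto the $\Q$-vector space spanned by the functions $\log|\fb|$, restricting to a bijection of convex cones $\Carbp(X)_\Q\simto\PL^+_\hom(X)$.

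For the first assertion, any $B\in\Carb(X)_\R$ is determined on some birational model $\pi\colon Y\to X$; writing $B_Y=\sum_i a_i E_i$ with $a_i\in\R$ and $E_i\subset Y$ prime divisors gives $B=\sum_i a_i\overline{E_i}$ as a finite $\R$-linear combination of integral b-Cartier divisors, so $\p_B=\sum_i a_i\p_{\overline{E_i}}$. By the rational case each $\p_{\overline{E_i}}$ is a $\Q$-linear combination of functions $\log|\fb|$, so $\p_B$ lies in the $\R$-vector space they generate. Surjectivity onto this space is immediate since every $\log|\fb|$ is already attained with $\Q$-coefficients, and injectivity follows from Lemma~\ref{lem:Bdivtype}, which gives injectivity on the larger space $\Zunb(X)_\R$.

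For the restriction to the positive cones, the crucial point is the decomposition
\[
\Carbp(X)_\R=\R_+\cdot\Carbp(X)_\Q,
\]
in the sense that every antieffective, relatively semiample $\R$-Cartier b-divisor is a finite $\R_+$-linear combination of antieffective, relatively semiample $\Q$-Cartier b-divisors. Granting this, any $B\in\Carbp(X)_\R$ writes $B=\sum_i t_i B_i$ with $t_i\ge 0$ and $B_i\in\Carbp(X)_\Q$; then $\p_B=\sum_i t_i\p_{B_i}$ with each $\p_{B_i}\in\PL^+_\hom(X)$ by the rational bijection, placing $\p_B$ in $\PL^+_\hom(X)_\R$. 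Conversely, any $\p\in\PL^+_\hom(X)_\R$ is an $\R_+$-linear combination of functions $\log|\fb|$; each such $\log|\fb|$ is realized as $\p_{B_i}$ for some $B_i\in\Carbp(X)_\Q$ by the rational case, and after passing to a common determination the sum $B:=\sum_i t_i B_i$ is antieffective and relatively semiample over $X$, hence in $\Carbp(X)_\R$. Combined with the injectivity already established, this gives the claimed bijection.

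The main obstacle is the decomposition above, which on a fixed determination $\pi\colon Y\to X$ amounts to expressing any antieffective $\pi$-semiample $\R$-Cartier divisor on $Y$ as an $\R_+$-linear combination of antieffective $\pi$-semiample $\Q$-Cartier divisors. If ``semiample $\R$-divisor'' is defined (as seems natural in light of~\cite{trivval}) to mean an $\R_+$-linear combination of semiample $\Q$-divisors, the decomposition is essentially tautological; otherwise, it follows from the rational polyhedrality of the relative semiample cone attached to the birational morphism $\pi$, applied to the finite-dimensional space of $\R$-divisors supported on the union of the components of $B_Y$ and the $\pi$-exceptional locus.
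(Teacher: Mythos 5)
Your reduction of the proposition to the rational case \cite[Theorem~6.32]{trivval}, and of the cone statement to the decomposition $\Carbp(X)_\R=\R_+\cdot\Carbp(X)_\Q$, matches the paper's strategy, and the linear-isomorphism part is fine. The gap is in how you dispose of that decomposition. The definition of a relatively semiample $\R$-Cartier $b$-divisor (the one the paper indeed uses) gives $B=\sum_i t_iB_i$ with $t_i>0$ and $B_i\in\Carb(X)_\Q$ relatively semiample, but it gives no control whatsoever on the signs of the $B_i$: they need not be antieffective even though $B$ is. So the decomposition you need --- into pieces that are \emph{simultaneously} antieffective and relatively semiample --- is not ``essentially tautological''; the antieffectivity constraint is precisely the nontrivial point, and your proposal never addresses it. The paper's proof handles it via the Negativity Lemma (Lemma~\ref{lem:neg}, \cite[Proposition~2.12]{BdFF}): each $B_i':=B_i-\overline{B_{i,X}}$ is antieffective and still relatively semiample, and the leftover term $\overline{B_X}=\sum_\a c_\a(-\overline{E_\a})$ (with $B_X=-\sum_\a c_\a E_\a$, $c_\a\ge0$, because $B\le0$) is an $\R_+$-combination of the antieffective, relatively semiample divisors $-\overline{E_\a}\in\Carb(X)_\Q$. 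That two-line rearrangement is the actual content of the proof and is missing from yours.

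Your fallback argument is also not sound as stated: you invoke ``rational polyhedrality of the relative semiample cone'' of the birational morphism $\pi\colon Y\to X$, but no such general fact is available. Semiampleness is not a closed (or numerical) condition, and even the relative nef cone of a birational morphism between smooth varieties need not be rational polyhedral once $\dim X\ge 3$ (two-dimensional fibers can carry infinitely many extremal curve classes); note that the paper itself only uses such a polyhedrality argument in dimension $\le 2$, in Lemma~\ref{lem:Lip}, where the relative nef cone is dual to the finitely many exceptional primes. So this route would need a genuinely new argument, whereas the Negativity Lemma trick resolves the issue directly in all dimensions.
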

\begin{proof} The first point is a consequence of~\cite[Theorem~6.32]{trivval}, which also yields 
$$
\Carb(X)_\Q\simto\PL^+_\hom(X).
$$
Since the right-hand side generates the convex cone $\PL^+_\hom(X)_\R$, it suffices to show that the convex cone of antieffective and relatively semiample divisors in $\Carb(X)_\R$ is generated by antieffective and semiample divisors in $\Carb(X)_\Q$. By definition of a relatively semiample $\R$-Cartier $b$-divisor, we have $B=\sum_i t_i B_i$ with $t_i>0$ and $B_i\in\Carb(X)_\Q$ relatively semiample. By the Negativity Lemma (see~\cite[Proposition~2.12]{BdFF}), $B'_i:=B_i-\overline{B_{i,X}}$ is antieffective, and still relatively semiample. Denoting by $B_X=-\sum_\a c_\a E_\a$ the irreducible decomposition of the antieffective $\R$-divisor $B_X$, we infer 
$$
B=\sum_i t_i B'_i+\sum_\a c_\a(-\overline{E_\a})
$$
where $-\overline{E_\a}\in\Carb(X)_\Q$ is antieffective and relatively semiample. The result follows.  
\end{proof}
%
%
\subsection{Numerical $b$-divisor classes}\label{sec:numbdiv}
The space of \emph{numerical $b$-divisor classes} is defined as 
$$
\Numb(X):=\varprojlim_Y\Num(Y),
$$
equipped with the inverse limit topology (each finite dimensional $\R$-vector space $\Num(Y)$ being endowed with its canonical topology). 

Any $b$-divisor defines a numerical $b$-divisor class. This yields a 
natural quotient map 
$$
\Zunb(X)_\R\to\Numb(X)\quad B\mapsto [B].
$$ 
One should be wary of the fact this map
is \emph{not} continuous with respect to the topology of pointwise convergence of $\Zunb(X)_\R$. However, we observe: 
\begin{lem}\label{lem:bnumcont} For any finite set $\cE$ of prime divisors on $X$,  the quotient map $B\mapsto [B]$ is continuous on the subspace $\Zunb(X)_{\R,\cE}$ of $b$-divisors $B$ such that $B_X$ is supported by $\cE$. 
\end{lem}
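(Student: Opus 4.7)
The plan is to use the definition of the inverse limit topology on $\Numb(X)=\varprojlim_Y\Num(Y)$: continuity of $B\mapsto[B]$ on $\Zunb(X)_{\R,\cE}$ amounts to continuity of $B\mapsto[B_Y]\in\Num(Y)$ for every smooth birational model $\pi\colon Y\to X$. So fix such a $Y$; I will show that the composite is continuous in the topology of pointwise convergence on $X^\div$.

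The decisive step is a finite-support observation for the divisors $B_Y$ as $B$ ranges over $\Zunb(X)_{\R,\cE}$. By compatibility under push-forward, $\pi_\star B_Y=B_X$, whose support lies in $\cE$. Hence any non-exceptional prime component of $B_Y$ is the strict transform of some $E\in\cE$. Since $\pi$ has only finitely many exceptional prime divisors, the set $\cE_Y$ of prime divisors on $Y$ that can occur in $B_Y$ (as $B$ varies in $\Zunb(X)_{\R,\cE}$) is finite. Let $V_Y\subset\Zun(Y)_\R$ be the finite-dimensional $\R$-subspace generated by $\cE_Y$. Then $B\mapsto B_Y$ sends $\Zunb(X)_{\R,\cE}$ into $V_Y$.

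To conclude, write any element of $V_Y$ in the basis $\cE_Y$: the components are the linear functionals $B\mapsto\ord_F(B)$ for $F\in\cE_Y\subset X^\div$, each of which is continuous by the very definition of the topology of pointwise convergence on $X^\div$. Therefore $B\mapsto B_Y\in V_Y$ is continuous, and composing with the natural linear map $V_Y\to\Num(Y)$---automatically continuous between finite-dimensional $\R$-vector spaces---yields continuity of $B\mapsto[B_Y]$, as required.

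There is no serious obstacle here: the only substantive input is the support-control observation in the second paragraph, which propagates the finiteness hypothesis on $B_X$ to each $B_Y$ by exploiting the finiteness of the exceptional locus of $\pi$. The warning preceding the lemma makes clear that without restricting to $\Zunb(X)_{\R,\cE}$ continuity fails, and this is precisely because $B_Y$ can acquire arbitrarily many non-exceptional components as $B$ varies unconstrained, breaking the reduction to a fixed finite-dimensional subspace.
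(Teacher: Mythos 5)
Your proof is correct and is essentially the paper's own argument: the key point in both is that for $B\in\Zunb(X)_{\R,\cE}$, each $B_Y$ lies in the fixed finite-dimensional subspace of $\Zun(Y)_\R$ spanned by the strict transforms of the elements of $\cE$ and the finitely many $\pi$-exceptional primes, so that $B\mapsto[B_Y]$ is continuous coordinate-wise and one concludes via the inverse limit topology on $\Numb(X)$. Your write-up just makes explicit the coordinate functionals $B\mapsto\ord_F(B)$, which the paper leaves implicit.
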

\begin{proof} For any model $Y\to X$, each $B_Y$ with $B\in\Zunb(X)_{\R,\cE}$ lives in the finite dimensional vector space generated by the strict transforms of the elements of $\cE$ and the $\pi$-exceptional  prime divisors.  Thus $B\mapsto[B_Y]\in\Num(Y)$ is continuous on $\Zunb(X)_{\R,\cE}$, and the result follows. 
\end{proof}

The set of numerical classes of $\R$-Cartier $b$-divisors can be identified with the direct limit
$$
\varinjlim_Y\Num(Y)\subset\Numb(X).
$$
In particular, any numerical class $\theta\in\Num(X)$ defines a numerical $b$-divisor class $\bar\theta=(\theta_Y)_Y\in\Numb(X)$, where $\theta_Y$ is the pullback of $\theta$.

\begin{defi}
  The cone of \emph{nef $b$-divisor classes} 
$$
\Nefb(X)\subset \Numb(X)
$$
is defined as the closed convex cone generated by all classes of nef $\R$-Cartier $b$-divisors. 
\end{defi}

The following characterization is essentially formal (see~\cite[Lemma~2.10]{BdFF}). 

\begin{lem}\label{lem:nefmov} A $b$-divisor $B\in\Zunb(X)_\R$ is nef iff $B_Y$ is movable for all birational models $Y\to X$. In other words, $\Nefb(X)=\varprojlim_Y\Mov(Y)$. 
\end{lem}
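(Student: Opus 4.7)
The plan is to exploit the projective limit topology on $\Numb(X)=\varprojlim_Y\Num(Y)$. Since any finite collection of birational models admits a common refinement, a class $B=(B_Y)_Y$ lies in the closure of a set $S\subset\Numb(X)$ iff, for each single model $Y$ and each $\e>0$, some $B'\in S$ satisfies $|B'_Y-B_Y|<\e$ in $\Num(Y)$. Writing $\cN_Y\subset\Num(Y)$ for the $\R_{\ge0}$-cone generated by the images in $\Num(Y)$ of all nef $\R$-Cartier $b$-divisors, this says $B\in\Nefb(X)$ iff $B_Y\in\overline{\cN_Y}$ for every $Y$. Now, a nef $\R$-Cartier $b$-divisor has the form $\overline{A}$ for some nef $\R$-divisor $A$ on a model $Z$; passing to a common resolution $\mu\colon Y'\to Y$, $\nu\colon Y'\to Z$, one has $\overline{A}_Y=\mu_\star(\nu^\star A)$ with $\nu^\star A\in\Nef(Y')$. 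Thus $\cN_Y$ is the $\R_{\ge0}$-cone generated by classes $\mu_\star\alpha$ with $\mu\colon Y'\to Y$ birational and $\alpha\in\Nef(Y')$, and the problem reduces to proving $\overline{\cN_Y}=\Mov(Y)$ for each $Y$.

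For $\overline{\cN_Y}\subset\Mov(Y)$, the key point is $\mu_\star\Nef(Y')\subset\Mov(Y)$ for any birational $\mu\colon Y'\to Y$. Since $\Mov(Y)$ is closed and convex and $\Nef(Y')$ is the closure of the ample cone, it suffices to show $\mu_\star[A]\in\Mov(Y)$ for $A$ a very ample divisor on $Y'$. A general member of $|A|$ avoids the (finitely many) $\mu$-exceptional prime divisors, and moving within $|A|$ shows that the base locus of the sublinear system of $|\mu_\star A|$ obtained by pushing forward sections is contained in the image $\mu(\Exc(\mu))$ of the exceptional locus. Since $\mu^{-1}$ extends regularly outside a subset of $Y$ of codimension $\ge2$ (as $Y$ is smooth), $\mu(\Exc(\mu))$ itself has codimension $\ge2$, so $[\mu_\star A]\in\Mov(Y)$.

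For the reverse inclusion $\Mov(Y)\subset\overline{\cN_Y}$, it suffices to represent each generator $[L]\in\Mov(Y)$, with $L$ a line bundle on $Y$ such that $\B(L)$ has codimension $\ge2$, in the form $\mu_\star\alpha$ with $\alpha\in\Nef(Y')$. By Hironaka applied to the base ideal of $|mL|$ for $m$ sufficiently divisible, I obtain $\mu\colon Y'\to Y$ with $\mu^\star L=M+F$, where $F$ is effective $\mu$-exceptional and $|mM|$ is base-point-free. Then $M$ is semiample, hence nef, and $\mu_\star M=L$ since $F$ is exceptional, giving $[L]=\mu_\star[M]\in\cN_Y$. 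I expect the main subtle point to be the first inclusion, and specifically the base-locus computation establishing $\mu_\star\Amp(Y')\subset\Mov(Y)$, while the rest is either formal from the projective limit topology or follows from Hironaka.
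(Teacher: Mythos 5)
Your argument is correct, but it is worth noting that the paper does not actually prove this lemma: it is stated as ``essentially formal'' with a pointer to~\cite[Lemma~2.10]{BdFF}, where the analogous statement (in the local setting of b-divisors over a singularity) is established by exactly the kind of argument you give. Your reduction via the inverse-limit topology is sound: since any finite set of models has a common refinement and the transition maps $\Num(Y')\to\Num(Y)$ are continuous linear maps of finite-dimensional spaces, the closure of the cone generated by nef $\R$-Cartier classes is computed modelwise, so the lemma does come down to $\overline{\cN_Y}=\Mov(Y)$ for each $Y$. Both inclusions are the standard facts: pushforwards of nef (reduced to very ample) classes are movable, and a generator $[L]$ of $\Mov(Y)$ becomes nef (indeed semiample) after blowing up the base ideal, with exceptional error, so that $[L]=\mu_\star[M]$. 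Two small points of wording deserve tightening, though they do not affect correctness: a general member $A'\in|A|$ does not ``avoid'' the exceptional divisors but merely contains none of them as a component, and the operative fact is rather that for $y\notin\mu(\Exc(\mu))$ the fiber $\mu^{-1}(y)$ is a single point, so a general $A'$ misses it and then $y\notin\supp\mu_\star A'$; one should also record that pushforward preserves linear equivalence, so the divisors $\mu_\star A'$ really form a subsystem of $|\mu_\star A|$, and that the components of your divisor $F$ are $\mu$-exceptional because they map into the base locus, which has codimension $\ge 2$. Finally, smoothness of $Y$ is more than is needed for $\mu(\Exc(\mu))$ to have codimension $\ge 2$; normality suffices, which is consistent with the paper's convention that all models are smooth anyway.
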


We finally record the following version of the Negativity Lemma (see~\cite[Proposition~2.12]{BdFF}). 

\begin{lem}\label{lem:neg} If $B\in\Zunb(X)_\R$ is nef, then $B\le\overline{B_Y}$ for any birational model $Y\to X$. 
\end{lem}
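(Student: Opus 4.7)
The plan is to reduce the inequality to a classical-style negativity statement on each individual model, and then invoke the cited reference. By the very definition of $\overline{B_Y}$, the assertion $B\le\overline{B_Y}$ amounts to proving $B_{Y'}\le\pi^\star B_Y$ as $\R$-divisors on $Y'$, for every birational model $\pi\colon Y'\to Y$ of $X$. The pushforward compatibility built into the definition of $b$-divisors gives $\pi_\star B_{Y'}=B_Y=\pi_\star(\pi^\star B_Y)$, so the difference
\[
  F:=\pi^\star B_Y-B_{Y'}
\]
is a $\pi$-exceptional $\R$-divisor on $Y'$. We are thus reduced to the single inequality $F\ge 0$ on $Y'$.

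To prove $F\ge 0$, I would invoke the Negativity Lemma~\cite[Proposition~2.12]{BdFF}. By Lemma~\ref{lem:nefmov}, the nefness of the $b$-divisor $B$ translates into $B_{Y'}\in\Mov(Y')$. On the other hand, $\pi^\star B_Y$ is a pullback of an $\R$-Cartier divisor from $Y$, hence is $\pi$-numerically trivial, and in particular its class is $\pi$-movable. Consequently the class of $-F=B_{Y'}-\pi^\star B_Y$ lies in the relative movable cone of $\pi$, and the form of the Negativity Lemma in \cite{BdFF} applies to yield $-F\le 0$, i.e.\ $F\ge 0$.

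The one point I would check carefully is the compatibility of hypotheses: the classical Negativity Lemma concerns $\pi$-nef (rather than merely $\pi$-movable) exceptional divisors, so it is essential that the stronger version in \cite{BdFF} — tailored precisely to the $b$-divisor setting, where the relevant positivity is movability — is the one invoked. Granting this, no further argument is required, and the lemma is established. This is the main (and really the only) obstacle; the rest of the argument is a purely formal translation between pointwise inequalities of $b$-divisor traces and a single relative effectivity statement.
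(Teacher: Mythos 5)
The paper does not actually prove this lemma: it simply records it as a ``version of the Negativity Lemma'' with a wholesale reference to \cite[Proposition~2.12]{BdFF}, so there is no internal argument to compare yours against. Your reduction is correct and is the natural way to unwind the statement: by cofinality of models dominating $Y$ (and since pushforwards of effective divisors are effective), $B\le\overline{B_Y}$ is equivalent to $F:=\pi^\star B_Y-B_{Y'}\ge 0$ for every $\pi\colon Y'\to Y$, and $F$ is $\pi$-exceptional by pushforward compatibility. The entire content of the lemma then sits in the step you yourself flag: Lemma~\ref{lem:nefmov} only gives $B_{Y'}\in\Mov(Y')$, so you need a negativity lemma for $\pi$-exceptional divisors whose relative class is merely movable, not $\pi$-nef, and you delegate exactly this to \cite[Proposition~2.12]{BdFF} without having checked that it is stated in that generality. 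The statement you need is true, but since it is the only non-formal point, you should either verify the citation or supply the short argument.

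Two ways to close it with tools already in the paper. (a) Directly: if $F$ had a nonzero negative part $F_-$ (write $F=F_+-F_-$ with $F_\pm\ge 0$ exceptional and without common components), a standard fact from the proof of the classical negativity lemma gives a component $E\subset\supp F_-$ and a covering family of $\pi$-contracted curves $C\subset E$ with $F_-\cdot C<0$; a general such $C$ is not contained in $\supp F_+$, so $F\cdot C>0$, i.e.\ $B_{Y'}\cdot C=-F\cdot C<0$ (the pullback term is $\pi$-trivial). This contradicts Lemma~\ref{lem:movrestr}, since $B_{Y'}|_E$ is psef and $C$ moves in a family covering $E$. (b) By approximation: $[B]\in\Nefb(X)$ is a limit of classes of nef $\R$-Cartier $b$-divisors $C_i$; the classical negativity lemma (applied on a model dominating $Y'$ and a determination of $C_i$, then pushed forward) gives $\pi^\star C_{i,Y}-C_{i,Y'}\ge 0$, and one passes to the limit using that the classes of the $\pi$-exceptional primes are linearly independent modulo numerical equivalence relative to $Y$, so that the coefficients of $F$ are limits of nonnegative numbers. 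With either supplement your proof is complete, and arguably more informative than the paper's bare citation.
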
 

%
%
\subsection{Plurisubharmonic functions}
We use \cite[\S4]{trivval} as a reference. Given a $\Q$-line bundle $L\in\Pic(X)_\Q$ and a numerical class $\theta\in\Num(X)$, we denote by
\begin{itemize}
\item $\cH^\gf(L)=\cH^\gf_\Q(L)$ the set of \emph{generically finite Fubini--Study} functions for $L$, \ie functions $\f\colon X^\an\to\R\cup\{-\infty\}$ of the form 
$$
\f=m^{-1}\max_i\{\log|s_i|+\la_i\};
$$
where $m\in\Z_{>0}$ is sufficiently divisible, $(s_i)$ is a finite set of nonzero sections of $mL$, and $\la_i\in\Q$;
\item $\cH_\hom(L)\subset\cH^\gf(L)$ the set of \emph{homogeneous Fubini--Study functions}, for which the $\la_i$ can be chosen to be $0$; 
\item $\PSH(\theta)$ the set of \emph{$\theta$-psh} functions $\f\colon X^\an\to\R\cup\{-\infty\}$, $\f\not\equiv-\infty$, obtained as limits of decreasing nets $(\f_i)$ of generically finite Fubini--Study functions $\f_i$ for $\Q$-line bundles $L_i$ such that $c_1(L_i)\to\theta$;
\item $\PSH_\hom(\theta)\subset\PSH(\theta)$ the subset of homogeneous $\theta$-psh functions, that is, functions $\f\in\PSH(\theta)$ such that $\f(tv)=t\f(v)$ for $v\in X^\an$ and $t\in\R_{>0}$.
\end{itemize}
All functions in $\PSH(\theta)$ are finite valued on the set $X^\div\subset X^\an$ of divisorial valuations, and we endow $\PSH(\theta)$ with the topology of pointwise convergence on $X^\div$. For all $\f,\p\in\PSH(\theta)$, we further have 
$$
\f\le\p\text{ on }X^\div\Longleftrightarrow\f\le\p\text{ on }X^\an.
$$
In particular, the topology of $\PSH(\theta)$ is Hausdorff. The set of $\theta$-psh functions is preserved by the action of $\R_{>0}$ given by $(t,\f)\mapsto t\cdot\f$, where $(t\cdot\f)(v):=t\f(t^{-1}v)$. 
\begin{lem}\label{lem:pshpos} For any $\theta\in\Num(X)$ we have:
\begin{itemize}
\item[(i)] $\PSH(\theta)\ne\emptyset\Rightarrow\theta\in\Psef(X)$;
\item[(ii)] $0\in\PSH(\theta)\Leftrightarrow\theta\in\Nef(X)$; 
\item[(iii)] $\theta\in\Bg(X)\Rightarrow\PSH(\theta)\ne\emptyset$. 
\end{itemize}
\end{lem}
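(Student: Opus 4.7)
For part~(i), I would simply unpack the definition: any $\f \in \PSH(\theta)$ is, by definition, a decreasing limit of generically finite Fubini--Study functions $\f_i \in \cH^\gf(L_i)$ with $c_1(L_i) \to \theta$, and the very existence of $\f_i = m^{-1}\max_j\{\log|s_j|+\lambda_j\}$ requires a nonzero global section of some $mL_i$. This forces $c_1(L_i)\in\Psef(X)$; closedness of the pseudoeffective cone yields $\theta \in \Psef(X)$.

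For part~(ii), the direction $\theta \in \Nef(X) \Rightarrow 0\in\PSH(\theta)$ is the main content. I would approximate the nef class $\theta$ by ample rational classes $\theta_i = c_1(L_i) \to \theta$ (for instance, by rationally approximating $\theta + \varepsilon_i\omega$ for a fixed ample $\omega$ and $\varepsilon_i\searrow 0$). For $m$ sufficiently divisible, $mL_i$ is globally generated by sections $s_{i,0},\dots,s_{i,N_i}$, and the homogeneous Fubini--Study function $\f_i := m^{-1}\max_j \log|s_{i,j}|\in\cH_\hom(L_i)$ vanishes identically on $X^\an$: at any semivaluation $v\in X^\an$ with center $x\in X$, some generator $s_{i,j}$ is a unit in $\cO_{X,x}$, so $v(s_{i,j})=0$ and $\max_j\log|s_{i,j}|(v)=0$. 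The constant net $(\f_i)\equiv 0$ is trivially decreasing with underlying classes converging to $\theta$, so $0\in\PSH(\theta)$. For the converse I would invoke part~(i) to get $\theta\in\Psef(X)$, and then upgrade to nef either by restricting to an arbitrary curve $C\subset X$ (where $\Psef(C)=\Nef(C)$ forces $\theta\cdot C\ge 0$) or by appealing to the extremal function $V_\theta$ of \S\ref{sec:extremal}, whose normalization encodes the nefness of $\theta$.

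For part~(iii), I would use density of $\Num(X)_\Q$ in $\Num(X)$ together with openness of $\Bg(X)$ to produce big rational classes $\theta_i=c_1(L_i)\to\theta$. Bigness of $L_i$ furnishes a nonzero section $s_i$ of some $m_iL_i$, yielding $\f_i := m_i^{-1}\log|s_i|\in\cH^\gf(L_i)$. To produce the decreasing net required by the definition of $\PSH(\theta)$ --- rather than merely a convergent family --- I would aggregate these via suitable maxima (using that $\cH^\gf$ is max-stable) together with additive-constant normalizations, so that the resulting net decreases to a limit $\f\not\equiv-\infty$ lying in $\PSH(\theta)$.

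The main obstacle I anticipate is the strict decreasing-net requirement in the definition of $\PSH(\theta)$: both the construction in~(iii) and the converse direction in~(ii) need to produce such nets whose classes only converge to (rather than equal) $\theta$. In~(iii) this may necessitate an envelope construction or upper-semicontinuous regularization; in~(ii,$\Rightarrow$), the cleanest route is likely to appeal to the extremal-function framework developed in \S\ref{sec:extremal}.
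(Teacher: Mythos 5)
Your arguments for (i) and for the implication $\theta\in\Nef(X)\Rightarrow 0\in\PSH(\theta)$ are correct (the paper simply cites~\cite[(4.1),(4.3)]{trivval} for (i) and (ii)), but the two remaining steps have genuine gaps.

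For the converse in (ii), neither of your proposed routes works as stated. Knowing only $\theta\in\Psef(X)$ from part~(i) does \emph{not} give $\theta|_C\in\Psef(C)$ for an arbitrary curve $C\subset X$: pseudoeffectivity does not restrict to subvarieties (the exceptional curve $E$ of a blown-up surface is effective, yet $E\cdot E=-1$), so ``$\Psef(C)=\Nef(C)$ forces $\theta\cdot C\ge 0$'' is unjustified. Appealing to the extremal function is circular within this paper: the equivalence $\theta\in\Nef(X)\Leftrightarrow V_\theta\equiv 0$ in Proposition~\ref{prop:extfun} is itself deduced from Lemma~\ref{lem:pshpos}~(ii). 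What does work is to use the hypothesis $0\in\PSH(\theta)$ directly: write $0$ as the limit of a decreasing net $\f_i\in\cH^\gf(L_i)$ with $c_1(L_i)\to\theta$; then $\f_i\ge 0$, so for any irreducible curve $C\subset X$ the finiteness of $\f_i(v_{C,\triv})$ forces some section $s_j\in\Hnot(X,mL_i)$ occurring in $\f_i$ not to vanish identically on $C$, whence $m\,(L_i\cdot C)=\div(s_j)\cdot C\ge 0$; letting $i\to\infty$ gives $\theta\cdot C\ge 0$, \ie $\theta$ is nef.

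For (iii), you correctly flag the decreasing-net requirement as the obstacle, but the proposed aggregation of the functions $m_i^{-1}\log|s_i|$ attached to \emph{different} bundles $L_i$ via maxima and constant shifts is not actually carried out, and it is unclear it can be: a maximum of Fubini--Study functions for distinct bundles is not a Fubini--Study function for a single class near $\theta$, and the envelope property cannot help here since it presupposes a nonempty family of $\theta$-psh functions. (Also, a single nonzero section need not yield a \emph{generically finite} Fubini--Study function.) The paper's argument sidesteps all of this: by openness of $\Bg(X)$ one chooses a single big $\Q$-line bundle $L$ with $\theta-c_1(L)$ nef; then $\cH^\gf(L)\ne\emptyset$ because $L$ is big, and $\cH^\gf(L)\subset\PSH(c_1(L))\subset\PSH(\theta)$, the last inclusion by monotonicity of $\PSH$ under adding a nef class, so no net with varying classes is needed.
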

As we shall see in Proposition~\ref{prop:extfun}, (i) is in fact an equivalence, rendering~(iii) redundant. 
\begin{proof} For (i) and (ii) see~\cite[(4.1),(4.3)]{trivval}. If $\theta$ is big, we find a big $\Q$-line bundle $L$ such that $\theta-c_1(L)$ is nef. Then $\PSH(\theta)\supset\PSH(L)\supset\cH^\gf(L)\ne\emptyset$, which proves (iii). 
\end{proof}
\begin{exam}\label{exam:pdiv} For any effective $\R$-divisor $D$, $\p_D:=\p_{\overline D}$ satisfies $-\p_D\in\PSH_\hom([D])$. 
\end{exam}

Our assumption that $X$ is smooth and $k$ is of characteristic zero implies that the \emph{envelope property} holds, see~\cite[Theorem~A]{trivadd} for any class $\theta\in\Num(X)$. This means that if $(\f_\a)_\a$ is any family in $\PSH(\theta)$ that is uniformly bounded above, and $\f:=\sup_\a\f_\a$, then the usc regularization $\f^\star$ is $\theta$-psh.

The envelope property has many favorable consequences, as discussed in~\cite[\S5]{trivval}. For example, for any birational model $\pi\colon Y\to X$ and any $\theta\in\Num(X)$ we have
\begin{equation}\label{equ:pshbir}
  \PSH(\pi^\star\theta)=\pi^\star\PSH(\theta);
\end{equation}
see~\cite[Lemma~5.13]{trivval}.
%
%
\subsection{The homogeneous decomposition of a psh function}\label{sec:homog}
We refer to~\cite[\S 6.3]{trivval} for details on what follows. Fix $\theta\in\Num(X)$. For any $\f\in\PSH(\theta)$ and $\la\le\sup\f$, setting
\begin{equation}\label{equ:hom1}
\hf^\la:=\inf_{t>0}\{t\cdot\f-t\la\}
\end{equation}
defines a homogeneous $\theta$-psh function $\hf^\la\in\PSH_\hom(\theta)$.  The family $(\hf^\la)_{\la\le\sup\f}$ is further concave, decreasing, and continuous for the topology of $\PSH_\hom(\theta)$ (\ie pointwise convergence on $X^\div$), and it gives rise to the \emph{homogeneous decomposition} 
\begin{equation}\label{equ:hom2}
\f=\sup_{\la\le\sup\f}\{\hf^\la+\la\}. 
\end{equation}
For $\la=\sup\f=\f(v_\triv)$, the function $\hf^{\max}:=\hf^{\sup\f}$ computes the directional derivatives of $\f$ at $v_\triv$, \ie
\begin{equation}\label{equ:hmax}
\hf^{\max}(v)=\lim_{t\to 0_+}\frac{\f(tv)-\f(v_\triv)}{t}
\end{equation}
for $v\in X^\an$. The limit exists as the function $t\mapsto\f(tv)$ on $(0,\infty)$ is convex and decreasing, see~\cite[Proposition~4.12]{trivval}. 

\begin{exam}\label{exam:PLhom} Assume $\f=\f_{\fa}$ for a flag ideal $\fa=\sum_{\la\in\Z}\fa_\la\unipar^{-\la}$ on $X\times\A^1$. Then $\hf^{\max}=\log|\fa_{\la_{\max}}|$ where $\la_{\max}:=\max\{\la\in\Z\mid \fa_\la\ne 0\}$ (see~Example~6.28 in~\cite{trivval}).
\end{exam}

%
%
\section{Psh functions and families of $b$-divisors}\label{sec:pshbdiv}
We work with a fixed numerical class $\theta\in\Num(X)$.
%
%
\subsection{Homogeneous psh functions and b-divisors}
Recall that a function $\p\in\PSH_{\hom}(\theta)$ is uniquely determined by its values on $X^\div$. We say that $\p$ is of divisorial type if its restriction to $X^\div$ is of divisorial type, that is, $\p(\ord_E)=0$ for all but finitely many prime divisors $E\subset X$.

Slightly generalizing~\cite[Theorem~6.40]{trivval}, we show: 
\begin{prop}\label{prop:homogb} The map $B\mapsto\p_B$ in~\S\ref{sec:bdiv} sets up a 1--1 correspondence between: 
\begin{itemize}
\item[(i)] the set of $b$-divisors $B\in\Zunb(X)_\R$ such that $B\le 0$ and $\overline{\theta}+[B]\in\Numb(X)$ is nef;
\item[(ii)] the set of $\theta$-psh homogeneous functions $\p\in\PSH_\hom(\theta)$ of divisorial type.
\end{itemize}
\end{prop}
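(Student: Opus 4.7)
The plan is to combine the bijection in Lemma~\ref{lem:Bdivtype} with the rational version of the statement, \cite[Theorem~6.40]{trivval}, by an approximation argument from $\Q$- to $\R$-coefficients.

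First I would reduce matters to an equivalence of just two conditions. By Lemma~\ref{lem:Bdivtype} the assignment $B\mapsto\p_B$ is already an $\R$-linear bijection between $\Zunb(X)_\R$ and homogeneous functions of divisorial type on $X^\div$, and it intertwines $B\le 0$ with $\p_B\le 0$. Furthermore, every $\p\in\PSH_\hom(\theta)$ satisfies $\p\le 0$ automatically: since $v_{\triv}$ is fixed by the $\R_{>0}$-action, homogeneity forces $\p(v_\triv)=0$, while $\sup\p=\p(v_{\triv})$ holds for any $\theta$-psh function (by convexity and monotonicity of $t\mapsto\p(tv)$, cf.~the discussion around~\eqref{equ:hmax}). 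It therefore suffices to show that, for $B\le 0$, the numerical class $\bar\theta+[B]\in\Numb(X)$ is nef iff $\p_B\in\PSH(\theta)$.

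For the direction (ii)$\Rightarrow$(i), I would realize $\p_B$ as a decreasing limit of homogeneous $\Q$-PL functions $\p_{B_n}$ with $B_n\in\Zunb(X)_\Q$ antieffective and $\bar\theta+[B_n]$ nef, supplied by the rational case of \cite[Theorem~6.40]{trivval}. Because $\p_B$ is of divisorial type, such an approximation can be chosen so that the supports of the $B_{n,X}$ all lie in a common finite set $\cE$ of prime divisors of $X$. Lemma~\ref{lem:bnumcont} then makes $B_n\mapsto[B_n]$ continuous on the relevant subspace, so $\bar\theta+[B]$ is a limit in $\Numb(X)$ of classes in $\Nefb(X)$, hence itself nef.

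For the converse (i)$\Rightarrow$(ii), I would approximate $B$ by a decreasing sequence of antieffective $\Q$-$b$-divisors $B_n\downarrow B$ with $\bar\theta+[B_n]$ still nef, if necessary after replacing $\theta$ by $\theta+\e_n\om$ for some fixed ample $\om$ and $\e_n\searrow 0$, to create a bit of room for rational perturbation. The rational case then supplies $\p_{B_n}\in\PSH_\hom(\theta+\e_n\om)$, and stability of $\PSH$ under decreasing limits of functions with perturbed classes yields $\p_B\in\PSH_\hom(\theta)$.

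The main obstacle is producing these rational approximations of $B$ compatibly with both antieffectivity and nefness of $\bar\theta+[B]$. Since $\Nefb(X)$ is only a closed cone in $\Numb(X)$, a general element is \emph{a priori} only a limit of nef Cartier $b$-divisor classes rather than a finite $\Q_+$-combination of them; upgrading this into a decreasing approximation at the level of $b$-divisors themselves (not just their classes) requires the finite-support hypothesis on $B_X$, which is precisely where Lemma~\ref{lem:bnumcont} lets the pointwise limit descend to a limit in $\Numb(X)$, with the Negativity Lemma~\ref{lem:neg} controlling the difference $B_n-B$ on exceptional divisors.
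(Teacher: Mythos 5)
Your reduction via Lemma~\ref{lem:Bdivtype}, and your direction (ii)$\Rightarrow$(i), are essentially the paper's argument: one writes $\p=\p_B$ as a decreasing net of homogeneous Fubini--Study functions $\p_i\in\cH_\hom(L_i)$ with $c_1(L_i)\to\theta$ in $\Num(X)$ (this comes from \cite[Corollary~6.17]{trivval}, with \cite[Lemma~6.34]{trivval} converting each $\p_i$ into an antieffective Cartier $b$-divisor $B_i$ with $\overline{L_i}+B_i$ semiample), observes $B\le B_i\le 0$ so that all $B_{i,X}$ are supported in the fixed finite set $\supp B_X$, and concludes by Lemma~\ref{lem:bnumcont} and closedness of the nef cone. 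Two inaccuracies in your version of this step: such an approximation is not ``supplied by Theorem~6.40'' (which is a correspondence, not an approximation statement), and you cannot in general arrange $\overline{\theta}+[B_n]$ nef for the fixed class $\theta$ --- one must let the class vary, $c_1(L_n)\to\theta$, which is harmless in the limit. These are fixable.

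The genuine gap is in (i)$\Rightarrow$(ii). Your route needs a decreasing sequence of antieffective \emph{rational} $b$-divisors $B_n\searrow B$ with $\overline{\theta+\e_n\om}+[B_n]$ nef; you flag this as ``the main obstacle'', but the tools you then invoke do not produce it. Lemma~\ref{lem:bnumcont} and Lemma~\ref{lem:neg} only say that, \emph{given} such $B_n$ with traces supported in a common finite set, the classes converge; they offer no way to perturb a Weil $b$-divisor --- infinitely many incarnations $B_Y$, compatible under push-forward --- into rational ones while keeping $\theta_Y+\e_n\om_Y+[B_{n,Y}]$ movable on every model simultaneously, and no such approximation result is available in the paper (nor is one needed). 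The paper's proof of this implication uses no approximation of $B$ at all: \cite[Theorem~6.40]{trivval} is invoked directly with real coefficients (the ``slight generalization'' proved here is the passage to $b$-divisors with nonzero trace $B_X$, i.e.\ to psh functions of divisorial type, not from $\Q$- to $\R$-coefficients), via the splitting $B=\overline{B_X}+(B-\overline{B_X})$: one has $\p_{\overline{B_X}}\in\PSH_\hom(-B_X)$ by Example~\ref{exam:pdiv}, while nefness of $\overline{\theta}+[B]=\overline{(\theta+[B_X])}+([B]-\overline{[B_X]})$ puts $\p_{B-\overline{B_X}}$ in $\PSH_\hom(\theta+B_X)$ by Theorem~6.40, and adding the two gives $\p_B\in\PSH(\theta)$. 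Unless you can actually construct your rational approximants, you should switch to this splitting argument; as written, the harder implication remains unproved.
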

 \begin{proof} Pick $B$ as in (i). On the one hand, $\p_{\overline{B_X}}\in\PSH_\hom(-B_X)$,  see Example~\ref{exam:pdiv}. On the other hand, since $\overline{\theta}+[B]=\overline{(\theta+[B_X])}+([B]-\overline{[B_X]})$ is nef, it follows from~\cite[Theorem~6.40]{trivval} that $\p_{B-\overline{B_X}}=\p_B-\p_{\overline{B_X}}$ lies in $\PSH_\hom(\theta+B_X)$. Thus 
$$
\p_B\in\PSH(\theta+B_X)+\PSH(-B_X)\subset\PSH(\theta). 
$$
Conversely, pick $\p$ as in (ii), so that $\p=\p_B$ with $0\ge B\in\Zunb(X)_\R$. By~\cite[Corollary~6.17]{trivval}, we can write $\p$ as the pointwise limit of a decreasing net $(\p_i)$ such that $\p_i\in\cH_\hom(L_i)$ with $L_i\in\Pic(X)_\Q$ and $\lim_i c_1(L_i)=\theta$. Then $\p_i=\p_{B_i}$ for a Cartier $b$-divisor $0\ge B_i\in\Carb(X)_\Q$ such that $\overline{L_i}+B_i$ is semiample (see~\cite[Lemma~6.34]{trivval}), and hence $\overline{c_1(L_i)}+[B_i]\in\Numb(X)$ is nef. Further, $B_i\searrow B$ in $\Zunb(X)_\R$, and hence $[B_i]\to [B]$ in $\Numb(X)$ (see Lemma~\ref{lem:bnumcont}). Since $\overline{c_1(L_i)}+[B_i]$ is nef for all $i$, we conclude, as desired, that $\overline{\theta}+[B]$ is nef. 
\end{proof}
%
%
\subsection{Rees valuations}
In order to formulate a version of Proposition~\ref{prop:homogb} for general $\theta$-psh functions, the following notion will be useful.
\begin{defi}\label{defi:ReesR} Given any effective $\R$-divisor $D$ on $X$ with irreducible decomposition $D=\sum_\a c_\a E_\a$ on $X$, we denote by $\Ga_D\subset X^\div_\R$ the set of \emph{Rees valuations} of $D$, defined as the real divisorial valuations $v_\a:=c_\a^{-1}\ord_{E_\a}$. 
\end{defi} 
Note that $v_\a(D)=1$ for all $\a$. We can now state a variant of~\cite[Theorem~6.21]{trivval}: 

\begin{prop}\label{prop:homdom} Pick $\p\in\PSH_\hom(\theta)$, and an effective $\R$-divisor $D$ on $X$. Then
$$
\max_{\Ga_D}\p\le-1\Longleftrightarrow\p+\p_D\in\PSH_\hom(\theta-D). 
$$
\end{prop}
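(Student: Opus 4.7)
The direction $(\Leftarrow)$ is immediate: any function in $\PSH_\hom(\theta-[D])$ is homogeneous, so it attains its maximum $0$ at $v_\triv$ and is $\le0$ on all of $X^\an$; evaluating at $v_\alpha\in\Ga_D$ and using $v_\alpha(D)=1$ gives $\p(v_\alpha)\le-1$.

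For $(\Rightarrow)$ I first treat the Fubini--Study case $\p\in\cH_\hom(L)$ for a $\Q$-line bundle $L$ with $c_1(L)=\theta$. By Proposition~\ref{prop:CarPL}, $\p=\p_B$ for a $\Q$-Cartier, antieffective, relatively semiample $b$-divisor $B$ determined on some smooth birational model $\pi\colon Y_0\to X$. The Rees hypothesis $\p(v_\alpha)\le-1$ translates to $\ord_{E_\alpha}(B_X)\le-c_\alpha$ for every $\alpha$, i.e.\ $B_X+D\le0$ as $\R$-divisors on $X$. Set $B':=B+\overline{D}$, so that $\p_{B'}=\p+\p_D$. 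On $Y_0$ the divisor $B'_{Y_0}=B_{Y_0}+\pi^\star D$ is $\pi$-nef (as $B_{Y_0}$ is $\pi$-semiample and $\pi^\star D$ is $\pi$-numerically trivial), and its pushforward $B_X+D$ is antieffective; the Negativity Lemma (\cite[Proposition~2.12]{BdFF}) thus yields $B'_{Y_0}\le0$. Since $B'$ is Cartier with determination $Y_0$, pulling back gives $B'\le0$ on every higher model. As $\overline{\theta-[D]}+[B']=\overline\theta+[B]$ is nef, Proposition~\ref{prop:homogb} applied to the class $\theta-[D]$ delivers $\p+\p_D=\p_{B'}\in\PSH_\hom(\theta-[D])$.

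For general $\p\in\PSH_\hom(\theta)$ I approximate. Write $\p$ as the limit of a decreasing net $\p_i\in\cH_\hom(L_i)$ with $c_1(L_i)\to\theta$. Because the $\p_i$ lie \emph{above} $\p$, the Rees inequality need not descend to the approximants, so I pre-perturb: for each $\delta\in\Q_{>0}$, the scaled function $(1+\delta)\p\in\PSH_\hom((1+\delta)\theta)$ satisfies the \emph{strict} bound $(1+\delta)\p(v_\alpha)\le-(1+\delta)<-1$, and $(1+\delta)\p_i\in\cH_\hom((1+\delta)L_i)$ still decreases to $(1+\delta)\p$. Since $\Ga_D$ is finite, strictness together with pointwise convergence ensures $(1+\delta)\p_i(v_\alpha)\le-1$ for all large $i$. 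The Fubini--Study case then gives $(1+\delta)\p_i+\p_D\in\PSH_\hom((1+\delta)c_1(L_i)-[D])$, and passing to the decreasing limit in $i$ (the classes $(1+\delta)c_1(L_i)-[D]$ converge to $(1+\delta)\theta-[D]$) places $(1+\delta)\p+\p_D$ in $\PSH_\hom((1+\delta)\theta-[D])$.

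It remains to let $\delta\searrow0$: the family $(1+\delta)\p+\p_D$ \emph{increases} to $\p+\p_D$, while the classes $(1+\delta)\theta-[D]$ converge to $\theta-[D]$. The sum $\p+\p_D$ is already upper semicontinuous, since $\p$ is usc and $v\mapsto v(D)=\p_D(v)$ is continuous on $X^\an$, so no regularization is needed, and the envelope-property closure of $\PSH$ under monotone suprema with convergent classes yields $\p+\p_D\in\PSH_\hom(\theta-[D])$. The step I expect to require the most care is precisely this passage to the limit across a moving family of numerical classes: this is the reason for the two-stage approximation above, first perturbing by $(1+\delta)$ to manufacture the strictness that makes the decreasing Fubini--Study approximation compatible with the Rees inequality, and only then relaxing $\delta$ via the weaker, but available, monotone closure of $\PSH$.
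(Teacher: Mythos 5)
Your proof is correct in outline and differs from the paper's at two points, one genuinely and one cosmetically. In the Fubini--Study case the paper argues with sections: the Rees condition forces $\ord_{E_\a}(s_i)\ge\lceil mc_\a\rceil$, so each $s_i$ factors through the canonical section of $D_m:=m^{-1}\lceil mD\rceil$, giving $\p+\p_{D_m}\in\cH_\hom(L-D_m)$ and then letting $m\to\infty$. Your route through the dictionary of Propositions~\ref{prop:CarPL} and~\ref{prop:homogb} plus the relative Negativity Lemma is a clean substitute: it avoids the rounding $D_m$ entirely, and your inequality $B'_{Y_0}\le 0$ is exactly $B'\le\overline{B'_X}\le 0$ from~\cite[Proposition~2.12]{BdFF}, the same statement the paper uses in the proof of Proposition~\ref{prop:CarPL}. (Do note that the nefness of $\overline\theta+[B]$, which you assert in passing, deserves a word: it follows from Proposition~\ref{prop:homogb} applied to $\p=\p_B$, or directly from~\cite[Lemma~6.34]{trivval}.) In the general case, your scaling of $\p$ by $1+\delta$ plays precisely the role of the paper's scaling of $D$ by $t<1$ (the paper applies the FS step to $tD$, whose Rees valuations are $(tc_\a)^{-1}\ord_{E_\a}$); both create the slack needed for the decreasing FS approximants to inherit the inequality at the finitely many Rees points, so this difference is inessential.

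The one step you state too loosely is the final passage $\delta\searrow 0$. There is no stated ``closure of $\PSH$ under monotone suprema with convergent classes''; the paper manufactures it by fixing $\om\in\Amp(X)$, noting that for the relevant parameters all functions lie in the single space $\PSH_\hom(\theta-D+\om)$, applying the envelope property there, and then letting $\om\to 0$ via~\cite[Theorem~4.5]{trivval}. You need the same device: fix $\om\in\Amp(X)$; for $\delta$ small, $\om-\delta\theta$ is ample, hence $(1+\delta)\p+\p_D\in\PSH_\hom(\theta-[D]+\om)$, and the envelope property applies in that fixed class before shrinking $\om$. Relatedly, your claim that $\p+\p_D$ is usc because $\p_D$ is continuous is not quite right: $\p_D$ is continuous only as a $[0,+\infty]$-valued function, and the pointwise sum is ill-defined (certainly not obviously usc) where $\p_D=+\infty$ and $\p=-\infty$. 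The cleaner way to dispense with the regularization, consistent with how the paper reads such identities, is to work on $X^\div$: all values there are finite, the regularized supremum of an increasing family agrees with the pointwise supremum on $X^\div$ (\cite[Theorem~5.6]{trivval}), and a homogeneous $\theta$-psh function is determined by its restriction to $X^\div$. With these two repairs your argument is complete.
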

Recall that $0\ge-\p_D\in\PSH_\hom([D])$. 

\begin{proof} If $\p+\p_D\in\PSH_\hom(\theta-D)$, then $\p\le-\p_D$, and hence $\max_\Ga\p\le-1$, since $\p_D\equiv 1$ on $\Ga_D$. Conversely, assume $\max_{\Ga_D}\p\le-1$. Consider first the case where $\theta=c_1(L)$ for a $\Q$-line bundle and $\p\in\cH_\hom(L)$. For any $m$ sufficiently divisible we thus have $\p=\tfrac 1m\max_i\log|s_i|$ for a finite set of nonzero section $s_i\in\Hnot(X,mL)$. Using the notation of Definition~\ref{defi:ReesR}, we get for all $i$ and all $\a$ 
$$
c_\a^{-1}\ord_{E_\a}(s_i)=-\log|s_i|(v_\a)\ge m,
$$
and hence $\ord_{E_\a}(s_i)\ge \lceil m c_\a\rceil$. This implies $s_i=s_i' s_{D_m}$ for some $s_i'\in\Hnot(X,m(L-D_m))$, where 
$$
D_m:=m^{-1}\lceil mD\rceil=\sum_\a m^{-1}\lceil mc_\a\rceil E_\a
$$
and $s_{D_m}\in\Hnot(X,D_m)$ is the canonical section. Since $\p_{D_m}=-\log|s_{D_m}|$, we infer
$$
\p+\p_{D_m}=\tfrac 1m\max_i\log|s'_i|\in\cH_\hom(L-D_m)\subset\PSH_\hom(L-D_m). 
$$
When $m\to\infty$, $\p_{D_m}$ decreases to $\p_D$, and $[D_m]\to [D]$ in $\Num(X)$, and we infer $\p+\p_{D}\in\PSH_\hom(L-D)$. 

In the general case, $\p$ can be written as the pointwise limit of a decreasing net $\p_i\in\cH_\hom(L_i)$, where $L_i\in\Pic(X)_\Q$ satisfies that $c_1(L_i)-\theta$ is nef and tends to $0$ (see~\cite[Corollary~6.17]{trivval}). Pick $t\in (0,1)$. For all $i$ large enough and all $\a$, we then have $c_\a^{-1}\p_i(\ord_{E_\a})\le -t$, and hence 
$$
\p_i+t\p_{D}\in\cH_\hom(L_i-t D)\subset\PSH_\hom(L_i-tD)
$$
by the previous step of the proof. Since $\p_i+t\p_{D}$ decreases to $\p+t\p_D$ and $L_i-tD\to\theta-tD$ in $\Num(X)$, we infer $\p+t\p_D\in\PSH_\hom(\theta-t D)$ (see~\cite[Theorem~4.5]{trivval}). Pick any $\om\in\Amp(X)$. Then $\p+t\p_D\in\PSH_\hom(\theta-D+\om)$ for all $t\in(0,1)$ close to $1$, so by the envelope property (see~\cite[Theorem~5.11]{trivval}), we get $\p+\p_D\in\PSH_\hom(\theta-D+\om)$. As this is true for all $\om\in\Amp(X)$, we conclude $\p+\p_D\in\PSH_\hom(\theta-D)$ (again see~\cite[Theorem~4.5]{trivval}).
\end{proof}

%
%
\subsection{Psh functions and families of b-divisors}\label{sec:pshb}
We now extend Proposition~\ref{prop:homogb} to general $\theta$-psh functions. We say that $\f\in\PSH(\theta)$ is of divisorial type if the homogeneous psh function $\hf^{\max}\in\PSH_\hom(\theta)$ is of divisorial type, see~\S\ref{sec:homog}. By~\eqref{equ:hmax}, this is equivalent to $\f(\ord_E)=\sup\f$ for all but finitely many prime divisors $E\subset X$.
\begin{thm}\label{thm:pshb} There is a 1--1 correspondence between: 
\begin{itemize}
\item[(i)] the set of $\theta$-psh functions $\f\in\PSH(\theta)$ of divisorial type; 
\item[(ii)] the set of continuous, concave, decreasing families $(B_\la)_{\la\le\la_{\max}}$ of $b$-divisors, for some $\la_{\max}\in\R$, such that $B_\la\le 0$ and $\overline{\theta}+[B_\la]\in\Numb(X)$ is nef for all $\la\le\la_{\max}$. 
\end{itemize}
The correspondence is given by 
\begin{equation}\label{equ:pshb}
\f=\sup_{\la\le\la_{\max}}\{\p_{B_\la}+\la\},\quad \p_{B_\la}=\hf^\la. 
\end{equation}
In particular, we have $\la_{\max}=\sup\f$ and $\hf^{\max}=B_{\la_{\max}}$.
\end{thm}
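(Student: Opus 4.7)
The strategy is to reduce to the homogeneous setting handled by Proposition~\ref{prop:homogb} by slicing $\f\in\PSH(\theta)$ through the homogeneous decomposition \eqref{equ:hom1}--\eqref{equ:hom2} of~\S\ref{sec:homog}, and to reconstruct $\f$ from its homogeneous slices as a Legendre-type envelope. The first step is a preservation lemma: if $\f$ is of divisorial type then so is every $\hf^\la$, with exceptional set of primes contained in that of $\hf^{\max}$. Indeed, for any prime divisor $E\subset X$ with $\hf^{\max}(\ord_E)=0$, formula~\eqref{equ:hmax} identifies the right-derivative at $t=0$ of the convex, decreasing function $t\mapsto\f(t\ord_E)$ as $0$; convexity then forces $\f(t\ord_E)=\sup\f$ for all $t\ge 0$, and substituting into~\eqref{equ:hom1} gives $\hf^\la(\ord_E)=0$ for every $\la\le\sup\f$.

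For the forward direction (i)$\to$(ii), let $\f$ be of divisorial type with $\la_{\max}:=\sup\f$. The preservation lemma ensures each $\hf^\la\in\PSH_\hom(\theta)$ is of divisorial type, so Proposition~\ref{prop:homogb} yields a unique $B_\la\in\Zunb(X)_\R$ with $B_\la\le 0$, $\overline{\theta}+[B_\la]$ nef, and $\p_{B_\la}=\hf^\la$. The continuity, concavity and monotonicity of the family $(B_\la)_\la$ in $\la$ are inherited from the corresponding properties of $(\hf^\la)_\la$ recalled in~\S\ref{sec:homog}, through the isomorphism of Lemma~\ref{lem:Bdivtype}, which by construction respects pointwise convergence on $X^\div$.

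For the reverse direction (ii)$\to$(i), given such a family $(B_\la)$, set $g_\la:=\p_{B_\la}\in\PSH_\hom(\theta)$ and define $\f\in\PSH(\theta)$ to be the usc regularization of $\sup_{\la\le\la_{\max}}\{g_\la+\la\}$. Since $B_\la\le 0$ forces $g_\la\le 0$, this supremum is bounded above by $\la_{\max}$, so the envelope property applies; evaluating at $v_{\triv}$, where each $g_\la$ vanishes, gives $\sup\f=\f(v_{\triv})=\la_{\max}$.

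The main obstacle is to verify that the two maps are mutually inverse, i.e.\ that $\hf^\la=g_\la$ for the $\f$ just constructed. For $v\in X^\div$, the homogeneity relation $g_\mu(sv)=sg_\mu(v)$, combined with the fact that the usc regularization of a sup of $\theta$-psh functions is invisible on the divisorial locus (where $\theta$-psh functions are determined pointwise), gives $\f(sv)=\sup_\mu\{sg_\mu(v)+\mu\}$. Substituting $t=s^{-1}$ into~\eqref{equ:hom1} then produces
\[
\hf^\la(v)=\inf_{t>0}\sup_{\mu\le\la_{\max}}\{g_\mu(v)+t(\mu-\la)\}.
\]
Setting $\phi(\mu):=g_\mu(v)$, the outer supremum $\psi(t):=\sup_\mu\{\phi(\mu)+t\mu\}$ is the Fenchel conjugate of the concave, continuous, decreasing function $\phi\colon(-\infty,\la_{\max}]\to(-\infty,0]$, and Fenchel--Moreau recovers $\inf_{t>0}\{\psi(t)-t\la\}=\phi(\la)$: the restriction $t>0$ is inessential since $\psi(t)=+\infty$ for $t<0$ by monotonicity of $\phi$, and $\psi$ is continuous at $t=0$. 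Hence $\hf^\la(v)=g_\la(v)$ on $X^\div$, so Lemma~\ref{lem:Bdivtype} recovers $B_\la$ from $\f$; combined with~\eqref{equ:hom2} this closes the bijection and automatically shows that $\f$ is of divisorial type.
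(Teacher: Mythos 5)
Your argument is correct and follows essentially the same route as the paper's proof: slice $\f$ through the homogeneous decomposition, apply Proposition~\ref{prop:homogb} to each slice $\hf^\la$ (your derivative argument for why divisorial type passes to all $\hf^\la$ is a slightly longer version of the paper's one-line observation $0\ge\hf^\la\ge\hf^{\max}$), and in the converse direction build $\f$ as the usc envelope via the envelope property and identify $\p_{B_\la}=\hf^\la$ by Legendre duality. The only real difference is that where the paper cites~\cite[Theorem~6.24]{trivval} for $\la<\la_{\max}$ and then uses continuity at $\la=\la_{\max}$, you carry out the Fenchel--Moreau computation by hand; this is fine, with the small caveat that your identity $\f(sv)=\sup_\mu\{g_\mu(v)s+\mu\}$ is invoked at points $sv\in X^\div_\R$ rather than $X^\div$, which you should justify either by restricting to rational $t$ in~\eqref{equ:hom1} (legitimate since $t\mapsto\f(tv)$ is convex, decreasing, hence continuous) or by noting that negligible sets are pluripolar and therefore disjoint from $X^\lin\supset X^\div_\R$.
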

\begin{proof} Pick a family $(B_\la)_{\la\le\la_{\max}}$ as in (ii). By Proposition~\ref{prop:homogb}, setting $\p_\la:=\p_{B_\la}$ defines a continuous, concave and decreasing family $(\p_\la)_{\la\le\la_{\max}}$ in $\PSH_\hom(\theta)$. Since $\theta$ has the envelope property, the usc upper envelope $\f:=\supstar_{\la\le\la_{\max}}(\p_\la+\la)$ lies in $\PSH(\theta)$. On $X^\div$, $\f$ coincides with $\sup_{\la\le\la_{\max}}(\p_\la+\la)$ (see~\cite[Theorem~5.6]{trivval}). By Legendre duality, we further have $\p_\la=\hf^\la$ for $\la<\la_{\max}$ (see~\cite[Theorem~6.24]{trivval}, and hence also for $\la=\la_{\max}$, by continuity of both sides on $(-\infty,\la_{\max}]$. 

Conversely, pick $\f$ as in (i), so that $\hf^{\max}\in\PSH_\hom(\theta)$ is of divisorial type. For each $\la\le\sup\f$ we then have $0\ge\hf^\la\ge\hf^{\max}$, which shows that $\hf^\la\in\PSH_\hom(\theta)$ is also of divisorial type. By Proposition~\ref{prop:homogb}, we thus have $\hf^\la=\p_{B_\la}$ for a $b$-divisor $B_\la\le 0$ such that $\overline\theta+[B_\la]$ is nef, and the family $(B_\la)_{\la\le\sup\f}$ is concave, decreasing and continuous, since so is $(\hf^\la)_{\la\le\sup\f}$.
\end{proof}
\begin{rmk}\label{rmk:divtype}
  Not every $\theta$-psh function is of divisorial type. For example, assume $\dim X=1$, and pick a sequence $(p_j)$ of closed points on $X$, with corresponding ideals $\fm_j\subset\cO_X$, and a sequence $\e_j$ in $\R_{>0}$ such that $\sum_j \e_j\le\deg\theta$. Then $\f:=\sum_j \e_j\log|\fm_j|\in\PSH(\theta)$, and $-c_j=\f(\ord_{p_j})<\sup\f=0$ for all $j$ (see~\cite[Example~4.13]{trivval}). 
\end{rmk}
%
%
\section{The center of a $\theta$-psh function}\label{sec:centpsh}
In this section we introduce the notion of the center of a $\theta$-psh function. This is a subset of $X$ defined in terms of the locus on $X^{\an}$ where $\f$ is smaller than its maximum.
%
%
\subsection{The center map}
For any $v\in X^\an$, we denote by $c_X(v)\in X$ its center, and by 
$$
Z_X(v):=\overline{\{c_X(v)\}}\subset X
$$
the corresponding subvariety.
The center map $c_X\colon X^\an\to X$ is surjective and anticontinuous, \ie the preimage of a closed subset is open. In particular, any subvariety $Z\subset X$ is of the form $Z=Z_X(v)$ for some $v$; we can simply take $v=\ord_Z$.

More generally, for any subset $S\subset X^\an$ we set 
\begin{equation}\label{equ:ZS}
Z_X(S):=\bigcup_{v\in S} Z_X(v).
\end{equation}
This is smallest subset of $X$ that contains $c_X(S)$ and is closed under specialization. 
%
%
\subsection{The center of a $\theta$-psh function}
We can now introduce
\begin{defi}\label{defi:centpsh} We define the \emph{center} on $X$ of any $\theta$-psh function $\f\in\PSH(\theta)$ as 
\[
  Z_X(\f):=Z_X(\{\f<\sup\f\}).
\]
\end{defi}

\begin{exam}\label{exam:ideal} For any nonzero ideal $\fb\subset\cO_X$, the function   $\p=\log|\fb|$ is $\theta$-psh if $\theta$ is sufficiently ample, and then $Z_X(\f)=V(\fb)$. More generally, if $\f=\sum_i t_i\log|\fb_i|$ with $t_i\in\R_{>0}$ and $\fb_i\subset\cO_X$ a nonzero ideal, then $Z_X(\f)=\bigcup_i V(\fb_i)$.
\end{exam}

Recall that to any $\theta$-psh function $\f\in\PSH(\theta)$ we can associate a homogeneous $\theta$-psh function $\hf^{\max}\in\PSH_\hom(\theta)$, see~\S\ref{sec:homog}.
\begin{lem}\label{lem:centhom}
  For any $\f\in\PSH(\theta)$ we have $\{\f<\sup\f\}=\{\hf^{\max}<0\}$. As a consequence,  $Z_X(\f)=Z_X(\hf^{\max})$. Moreover, the following conditions are equivalent:
  \begin{itemize}
  \item[(i)]
    $\f$ is of divisorial type;
  \item[(ii)]
    $\hf^{\max}$ is of divisorial type;
  \item[(iii)]
     $Z_X(\f)=Z_X(\hf^{\max})$ contains at most finitely many prime divisors $E\subset X$.
  \end{itemize}
\end{lem}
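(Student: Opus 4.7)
For part (a), I would first establish that $\f(v_\triv)=\sup\f$: since $v_\triv$ is a fixed point of the $\R_{>0}$-scaling action, each homogeneous function $\hf^\la$ vanishes there by degree-one homogeneity, so evaluating the decomposition~\eqref{equ:hom2} at $v_\triv$ gives $\f(v_\triv)=\sup_{\la\le\sup\f}\la=\sup\f$. Fix now $v\in X^\an$ and set $g(t):=\f(tv)$, which is convex and decreasing on $(0,\infty)$ with $g\le\sup\f$. The slope $s(t):=t^{-1}(g(t)-\sup\f)$ is then nonpositive and increasing in $t$ by convexity, so by~\eqref{equ:hmax}
\[
\hf^{\max}(v)=\lim_{t\to 0_+}s(t)=\inf_{t>0}s(t).
\]
Hence $\hf^{\max}(v)=0$ iff $s\equiv 0$ on $(0,\infty)$ iff $g\equiv\sup\f$ on $(0,\infty)$. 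By monotonicity, $g(1)=\sup\f$ forces $g\equiv\sup\f$ on $(0,1]$, and convexity then extends this to all of $(0,\infty)$; the converse is trivial. Thus $\hf^{\max}(v)=0\Leftrightarrow\f(v)=\sup\f$, and contraposition gives~(a).

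Part (b) is then immediate: $\hf^{\max}\le 0$ together with $\hf^{\max}(v_\triv)=0$ forces $\sup\hf^{\max}=0$, so using (a),
\[
Z_X(\hf^{\max})=Z_X(\{\hf^{\max}<0\})=Z_X(\{\f<\sup\f\})=Z_X(\f).
\]

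For part (c), the equivalence (i)$\Leftrightarrow$(ii) is the definition of ``divisorial type'' for a $\theta$-psh function. To establish (ii)$\Leftrightarrow$(iii), I would prove that for every prime divisor $E\subset X$,
\[
E\subset Z_X(\hf^{\max})\Longleftrightarrow\hf^{\max}(\ord_E)<0,
\]
from which the equivalence of the two finiteness statements is immediate. The direction ``$\Leftarrow$'' follows from $Z_X(\ord_E)=E$. For ``$\Rightarrow$'', pick $v\in X^\an$ with $\hf^{\max}(v)<0$ and $E\subset Z_X(v)$; then $Z_X(v)$ is an irreducible closed subvariety of $X$ containing the prime divisor $E$, hence equals $X$ or $E$. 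The case $Z_X(v)=X$ forces $v=v_\triv$, where $\hf^{\max}$ vanishes, contradicting the assumption. Otherwise $c_X(v)=\eta_E$, and $v$ is either a valuation on $k(X)$ dominating the DVR $\cO_{X,\eta_E}$, or the trivial semivaluation $v_{E,\triv}$ on $E$. In the first sub-case, writing $\pi$ for a local equation of $E$ at $\eta_E$, every $f\in\cO_{X,\eta_E}$ factors as $u\pi^n$ with $u$ a unit and $n=\ord_E(f)$, giving $v(f)=v(\pi)\ord_E(f)$; extending to $k(X)$ we get $v=v(\pi)\ord_E$, and homogeneity of $\hf^{\max}$ yields $\hf^{\max}(\ord_E)=v(\pi)^{-1}\hf^{\max}(v)<0$. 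In the second sub-case, $t\ord_E\to v_{E,\triv}$ as $t\to+\infty$ in $X^\an$, and upper semicontinuity of $\hf^{\max}$ gives $\limsup_{t\to\infty}t\hf^{\max}(\ord_E)\le\hf^{\max}(v_{E,\triv})<0$, which forces $\hf^{\max}(\ord_E)<0$.

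The main obstacle is the classification of semivaluations $v\in X^\an$ with $c_X(v)=\eta_E$: the key is that smoothness of $X$ makes $\cO_{X,\eta_E}$ a DVR, so any genuine valuation on $k(X)$ dominating it is a positive scalar multiple of $\ord_E$, the only other possibility being the trivial semivaluation on $E$; the latter case is handled by upper semicontinuity of $\hf^{\max}$ together with the convergence $t\ord_E\to v_{E,\triv}$.
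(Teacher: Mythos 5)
Your argument is correct and follows the same route as the paper: the first assertion via convexity and monotonicity of $t\mapsto\f(tv)$ together with~\eqref{equ:hmax}, and (ii)$\Leftrightarrow$(iii) via the observation that a prime divisor $E$ lies in $Z_X(\hf^{\max})$ iff $\hf^{\max}(\ord_E)<0$. The paper simply declares these steps clear, whereas you supply the details (the slope monotonicity, and the classification of semivaluations centered at $\eta_E$ into multiples of $\ord_E$ or $v_{E,\triv}$, the latter handled by upper semicontinuity along the ray $t\ord_E\to v_{E,\triv}$); these verifications are accurate and consistent with the framework of the paper.
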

\begin{proof}
  Pick any $v\in X^\an$. By~\eqref{equ:hmax} and the fact that $t\mapsto\f(tv)$ is decreasing and convex, it follows that $\f(v)<\sup\f$ iff $\hf^{\max}(v)<0$. Thus $Z_X(\f)=Z_X(\hf^{\max})$ since $\sup\hf^{\max}=0$.

  Now the equivalence (i)$\Leftrightarrow$(ii) is definitional, and~(ii)$\Leftrightarrow$(iii) is clear since a prime divisor $E\subset X$ belongs to $Z_X(\hf^{\max})$ iff $\hf^{\max}(\ord_E)<0$. 
\end{proof}
Together with Example~\ref{exam:PLhom}, Lemma~\ref{lem:centhom} implies
\begin{cor}\label{cor:centflag}
  If $\f=\f_{\fa}$ for a flag ideal $\fa=\sum_{\la\in\Z}\fa_\la\unipar^{-\la}$ on $X\times\A^1$, then $Z_X(\f_\fa)=V(\fa_{\la_{\max}})$, where $\la_{\max}:=\max\{\la\in\Z\mid \fa_\la\ne 0\}$.
  \end{cor}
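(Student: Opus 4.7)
The plan is to string together three ingredients already established earlier in the section: Example~\ref{exam:PLhom}, Lemma~\ref{lem:centhom}, and Example~\ref{exam:ideal}. Indeed, the statement reduces to a pure bookkeeping exercise once these three facts are assembled in the correct order.

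First, I would apply Example~\ref{exam:PLhom} to the flag ideal $\fa=\sum_\la \fa_\la\unipar^{-\la}$ to identify the homogeneous part at the top: setting $\la_{\max}=\max\{\la\in\Z\mid\fa_\la\neq 0\}$, we have
\[
\widehat{\f_\fa}^{\,\max}=\log|\fa_{\la_{\max}}|,
\]
where the right-hand side is well defined because $\fa_{\la_{\max}}\subset\cO_X$ is nonzero by the very choice of $\la_{\max}$. Next I would invoke Lemma~\ref{lem:centhom}, which asserts that the center of a $\theta$-psh function is computed by its homogeneous part, i.e.
\[
Z_X(\f_\fa)=Z_X\!\left(\widehat{\f_\fa}^{\,\max}\right)=Z_X(\log|\fa_{\la_{\max}}|).
\]

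Finally, Example~\ref{exam:ideal} evaluates the center of the logarithm of a nonzero ideal: $Z_X(\log|\fb|)=V(\fb)$ for any nonzero $\fb\subset\cO_X$. Applying this to $\fb=\fa_{\la_{\max}}$ yields
\[
Z_X(\f_\fa)=V(\fa_{\la_{\max}}),
\]
which is the desired equality.

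I do not anticipate any real obstacle: the work has already been done in Example~\ref{exam:PLhom} (which pins down $\widehat{\f_\fa}^{\,\max}$) and Lemma~\ref{lem:centhom} (which reduces the center of $\f$ to that of its homogeneous part). If anything, the only point one might want to double-check in passing is that $\fa_{\la_{\max}}\ne 0$, so that Example~\ref{exam:ideal} genuinely applies; but this is immediate from the definition of $\la_{\max}$.
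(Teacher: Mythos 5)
Your proposal is correct and follows exactly the route the paper intends: the corollary is stated as an immediate consequence of Example~\ref{exam:PLhom} (identifying $\hf^{\max}=\log|\fa_{\la_{\max}}|$) and Lemma~\ref{lem:centhom} (reducing $Z_X(\f_\fa)$ to $Z_X(\hf^{\max})$), with Example~\ref{exam:ideal} supplying the final identification $Z_X(\log|\fa_{\la_{\max}}|)=V(\fa_{\la_{\max}})$. Your extra check that $\fa_{\la_{\max}}\ne 0$ is a sensible, if minor, point of care.
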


\begin{thm}\label{thm:centpsh1}
  For any $\f\in\PSH(\theta)$, the center $Z_X(\f)$ is a strict subset of $X$, and an at most countable union of (strict) subvarieties. Moreover, we have 
  $c_X^{-1}(Z_X(\f))=\{\f<\sup\f\}$. 
\end{thm}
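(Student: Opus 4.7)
The plan is to reduce via Lemma~\ref{lem:centhom} to the case of a homogeneous $\theta$-psh function, and then to approximate by homogeneous Fubini--Study functions, for which the three assertions become transparent. Concretely, Lemma~\ref{lem:centhom} gives $\{\f<\sup\f\}=\{\hf^{\max}<0\}$ and $Z_X(\f)=Z_X(\hf^{\max})$, so I may replace $\f$ by $\psi:=\hf^{\max}\in\PSH_\hom(\theta)$, which satisfies $\sup\psi=0$.

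By~\cite[Corollary~6.17]{trivval}, write $\psi$ as the pointwise limit of a decreasing net of homogeneous Fubini--Study functions $\psi_j=m_j^{-1}\max_i\log|s_{j,i}|$, for nonzero sections $s_{j,i}$ of $\Q$-line bundles $m_j L_j$ with $c_1(L_j)\to\theta$. Since $\log|s_{j,i}|(v)=-v(s_{j,i})\le 0$, with equality iff $s_{j,i}$ does not vanish at $c_X(v)$, one verifies directly that
\[
\{\psi_j<0\}=c_X^{-1}(F_j),\qquad F_j:=\bigcap_i V(s_{j,i}),
\]
a Zariski closed strict subset of $X$ (strictness because $\psi_j(v_\triv)=0$, and $v_\triv$ has center the generic point of $X$).

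Since $\psi_j\searrow\psi$ pointwise with $\psi_j\le 0$, I obtain $\{\psi<0\}=\bigcup_j\{\psi_j<0\}=c_X^{-1}(F)$, where $F:=\bigcup_j F_j$ is a union of strict Zariski closed subsets and hence closed under specialization. The identity $Z_X(\psi)=F$ then follows in two steps: for $v\in\{\psi<0\}$, $c_X(v)\in F$ and specialization-closedness of $F$ yields $\overline{\{c_X(v)\}}\subset F$, whence $Z_X(\psi)\subset F$; conversely, any $\xi\in F$ is the center of the trivial valuation on $\overline{\{\xi\}}$, which lies in $c_X^{-1}(F)=\{\psi<0\}$, so $\overline{\{\xi\}}\subset Z_X(\psi)$. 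Combined, $c_X^{-1}(Z_X(\f))=c_X^{-1}(F)=\{\f<\sup\f\}$, and $Z_X(\f)\ne X$ because the generic point of $X$ lies outside $F$.

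The main obstacle is the countability claim: a priori the approximating net $(\psi_j)$ need not be countable, yet the theorem asserts $Z_X(\f)$ is an at most \emph{countable} union of subvarieties. I would extract a countable cofinal subsequence from $(\psi_j)$ by a diagonal argument---exploiting that $\psi$ is determined by its values on $X^\div$ and that countably many Fubini--Study functions suffice to recover it there---so that $F=\bigcup_j F_j$ becomes a countable union of strict irreducible subvarieties of $X$.
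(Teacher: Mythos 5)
Most of your argument is correct and runs parallel to the paper's: the reduction to $\psi=\hf^{\max}$ via Lemma~\ref{lem:centhom}, the approximation by homogeneous Fubini--Study functions, and the resulting description of $Z_X(\f)$ as a union of the zero loci $F_j=\bigcap_i V(s_{j,i})$. Your observation that each sublevel set is a full preimage, $\{\psi_j<0\}=c_X^{-1}(F_j)$, and hence $\{\psi<0\}=c_X^{-1}\bigl(\bigcup_j F_j\bigr)$, is a nice touch: it yields the identity $c_X^{-1}(Z_X(\f))=\{\f<\sup\f\}$ rather directly, whereas the paper obtains it from the scaling limit $\f(tv)\to\sup_{Z^\an}\f$ as $t\to+\infty$ (\cite[Proposition~4.12]{trivval}) and the characterization of $\{\f<\sup\f\}$ through $\f\equiv-\infty$ on $Z^\an$. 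That part of your proposal is fine.

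The genuine gap is the countability step. \cite[Corollary~6.17]{trivval} only provides a decreasing \emph{net} $(\psi_j)$, and your proposed extraction of a countable cofinal subfamily is not justified: $X^\an$ is not second countable (for uncountable $k$), so no Choquet-type argument applies, and an increasing directed family of closed subsets of a Noetherian space need not be covered by countably many of its members (think of an increasing net of finite sets of closed points exhausting an uncountable subset of a curve); directedness of $(F_j)$ therefore does not help. The parenthetical claim that ``countably many Fubini--Study functions suffice to recover $\psi$ on $X^\div$'' is precisely the nontrivial point at issue ($X^\div$ is uncountable), so the diagonal argument is circular as stated. What you actually need is that a homogeneous $\theta$-psh function is the limit of a decreasing \emph{sequence} $\psi_m\in\cH_\hom(L_m)$ with $c_1(L_m)\to\theta$; this is exactly \cite[Remark~6.18]{trivval}, which is what the paper invokes. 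If you replace your diagonal argument by that citation, the rest of your proof goes through unchanged.
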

\begin{proof}
  Note that $Z_X(\f)$ does not contain the generic point of $X$, so $Z_X(\f)\ne X$. Also note that by Lemma~\ref{lem:centhom} we may assume that $\f$ is homogeneous.

  If $\f\in\cH_\hom(L)$ for a $\Q$-line bundle $L$, so that $\f=\tfrac 1m\max_i\log|s_i|$ for a finite set of nonzero sections $s_i\in\Hnot(X,mL)$, then $Z_X(\f)=\bigcap_i(s_i=0)$, which is Zariski closed. In general, $\f$ can be written as the limit of a decreasing sequence $\f_m\in\cH_\hom(L_m)$ with $L_m\in\Pic(X)_\Q$ such that $c_1(L_m)\to\theta$ (see~\cite[Remark~6.18]{trivval}). For any $v\in X^\div$ we then have 
  \[
  c_X(v)\in Z_X(\f)\Leftrightarrow\f(v)<0\Leftrightarrow\f_m(v)<0\ \text{for some $m$},
\]
\ie $Z_X(\f)=\bigcup_m Z_X(\f_m)$, an at most countable union of strict subvarieties.

Next pick $v\in X^\an$, and set $Z=Z_X(v)$. By~\cite[Proposition~4.12]{trivval}, $\f(tv)=t\f(v)$ converges to $\f(v_{Z,\triv})=\sup_{Z^\an}\f$ as $t\to+\infty$, and hence $\f(v)<0\Leftrightarrow\f\equiv-\infty$ on $Z^\an$. By definition of the center, if $c_X(v)$ lies in $Z_X(\f)$, then we can find $w\in X^\an$ such that $\f(w)<0$ and $c_X(v)\in Z_X(w)$, \ie $Z\subset Z_X(w)$. Then $\f\equiv-\infty$ on $Z_X(w)\supset Z$, which yields $\f(v)<0$. Conversely, assume $\f(v)<0$, and hence $\f\equiv-\infty$ on $Z^\an$. We can find $w\in X^\div$ such that $Z=Z_X(w)$. Since $\f\equiv-\infty$ on $Z^\an=Z_X(w)^\an$,  we get $\f(w)<0$, and hence $c_X(v)\in Z_X(w)\subset Z_X(\f)$. 
\end{proof}  
For later use we record
\begin{lem}\label{lem:centsum}
  If $\f_i\in\PSH(\theta_i)$, $i=1,2$, then $Z_X(\f_1+\f_2)=Z_X(\f_1)\cup Z_X(\f_2)$.
\end{lem}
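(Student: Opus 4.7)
The plan is to reduce the statement to a pointwise identity of sublevel sets on $X^{\an}$, and then pass to the center using the evident fact that $Z_X$ commutes with unions of subsets.

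The key observation is that for any $\p \in \PSH(\theta)$, one has $\sup \p = \p(v_\triv)$. This is because, as recalled after~\eqref{equ:hmax}, the function $t \mapsto \p(tv)$ on $(0,\infty)$ is decreasing and converges to $\p(v_\triv)$ as $t \to 0^+$, so $\p(v) \le \p(v_\triv)$ for every $v \in X^{\an}$, and $\p(v_\triv)$ is finite by the finiteness of psh functions on $X^\div$. Applying this to $\f_1$, $\f_2$, and $\f_1 + \f_2 \in \PSH(\theta_1+\theta_2)$, all three suprema are attained at $v_\triv$, so
\[
\sup(\f_1+\f_2) = \sup \f_1 + \sup \f_2.
\]

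With this in hand, I claim that
\[
\{\f_1+\f_2 < \sup(\f_1+\f_2)\} = \{\f_1<\sup\f_1\}\cup\{\f_2<\sup\f_2\}.
\]
The inclusion $\supset$ is immediate from $\f_i(v) \le \sup \f_i$. For $\subset$, if $v$ lies in the left-hand side but in neither set on the right, then $\f_i(v) = \sup \f_i$ for $i=1,2$, whence $(\f_1+\f_2)(v) = \sup \f_1 + \sup \f_2 = \sup(\f_1+\f_2)$, a contradiction.

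Finally, applying $Z_X$ to both sides and using that $Z_X(S_1 \cup S_2) = Z_X(S_1) \cup Z_X(S_2)$, which follows at once from the definition~\eqref{equ:ZS}, yields
\[
Z_X(\f_1+\f_2) = Z_X(\f_1) \cup Z_X(\f_2),
\]
as desired. There is no real obstacle; the only point that needs verification is that $\sup$ behaves additively on $\PSH(\theta)$, which rests on the fact that every $\theta$-psh function attains its maximum at the trivial valuation.
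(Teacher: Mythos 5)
Your proof is correct. Note that the paper actually records this lemma without proof, so there is nothing to compare it against line by line; but your argument is the natural one, and its only nontrivial ingredient --- that every $\theta$-psh function attains its supremum at $v_\triv$, so that $\sup(\f_1+\f_2)=\sup\f_1+\sup\f_2$ --- is exactly the fact the paper states in \S\ref{sec:homog} ($\sup\f=\f(v_\triv)$, via \cite[Proposition~4.12]{trivval}), together with $\PSH(\theta_1)+\PSH(\theta_2)\subset\PSH(\theta_1+\theta_2)$. An equivalent route, more in the spirit of the surrounding text, would be to use Lemma~\ref{lem:centhom} and the additivity of $\f\mapsto\hf^{\max}$ coming from~\eqref{equ:hmax}, reducing to the identity $\{\p_1+\p_2<0\}=\{\p_1<0\}\cup\{\p_2<0\}$ for nonpositive homogeneous functions; this buys nothing essential, and your direct manipulation of sublevel sets, followed by the observation that $Z_X$ commutes with unions by~\eqref{equ:ZS}, is perfectly adequate.
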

%
%
\subsection{Centers of PL functions}
%
%
The following result will play a crucial role in what follows.
\begin{lem}\label{lem:PLcent} If $\f\in\PSH(\theta)$ lies in $\RPL^+(X)$ (resp.~$\RPL(X)$), then $Z_X(\f)$ is Zariski closed (resp.~not Zariski dense) in $X$. 
\end{lem}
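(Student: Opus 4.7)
My plan is to prove the two claims separately. For the first, assume $\f\in\RPL^+(X)\cap\PSH(\theta)$. By Lemma~\ref{lem:RPL}, one can write $\f=\max_i\{\psi_i+\lambda_i\}$ with $\psi_i\in\PL^+_\hom(X)_\R$ and $\lambda_i\in\R$, each $\psi_i=\sum_k t_{ik}\log|\fb_{ik}|$ being a positive $\R$-linear combination of functions $\log|\fb|$ for nonzero ideals $\fb\subset\cO_X$. Since $\log|\fb|\le 0$ with $\log|\fb|(v_\triv)=0$, the same holds for each $\psi_i$, so $\sup\f=\f(v_\triv)=\max_i\lambda_i$.

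Setting $I:=\{i:\lambda_i=\sup\f\}$, I would observe that $\f(v)<\sup\f$ is equivalent to $\psi_i(v)<0$ for all $i\in I$; the corresponding inequality is automatic for $i\notin I$, since $\psi_i\le 0$ and $\lambda_i<\sup\f$. For $i\in I$, the condition $\psi_i(v)<0$ amounts to $v(\fb_{ik})>0$ for some $k$, that is, $c_X(v)\in V_i$, where $V_i:=\bigcup_k V(\fb_{ik})$ is Zariski closed---this uses the standard identification that for an ideal $\fb\subset\cO_X$ and $v\in X^\an$, $v(\fb)>0$ iff $c_X(v)\in V(\fb)$. Therefore
\[
\{\f<\sup\f\}=c_X^{-1}(W),\quad W:=\bigcap_{i\in I} V_i,
\]
and since $W$ is Zariski closed, hence stable under specialization, I conclude $Z_X(\f)=W$ is Zariski closed.

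For the second claim, I would write $\f=u-w$ with $u,w\in\RPL^+(X)$. By Example~\ref{exam:ideal}, each $\log|\fb|$ lies in $\PSH(A)$ for $A$ sufficiently ample, so taking a large enough multiple of any fixed ample class yields a class $\theta_w$ with $w\in\RPL^+(X)\cap\PSH(\theta_w)$. Then $u=\f+w\in\RPL^+(X)\cap\PSH(\theta+\theta_w)$, and the first part together with Theorem~\ref{thm:centpsh1} shows that $Z_X(u)$ is a proper Zariski closed subset of $X$. Finally, Lemma~\ref{lem:centsum} yields $Z_X(u)=Z_X(\f)\cup Z_X(w)$, hence $Z_X(\f)\subseteq Z_X(u)\subsetneq X$, so $Z_X(\f)$ is not Zariski dense. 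The main obstacle here is quite mild: it is just the bookkeeping in the first part, after which the second part reduces immediately by passing to the psh class $\theta+\theta_w$ and invoking additivity of the center.
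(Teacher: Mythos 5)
Your proof is correct, and its first half is essentially the paper's own argument: both start from Lemma~\ref{lem:RPL} to write $\f=\max_i\{\p_i+\la_i\}$ with $\p_i\in\PL^+_\hom(X)_\R$, and both identify $\{\f<\sup\f\}$ with the locus where every piece attaining the top constant is strictly negative; the paper packages this through the homogenization $\hf^{\max}$ (Lemma~\ref{lem:centhom}, Example~\ref{exam:PLhom}) and Example~\ref{exam:ideal}, while you compute directly, ending with $Z_X(\f)=W=\bigcap_{\la_i=\sup\f}\bigcup_k V(\fb_{ik})$, which is exactly the paper's $\bigcap_{\la_i=\sup\f}Z_X(\p_i)$. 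In the second half you take a mildly different route: the paper writes $\f=\f_1-\f_2$ with $\f_1,\f_2\in\RPL^+(X)$, makes both $\theta$-psh for a sufficiently ample class, and uses the identity $\hf^{\max}=\hf_1^{\max}-\hf_2^{\max}$ to get $Z_X(\f)\subset Z_X(\f_1)\cup Z_X(\f_2)$; you instead add $w$ back, apply the first half to $u=\f+w\in\RPL^+(X)\cap\PSH(\theta+\theta_w)$, and invoke the additivity of centers (Lemma~\ref{lem:centsum}, recorded in the paper just before this statement) together with Theorem~\ref{thm:centpsh1} to conclude $Z_X(\f)\subset Z_X(u)\subsetneq X$. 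Both reductions rest on the same auxiliary observation that functions in $\RPL^+(X)$ are psh for a sufficiently ample class; yours bypasses the homogeneous decomposition machinery entirely at the modest cost of relying on Lemma~\ref{lem:centsum}, so the two arguments are of comparable length and the difference is one of bookkeeping rather than substance.
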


\begin{proof} Assume first $\f\in\RPL^+(X)$, and write $\f=\max_i\{\p_i+\la_i\}$ for a finite set $\p_i\in\PL^+_\hom(X)_\R$ and $\la_i\in\R$. As in Example~\ref{exam:PLhom}, we then have $\max_i\la_i=\sup\f$, and $\hf^{\max}=\max_{\la_i=\sup\f}\p_i$. This shows that 
$$
Z_X(\f)=Z_X(\hf^{\max})=\bigcap_{\la_i=\sup\f} Z_X(\p_i)
$$ 
is Zariski closed (see Example~\ref{exam:ideal}). Assume next $\f\in\RPL(X)$ and write $\f=\f_1-\f_2$ with $\f_1,\f_2\in\RPL^+(X)$. After replacing $\theta$ with a sufficiently ample class, we may assume that $\f_1,\f_2$ are $\theta$-psh. By~\eqref{equ:hmax} we have $\hf^{\max}=\hf_1^{\max}-\hf_2^{\max}$, and hence 
$$
Z_X(\f)=Z_X(\hf^{\max})\subset Z_X(\hf^{\max}_1)\cup Z_X(\hf^{\max}_2)=Z_X(\f_1)\cup Z_X(\f_2),
$$
which cannot be Zariski dense, since $Z_X(\f_1)$ and $Z_X(\f_2)$ are both Zariski closed strict subsets by the first part of the proof. 
\end{proof}
%
%
\section{Extremal functions and minimal vanishing orders}\label{sec:extremal}
Next we define a trivially valued analogue of an important construction in the complex analytic case.
%
%
\subsection{Extremal functions}
For any $\theta\in\Num(X)$, we define the \emph{extremal function} $V_\theta\colon X^\an\to [-\infty,0]$ as the pointwise envelope
\begin{equation}\label{equ:Vtheta}
V_\theta:=\sup\left\{\f\in\PSH(\theta)\mid\f\le 0\right\}.
\end{equation}

\begin{prop}\label{prop:extfun} For any $\theta\in\Num(X)$ we have 
\begin{align*}
\theta\in\Psef(X) & \Rightarrow V_\theta\in\PSH_\hom(\theta);\\
\theta\notin\Psef(X) & \Rightarrow V_\theta\equiv-\infty;\\
\theta\in\Nef(X) & \Leftrightarrow V_\theta\equiv 0.
\end{align*}
In particular, $\PSH(\theta)$ is nonempty iff $\theta$ is pseudoeffective. For any $\om\in\Amp(X)$, we further have
\begin{equation}\label{equ:Vdecr}
V_{\theta+\e\om}\searrow V_\theta\text{ as }\e\searrow 0. 
\end{equation}
\end{prop}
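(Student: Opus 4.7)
My plan is to isolate a single preliminary fact: \emph{whenever $\PSH(\theta)\ne\emptyset$, $V_\theta$ is automatically a homogeneous $\theta$-psh function with $V_\theta(v_\triv)=0$}. Once this is in hand, all three assertions reduce to characterizing the non-emptiness of $\PSH(\theta)$, which I would handle by approximating $\theta$ from above by big classes $\theta+\e\om$.

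For the preliminary fact: given any $\f\in\PSH(\theta)$, the translate $\f-\sup\f$ remains in $\PSH(\theta)$, is $\le 0$, and attains $0$ at $v_\triv$ (using that $\sup\f=\f(v_\triv)$ for any $\theta$-psh function); this yields $V_\theta(v_\triv)=0$, and hence $\sup V_\theta=0$. The set $\{\p\in\PSH(\theta):\p\le 0\}$ is stable under the $\R_{>0}$-action $\p\mapsto t\cdot\p$, so its pointwise envelope $V_\theta$ is homogeneous. The envelope property then places the usc regularization $V_\theta^\star$ in $\PSH(\theta)$, and since $V_\theta^\star\le 0$ it is dominated by $V_\theta$; together with $V_\theta\le V_\theta^\star$, this forces $V_\theta=V_\theta^\star\in\PSH_\hom(\theta)$.

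With this in hand: if $\theta\notin\Psef(X)$, then Lemma~\ref{lem:pshpos}(i) gives $\PSH(\theta)=\emptyset$ and hence $V_\theta\equiv-\infty$. If $\theta\in\Psef(X)$, fix $\om\in\Amp(X)$; each $\theta+\e\om$ is big, so by Lemma~\ref{lem:pshpos}(iii) and the preliminary fact, $V_{\theta+\e\om}\in\PSH_\hom(\theta+\e\om)$ with $V_{\theta+\e\om}(v_\triv)=0$. The inclusion $\PSH(\theta+\e'\om)\subset\PSH(\theta+\e\om)$ for $0<\e'<\e$ makes the family decreasing in $\e$, so $V:=\lim_{\e\searrow 0}V_{\theta+\e\om}$ exists pointwise with $V(v_\triv)=0$; in particular $V\not\equiv-\infty$. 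Stability of $\PSH$ under decreasing limits with classes converging in $\Num(X)$ (cf.~\cite[Theorem~4.5]{trivval}) then places $V$ in $\PSH(\theta)$, so $\PSH(\theta)\ne\emptyset$ and the preliminary fact yields $V_\theta\in\PSH_\hom(\theta)$. Finally, $V_\theta\le V_{\theta+\e\om}$ for each $\e$ (from $\PSH(\theta)\subset\PSH(\theta+\e\om)$) gives $V_\theta\le V$, while $V\in\PSH(\theta)\cap\{\le 0\}$ is a competitor in the sup defining $V_\theta$, giving $V\le V_\theta$; hence $V=V_\theta$, which is precisely the monotone convergence~\eqref{equ:Vdecr}. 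The nef equivalence then falls out of Lemma~\ref{lem:pshpos}(ii): $V_\theta\equiv 0\Leftrightarrow 0\in\PSH(\theta)\Leftrightarrow\theta\in\Nef(X)$.

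\textbf{Main obstacle.} The critical step is verifying $V\not\equiv-\infty$, i.e., upgrading $\PSH(\theta)\ne\emptyset$ from big $\theta$ to all psef $\theta$. The indispensable input is the identity $\sup\f=\f(v_\triv)$, which permits a uniform normalization $V_{\theta+\e\om}(v_\triv)=0$; without it, the pointwise decreasing limit could collapse to $-\infty$ and the stability theorem would be inapplicable. Everything else amounts to a routine envelope-and-monotonicity argument.
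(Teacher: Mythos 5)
Your proof is correct and follows essentially the same route as the paper: homogeneity of $V_\theta$ from scaling invariance of the candidate set, the envelope property to identify $V_\theta$ with its usc regularization, and the decreasing approximation $V_{\theta+\e\om}\searrow V_\theta$ via the inclusions $\PSH(\theta)\subset\PSH(\theta+\e\om)$. Your only (welcome) addition is making explicit the normalization $V_{\theta+\e\om}(v_\triv)=0$, which the paper leaves implicit when concluding that the decreasing limit is not identically $-\infty$.
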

\begin{proof} Since the action $(t,\f)\mapsto t\cdot\f$ of $\R_{>0}$ preserves the set of candidate functions $\f$ in~\eqref{equ:Vtheta}, $V_\theta$ is necessarily fixed by the action, and hence homogeneous. If $\theta$ is not psef, then $\PSH(\theta)$ is empty (see Lemma~\ref{lem:pshpos}), and hence $V_\theta\equiv-\infty$. By Lemma~\ref{lem:pshpos}, we also have $V_\theta\equiv 0$ iff $\theta$ is nef. 

Next, assume $\theta\in\Bg(X)$. Then $\PSH(\theta)$ is non-empty (see Lemma~\ref{lem:pshpos}), and the envelope property implies that $V_\theta^\star$ is $\theta$-psh and nonpositive. It is thus a candidate in~\eqref{equ:Vtheta}, and hence $V_\theta^\star\le V_\theta$, which shows that $V_\theta^\star=V_\theta$ is $\theta$-psh. 

Assume now $\theta\in\Psef(X)$, and pick $\om\in\Amp(X)$. For each $\e>0$, the previous step yields $V_\e:=V_{\theta+\e\om}\in\PSH_\hom(\theta+\e\om)$. For $0<\e<\d$ we further have $\PSH(\theta)\subset\PSH(\theta+\e\om)\subset\PSH(\theta+\d\om)$, and hence $V_\d\ge V_\e\ge V_\theta$. Set $V:=\lim_\e V_\e$. For any $\d>0$ fixed, we have $V_\e\in\PSH_\hom(\theta+\d\om)$ for $\e\le\d$, and $V_\e\searrow V$ as $\e\to 0$. Thus $V\in\PSH_\hom(\theta+\d\om)$ for all $\d>0$, and hence $V\in\PSH_\hom(\theta)$. Since $V$ is a candidate in~\eqref{equ:Vtheta}, we get $V\le V_\theta$, and hence $V_\theta=V=\lim_\e V_\e$. This proves that $V_\theta$ is $\theta$-psh, as well as~\eqref{equ:Vdecr}. 
\end{proof}
%
%
\subsection{Minimal vanishing orders}
For $\theta\in\Psef(X)$, the function $V_\theta\in\PSH_\hom(\theta)$ is uniquely determined by its restriction to $X^\div$, where it is furthermore finite valued. For any $v\in X^\div$ we set 
\begin{equation}\label{equ:vV}
v(\theta):=-V_\theta(v)=\inf\{-\f(v)\mid\f\in\PSH(\theta),\,\f\le 0\}\in\R_{\ge 0}. 
\end{equation}
Note that 
\begin{equation}\label{equ:vpsef}
v(\theta)=\sup_{\e>0} v(\theta+\e\om)
\end{equation}
for any $\om\in\Amp(X)$, by~\eqref{equ:Vdecr}. As we next show, these invariants coincide with the minimal/asymptotic vanishing orders studied in~\cite{Nak,Bou,ELMNP}.  

\begin{prop}\label{prop:Vmin} Pick $v\in X^\div$. Then:  
\begin{itemize}
\item[(i)] the function $\theta\mapsto v(\theta)$ is homogeneous, convex and lsc on $\Psef(X)$; in particular, it is continuous on $\Bg(X)$; 
\item[(ii)] for any $\theta\in\Psef(X)$ we have 
\begin{equation}\label{equ:vnum}
v(\theta)\le\inf\left\{v(D)\mid D\equiv\theta\text{ effective }\R\text{-divisor}\right\},
\end{equation}
and equality holds when $\theta$ is big. 
\end{itemize}
\end{prop}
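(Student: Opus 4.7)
The plan for (i) is to handle homogeneity, convexity, and semicontinuity in turn. For homogeneity, rescaling by $t>0$ carries $\cH^\gf(L)$ to $\cH^\gf(tL)$ (after rescaling the denominator $m$ appropriately), extends by decreasing limits to a bijection $\PSH(\theta)\simto\PSH(t\theta)$, $\f\mapsto t\f$, that preserves the nonpositivity condition; taking the envelope in~\eqref{equ:Vtheta} yields $V_{t\theta}=tV_\theta$ and hence $v(t\theta)=tv(\theta)$. For convexity, $\PSH(\cdot)$ is additive in the class (sum of approximating FS sequences), so given $\f_i\in\PSH(\theta_i)$ nonpositive, the convex combination $(1-s)\f_0+s\f_1$ lies in $\PSH((1-s)\theta_0+s\theta_1)$ and is nonpositive; taking the sup gives $V_{(1-s)\theta_0+s\theta_1}\ge(1-s)V_{\theta_0}+sV_{\theta_1}$, and evaluating at $v$ produces the convexity of $\theta\mapsto v(\theta)$ on $\Psef(X)$.

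A convex function which is finite on the open convex set $\Bg(X)\subset\Num(X)$ is automatically continuous there, yielding the continuity on the big cone. For lower semicontinuity on all of $\Psef(X)$, I would invoke~\eqref{equ:vpsef}: for a fixed $\om\in\Amp(X)$, $v(\theta)=\sup_{\e>0}v(\theta+\e\om)$, and each summand is continuous on a neighborhood of $\theta$ in $\Num(X)$ since $\theta+\e\om\in\Bg(X)$ (an open condition). A supremum of continuous functions is lsc, so $v$ is lsc on $\Psef(X)$, and trivially elsewhere (where we can set $v\equiv+\infty$, consistent with $V_\theta\equiv-\infty$ off $\Psef(X)$).

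For the inequality~\eqref{equ:vnum}, if $D\equiv\theta$ is effective, then by Example~\ref{exam:pdiv} the function $-\p_D\in\PSH_\hom([D])=\PSH(\theta)$ is nonpositive (as $\p_D\ge 0$ on $X^\div$ for effective $D$), hence a candidate in~\eqref{equ:Vtheta}; evaluating at $v$ gives $V_\theta(v)\ge-v(D)$, i.e., $v(\theta)\le v(D)$.

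For the reverse inequality when $\theta$ is big, I would fix $\e>0$, pick $\f\in\PSH(\theta)$ nonpositive with $\f(v)\ge-v(\theta)-\e$, and approximate $\f$ from above by a decreasing net of FS functions $\f_m\in\cH^\gf(L_m)$ with $c_1(L_m)\to\theta$, writing $\f_m=N_m^{-1}\max_i\{\log|s_i^{(m)}|+\la_i^{(m)}\}$ for $s_i^{(m)}\in\Hnot(X,N_mL_m)$. Selecting an index $i^\star$ attaining $\f_m(v)$ produces an effective $\Q$-divisor $D_m:=N_m^{-1}\div(s_{i^\star}^{(m)})\equiv c_1(L_m)$ with $v(D_m)=\la_{i^\star}/N_m-\f_m(v)\le\sup\f_m-\f_m(v)$. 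Since $\sup\f_m=\f_m(v_\triv)\searrow\sup\f\le 0$ and $\f_m(v)\to\f(v)$, one gets $v(D_m)\le v(\theta)+2\e$ for $m\gg 0$. The main obstacle is then the \emph{class-correction step}: producing effective $\R$-divisors $E_m\equiv\theta-c_1(L_m)$ with $v(E_m)\to 0$, so that $D_m+E_m\equiv\theta$ is effective with $v$-value at most $v(\theta)+O(\e)$. Bigness is essential here: the numerically small class $\theta-c_1(L_m)$ can be split as $(\theta-c_1(L_m)+\e_m\om)-\e_m\om$ with the first term ample and the second an arbitrarily small ample correction (for $\e_m\to 0$), and both pieces can be represented by effective $\R$-divisors avoiding $v$ (since a sufficiently ample class admits effective representatives with $v$-value $0$, e.g.\ via Bertini/generic sections). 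Alternatively, one can short-circuit this technical step by identifying $V_\theta(v)$ with the asymptotic Nakayama order $v(\|\theta\|)$ via the FS-approximation description of $\PSH(\theta)$, for which the equality $v(\|\theta\|)=\inf\{v(D):D\equiv\theta,\,D\ge 0\}$ on the big cone is established in~\cite{Nak,Bou,ELMNP}.
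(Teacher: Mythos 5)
Your treatment of (i) and of the inequality in (ii) is correct and essentially identical to the paper's: homogeneity and convexity from $\PSH(t\theta)=t\PSH(\theta)$ and $\PSH(\theta_0)+\PSH(\theta_1)\subset\PSH(\theta_0+\theta_1)$, continuity on $\Bg(X)$ from convexity plus finiteness, lower semicontinuity from~\eqref{equ:vpsef}, and the upper bound~\eqref{equ:vnum} from the competitor $-\p_D$ in~\eqref{equ:Vtheta}. The problem is the equality when $\theta$ is big. Your ``class-correction step'' does not work as written: you split $\theta-c_1(L_m)$ as $(\theta-c_1(L_m)+\e_m\om)-\e_m\om$ and claim that \emph{both} pieces are represented by effective $\R$-divisors avoiding the center of $v$, but the second piece is $-\e_m\om$, an anti-ample class, which admits no effective representative at all. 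What your construction actually produces is an effective $\R$-divisor in the perturbed class $\theta+\e_m\om$ (namely $D_m$ plus a general effective representative of the ample class $\theta-c_1(L_m)+\e_m\om$), with $v$-value at most $v(\theta)+O(\e)$. This only gives $v'(\theta+\e_m\om)\le v(\theta)+O(\e)$, where $v'$ denotes the right-hand side of~\eqref{equ:vnum}, and you still need to pass to the limit in the class. The missing ingredient is that $v'$ itself is convex and finite on $\Bg(X)$, hence continuous there, so that $v'(\theta)=\lim_{\e_m\to0}v'(\theta+\e_m\om)\le v(\theta)+O(\e)$; this is exactly the continuity argument the paper uses (there, to reduce to rational big classes and to conclude from $v'(\theta)\ge v(\theta)\ge v'(\theta+\om)$). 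Without some such statement about $v'$, your argument does not close.

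The proposed ``short-circuit'' does not repair this: identifying $V_\theta(v)$ with the asymptotic order $v(\|\theta\|)$ is precisely the nontrivial content of the equality in~\eqref{equ:vnum} (compare Remark~\ref{rmk:asymvan}), and establishing that identification from the Fubini--Study approximation of $\PSH(\theta)$ runs into the very same discrepancy between $c_1(L_m)$ and $\theta$ that you were trying to fix; what is citable from~\cite{Nak,Bou,ELMNP} is only the purely algebraic statement that the asymptotic order of a big class equals the infimum over effective representatives, not its equality with the envelope-theoretic quantity $-V_\theta(v)$. So either carry out the continuity-of-$v'$ argument (convexity of $v'$ is immediate, finiteness on the big cone comes from the existence of effective representatives), or follow the paper's route: reduce by continuity of both sides on $\Bg(X)$ to $\theta=c_1(L)$ rational, and use~\cite[Theorem~4.15]{trivval} to produce a single generically finite Fubini--Study competitor for $L+A$ lying above $V_\theta$ with small sup, from which the comparison $v'(\theta)\ge v(\theta)\ge v'(\theta+c_1(A))$ follows directly.
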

Note that equality in~\eqref{equ:vnum} fails in general for $\theta$ is not big, as there might not even exist any effective $\R$-divisor $D$ in the class of $\theta$. 

\begin{proof} Using $\PSH(\theta)+\PSH(\theta')\subset\PSH(\theta+\theta')$ and $\PSH(t\theta)=t\PSH(\theta)$ for $\theta,\theta'\in\Psef(X)$ and $t>0$, it is straightforward to see that $\theta\mapsto v(\theta)$ is convex and homogeneous on $\Psef(X)$. Being also finite valued, it is automatically continuous on the interior $\Bg(X)$. For any $\om\in\Amp(X)$ and $\e>0$, $\theta\mapsto v(\theta+\e\om)$ is thus continuous on $\Psef(X)$, and~\eqref{equ:vpsef} thus shows that $\theta\mapsto v(\theta)$ is lsc, which proves (i). 

Next pick $\theta\in\Psef(X)$. For each effective $\R$-divisor $D\equiv\theta$, the function $-\p_D\in\PSH_\hom(\theta)$, see Example~\ref{exam:pdiv}, is a competitor in~\eqref{equ:Vtheta}. Thus $-v(D)=\p_D(v)\le V_\theta(v)=-v(\theta)$, which proves the first half of (ii). Now assume $\theta$ is big, and denote by $v'(\theta)$ the right-hand side of~\eqref{equ:vnum}. Both $v(\theta)$ and $v'(\theta)$ are (finite valued) convex function of $\theta\in\Bg(X)$. They are therefore continuous, and it is thus enough to prove the equality $v(\theta)=v'(\theta)$ when $\theta=c_1(L)$ with $L\in\Pic(X)_\Q$ big. To this end, pick an ample $\Q$-line bundle $A$, and set $\om:=c_1(A)$. By~\cite[Theorem~4.15]{trivval}, for any $\e>0$ we can find $\f\in\cH^\gf(L+A)$ such that $\f\ge V_\theta$ and $\f(v_\triv)=\sup\f\le\e$. By definition, we have $\f=m^{-1}\max_i\{\log|s_i|+\la_i\}$ with $m$ sufficiently divisible and a finite family of nonzero sections $s_i\in\Hnot(X,m(L+A))$ and constants $\la_i\in\Q$. Then $\max_i\la_i=m\sup\f\le m\e$, and $m^{-1}v(s_i)=v(D_i)$ with $D_i:=m^{-1}\div(s_i)\equiv \theta+\om$, and hence $m^{-1}v(s_i)\ge v'(\theta+\om)$. Thus 
$$
-v(\theta)=V_\theta(v)\le\f(v)=m^{-1}\max_i\{v(s_i)+\la_i\}\le-v'(\theta+\om)+\e.
$$
This shows $v'(\theta)\ge v(\theta)\ge v'(\theta+\om)$, and hence $v'(\theta)=v(\theta)$, since $\lim_{\om\to 0} v'(\theta+\om)=v'(\theta)$ by continuity on the big cone. 
\end{proof}

\begin{rmk}\label{rmk:asymvan} If $L\in\Pic(X)$ is big, then \cite[Corollary~2.7]{ELMNP} (or, alternatively, a small variant of the above argument) shows that $v(c_1(L))$ is also equal to the asymptotic vanishing order 
\begin{align*}
v(\|L\|) : & =\lim_{m\to\infty}\tfrac{1}{m}\min\left\{v(s)\mid s\in\Hnot(X,mL)\setminus\{0\}\right\}\\
& =\inf\left\{v(D)\mid D\sim_\Q L\text{ effective }\Q{-divisor}\right\}. 
\end{align*}
\end{rmk}

\begin{rmk}\label{rmk:mincont} Continuity of minimal vanishing orders beyond the big cone is a subtle issue. For any $v\in X^\div$, the function $\theta\mapsto v(\theta)$, being convex and lsc on $\Psef(X)$, is automatically continuous on any polyhedral subcone (cf.~\cite{How}). When $\dim X=2$, it is in fact continuous on the whole of $\Psef(X)$, but this fails in general when $\dim X\ge 3$ (see respectively Proposition~III.1.19 and Example~IV.2.8 in \cite{Nak}). 
\end{rmk}

%
%
\subsection{The center of an extremal function}
The following fact plays a key role in what follows. 
\begin{thm}\label{thm:Vdivtype} For any $\theta\in\Psef(X)$, the function $V_\theta\in\PSH_\hom(\theta)$ is of divisorial type (see Definition~\ref{defi:divtype}). Further, its center $Z_X(V_\theta)$ coincides with the diminished base locus $\B_-(\theta)$ (see~\S\ref{sec:pos}). 
\end{thm}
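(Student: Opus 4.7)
I would prove the two assertions in turn, treating divisorial type first.

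For $\theta \in \Bg(X)$, pick an effective $\R$-divisor $D \equiv \theta$; by Example~\ref{exam:pdiv}, $-\p_D \in \PSH_\hom(\theta)$ with $-\p_D \le 0$, so the defining property of $V_\theta$ gives $-\p_D \le V_\theta \le 0$. For any prime divisor $E \not\subset \supp(D)$, this forces $V_\theta(\ord_E) = 0$, proving divisorial type. For $\theta \in \Psef(X) \setminus \Bg(X)$, I would combine~\eqref{equ:Vdecr} and~\eqref{equ:vpsef} to get $\ord_E(\theta) = \sup_{\e > 0}\ord_E(\theta + \e\om)$; each $\theta + \e\om$ is big, so $\{E : \ord_E(\theta + \e\om) > 0\}$ is finite, but finiteness of the union $\{E : \ord_E(\theta) > 0\}$ is the classical content of Nakayama's divisorial Zariski decomposition~\cite{Nak}, which I would invoke (matching $\sigma_E$ with $\ord_E$ via Proposition~\ref{prop:Vmin}(ii)).

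For $Z_X(V_\theta) = \B_-(\theta)$, by Theorem~\ref{thm:centpsh1} and the divisorial-reduction argument in its proof, it suffices to show that for every scheme point $x \in X$,
\[
  x \in \B_-(\theta) \iff \exists\, v \in X^\div \text{ with } x \in Z_X(v) \text{ and } v(\theta) > 0.
\]
The direction $(\Leftarrow)$ is concrete: from $v(\theta) > 0$ and the monotone convergence~\eqref{equ:vpsef}, pick $\e > 0$ with $v(\theta + \e\om) > 0$; since $\theta + \e\om$ is big, Proposition~\ref{prop:Vmin}(ii) ensures $v(D) \ge v(\theta + \e\om) > 0$ for every effective $\R$-divisor $D \equiv \theta + \e\om$, so $c_X(v) \subset \supp(D)$. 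Intersecting over all such $D$ embeds $c_X(v)$ into $\B(\theta + \e\om) \subset \B_+(\theta + \e\om) \subset \B_-(\theta)$ via~\eqref{equ:Bminplus}, and Zariski-closedness of $\B_+(\theta + \e\om)$ then gives $Z_X(v) \subset \B_-(\theta)$.

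The direction $(\Rightarrow)$ is the main obstacle. Given $x \in \B_-(\theta)$, I would select a birational model $\pi\colon \tilde X \to X$ such that $\overline{\{x\}}$ is the image of a prime divisor $E \subset \tilde X$ (\eg, a resolution of the blow-up of $\overline{\{x\}}$). The standard compatibility of the diminished base locus with birational pullbacks then gives $E \subset \B_-(\pi^\star\theta)$ (\cf~\cite{ELMNP}). Applying the first assertion of the theorem on $\tilde X$, combined with Nakayama's characterization that the divisorial part of $\B_-$ coincides with the support of the $\sigma$-decomposition~\cite{Nak}, forces $\ord_E(\pi^\star\theta) > 0$, so $v := \ord_E \in X^\div$ satisfies $v(\theta) > 0$ and $c_X(v) = x$. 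The main external input to the proof is this divisorial characterization of the non-nef locus.
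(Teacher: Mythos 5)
The first assertion and the easy implication of the second are fine. Your big-case argument via an effective representative $D\equiv\theta$ is correct, and invoking Nakayama~\cite{Nak} for finiteness of $\{E:\ord_E(\theta)>0\}$ on the psef boundary is a legitimate (if external) substitute for the paper's self-contained Lemma~\ref{lem:finitediv}, provided one identifies $v(\theta)$ with $\sigma_v$ via Proposition~\ref{prop:Vmin}~(ii) and~\eqref{equ:vpsef}, as you indicate. Your $(\Leftarrow)$ direction is also essentially correct, modulo small points ($\B(\theta+\e\om)$ is not defined for an $\R$-class, so one should argue directly that a point outside $\B_+(\theta+\e\om)$ avoids the support of some effective representative, and take $\e$ rational for~\eqref{equ:Bminplus}); alternatively one can simply quote the easy half of~\cite[Theorem~B]{ELMNP}, as the paper does. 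The reduction to divisorial valuations via the proof of Theorem~\ref{thm:centpsh1} is also fine, since $V_\theta$ is homogeneous.

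The genuine gap is in $(\Rightarrow)$, at the step ``the standard compatibility of the diminished base locus with birational pullbacks gives $E\subset\B_-(\pi^\star\theta)$''. No such formal compatibility is available in the form you need. The standard pullback statement concerns the augmented base locus, $\B_+(\pi^\star\xi)=\pi^{-1}(\B_+(\xi))\cup\Exc(\pi)$, and when one tries to descend from the nef-and-big perturbation $\B_+(\pi^\star\theta+\e\,\pi^\star\om)$ to $\B_-(\pi^\star\theta)=\bigcup_{A'}\B_+(\pi^\star\theta+A')$ (which requires \emph{ample} perturbations on the model), the best formal output is $\pi^{-1}(\B_-(\theta))\subseteq\B_-(\pi^\star\theta)\cup\Exc(\pi)$ --- and your divisor $E$ is precisely $\pi$-exceptional, so this says nothing about it. Unwinding your steps (2)+(3), the claim ``$x\in\B_-(\theta)\Rightarrow\ord_E(\pi^\star\theta)>0$ for a divisor $E$ extracted over $\overline{\{x\}}$'' is exactly the nontrivial half of the valuative criterion ``$c_X(v)\in\B_-(\theta)\Rightarrow v(\theta)>0$'' applied to $v=\ord_E$, i.e.\ the content of~\cite[Theorem~B]{ELMNP} that your argument is supposed to deliver; the known proofs of the preimage inclusion for $\B_-$ go through precisely this criterion, so as written the step is circular, or at best an unreferenced assertion carrying the whole weight of the proof. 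The repair is also the simplification, and is the paper's route: drop the blowup entirely, observe that $v:=\ord_E$ has $c_X(v)=x\in\B_-(\theta+\e\om)$ for $0<\e\ll1$ by~\eqref{equ:Bminplus}, apply~\cite[Theorem~B]{ELMNP} to the big class $\theta+\e\om$ to get $v(\theta+\e\om)>0$, and conclude $v(\theta)>0$ by~\eqref{equ:vpsef}.
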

The proof relies on the next result, which corresponds to~\cite[Corollary~III.1.11]{Nak} (see also~\cite[Theorem~3.12]{Bou} in the analytic context). 

\begin{lem}\label{lem:finitediv} Pick $\theta\in\Psef(X)$, and assume $E_1,\dots,E_r\subset X$ are prime divisors such that $\ord_{E_i}(\theta)>0$ for all $i$. Then $[E_1],\dots,[E_r]$ are linearly independent in $\Num(X)$. In particular, $r\le\rho(X)=\dim\Num(X)$. 
\end{lem}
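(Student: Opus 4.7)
My plan has two main ingredients: first, showing that $\eta:=\theta-\sum_i\sigma_i[E_i]$ is itself pseudoeffective (where $\sigma_i:=\ord_{E_i}(\theta)>0$) and that $\ord_{E_j}(\eta)=0$ for every $j$; second, using this together with subadditivity of the minimal vanishing order to rule out any nontrivial linear relation among the $[E_i]$.

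For the first part, set $D:=\sum_i\sigma_iE_i$, whose Rees valuations are $v_i=\sigma_i^{-1}\ord_{E_i}$; by homogeneity $V_\theta(v_i)=\sigma_i^{-1}V_\theta(\ord_{E_i})=-1$, so the hypothesis $\max_{\Ga_D}V_\theta\le -1$ of Proposition~\ref{prop:homdom} is verified and yields $V_\theta+\p_D\in\PSH_\hom(\eta)$. In particular $\eta\in\Psef(X)$ by Proposition~\ref{prop:extfun}. Now every homogeneous $\eta$-psh function is automatically $\le 0$ on $X^\an$ (it vanishes at $v_\triv$ and $t\mapsto\f(tv)$ is convex decreasing in $t$), so $V_\theta+\p_D$ is a competitor in the envelope defining $V_\eta$, giving $V_\eta\ge V_\theta+\p_D$ on $X^\div$. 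Conversely, $V_\eta-\p_D$ lies in $\PSH_\hom(\theta)$ and is $\le 0$ on $X^\div$ (since $\p_D\ge 0$ there), hence competes for $V_\theta$, yielding $V_\eta-\p_D\le V_\theta$. Combining, $V_\eta=V_\theta+\p_D$ on $X^\div$; evaluating at $\ord_{E_j}$ gives $V_\eta(\ord_{E_j})=-\sigma_j+\sigma_j=0$, i.e.~$\ord_{E_j}(\eta)=0$ for each $j$.

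For the second part, assume for contradiction that $\sum_i a_i[E_i]=0$ is a nontrivial relation; after a sign change, some $a_i$ is positive. Let $t^\star:=\min_{a_i>0}\sigma_i/a_i>0$, fix an index $i_0$ achieving the minimum, and set $D':=\sum_i(\sigma_i-t^\star a_i)E_i$. Then $D'$ is effective, $\ord_{E_{i_0}}(D')=0$, and $[D']=[D]-t^\star\sum_i a_i[E_i]=[D]$, so $\theta-[D']=\eta$ is pseudoeffective. Proposition~\ref{prop:Vmin}(i) asserts that $\ord_{E_{i_0}}(\cdot)$ is convex and homogeneous on $\Psef(X)$, hence subadditive, so
$$
\sigma_{i_0}=\ord_{E_{i_0}}(\theta)\le\ord_{E_{i_0}}([D'])+\ord_{E_{i_0}}(\eta).
$$
Proposition~\ref{prop:Vmin}(ii) bounds the first term by $\ord_{E_{i_0}}(D')=0$, and the second term vanishes by the first part; this forces $\sigma_{i_0}\le 0$, contradicting $\sigma_{i_0}>0$. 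The main delicate point is the identity $V_\eta=V_\theta+\p_D$ on $X^\div$: producing $V_\eta\le V_\theta+\p_D$ is routine, but the reverse rests on the subtle automatic bound $\le 0$ for homogeneous psh functions, without which one only obtains $\eta\in\Psef(X)$ and not the crucial vanishing $\ord_{E_j}(\eta)=0$ needed in the final contradiction.
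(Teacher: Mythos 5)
Your proof is correct, and it takes a genuinely different route from the paper's. The paper reproduces the short argument of~\cite[Theorem~3.5~(v)]{BDPP}: using~\eqref{equ:vpsef} one first adds a small ample class to reduce to $\theta$ big; then, given a relation $G=\sum_i c_iE_i\equiv 0$, one perturbs an arbitrary effective $D\equiv\theta$ to $D'=D+\e G$, which is still effective and numerically equivalent to $\theta$, and the equality case of~\eqref{equ:vnum} on the big cone forces $c_i\ge 0$ for all $i$, hence $G\ge 0$ and so $G=0$. You instead first establish, via Proposition~\ref{prop:homdom} and the identity $V_\eta=V_\theta+\p_D$ on $X^\div$, that $\eta=\theta-\sum_i\sigma_i[E_i]$ is pseudoeffective with $\ord_{E_j}(\eta)=0$ for all $j$ --- in effect a partial form of the divisorial Zariski decomposition that the paper only develops later in~\S\ref{sec:Zariski} --- and then you kill a nontrivial relation using subadditivity of $\ord_{E_{i_0}}$ on $\Psef(X)$ (convexity and homogeneity from Proposition~\ref{prop:Vmin}~(i)) together with only the inequality half of~\eqref{equ:vnum}, which needs no bigness. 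Your argument thus avoids any perturbation into the big cone and works directly on the boundary of $\Psef(X)$, and it proves a slightly stronger intermediate statement, at the cost of invoking the heavier pluripotential input (Proposition~\ref{prop:homdom} and the extremal-function formalism); the paper's proof is more elementary but relies on the big-cone characterization of minimal vanishing orders plus a limiting argument. There is no circularity in your citations, since Propositions~\ref{prop:homdom}, \ref{prop:extfun} and~\ref{prop:Vmin} all precede the lemma. One cosmetic remark: the competitor condition in~\eqref{equ:Vtheta} requires $V_\eta-\p_D\le 0$ on all of $X^\an$, not just on $X^\div$; this is immediate since both $V_\eta$ and $-\p_D$ are nonpositive homogeneous psh functions, so it is a wording issue rather than a gap.
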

\begin{proof} We reproduce the simple argument of~\cite[Theorem~3.5~(v)]{BDPP} for the convenience of the reader. 
  By~\eqref{equ:vpsef}, after adding to $\theta$ a small enough ample class we assume that $\theta$ is big. Suppose $\sum_i c_i [E_i]=0$ with $c_i\in\R$, so that $G:=\sum_i c_i E_i$ is numerically equivalent to $0$, and choose $0<\e\ll 1$ such that $\ord_{E_i}(\theta)+\e c_i>0$ for all $i$.
  Pick any effective $\R$-divisor $D\equiv\theta$ and set $D':=D+\e G$. Then $D'$ is effective, since 
$$
\ord_{E_i}(D')=\ord_{E_i}(D)+\e c_i\ge\ord_{E_i}(\theta)+\e c_i>0
$$
 for all $i$. Since $G\equiv 0$, we also have $D'\equiv\theta$, and~\eqref{equ:vnum} thus yields for each $i$  
 $$
 \ord_{E_i}(\theta)\le\ord_{E_i}(D')=\ord_{E_i}(D)+\e c_i. 
$$ 
Taking the infimum over $D$ we get $\ord_{E_i}(\theta)\le\ord_{E_i}(\theta)+\e c_i$ (see Proposition~\ref{prop:Vmin}~(ii)), \ie $c_i\ge 0$ for all $i$. Thus $G\ge 0$, and hence $G=0$, since $G\equiv 0$. This proves $c_i=0$ for all $i$ which shows, as desired, that the $[E_i]$ are linearly independent. 
\end{proof} 

\begin{proof}[Proof of Theorem~\ref{thm:Vdivtype}] By~\eqref{equ:vV}, the first assertion means that there are only finitely many prime divisors $E\subset X$ such that $\ord_E(\theta)>0$, and is thus a direct consequence of Lemma~\ref{lem:finitediv}. Pick $v\in X^\div$. The second point is equivalent to $v(\theta)>0\Leftrightarrow c_X(v)\in\B_-(\theta)$. When $\theta$ is big, this is the content of~\cite[Theorem~B]{ELMNP}. In the general case, pick $\om\in\Amp(X)$. Then $v(\theta)>0$ iff $v(\theta+\e\om)>0$ for $0<\e\ll 1$, by~\eqref{equ:vpsef}, while $c_X(v)\in\B_-(\theta)$ iff $c_X(v)\in\B_-(\theta+\e\om)$ for $0<\e\ll 1$, by~\eqref{equ:Bminplus}. The result follows. 
\end{proof}
For later use, we also note: 
\begin{lem}\label{lem:minpoly} For any polyhedral subcone $C\subset\Psef(X)$, we have: 
\begin{itemize}
\item[(i)] $\theta\mapsto v(\theta)$ is continuous on $C$ for all $v\in X^\div$; 
\item[(ii)] the set of prime divisors $E\subset X$ such that $\ord_E(\theta)>0$ for some $\theta\in C$ is finite.
\end{itemize}
\end{lem}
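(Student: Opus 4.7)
My plan is to handle~(ii) first, as it follows quickly from Lemma~\ref{lem:finitediv}, and then to deduce~(i) from a general convex-analytic fact.

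For part~(ii), I would fix a finite set of generators $\theta_1,\dots,\theta_N\in\Psef(X)$ of the polyhedral cone $C$. Every $\theta\in C$ can then be written as $\theta=\sum_i t_i\theta_i$ with $t_i\ge0$, and the convexity-plus-homogeneity of $v=\ord_E$ provided by Proposition~\ref{prop:Vmin}~(i) yields the subadditivity
\[
\ord_E(\theta)\le\sum_i t_i\ord_E(\theta_i).
\]
Hence if $\ord_E(\theta)>0$ for some $\theta\in C$, then $\ord_E(\theta_i)>0$ for some $i$. Lemma~\ref{lem:finitediv} tells us that each set $\{E\mid\ord_E(\theta_i)>0\}$ is finite, so their union---which contains the set in~(ii)---is finite.

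For part~(i), I would appeal to the classical Gale--Klee--Rockafellar theorem: any finite-valued convex function on a polyhedral subset of a finite-dimensional real vector space is continuous. The function $\theta\mapsto v(\theta)$ is convex on $\Psef(X)\subset\Num(X)$ by Proposition~\ref{prop:Vmin}~(i), and it is finite-valued because, for $\theta\in\Psef(X)$, the homogeneous $\theta$-psh function $V_\theta$ is finite at every divisorial valuation (see the discussion around~\eqref{equ:vV}). Restricting to the polyhedral cone $C$ thus gives a finite-valued convex function on a polyhedron, which is continuous.

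The only delicate point here is that lower semicontinuity on $\Psef(X)$---which is all that Proposition~\ref{prop:Vmin}~(i) gives on the whole pseudoeffective cone---is strictly weaker than continuity in general; indeed continuity can fail on $\Psef(X)$ when $\dim X\ge3$, as noted in Remark~\ref{rmk:mincont}. The polyhedral hypothesis on $C$ is precisely what rules out such pathologies via Gale--Klee--Rockafellar, and no deeper algebro-geometric input is needed beyond the basic convexity/homogeneity already established.
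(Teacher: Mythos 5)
Your part~(ii) is correct and is essentially the paper's own argument: write $\theta=\sum_i t_i\theta_i$ in terms of a finite set of generators of $C$, use convexity and homogeneity of $\theta\mapsto\ord_E(\theta)$ to get $\ord_E(\theta)\le\sum_i t_i\ord_E(\theta_i)$, and conclude with Lemma~\ref{lem:finitediv}.

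Part~(i), however, rests on a misquoted convex-analysis fact. The Gale--Klee--Rockafellar theorem says that a finite convex function on a polyhedral (more generally, locally simplicial) convex set is \emph{upper} semicontinuous there; it does not give continuity, and continuity genuinely can fail. For instance, on the polyhedral cone $\R_{\ge0}^2$ the function defined by $f(x,y)=0$ for $x>0$ and $f(0,y)=y$ is convex, homogeneous and finite-valued, yet it is not continuous (indeed not lsc) along the ray $\{x=0\}$. So ``finite-valued convex on a polyhedron implies continuous'' is false, and your closing paragraph---which asserts that the lower semicontinuity from Proposition~\ref{prop:Vmin}~(i) is not needed---has it exactly backwards: lsc is precisely the extra ingredient that must be combined with the automatic upper semicontinuity furnished by polyhedrality. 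This is what the paper does: it invokes the statement that a convex, \emph{lsc} function on a polyhedral cone is continuous (citing~\cite{How}, cf.\ Remark~\ref{rmk:mincont}), the lsc being supplied by Proposition~\ref{prop:Vmin}~(i). Your argument for~(i) is thus repairable in one line by reinstating that lsc input, but as written it contains a genuine gap.
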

\begin{proof} As mentioned in Remark~\ref{rmk:mincont}, any convex, lsc function on a polyhedral cone is continuous (see~\cite{How}), and (i) follows. To see (ii), pick a finite set of generators $(\theta_i)$ of $C$. Each $\theta\in C$ can be written as $\theta=\sum_i t_i \theta_i$ with $t_i\ge 0$. By convexity and homogeneity of minimal vanishing orders, this implies $\ord_E(\theta)\le\sum_i t_i\ord_{E}(\theta_i)$, so that $\ord_E(\theta)>0$ implies $\ord_E(\theta_i)>0$ for some $i$. The result now follows from Lemma~\ref{lem:finitediv}. 
\end{proof}
%
%
\section{Zariski decompositions}\label{sec:Zariski}
Next we study the close relationship between the extremal function in~\S\ref{sec:extremal}, and the various versions of the Zariski decomposition of a psef numerical class.
%
%
\subsection{The $b$-divisorial Zariski decomposition}
Pick $\theta\in\Num(X)$ a psef class.
By Theorem~\ref{thm:Vdivtype}, the function $X^\div\ni v\mapsto v(\theta)=-V_\theta(v)$ is of divisorial type. We denote by 
$$
\Nz(\theta)\in\Zunb(X)_\R
$$
the corresponding effective $b$-divisor, which thus satisfies
$$
\p_{\Nz(\theta)}(v)=v(\Nz(\theta))=v(\theta)=-V_\theta(v) 
$$
for all $v\in X^\div$. This construction is birationally invariant: 

\begin{thm}\label{thm:bZarvar} For any $\theta\in\Psef(X)$, the $b$-divisor class 
$$
\env(\theta):=\overline{\theta}-[\Nz(\theta)]\in\Numb(X)
$$ 
is nef, and $\Nz(\theta)$ is the smallest effective $b$-divisor with this property. Moreover,
  \begin{equation}\label{equ:negZar}
    \Nz(\theta)\ge\overline{\Nz(\theta)_Y}
  \end{equation}
  for all birational models $Y\to X$. 
\end{thm}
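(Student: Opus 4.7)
The plan is to derive all three assertions from the dictionary of Proposition~\ref{prop:homogb} between antieffective $b$-divisors $B$ with $\bar\theta+[B]$ nef and homogeneous $\theta$-psh functions of divisorial type, combined with Proposition~\ref{prop:homdom} and the general fact that any homogeneous element of $\PSH(\theta')$ is $\le 0$ (since $\sup\f=\f(v_\triv)=0$ by homogeneity).

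For~(1), I would apply Proposition~\ref{prop:homogb} directly to $V_\theta\in\PSH_\hom(\theta)$, which is of divisorial type by Theorem~\ref{thm:Vdivtype}. Since $V_\theta=-\p_{\Nz(\theta)}$, the correspondence forces the associated antieffective $b$-divisor to be $-\Nz(\theta)$, and the conclusion supplied by the proposition is precisely that $\bar\theta-[\Nz(\theta)]=\env(\theta)$ is nef. For~(2), let $N'\ge 0$ be any $b$-divisor with $\bar\theta-[N']$ nef. Running Proposition~\ref{prop:homogb} in the reverse direction on $B=-N'\le 0$ produces $\p_{-N'}=-\p_{N'}\in\PSH_\hom(\theta)$, a function that is $\le 0$ and hence a competitor in the envelope~\eqref{equ:Vtheta} defining $V_\theta$; therefore $-\p_{N'}\le V_\theta=-\p_{\Nz(\theta)}$ on $X^\div$, and Lemma~\ref{lem:Bdivtype} turns this into $\Nz(\theta)\le N'$.

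For~(3) I would first treat the case $Y=X$. Write the irreducible decomposition $D:=\Nz(\theta)_X=\sum_\a c_\a E_\a$ with $c_\a=\ord_{E_\a}(\theta)>0$; by Theorem~\ref{thm:Vdivtype} this sum is finite. The Rees valuations of $D$ are $v_\a=c_\a^{-1}\ord_{E_\a}$, and a direct calculation gives $V_\theta(v_\a)=-c_\a^{-1}\ord_{E_\a}(\theta)=-1$. Thus $\max_{\Ga_D}V_\theta\le -1$, and Proposition~\ref{prop:homdom} yields
$$
\psi:=V_\theta+\p_D\in\PSH_\hom(\theta-[D]).
$$
Being a homogeneous element of $\PSH(\theta-[D])$, the function $\psi$ satisfies $\psi\le 0$ on $X^\an$; evaluating at $v\in X^\div$ this reads $v(\bar D)\le v(\Nz(\theta))$, \ie $\overline{\Nz(\theta)_X}\le\Nz(\theta)$. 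For a general birational model $\pi\colon Y\to X$, I would apply this case with $X$ and $\theta$ replaced by $Y$ and $\pi^\star\theta$: since divisorial valuations on $Y$ coincide with those on $X$ and $v(\pi^\star\theta)=v(\theta)$ for each such $v$, the $b$-divisor $\Nz(\pi^\star\theta)$ constructed over $Y$ agrees with $\Nz(\theta)$, and the inequality obtained at the base $Y$ is exactly~\eqref{equ:negZar}.

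The main subtlety is~(3): rather than invoking the classical birational Negativity Lemma (recorded as Lemma~\ref{lem:neg}), the argument converts it into a pluripotential statement via Proposition~\ref{prop:homdom}. The crux is that the Rees computation yields exactly $-1$ (and not merely $<-1$), allowing Proposition~\ref{prop:homdom} to be invoked at the equality boundary case, after which the automatic sign constraint on homogeneous psh functions delivers the desired inequality for free.
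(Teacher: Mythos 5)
Your parts (1) and (2) coincide with the paper's own proof: nefness of $\env(\theta)$ comes from applying Proposition~\ref{prop:homogb} to $V_\theta=-\p_{\Nz(\theta)}$ (of divisorial type by Theorem~\ref{thm:Vdivtype}), and minimality comes from running the correspondence backwards on any effective $N'$ with $\overline\theta-[N']$ nef, so that $-\p_{N'}\le 0$ is a competitor in~\eqref{equ:Vtheta} and hence $N'\ge\Nz(\theta)$. Where you genuinely diverge is~\eqref{equ:negZar}: the paper disposes of it in one line by citing the Negativity Lemma (Lemma~\ref{lem:neg}, imported from~\cite{BdFF}), whereas you re-derive it inside the pluripotential framework. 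Your route is correct: for $D=\Nz(\theta)_X$ the Rees normalization gives $V_\theta(v_\a)=-1$ exactly, so Proposition~\ref{prop:homdom} applies at its boundary case and yields $V_\theta+\p_D\in\PSH_\hom(\theta-[D])$; nonpositivity of homogeneous psh functions (indeed $\sup\psi=\psi(v_\triv)=0$) then gives $\overline{\Nz(\theta)_X}\le\Nz(\theta)$ on $X^\div$ via Lemma~\ref{lem:Bdivtype}, and the general model $Y$ follows by replacing $(X,\theta)$ with $(Y,\pi^\star\theta)$. Note that the identification $\Nz(\pi^\star\theta)=\Nz(\theta)$ you invoke is not a pure tautology: it needs $V_{\pi^\star\theta}=\pi^\star V_\theta$, i.e.\ $\PSH(\pi^\star\theta)=\pi^\star\PSH(\theta)$ from~\eqref{equ:pshbir} (envelope property, $Y$ smooth, char $0$); this is exactly Lemma~\ref{lem:Zarbir}, whose proof is independent of the theorem, so there is no circularity, but you should cite it. What the two approaches buy: the paper's argument is shorter and leans on a standard algebro-geometric fact about relatively nef $b$-divisors, while yours avoids Lemma~\ref{lem:neg} entirely, in effect re-proving its consequence in this situation from Proposition~\ref{prop:homdom} and the sign constraint on homogeneous psh functions, at the modest cost of the birational-invariance input just mentioned (and of treating separately the trivial case $\Nz(\theta)_Y=0$, where $\Ga_D$ is empty).
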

We call $\overline{\theta}=\env(\theta)+[\Nz(\theta)]$ the \emph{$b$-divisorial Zariski decomposition} of $\theta$. At least when $\theta$ is big, this construction is basically equivalent to~\cite[Theorem~D]{KM}, and to the case $p=1$ of~\cite[\S 2.2]{diskant}. 

Note that the $b$-divisorial Zariski decomposition is birationally invariant:
\begin{lem}\label{lem:Zarbir} For any $\theta\in\Psef(X)$ and any birational model $\pi\colon Y\to X$, we have
  \[
    \Nz(\pi^\star\theta)=\Nz(\theta)
    \quad\text{and}\quad
    \env(\pi^\star\theta)=\env(\theta)
  \]
  in   $\Zunb(X)_\R=\Zunb(Y)_\R$ and $\Numb(X)_\R=\Numb(Y)_\R$, respectively.
\end{lem}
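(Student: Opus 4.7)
The plan is to reduce both equalities to the birational invariance of the extremal function, which in turn is immediate from the birational invariance of $\theta$-psh functions recorded in~\eqref{equ:pshbir}.

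First I would clarify the identifications. Since $\pi\colon Y\to X$ is birational, the function fields agree, $k(X)=k(Y)$, and consequently the spaces of valuations coincide: $X^{\an}=Y^{\an}$ and in particular $X^{\div}=Y^{\div}$. Under this identification, the pullback $\pi^\star\f=\f\circ\pi^{\an}$ of a function $\f\in\PSH(\theta)$ to $Y^{\an}$ is simply $\f$ itself. Similarly, the inverse systems used to define $b$-divisors are cofinal (every smooth model dominating $Y$ dominates $X$ and conversely), giving canonical identifications $\Zunb(X)_\R=\Zunb(Y)_\R$ and $\Numb(X)=\Numb(Y)$; under these, the pullback map $\theta\mapsto\overline{\pi^\star\theta}$ coincides with $\theta\mapsto\bar\theta$ since for any model $Y'\to Y$ (hence $Y'\to X$), the pullback of $\pi^\star\theta$ to $Y'$ equals the pullback of $\theta$ to $Y'$.

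For the first equality, the defining property of $\Nz(\theta)$ is that $v(\Nz(\theta))=v(\theta)=-V_\theta(v)$ for all $v\in X^{\div}$, and analogously for $\Nz(\pi^\star\theta)$. By Lemma~\ref{lem:Bdivtype}, it thus suffices to check $v(\pi^\star\theta)=v(\theta)$ for every $v\in X^{\div}=Y^{\div}$. Using the formula~\eqref{equ:vV}, the invariance~\eqref{equ:pshbir}, and the fact that $\pi^\star\f=\f$ on $X^{\an}=Y^{\an}$, one computes
\[
v(\pi^\star\theta)=\inf\{-\psi(v)\mid\psi\in\PSH(\pi^\star\theta),\,\psi\le 0\}=\inf\{-\f(v)\mid\f\in\PSH(\theta),\,\f\le 0\}=v(\theta).
\]
Hence $\Nz(\pi^\star\theta)=\Nz(\theta)$ in $\Zunb(X)_\R=\Zunb(Y)_\R$.

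For the second equality, I write $\env(\pi^\star\theta)=\overline{\pi^\star\theta}-[\Nz(\pi^\star\theta)]$ and use the two identifications just established: $\overline{\pi^\star\theta}=\bar\theta$ in $\Numb(X)=\Numb(Y)$ (by the first paragraph), and $[\Nz(\pi^\star\theta)]=[\Nz(\theta)]$ (by the previous paragraph). Subtracting, $\env(\pi^\star\theta)=\bar\theta-[\Nz(\theta)]=\env(\theta)$, completing the proof.

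There is no serious obstacle: the entire argument is a bookkeeping exercise once~\eqref{equ:pshbir} is invoked. The only point requiring mild care is verifying that the various canonical identifications ($X^{\an}=Y^{\an}$, $\Zunb(X)_\R=\Zunb(Y)_\R$, $\Numb(X)=\Numb(Y)$) are compatible with pullback, but this is automatic from the definitions via inverse limits over birational models.
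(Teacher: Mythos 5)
Your proposal is correct and takes essentially the same route as the paper, which simply observes that $\PSH(\pi^\star\theta)=\pi^\star\PSH(\theta)$ (see~\eqref{equ:pshbir}) gives $V_{\pi^\star\theta}=\pi^\star V_\theta$, whence the result. The only small imprecision is your claim $X^{\an}=Y^{\an}$ (these spaces also contain semivaluations on proper subvarieties, so they are not literally equal for a birational model), but the identification you actually use, namely $X^{\div}=Y^{\div}$ together with the cofinality of models giving $\Zunb(X)_\R=\Zunb(Y)_\R$ and $\Numb(X)=\Numb(Y)$, is valid, so the argument stands.
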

\begin{proof} Since $\PSH(\pi^\star\theta)=\pi^\star\PSH(\theta)$, see~\eqref{equ:pshbir}, we have $V_{\pi^\star\theta}=\pi^\star V_\theta$, and the result follows.
\end{proof}

\begin{proof}[Proof of Theorem~\ref{thm:bZarvar}] Since $\p_{-\Nz(\theta)}=V_\theta$ is $\theta$-psh, Proposition~\ref{prop:homogb} shows that $\overline{\theta}-[\Nz(\theta)]$ is nef, which yields the last point, by the Negativity Lemma (see Lemma~\ref{lem:neg}). Conversely, if $E\in\Zunb(X)_\R$ is effective with $\overline{\theta}-[E]$ nef, then $-\p_E\in\PSH_\hom(\theta)$, again by Proposition~\ref{prop:homogb}. Thus $-\p_E\le V_\theta=-\p_{\Nz(\theta)}$, and hence $E\ge\Nz(\theta)$.
\end{proof}
As a consequence of  Proposition~\ref{prop:Vmin}, we get
\begin{cor}\label{cor:Zarconv}
  The map $\Psef(X)\ni\theta\mapsto\Nz(\theta)\in\Zunb(X)$ is homogeneous, lsc, and convex.
\end{cor}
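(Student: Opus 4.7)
The plan is to reduce the corollary to Proposition~\ref{prop:Vmin}(i) by exploiting the definition of the topology on $\Zunb(X)_\R$. By construction, $\Nz(\theta)$ is the unique element of $\Zunb(X)_\R$ determined by $v(\Nz(\theta))=v(\theta)$ for every $v\in X^\div$. Now recall from~\S\ref{sec:bdiv} that $\Zunb(X)_\R$ carries the topology of pointwise convergence on $X^\div$, and that for maps $f\colon\Omega\to\Zunb(X)_\R$, the notions of continuity, lsc, and convexity are all defined componentwise via the scalar maps $v\circ f\colon\Omega\to\R$ for $v\in X^\div$. The analogous componentwise definition of homogeneity is immediate from the $\R$-vector space structure of the target.

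Therefore, it suffices to establish, for each fixed $v\in X^\div$, that the function
\[
  \Psef(X)\ni\theta\mapsto v(\Nz(\theta))=v(\theta)
\]
is homogeneous, convex, and lsc. Convexity and lower semicontinuity are exactly the content of Proposition~\ref{prop:Vmin}(i). Homogeneity of $\theta\mapsto v(\theta)$ is a consequence of the identity $\PSH(t\theta)=t\,\PSH(\theta)$ for $t>0$ via the defining formula~\eqref{equ:vV}, and is also recorded in Proposition~\ref{prop:Vmin}(i).

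There is no substantial obstacle in this argument: once the correspondence between effective $b$-divisors and homogeneous functions of divisorial type (Lemma~\ref{lem:Bdivtype}, used to define $\Nz(\theta)$ via Theorem~\ref{thm:Vdivtype}) is in place, and once one observes that the relevant properties of $\Zunb(X)_\R$-valued maps are defined pointwise on $X^\div$, the corollary follows directly by plugging in Proposition~\ref{prop:Vmin}(i). The only minor point to flag is that the statement of the corollary does not assert continuity on $\Psef(X)$; this is as it should be, since by Remark~\ref{rmk:mincont} continuity of $\theta\mapsto v(\theta)$ on $\Psef(X)$ can genuinely fail in dimension $\ge 3$, and so $\theta\mapsto\Nz(\theta)$ is in general only lsc rather than continuous.
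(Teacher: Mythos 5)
Your argument is correct and is exactly the paper's intended reasoning: the corollary is stated as an immediate consequence of Proposition~\ref{prop:Vmin}(i), since homogeneity, convexity and lower semicontinuity of $\Zunb(X)_\R$-valued maps are by definition checked componentwise via $\theta\mapsto v(\Nz(\theta))=v(\theta)$ for $v\in X^\div$. Your closing remark about the failure of continuity beyond the big cone correctly explains why only lsc is asserted, consistent with Remark~\ref{rmk:mincont}.
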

%
\subsection{The divisorial Zariski decomposition}
For any $\theta\in\Psef(X)$, we denote by $\Nz_X(\theta):=\Nz(\theta)_X$ the incarnation of $\Nz(\theta)\in\Zunb(X)_\R$ on $X$, which thus satisfies 
\begin{equation}\label{equ:NX}
\Nz_X(\theta)=\sum_{E\subset X} \ord_E(\theta) E
\end{equation}
with $E$ ranging over all prime divisors of $X$, and $\ord_E(\theta)=0$ for all but finitely many $E$. 

For any effective $\R$-divisor $D$ on $X$ with numerical class $[D]\in\Psef(X)$, \eqref{equ:vnum} yields
\begin{equation}\label{equ:Neff}
\Nz_X(D):=\Nz_X([D])\le D.
\end{equation}
More generally, the following variational characterization holds. 

\begin{thm}\label{thm:Zarvar} For any $\theta\in\Psef(X)$, the class 
$$
\env_X(\theta):=\theta-[\Nz_X(\theta)]\in\Num(X)
$$
is movable, and $\Nz_X(\theta)$ is the smallest effective $\R$-divisor on $X$ with this property. 
\end{thm}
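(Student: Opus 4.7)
The plan is to deduce Theorem~\ref{thm:Zarvar} from the already-established $b$-divisorial statement (Theorem~\ref{thm:bZarvar}) by specializing to $Y=X$, combined with the characterization of the movable cone via the diminished base locus from \S\ref{sec:pos} and Theorem~\ref{thm:Vdivtype}.

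First I would address movability of $\env_X(\theta)$. By Theorem~\ref{thm:bZarvar}, $\env(\theta)=\overline{\theta}-[\Nz(\theta)]\in\Numb(X)$ is nef, and Lemma~\ref{lem:nefmov} then gives that $\env(\theta)_Y\in\Num(Y)$ is movable for every birational model $Y\to X$. Specializing to $Y=X$ yields $\env_X(\theta)=\theta-[\Nz_X(\theta)]\in\Mov(X)$, as desired.

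The substance is the minimality statement. Suppose $N$ is an effective $\R$-divisor on $X$ such that $\alpha:=\theta-[N]\in\Mov(X)$; in view of~\eqref{equ:NX}, it is enough to show that $\ord_E(\theta)\le\ord_E(N)$ for every prime divisor $E\subset X$. Fix such an $E$ and set $v:=\ord_E\in X^\div$. Since $\alpha$ is movable, $\codim\B_-(\alpha)\ge 2$, hence no prime divisor of $X$ is contained in $\B_-(\alpha)$; in particular $c_X(v)\notin\B_-(\alpha)$, which by Theorem~\ref{thm:Vdivtype} forces $v(\alpha)=0$. The function $\theta'\mapsto v(\theta')$ on $\Psef(X)$ is convex and homogeneous (Proposition~\ref{prop:Vmin}~(i)), and therefore subadditive; combined with Proposition~\ref{prop:Vmin}~(ii) applied to the effective divisor $D=N$, this yields
$$
\ord_E(\theta)=v(\theta)\le v([N])+v(\alpha)=v([N])\le v(N)=\ord_E(N),
$$
completing the proof.

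No step looks genuinely hard: the argument is essentially a concatenation of earlier results. The conceptual heart is the implication $\alpha\in\Mov(X)\Rightarrow v(\alpha)=0$ for every $v=\ord_E$, which packages the codimension-$2$ characterization of the movable cone into the divisorial language of $\Nz_X$; the rest is subadditivity of minimal vanishing orders and the trivial bound $v([N])\le v(N)$ for the effective divisor $N$.
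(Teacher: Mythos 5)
Your proof is correct. The movability of $\env_X(\theta)$ is argued exactly as in the paper (Theorem~\ref{thm:bZarvar} plus Lemma~\ref{lem:nefmov}), and your final display is the paper's concluding step verbatim in substance: subadditivity of $\theta'\mapsto v(\theta')$ from Proposition~\ref{prop:Vmin}~(i) together with $v([N])\le v(N)$ from~\eqref{equ:vnum}. Where you diverge is the key intermediate implication ``$\alpha$ movable $\Rightarrow v(\alpha)=0$ for every $v=\ord_E$''. You obtain it by combining the characterization $\alpha\in\Mov(X)\Leftrightarrow\codim\B_-(\alpha)\ge 2$ from~\S\ref{sec:pos} with Theorem~\ref{thm:Vdivtype} (note that passing from ``$\codim\B_-(\alpha)\ge2$'' to ``$c_X(\ord_E)\notin\B_-(\alpha)$'' implicitly uses that $\B_-$ is a union of the closed sets $\B_+(\alpha+\e\om)$, via~\eqref{equ:Bminplus}, so that a generic point lying in it would force the whole divisor to lie in it --- fine, but worth saying). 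The paper instead proves this implication from scratch: it uses the definition of $\Mov(X)$ as the closed cone generated by big line bundles with base locus of codimension $\ge 2$, where~\eqref{equ:vnum} gives $\ord_E=0$ directly, and then extends by continuity on the big cone and by the approximation property~\eqref{equ:vpsef}. Your route is shorter and is legitimate within the paper's logical structure (Theorem~\ref{thm:Vdivtype} precedes this result, and the $\B_-$-characterization of $\Mov$ is stated in the preliminaries), but it outsources to that unproved preliminary fact --- essentially Nakayama's theorem, whose standard proof runs along lines close to the paper's argument --- exactly the content the authors choose to re-derive via their envelope-theoretic vanishing orders; the paper's version is thus more self-contained, while yours makes the link between $\Nz_X$, $\B_-$ and the movable cone more transparent.
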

Following~\cite{Bou}, we call the decomposition
$$
\theta=\env_X(\theta)+[\Nz_X(\theta)]
$$
the \emph{divisorial Zariski decomposition} of $\theta$. It coincides with the \emph{$\sigma$-decomposition} of~\cite{Nak}.

\begin{proof}[Proof of Theorem~\ref{thm:Zarvar}] By definition, $\env_X(\theta)$ is the incarnation on $X$ of $\overline{\theta}-[\Nz(\theta)]$. By Theorem~\ref{thm:bZarvar}, the latter class is nef, and $\env_X(\theta)$ is thus movable, by Lemma~\ref{lem:nefmov}. 

 To prove the converse, assume first that $\theta$ is movable. We then need to show $\Nz_X(\theta)=0$, \ie $\ord_E(\theta)=0$ for each $E\subset X$ prime (see~\eqref{equ:NX}). By~\eqref{equ:vnum}, this is clear if $\theta=c_1(L)$ for a big line bundle $L$ with base locus of codimension at least $2$. Since the movable cone $\Mov(X)$ is generated by the classes of such line bundles, the continuity of $\theta\mapsto\ord_E(\theta)$ on the big cone yields the result when $\theta$ is further big, and the case of an arbitrary movable class follows by~\eqref{equ:vpsef}.  

Finally, consider any $\theta\in\Psef(X)$ and any effective $\R$-divisor $D$ on $X$ such that $\theta-[D]$ is movable. For any $E\subset X$ prime we then have $\ord_E(\theta-[D])=0$ by the previous step, and $\ord_E([D])\le\ord_E(D)$ by~\eqref{equ:Neff}). Thus
$$
\ord_E(\theta)\le\ord_E(\theta-[D])+\ord_E(D)=\ord_E(D). 
$$
This shows $\Nz_X(\theta)\le D$, which concludes the proof. 
\end{proof}

\begin{rmk}\label{rmk:negmov} Theorem~\ref{thm:Zarvar} implies the following converse of Lemma~\ref{lem:nefmov}: a class $\theta\in\Num(X)$ is movable iff $\theta=\a_X$ for a nef $b$-divisor class $\a\in\Nefb(X)$. 
\end{rmk}

\begin{cor}\label{cor:restrpsef} Pick $\theta\in\Psef(X)$ and a prime divisor $E\subset X$. Then $(\theta-\ord_E(\theta)E)|_E\in\Num(E)
$ is pseudoeffective. 
\end{cor}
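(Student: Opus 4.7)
The plan is to deduce the corollary directly from the divisorial Zariski decomposition of Theorem~\ref{thm:Zarvar}, combined with Lemma~\ref{lem:movrestr} on restrictions of movable classes.

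First, I would recall the decomposition
\[
\theta=\env_X(\theta)+[\Nz_X(\theta)]
\]
of Theorem~\ref{thm:Zarvar}, and expand $[\Nz_X(\theta)]=\sum_F\ord_F(\theta)[F]$ over prime divisors $F\subset X$, using~\eqref{equ:NX}. Isolating the term corresponding to the given prime divisor $E$, I rewrite
\[
\theta-\ord_E(\theta)[E]=\env_X(\theta)+\sum_{F\ne E}\ord_F(\theta)[F].
\]

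Next, I would restrict this identity to $E$ via the inclusion $\iota\colon E\hookrightarrow X$. The class $\env_X(\theta)$ is movable by Theorem~\ref{thm:Zarvar}, hence its restriction $\env_X(\theta)|_E\in\Num(E)$ is pseudoeffective by Lemma~\ref{lem:movrestr}. For each prime divisor $F\ne E$, since $F$ does not contain $E$ and $E$ is a Cartier divisor on the smooth variety $X$, the scheme-theoretic intersection $F\cap E$ is an effective Cartier divisor on $E$, so $[F]|_E\in\Num(E)$ is the class of an effective divisor, in particular pseudoeffective. Since $\ord_F(\theta)\ge 0$ and the sum over $F\ne E$ has only finitely many nonzero terms (by Theorem~\ref{thm:Vdivtype}), the right-hand side restricts to a pseudoeffective class on $E$.

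There is no real obstacle here; the only subtlety worth flagging is the need to separate the coefficient along $E$ (which cannot be restricted to $E$ as a divisor) from the coefficients along the other prime divisors (which can). The divisorial Zariski decomposition is precisely designed to make this bookkeeping work, since it already identifies $\ord_E(\theta)$ as the minimal multiplicity along $E$ of any effective representative.
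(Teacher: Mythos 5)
Your proof is correct and is essentially identical to the paper's argument: both isolate the $E$-component of the divisorial Zariski decomposition, write $\theta-\ord_E(\theta)[E]=\env_X(\theta)+\sum_{F\ne E}\ord_F(\theta)[F]$, and conclude by restricting to $E$ using that movable classes restrict to psef classes (Lemma~\ref{lem:movrestr}) and that $[F]|_E$ is psef for $F\ne E$. Your extra remarks on finiteness of the sum and Cartier-ness are fine but not needed beyond what the paper already records.
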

\begin{proof} We have $\theta-\ord_E(\theta)[E]=\env_X(\theta)+\sum_{F\ne E}\ord_{F}(\theta)[F]$, where $F$ ranges over all prime divisors of $X$ distinct from $E$. Since $\env_X(\theta)$ is movable, $\env_X(\theta)|_E$ is psef. On the other hand, $[F]|_E$ is psef for any $F\ne E$, and the result follows. 
\end{proof}

\begin{lem}\label{lem:bZarCar} For any $\theta\in\Psef(X)$ and any birational model $\pi\colon Y\to X$, the incarnation of $\Nz(\theta)$ on $Y$ coincides with $\Nz_Y(\pi^\star\theta)$. Further, the following are equivalent:
\begin{itemize}
\item[(i)] the $b$-divisor $\Nz(\theta)$ is $\R$-Cartier, and determined on $Y$;
\item[(ii)] $\env_Y(\pi^\star\theta)$ is nef.
\end{itemize}
\end{lem}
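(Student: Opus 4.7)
The first assertion I would derive immediately from Lemma~\ref{lem:Zarbir}: since $\Nz(\pi^\star\theta) = \Nz(\theta)$ in $\Zunb(X)_\R = \Zunb(Y)_\R$, taking the $Y$-incarnation of both sides yields $\Nz(\theta)_Y = \Nz_Y(\pi^\star\theta)$.

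For the direction (ii)$\Rightarrow$(i), I would argue as follows. Assuming $\env_Y(\pi^\star\theta)$ is nef on $Y$, consider the effective $\R$-Cartier $b$-divisor $E := \overline{\Nz_Y(\pi^\star\theta)}$, which satisfies $\overline{\theta} - [E] = \overline{\env_Y(\pi^\star\theta)}$; the latter lies in $\Nefb(X)$, being the pullback of a nef class from $Y$. The minimality part of Theorem~\ref{thm:bZarvar} then gives $\Nz(\theta) \le E$, while the Negativity Lemma inequality~\eqref{equ:negZar} together with the first assertion provide the reverse $\Nz(\theta) \ge \overline{\Nz(\theta)_Y} = E$. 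Hence $\Nz(\theta) = E$, which is $\R$-Cartier and determined on $Y$.

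The main obstacle is the converse direction (i)$\Rightarrow$(ii). Granting (i), we have $\env(\theta) = \overline{\env_Y(\pi^\star\theta)}$, a $b$-divisor class in $\Nefb(X)$ by Theorem~\ref{thm:bZarvar}; the task is to upgrade this to nefness of $\eta := \env_Y(\pi^\star\theta)$ in $\Num(Y)$. Note that Lemma~\ref{lem:nefmov} only yields movability of $\eta$ on $Y$, so a genuine argument is needed. I would proceed by contradiction: supposing $\eta \cdot C < 0$ for some irreducible curve $C \subset Y$, resolution of singularities in characteristic zero provides a smooth birational model $\sigma\colon Y'\to Y$ on which the strict transform $\tilde{C}$ of $C$ is smooth; blowing up $\tilde{C}$ in $Y'$ then yields $\pi'\colon Y''\to Y'$ with exceptional divisor $D$ a projective bundle over $\tilde{C}$. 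By the projection formula, $(\pi'^\star\sigma^\star\eta)|_D$ is numerically equal to $d\cdot F$, where $F$ is the fiber class of $D\to\tilde{C}$ and $d := \eta\cdot C < 0$; intersecting with $H^{\dim D-1}$ for any ample $H$ on $D$ shows that $dF$ is not pseudoeffective on $D$. On the other hand, $\env(\theta)\in\Nefb(X)$ combined with Lemma~\ref{lem:nefmov} yields $\pi'^\star\sigma^\star\eta\in\Mov(Y'')$, and Lemma~\ref{lem:movrestr} then forces $(\pi'^\star\sigma^\star\eta)|_D$ to be pseudoeffective, giving the desired contradiction. Hence $\eta$ is nef, proving (ii).
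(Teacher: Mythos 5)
Your proof is correct. The first assertion and the direction (ii)$\Rightarrow$(i) follow the paper verbatim: Lemma~\ref{lem:Zarbir} for the compatibility of incarnations, then the minimality in Theorem~\ref{thm:bZarvar} combined with~\eqref{equ:negZar} to squeeze $\Nz(\theta)$ between $\overline{\Nz_Y(\pi^\star\theta)}$ and itself. The only divergence is in (i)$\Rightarrow$(ii). The paper disposes of it in one line: once $\env(\theta)=\overline{\theta}-[\Nz(\theta)]$ is known to be a nef $b$-divisor class that is $\R$-Cartier and determined on $Y$, its incarnation $\env_Y(\pi^\star\theta)$ is declared nef, this being treated as part of the formal $b$-divisor package from~\cite{BdFF}. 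You correctly observe that Lemma~\ref{lem:nefmov} by itself only yields movability of the trace, and you supply the missing geometric content: if $\eta\cdot C<0$ for a curve $C$, pass to a smooth model where the strict transform $\tilde C$ is smooth, blow it up, and note that the restriction of the pullback of $\eta$ to the exceptional divisor is $(\eta\cdot C)$ times the fiber class, which is not psef, contradicting movability of the trace on that model via Lemma~\ref{lem:movrestr}. This is a sound and self-contained justification (in effect, a proof that an $\R$-Cartier class determined on $Y$ lies in $\Nefb(X)$ only if its trace on $Y$ is nef), so your argument proves slightly more than the paper writes down explicitly, at the cost of a longer detour; the paper's route is shorter because it leans on this fact as already established in the $b$-divisor formalism it cites.
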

\begin{proof} The first point follows from Lemma~\ref{lem:Zarbir}. If (i) holds then the nef $b$-divisor $\overline{\theta}-\Nz(\theta)$ is $\R$-Cartier and determined on $Y$. Thus $(\overline{\theta}-\Nz(\theta))_Y=\pi^\star\theta-\Nz_Y(\pi^\star\theta)=\env_Y(\pi^\star\theta)$ is nef, and hence (i)$\Rightarrow$(ii). 

Conversely, assume (ii). Then $\overline{\Nz(\theta)_Y}=\overline{\Nz_Y(\pi^\star\theta)}$ is an effective $b$-divisor, and the $b$-divisor class $\overline{\theta}-[\overline{\Nz(\theta)_Y}]=\overline{\env_Y(\pi^\star\theta)}$ is nef. By Theorem~\ref{thm:bZarvar} this implies $\Nz(\theta)\le\overline{\Nz(\theta)_Y}$, while $\Nz(\theta)\ge\overline{\Nz(\theta)_Y}$ always holds (see~\eqref{equ:negZar}). This proves (ii)$\Rightarrow$(i). 
\end{proof}

Since any movable class on a surface is nef, we get: 
\begin{cor}\label{cor:bZarsurf} If $\dim X=2$ then $\Nz(\theta)=\overline{\Nz_X(\theta)}$ for all $\theta\in\Psef(X)$. 
\end{cor}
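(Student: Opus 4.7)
The plan is to apply Lemma~\ref{lem:bZarCar} in the trivial case $\pi=\id\colon X\to X$. With $Y=X$, condition~(i) of that lemma asserts that $\Nz(\theta)$ is $\R$-Cartier and determined on $X$, i.e.\ that $\Nz(\theta)=\overline{\Nz(\theta)_X}=\overline{\Nz_X(\theta)}$, which is precisely the equality we need. It therefore suffices to check the equivalent condition~(ii) of the lemma, namely that $\env_X(\theta)\in\Num(X)$ is nef.

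By Theorem~\ref{thm:Zarvar}, the class $\env_X(\theta)=\theta-[\Nz_X(\theta)]$ is movable. The crucial input now is the surface-specific fact, recalled in~\S\ref{sec:pos}, that the inclusion $\Nef(X)\subset\Mov(X)$ is an equality as soon as $\dim X\le 2$. Hence, under the hypothesis $\dim X=2$, movability of $\env_X(\theta)$ upgrades to nefness, and Lemma~\ref{lem:bZarCar} delivers the conclusion.

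There is essentially no real obstacle here: the corollary reduces to plugging the identity into Lemma~\ref{lem:bZarCar} and invoking the classical coincidence of the movable and nef cones on surfaces. This coincidence is of course the structural reason why Zariski's original decomposition on surfaces produces a genuine $\R$-divisor (rather than a merely $b$-divisorial) Zariski decomposition, so the proof is expected to be a short quotation of earlier results.
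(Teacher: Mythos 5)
Your proof is correct and is exactly the paper's intended argument: the corollary is stated immediately after Lemma~\ref{lem:bZarCar} with the remark ``since any movable class on a surface is nef,'' i.e.\ one applies the lemma with $\pi=\id$, using Theorem~\ref{thm:Zarvar} for movability of $\env_X(\theta)$ and the equality $\Nef(X)=\Mov(X)$ in dimension two. Nothing is missing.
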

In contrast, see~\cite[Theorem~IV.2.10]{Nak} for an example of a big line bundle $L$ on a $4$-fold $X$ such that the $b$-divisor $\Nz(L)$ is not $\R$-Cartier, \ie $\env_Y(\pi^\star L)$ is not nef for any model $\pi\colon Y\to X$.

%
\subsection{Zariski exceptional divisors and faces}
This section revisits~\cite[\S 3.3]{Bou}. 

\begin{defi}\label{defi:Zarexc} We say that:
\begin{itemize}
\item[(i)] an effective $\R$-divisor $D$ on $X$ is \emph{Zariski exceptional} if $\Nz_X(D)=D$, or equivalently, $\env_X([D])=0$; 
\item[(ii)] a finite family $(E_i)$ of prime divisors $E_i\subset X$ is \emph{Zariski exceptional} if every effective $\R$-divisor supported in the $E_i$'s is Zariski exceptional. 
\end{itemize}
We also define a \emph{Zariski exceptional face} $F$ of $\Psef(X)$ as an extremal subcone such that $\env_X|_F\equiv 0$. 
\end{defi}
Here a closed subcone $C\subset\Psef(X)$ is extremal iff $\a,\b\in\Psef(X)$, $\a+\b\in C$ implies $\a,\b\in C$.

We first note: 
\begin{lem}\label{lem:bZex} An effective $\R$-divisor $D$ is Zariski exceptional iff $\Nz(D)=\overline{D}$. 
\end{lem}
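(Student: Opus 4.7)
The plan is to deduce both implications directly from Lemma~\ref{lem:bZarCar}, which relates the $\R$-Cartier property of the $b$-divisor $\Nz(\theta)$ to the nefness of $\env_X$ on a model. The reverse implication is essentially automatic: if $\Nz([D])=\overline{D}$, then taking the incarnation on $X$ gives $\Nz_X([D])=\Nz([D])_X=\overline{D}_X=D$, which is exactly the definition of $D$ being Zariski exceptional.

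For the forward implication, I would argue as follows. Assume $D$ is Zariski exceptional, so that $\env_X([D])=[D]-[\Nz_X([D])]=0$ in $\Num(X)$. The zero class is of course nef, so condition (ii) of Lemma~\ref{lem:bZarCar} holds with $\pi=\id_X$. Applying the implication (ii)$\Rightarrow$(i) of that lemma yields that $\Nz([D])$ is $\R$-Cartier and determined on $X$, meaning $\Nz([D])=\overline{\Nz([D])_X}$. Using the first part of Lemma~\ref{lem:bZarCar} (the equality $\Nz([D])_X=\Nz_X([D])$) together with the assumption $\Nz_X([D])=D$, we conclude $\Nz([D])=\overline{D}$.

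No serious obstacle is expected: the argument is a direct application of the structural results already established for the $b$-divisorial Zariski decomposition, and the only thing to verify is that the two characterizations of Zariski exceptionality in Definition~\ref{defi:Zarexc}~(i) ($\Nz_X(D)=D$ and $\env_X([D])=0$) are used in the two matching directions. The brevity of the proof is what makes the statement worth recording as a lemma: it identifies the $b$-divisorial and the divisorial incarnations of exceptionality whenever the divisorial one holds on the base $X$ itself, without passing to higher models.
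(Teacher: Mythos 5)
Your proof is correct, and it is essentially the paper's argument in packaged form: the paper proves the forward direction by combining Theorem~\ref{thm:bZarvar} (giving $\Nz(D)\le\overline{D}$) with~\eqref{equ:negZar} (giving $\Nz(D)\ge\overline{\Nz_X(D)}=\overline{D}$), which is exactly the content of the implication (ii)$\Rightarrow$(i) of Lemma~\ref{lem:bZarCar} that you invoke with $\pi=\id_X$, while the reverse direction is the same passage to incarnations on $X$ in both cases.
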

\begin{proof} Assume $\Nz_X(D)=D$. Then $\Nz(D)\le\overline{D}$, by Theorem~\ref{thm:bZarvar}, and $\Nz(D)\ge\overline{\Nz_X(D)}=\overline{D}$ (see~\eqref{equ:negZar}). The result follows. 
\end{proof}

The above notions are related as follows: 

\begin{thm}\label{thm:Zarexc} The properties following hold:
\begin{itemize}
\item[(i)] if $E\subset X$ is a prime divisor, then $E$ is either movable (in which case $E|_E$ is psef), or it is Zariski exceptional; 
\item[(ii)] the set of Zariski exceptional families of prime divisors on $X$ is at most countable;
\item[(iii)] for any $\theta\in\Psef(X)$, the irreducible components of $\Nz_X(\theta)$ form a Zariski exceptional family; in particular, $\Nz_X(\theta)$ is Zariski exceptional; 
\item[(iv)] each Zariski exceptional family $(E_i)$ is linearly independent in $\Num(X)$, and generates a Zariski exceptional face $F:=\sum_i \R_{\ge 0}[E_i]$ of $\Psef(X)$; 
\item[(v)] conversely, each Zariski exceptional face $F$ of $\Psef(X)$ arises as in (iv).
\end{itemize}
\end{thm}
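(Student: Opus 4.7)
The statement has five parts, where (i) and (ii) are elementary, (iii) is the technical core, and (iv)--(v) follow from (iii) together with the Zariski decomposition formalism of~\S\ref{sec:Zariski}.

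For~(i), apply~\eqref{equ:Neff} to $D = E$: $\Nz_X([E]) = tE$ with $t \in [0,1]$, so $\env_X([E]) = (1-t)[E]$ is movable. If $0 < t < 1$, dividing by $(1-t)$ would make $[E]$ itself movable; but movable classes have vanishing minimal order on every prime divisor (as shown in the proof of Theorem~\ref{thm:Zarvar}), contradicting $t = \ord_E([E]) > 0$. Hence $t \in \{0,1\}$ gives the dichotomy, and Lemma~\ref{lem:movrestr} provides $E|_E\in\Psef(E)$ in the movable case. For~(ii), the map $E \mapsto [E]$ is injective on Zariski exceptional primes: if $E \ne E'$ are both Zariski exceptional with $[E] = [E']$, then~\eqref{equ:vnum} applied to the representative $E$ gives $\ord_{E'}([E']) = \ord_{E'}([E]) \le \ord_{E'}(E) = 0$, contradicting $\ord_{E'}([E']) = 1$. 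Since $\NS(X)$ is countable by the N\'eron--Severi theorem, so is the set of Zariski exceptional primes, and hence so is the set of its finite subsets, to which every Zariski exceptional family belongs.

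For~(iii), write $\Nz_X(\theta) = \sum c_i E_i$ with $c_i > 0$ and take $D = \sum d_i E_i$ with $d_i \ge 0$. By~\eqref{equ:Neff}, $\Nz_X([D]) = \sum a_i E_i$ with $a_i \in [0, d_i]$; I will derive a contradiction from $a_j < d_j$ for some $j$, showing $D$ is Zariski exceptional. In that case $\eta := \env_X([D]) = \sum_i(d_i - a_i)[E_i]$ is a nonzero movable class. For $\epsilon > 0$ small enough that each $c_i - \epsilon(d_i - a_i) \ge 0$,
\[
\theta' := \theta - \epsilon\eta = \env_X(\theta) + \sum_i\bigl(c_i - \epsilon(d_i - a_i)\bigr)[E_i]
\]
is pseudoeffective. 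Reading the right-hand side as a movable part plus an effective $\R$-divisor, Theorem~\ref{thm:Zarvar} gives $\Nz_X(\theta') \le \sum_i(c_i - \epsilon(d_i - a_i)) E_i$. On the other hand, subadditivity of $\Nz_X$ (from convexity, Corollary~\ref{cor:Zarconv}) applied to $\theta = \theta' + \epsilon\eta$, together with $\Nz_X(\epsilon\eta) = 0$ for the movable $\eta$, yields $\sum_i c_i E_i = \Nz_X(\theta) \le \Nz_X(\theta')$. Comparing $j$-th coefficients produces $c_j \le c_j - \epsilon(d_j - a_j) < c_j$, the required contradiction. This perturbation is the main obstacle: the direction $\eta = \env_X([D])$ is precisely what allows one to play minimality of $\Nz_X(\theta)$ against the explicit upper bound on $\Nz_X(\theta')$.

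For~(iv), linear independence of $\{[E_i]\}$ follows from Lemma~\ref{lem:finitediv} applied to $\theta_0 := \sum_i[E_i]$: since $\sum_i E_i$ is Zariski exceptional, $\Nz_X(\theta_0) = \sum_i E_i$ and $\ord_{E_i}(\theta_0) = 1 > 0$ for each $i$. That $F := \sum_i\R_{\ge 0}[E_i]$ satisfies $\env_X|_F \equiv 0$ is immediate from the definition. For extremality, given $\alpha + \beta \in F$ with $\alpha, \beta \in \Psef(X)$, write $\alpha + \beta = \sum_i t_i[E_i]$ so $\Nz_X(\alpha + \beta) = \sum_i t_i E_i$; subadditivity yields the effective divisor $G := \Nz_X(\alpha) + \Nz_X(\beta) - \sum_i t_i E_i \ge 0$ with class $-(\env_X(\alpha) + \env_X(\beta))$. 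Intersecting with $A^{n-1}$ for $A$ ample gives $G \cdot A^{n-1} \ge 0$ while $[G]\cdot A^{n-1} \le 0$, so $G \cdot A^{n-1} = 0$; positivity of $E \cdot A^{n-1}$ for every prime $E$ then forces $G = 0$ as a divisor, and pointedness of $\Psef(X)$ forces $\env_X(\alpha) = \env_X(\beta) = 0$, whence $\Nz_X(\alpha), \Nz_X(\beta)$ are supported in $(E_i)$ and $\alpha, \beta \in F$. For~(v), given a Zariski exceptional face $F$, the $F$-primes $E$ (those with $[E]\in F$) are linearly independent: a relation $\sum_j\lambda_j[E_j] = 0$ would give $\alpha := \sum_{\lambda_j > 0}\lambda_j[E_j] = \sum_{\lambda_j < 0}(-\lambda_j)[E_j] \in F$, and applying~\eqref{equ:Neff} to the two different effective representatives would force $\Nz_X(\alpha)$ to have support in two disjoint sets, hence $\Nz_X(\alpha) = 0$ and $\alpha = 0$; intersecting $\sum_{\lambda_j > 0}\lambda_j E_j$ with $A^{n-1}$ then forces all $\lambda_j = 0$. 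The $F$-primes thus form a finite family $(E_i)$; the argument of~(iv) applied to $\theta_0 = \sum_i[E_i] \in F$ gives $\Nz_X(\theta_0) = \sum_i E_i$, so $(E_i)$ is a Zariski exceptional family by~(iii). Finally, for any $\alpha \in F$, (iii) combined with extremality of $F$ forces each component of $\Nz_X(\alpha)$ to be an $F$-prime, hence to lie in $(E_i)$, so $\alpha \in \sum_i\R_{\ge 0}[E_i]$, completing $F = \sum_i\R_{\ge 0}[E_i]$.
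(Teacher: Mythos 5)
Your proof is correct; parts (i), (ii) and (iv) follow the paper's own arguments nearly verbatim (the paper settles (i) by noting $c<1$ makes $E$ proportional to the movable class $\env_X(E)$, and handles the extremality in (iv) by invoking strictness of $\Psef(X)$ where you intersect with $A^{n-1}$ -- cosmetic differences only). The genuine divergences are in (iii) and (v). For (iii), the paper argues in two steps: first $\Nz_X(\theta)$ itself is Zariski exceptional, because $\theta-[\Nz_X(\Nz_X(\theta))]=\env_X(\theta)+\env_X(\Nz_X(\theta))$ is movable so Theorem~\ref{thm:Zarvar} forces $\Nz_X(\Nz_X(\theta))\ge\Nz_X(\theta)$; then it passes to arbitrary $D=\sum_i x_iE_i$ by observing that the convex functions $f_i(x)=\ord_{E_i}(\sum_j x_jE_j)$ satisfy $f_i\le x_i$ with equality at the interior point $x=c$, hence identically. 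Your single perturbation argument ($\theta'=\theta-\e\env_X([D])$, combined with subadditivity from Corollary~\ref{cor:Zarconv} and minimality from Theorem~\ref{thm:Zarvar}) reaches the same conclusion in one stroke and makes the quantitative role of convexity explicit; the paper's interior-point trick is the qualitative version of the same mechanism. For (v), the paper picks $\theta$ in the relative interior of $F$, applies (iii) to the components of $\Nz_X(\theta)$, and concludes by the fact that two extremal faces sharing a relative interior point coincide; you instead characterize $F$ directly as the cone on its prime classes, proving their linear independence and finiteness by the disjoint-support trick with~\eqref{equ:Neff}, and recovering every $\alpha\in F$ from the components of $\Nz_X(\alpha)$ via extremality. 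Your route avoids the facial/relative-interior lemma at the cost of a longer hands-on analysis of the $F$-primes; both are sound, and each buys a slightly different structural insight (the paper: faces are determined by any interior class; yours: faces are spanned by exactly the prime classes they contain).
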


\begin{proof} Assume $E\subset X$ is prime. Then $\Nz_X(E)\le E$ (see~\eqref{equ:Neff}), and hence $\Nz_X(E)=c E$ with $c\in [0,1]$. If $c=1$, then $E$ is Zariski exceptional. Otherwise, 
$$
E=(1-c)^{-1}(E-\Nz_X(E))\equiv(1-c)^{-1}\env_X(E)
$$
is movable. This proves (i). 

To see (ii), note that a Zariski exceptional prime divisor satisfies $E=\Nz_X(E):=\Nz_X([E])$, and hence is uniquely determined by its numerical class $[E]\in\Num(X)_\Q$. As a consequence, the set of Zariski exceptional primes is at most countable, and hence so is the set of Zariski exceptional families. 

Pick $\theta\in\Psef(X)$. We first claim that $D:=\Nz_X(\theta)$ is Zariski exceptional. Since $\env_X(\theta)=\theta-[D]$ and $\env_X(D)=[D-\Nz_X(D)]$ are both movable, $\theta-[\Nz_X(D)]$ is movable as well. Theorem~\ref{thm:Zarvar} thus yields $\Nz_X(D)\ge\Nz_X(\theta)=D$, which proves the claim in view of~\eqref{equ:Neff}. Denote by $D=\sum_{i=1}^r c_i E_i$ the irreducible decomposition of $D$, and set $f_i(x):=\ord_{E_i}(\sum_j x_j E_j)$ for $1\le i\le r$. This defines a convex function $f_i\colon\R_{\ge 0}^r\to\R_{\ge 0}$ which satisfies $f_i(x)\le x_i$ for all $x$, by~\eqref{equ:Neff}. Since equality holds at the interior point $x=c\in\R_{>0}^r$, we necessarily have $f_i(x)=x_i$ for all $x\in\R_{\ge 0}^r$, which proves (iii). 

Next pick a Zariski exceptional family $(E_i)$. By Lemma~\ref{lem:finitediv}, the $[E_i]$ are linearly independent in $\Num(X)$. By definition, we have $\env_X\equiv 0$ on $F:=\sum_i\R_{\ge 0}[E_i]$. To see that $F$ is an extremal face of $\Psef(X)$, pick $D:=\sum_i c_i E_i$ with $c_i\ge 0$, and assume $[D]=\a+\b$ with $\a,\b\in\Psef(X)$. We need to show that both $\a$ and $\b$ lie in $F$. By Definition~\ref{defi:Zarexc} we have $D=\Nz_X(D)\le\Nz_X(\a)+\Nz_X(\b)$, and hence 
\begin{multline}
  [\Nz_X(\a)]+[\Nz_X(\b)]\le\env_X(\a)+\env_X(\b)+[\Nz_X(\a)]+[\Nz_X(\b)]\\
  =\a+\b=[D]\le[\Nz_X(\a)]+\Nz_X(\b)], 
\end{multline}
with respect to the psef order on $\Num(X)$. Since $\Psef(X)$ is strict, we infer $\env_X(\a)=\env_X(\b)=0$ and $[D]=[\Nz_X(\a)]+[\Nz_X(\b)]$. Since $\Nz_X(\a)+\Nz_X(\b)-D$ is effective, it follows that $\Nz_X(\a)+\Nz_X(\b)=D$. This implies that $\Nz_X(\a)$ and $\Nz_X(\b)$ are supported in the $E_i$'s, which proves, as desired, that $\a=[\Nz_X(\a)]$ and $\b=[\Nz_X(\b)]$ both lie in $F$. Thus (iv) holds.

Conversely, assume that $F\subset\Psef(X)$ is a Zariski exceptional face, and pick a class $\theta$ in its relative interior $\mathring{F}$. By (iii), the components $(E_i)$ of $\Nz_X(\theta)$ form a Zariski exceptional family, which thus generates a Zariski exceptional face $F':=\sum_i\R_{\ge 0}[E_i]$. Since $F$ and $F'$ are both extremal faces containing $\theta$ in their relative interior, we conclude $F=F'$, which proves (v). 
\end{proof}

As a result, Zariski exceptional families are in 1--1 correspondence with Zariski exceptional faces, which are rational simplicial cones generated by Zariski exceptional primes.


For surfaces, we recover the classical picture (see \eg~\cite[\S 4]{Bou}): 

\begin{thm}\label{thm:Zarsurf} Assume $\dim X=2$. Then:
\begin{itemize}
\item[(i)] a finite family $(E_i)$ of prime divisors on $X$ is Zariski exceptional iff the intersection matrix $(E_i\cdot E_j)$ is negative definite;
\item[(ii)] for any $\theta\in\Psef(X)$, $\theta=\env_X(\theta)+[\Nz_X(\theta)]$ coincides with the classical Zariski decomposition, \ie $\env_X(\theta)$ is nef, $\Nz_X(\theta)$ is Zariski exceptional, and $\env_X(\theta)\cdot\Nz_X(\theta)=0$. 
\end{itemize}
\end{thm}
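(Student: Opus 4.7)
My plan relies on two inputs specific to surfaces: the equality $\Mov(X)=\Nef(X)$ recorded in~\S\ref{sec:pos}, and the standard Riemann--Roch consequence that any effective divisor $F$ with $F^2>0$ on a smooth projective surface is big. I will first prove (ii), then each direction of (i); the main obstacle will sit in the direction of (i) that asserts Zariski exceptionality implies negative definiteness, which rests on a self-intersection bound for Zariski exceptional divisors.

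\emph{Proof of (ii).} That $P:=\env_X(\theta)$ is nef (being movable by Theorem~\ref{thm:Zarvar}, and $\Mov=\Nef$) and that $N:=\Nz_X(\theta)$ is Zariski exceptional (Theorem~\ref{thm:Zarexc}(iii)) come for free. It remains to establish the orthogonality $P\cdot N=0$. Writing $N=\sum_i c_iE_i$, I argue by contradiction: suppose $P\cdot E_j>0$ for some $j$ with $c_j>0$. For $\e>0$ small, the effective divisor $N':=N-\e E_j$ satisfies $\theta-[N']=P+\e[E_j]$, and this class is still nef. Indeed, for any curve $C\ne E_j$ both $P\cdot C$ and $E_j\cdot C$ are non-negative, while $(P+\e E_j)\cdot E_j=P\cdot E_j+\e E_j^2>0$ provided $\e$ is small enough. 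Since $N'<N$, this contradicts the minimality of $\Nz_X(\theta)$ in Theorem~\ref{thm:Zarvar}.

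\emph{Proof of (i), both directions.} The direction $(\Leftarrow)$ follows by combining negative definiteness with $\Mov=\Nef$: for any effective $D=\sum c_iE_i$, the divisor $M:=D-\Nz_X(D)$ is effective and supported on the $E_i$'s, and its class $\env_X([D])$ is movable, hence nef, so $M^2\geq 0$; negative definiteness then forces $M=0$, so $\Nz_X(D)=D$, proving the family Zariski exceptional. The converse direction $(\Rightarrow)$ reduces to the key claim that \emph{every nonzero effective Zariski exceptional divisor $F$ on $X$ satisfies $F^2<0$}. Granting this, for an arbitrary nonzero $v=(v_i)\in\R^r$ decompose $v=v^+-v^-$ with $v_i^+v_i^-=0$ and set $A:=\sum v_i^+E_i$, $B:=\sum v_i^- E_i$: both are effective Zariski exceptional (by hypothesis on the family) with no common component, so $A\cdot B\geq 0$, and at least one of $A,B$ is nonzero since $v\ne 0$ and the $[E_i]$ are linearly independent by Theorem~\ref{thm:Zarexc}(iv). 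The key claim then yields $A^2+B^2<0$, whence $\left(\sum v_iE_i\right)^2=(A-B)^2\leq A^2+B^2<0$.

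\emph{Proof of the key claim.} This is where the main obstacle sits. If $F^2>0$, then $F$ is big, so writing $[F]=[A]+[E]$ with $A$ ample and $E$ effective, Theorem~\ref{thm:Zarvar} applied to the movable class $[F]-[E]=[A]$ gives $\Nz_X([F])\leq E$; hence $\env_X([F])\geq[A]\ne 0$, contradicting Zariski exceptionality of $F$. The subtler case is $F^2=0$. Writing $F=\sum c_iE_i$ over its actual support (so all $c_i>0$), expand $0=F^2=\sum c_iF\cdot E_i$. Either (a) $F\cdot E_j\geq 0$ for every component $E_j$ of $F$, in which case $F$ is nef (trivially $F\cdot C\geq 0$ for $C$ not a component of $F$), so $[F]$ is movable and $\env_X([F])=[F]$; combined with $\env_X([F])=0$ from Zariski exceptionality, $[F]=0$, and Theorem~\ref{thm:Zarexc}(iv) forces $F=0$, contradicting $F\ne 0$. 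Otherwise (b) some $F\cdot E_j<0$ with $c_j>0$, and for $t>0$ small the divisor $F-tE_j$ is still effective and supported on the $E_i$'s (hence Zariski exceptional), and satisfies $(F-tE_j)^2=-2t(F\cdot E_j)+t^2E_j^2>0$, reducing us to the already handled case $F^2>0$.
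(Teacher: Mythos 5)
Your proposal is correct, but note that the paper itself does not prove Theorem~\ref{thm:Zarsurf}: it simply records that on surfaces one recovers the classical picture and points to~\cite[\S 4]{Bou}. What you supply is a self-contained derivation from the paper's own machinery---the variational characterization of $\Nz_X$ (Theorem~\ref{thm:Zarvar}), the Zariski-exceptional formalism (Theorem~\ref{thm:Zarexc}), and $\Mov(X)=\Nef(X)$ in dimension two---essentially reproducing the classical Zariski/Fujita argument: orthogonality by perturbing $N$ along a component $E_j$ with $P\cdot E_j>0$ and contradicting minimality; negative definiteness via the key lemma that a nonzero effective Zariski exceptional divisor has strictly negative square, combined with the standard $v=v^+-v^-$ splitting. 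All the steps check: the nefness of $P+\e E_j$ for small $\e$, the bigness of an effective $\R$-divisor with $F^2>0$ (Hodge index gives $F\cdot H>0$, and the positive cone consists of big classes), and the reduction of the $F^2=0$ case to $F^2>0$. Two small remarks. First, in case~(b) and in the standalone form of your key claim you assert that $F-tE_j$ is Zariski exceptional because it is "supported on the $E_i$'s"; as stated this needs the observation that the components of a Zariski exceptional divisor $F=\Nz_X([F])$ form a Zariski exceptional family, which is exactly Theorem~\ref{thm:Zarexc}(iii)---alternatively, in the application to part~(i) the divisors involved are supported on the given family, so the hypothesis covers it directly; either way the gap is only expository. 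Second, in case~(a) the appeal to Theorem~\ref{thm:Zarexc}(iv) is unnecessary: a nonzero effective divisor cannot be numerically trivial, since it pairs strictly positively with an ample class. Compared with simply citing~\cite{Bou}, your route has the merit of showing that the surface case follows formally from the paper's general $b$-divisorial framework, at the cost of re-proving classical facts (bigness from $F^2>0$, nef $\Rightarrow$ nonnegative square) that the citation absorbs.
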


%
\subsection{Piecewise linear Zariski decompositions}
We introduce the following terminology:

\begin{defi} Given a convex subcone $C\subset\Psef(X)$, we say that \emph{the Zariski decomposition is piecewise linear} (\emph{PL} for short) on $C$ if the map $\Nz\colon C\to\Zunb(X)_\R$ extends to a PL map $\Num(X)\to\Zunb(X)_\R$, \ie a map that is linear on each cone of some finite fan decomposition of $\Num(X)$. If the fan and the linear maps on its cones can further be chosen rational, then we say that \emph{the Zariski decomposition is $\Q$-PL} on $C$. 
\end{defi}

\begin{lem}\label{lem:ZarPL} Let $C\subset\Psef(X)$ be a convex cone, and assume that $C$ is written as the union of finitely many convex subcones $C_i$. Then the Zariski decomposition is PL (resp.~$\Q$-PL) on $C$ iff it is PL (resp.~$\Q$-PL) on each $C_i$. 
\end{lem}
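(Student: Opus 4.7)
\noindent\emph{Forward direction.} If $f:\Num(X)\to\Zunb(X)_\R$ is a PL (resp.\ $\Q$-PL) extension of $\Nz|_C$, then since each $C_i\subset C$, the very same map $f$ extends $\Nz|_{C_i}$, so the Zariski decomposition is PL (resp.\ $\Q$-PL) on each $C_i$.

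\medskip
\noindent\emph{Backward direction: reduction to a common finite-dimensional target.} Suppose we are given PL (resp.\ $\Q$-PL) extensions $f_i:\Num(X)\to\Zunb(X)_\R$ of $\Nz|_{C_i}$ for $i=1,\dots,k$. Each $f_i$, being PL on a finite-dimensional space, has finite-dimensional image and hence factors through $\Zun(Y_i)_\R$ for some smooth birational model $Y_i\to X$. Taking a smooth birational model $Y$ dominating all the $Y_i$, we may regard every $f_i$ as a PL (resp.\ $\Q$-PL) map with values in the finite-dimensional real vector space $V:=\Zun(Y)_\R$.

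\medskip
\noindent\emph{Backward direction: patching via common refinement.} Let $\mathcal{F}$ be a common refinement of the (rational, in the $\Q$-PL case) finite fans on $\Num(X)$ defining the $f_i$, further refined by adjoining, for every pair $i\ne j$, the linear subspace $\{f_i=f_j\}\subset\Num(X)$ (well-defined after the first refinement, since each $f_i-f_j$ is linear on every cone). On each cone $\sigma\in\mathcal{F}$ every $f_i|_\sigma$ is linear, and for every pair $(i,j)$ either $f_i|_\sigma\equiv f_j|_\sigma$ or $f_i|_\sigma\ne f_j|_\sigma$ throughout the relative interior of $\sigma$. For each $\sigma$ I set $f|_\sigma:=f_{i(\sigma)}|_\sigma$, where $i(\sigma)$ is any index such that $\sigma\cap C_{i(\sigma)}$ has full dimension in $\sigma$; such $i(\sigma)$ exists whenever $\sigma\cap C$ has full dimension in $\sigma$, because the finitely many $C_i$ cover $C$. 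Convexity of $\Nz$ (Corollary~\ref{cor:Zarconv}) makes the choice well-defined: if indices $i,j$ both qualify, then picking $x_0\in\mathrm{relint}(C_i)$ where $\Nz=f_i$ is affine and invoking the subdifferential inequality for the convex function $\Nz$ yields $\Nz(y)\ge f_i(y)$ for every $y\in C$, and symmetrically $\Nz\ge f_j$ on $C$; combined with $\Nz=f_i$ on $C_i$ and $\Nz=f_j$ on $C_j$ and with the refinement forcing a definite order of the two affine functions on $\sigma$, this equality collapses to $f_i|_\sigma=f_j|_\sigma$. For cones not meeting $C$ in full dimension, we extend consistently across shared faces. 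The resulting $f:\Num(X)\to V\subset\Zunb(X)_\R$ is PL (rational under the $\Q$-PL hypothesis, since all refinements and selections preserve rationality) and satisfies $f|_C=\Nz|_C$ by construction.

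\medskip
\noindent\emph{Main obstacle.} The delicate step is the convexity-subdifferential argument securing well-definedness of $i(\sigma)$, together with the treatment of lower-dimensional $C_i$, where the subdifferential bound must be supplemented by continuity of $\Nz$ on the relative interior of $C$ (via the lsc property in Proposition~\ref{prop:Vmin}(i)) and by the fact that adjacent selections match on faces lying in some $\{f_i=f_j\}$.
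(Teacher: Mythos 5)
Your forward direction is fine, and your use of the subdifferential inequality for the convex map $\Nz$ (Corollary~\ref{cor:Zarconv}) is in the same spirit as the paper. But the backward direction has genuine gaps. First, the ``reduction to a common finite-dimensional target'' is false as stated: a PL map into $\Zunb(X)_\R$ does have image with finite-dimensional span, but a finite-dimensional subspace of $\Zunb(X)_\R$ need not consist of $\R$-Cartier $b$-divisors, so $f_i$ need not factor through $\Zun(Y_i)_\R$ for any single model $Y_i$. Indeed the values $\Nz(\theta)$ that $f_i$ is forced to take on $C_i$ are in general not $\R$-Cartier (see Lemma~\ref{lem:bZarCar} and Nakayama's example quoted after Corollary~\ref{cor:bZarsurf}). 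This could be repaired by working with the abstract span, but the patching step cannot. The order on $\Zunb(X)_\R$ is componentwise over all $v\in X^\div$, so adjoining the single subspace $\{f_i=f_j\}$ to the fan does \emph{not} force ``a definite order'' of $f_i$ and $f_j$ on a cone: away from that subspace the two $b$-divisor values are typically incomparable, and comparability would require cutting by the whole family of hyperplanes $\{v\circ f_i=v\circ f_j\}$, $v\in X^\div$, which need not be finite. Consequently the collapse $f_i|_\sigma=f_j|_\sigma$ is not established; in fact, since the $C_i$ are only convex---not polyhedral, hence not unions of cones of your fan---a single cone $\sigma$ can meet several $C_j$ (some of them of lower dimension, for which the subdifferential bound $\Nz\ge f_j$ on $C$ is simply unavailable) on which $\Nz$ differs from the selected $f_{i(\sigma)}$. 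Thus your concluding claim that $f|_C=\Nz|_C$ ``by construction'' is precisely what is not proved, and the sentence ``we extend consistently across shared faces'' hides the entire difficulty, which your ``Main obstacle'' paragraph names but does not resolve.

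The paper avoids all of this by not selecting at all: it keeps only those $C_i$ with nonempty interior in $C$, takes the corresponding supporting linear maps $L_i$ (well defined on $\Vect C$ as derivatives of $\Nz$ at interior points), sets $F:=\max_i L_i$, and observes that $\Nz\ge F$ on $C$ by convexity, with equality off the finite union of the $C_i$ with empty interior in $C$, hence on a dense subset of $C$; lower semicontinuity of $\Nz$ (Corollary~\ref{cor:Zarconv}) then gives $\Nz=F$ on all of $C$, lower-dimensional pieces included. Some such density-plus-lsc (or continuity along segments into the relative interior) argument is the missing ingredient in your write-up; without it the explicit cone-by-cone selection does not yield an extension of $\Nz|_C$.
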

\begin{proof} The `only if' part is clear. Conversely, assume the Zariski decomposition is PL (resp.~$\Q$-PL) on each $C_i$. After further subdividing each $C_i$ according to a fan decomposition of $\Num(X)$, we may assume that there exists a linear (resp.~rational linear) map $L_i\colon\Num(X)\to\Zunb(X)_\R$ that coincides with $\Nz$ on $C_i$. If $C_i$ has nonempty interior in $C$, then $L_i|_{\Vect C}$ is uniquely determined as the derivative of $\Nz$ at any interior point of $C_i$, and we have $\Nz\ge L_i$ on $C$ by convexity of $\Nz$, see Corollary~\ref{cor:Zarconv}. Set $F:=\max_i L_i$, where the maximum is over all $C_i$ with nonempty interior in $C$. Then $F\colon\Num(X)\to\Zunb(X)_\R$ is PL (resp.~$\Q$-PL), $\Nz\ge F$ on $C$, and equality holds outside the union $A$ of all $C_i$ with empty interior in $C$. Since $A$ has zero measure, its complement is dense in $C$. Since $\Nz-F$ is lsc, see Corollary~\ref{cor:Zarconv}, we infer $\Nz\le F$ on $C$, which proves the `if' part. 
\end{proof}

As a consequence of~\cite[Theorem~4.1]{ELMNP}, we have: 

\begin{exam}\label{exam:ZarPL} If $X$ is a Mori dream space (\eg of log Fano type), then:
\begin{itemize}
\item for each $\theta\in\Psef(X)$, the $b$-divisor $\Nz(\theta)$ is $\R$-Cartier;
\item $\Psef(X)$ is a rational polyhedral cone;
\item the Zariski decomposition is $\Q$-PL on $\Psef(X)$. 
\end{itemize}
\end{exam}

The next result is closely related to the theory of Zariski chambers studied in~\cite{BKS}. 

\begin{prop}\label{prop:ZarPL} If $\dim X=2$, then the Zariski decomposition is $\Q$-PL on any convex cone $C\subset \Psef(X)$ with the property that the set of prime divisors $E\subset X$ with $\ord_E(\theta)>0$ for some $\theta\in C$ is finite. 
\end{prop}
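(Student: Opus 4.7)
The plan is to apply Lemma~\ref{lem:ZarPL}: I will cover $C$ by finitely many closed convex subcones on each of which $\Nz$ coincides with a globally defined $\Q$-linear map $\Num(X)\to\Zunb(X)_\R$. The surface hypothesis enters through the classical Zariski decomposition (Theorem~\ref{thm:Zarsurf}) together with Corollary~\ref{cor:bZarsurf}, which identifies $\Nz(\theta)$ with $\overline{\Nz_X(\theta)}$; because of the latter it suffices to describe $\Nz_X$ piecewise linearly on $C$, and what makes that description finite is exactly the hypothesis that $\supp\Nz_X(\theta)$, for $\theta\in C$, is drawn from a fixed finite list of prime divisors.

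Let $E_1,\dots,E_N$ be the prime divisors provided by the hypothesis. For each of the finitely many subsets $I\subseteq\{1,\dots,N\}$ which are Zariski exceptional---equivalently, by Theorem~\ref{thm:Zarsurf}~(i), those for which the Gram matrix $G_I:=(E_i\cdot E_j)_{i,j\in I}$ is negative definite---this matrix is invertible with rational entries, so the linear system $\sum_{j\in I}a_j(E_j\cdot E_i)=\theta\cdot E_i$, $i\in I$, has a unique solution $a_i=a_i^I(\theta)$ depending $\Q$-linearly on $\theta\in\Num(X)$. I then define the $\Q$-linear (hence $\Q$-PL) map
\[
L_I\colon\Num(X)\to\Zunb(X)_\R,\quad L_I(\theta):=\overline{\textstyle\sum_{i\in I}a_i^I(\theta)E_i},
\]
together with the closed convex subcone
\[
C_I:=\{\theta\in C\mid a_i^I(\theta)\ge 0\text{ for all }i\in I,\ \theta-\textstyle\sum_{i\in I}a_i^I(\theta)[E_i]\in\Nef(X)\}.
\]

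On $C_I$ the splitting $\theta=(\theta-\sum_i a_i^I(\theta)[E_i])+\sum_i a_i^I(\theta)[E_i]$ has a nef positive part, a Zariski exceptional negative part (supported on a subfamily of $\{E_i\}_{i\in I}$), and the two are orthogonal by the very equations defining $a_i^I$; uniqueness in the classical Zariski decomposition (Theorem~\ref{thm:Zarsurf}~(ii)) then forces $\Nz_X(\theta)=\sum_{i\in I}a_i^I(\theta)E_i$, so $\Nz|_{C_I}=L_I|_{C_I}$ via Corollary~\ref{cor:bZarsurf}. Conversely, every $\theta\in C$ lies in $C_I$ for $I:=\supp\Nz_X(\theta)\subseteq\{1,\dots,N\}$: this $I$ is Zariski exceptional by Theorem~\ref{thm:Zarexc}~(iii), we have $a_i^I(\theta)=\ord_{E_i}(\theta)\ge 0$, and $\theta-\sum_i a_i^I(\theta)[E_i]=\env_X(\theta)$ is nef (movable equals nef on a surface). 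Lemma~\ref{lem:ZarPL} applied to the finite cover $C=\bigcup_I C_I$ then finishes the proof. The main technical point---and where dimension two enters essentially---is the identification $\Nz|_{C_I}=L_I|_{C_I}$, which relies on both the uniqueness of the classical Zariski decomposition and the equality of movable and nef cones on a surface.
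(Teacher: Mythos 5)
Your proof is correct and follows essentially the same route as the paper: both decompose $C$ into finitely many convex chambers indexed by the possible supports of the negative part (the Zariski chambers of~\cite{BKS}), identify $\Nz_X$ on each chamber as the rational linear map determined by the negative definite intersection matrix, pass to $b$-divisors via Corollary~\ref{cor:bZarsurf}, and conclude with Lemma~\ref{lem:ZarPL}. The only difference is cosmetic: the paper works with the chambers $Z_F=\Nz_X^{-1}(\mathring{F})$ and describes $\Nz_X|_{Z_F}$ as the orthogonal projection onto $\Vect F$, whereas you define closed chambers $C_I$ by explicit linear inequalities and recover the same identification from the uniqueness of the classical Zariski decomposition.
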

By Lemma~\ref{lem:minpoly}~(ii), the finiteness condition on $C$ is satisfied as soon as $C$ is polyhedral. 

\begin{proof} For each Zariski exceptional face $F$ of $\Psef(X)$ with relative interior $\mathring{F}$, set $Z_F:=\Nz_X^{-1}(\mathring{F})$. Thus $\theta\in\Psef(X)$ lies in $Z_F$ iff the irreducible decomposition of $\Nz_X(\a)$ are precisely the generators of $F$. By Theorem~\ref{thm:Zarsurf}~(ii), $Z_F$ is a convex subcone of $\Psef(X)$ (whose intersection with $\Bg(X)$ is a Zariski chamber in the sense of~\cite{BKS}); further, $\Nz_X|_{Z_F}:Z_F\to \mathring{F}$ is the restriction of the orthogonal projection onto $\Vect F$, which is a rational linear map. By Corollary~\ref{cor:bZarsurf}, the Zariski decomposition is thus $\Q$-PL on $Z_F$. Finally, the finiteness assumption guarantees that $C$ meets only finitely many $Z_F$'s, and the result is thus a consequence of Lemma~\ref{lem:ZarPL}. 
\end{proof}

We conclude this section with a higher-dimensional situation in which Zariski decompositions can be analyzed. Assuming again that $\dim X$ is arbitrary, consider next a $2$-dimensional cone $C\subset\Num(X)$ generated by two classes $\theta,\a\in\Num(X)$ such that $\theta\in\Nef(X)$ and $\a\notin\Psef(X)$. Set 
$$
C_\nef:=C\cap\Nef(X)\subset C_\psef:=C\cap\Psef(X)\subset C, 
$$
and introduce the thresholds 
$$
\la_\nef:=\sup\{\la\ge 0\mid\theta+\la\a\in\Nef(X)\},\quad \la_\psef:=\sup\{\la\ge 0\mid\theta+\la \a\in\Psef(X)\}, 
$$
so that $C_\nef$ (resp.~$C_\psef$) is generated by $\theta$ and $\theta_\nef:=\theta+\la_\nef \a$ (resp~$\theta_\psef:=\theta+\la_\psef\a$). 

The next result is basically contained in~\cite[\S6.5]{Per}.
\begin{prop}\label{prop:Zar2D} With the above notation, suppose that $C$ contains the class of a prime divisor $S\subset X$ such that $\Nef(S)=\Psef(S)$ and $S|_S$ is not nef. Then:
\begin{itemize}
\item[(i)] $\theta_\psef=t[S]$ with $t>0$;
\item[(ii)] $\la_\nef=\la^S_\nef:=\sup\left\{\la\ge 0\mid (\theta+\la\a)|_S\in\Nef(S)\right\}$;
\item[(iii)] the Zariski decomposition is PL on $C_\psef$, with 
$$
\Nz\equiv 0\text{ on }C_\nef,\quad\Nz(a\theta_\nef+b [S])=b\overline S\text{ for all }a,b\ge 0. 
$$
\end{itemize}
\end{prop}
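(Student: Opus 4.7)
The plan is to first establish that $S$ is Zariski exceptional, then exploit the rigidity $\Nef(S)=\Psef(S)$ to prove (i) and (ii), and finally use a two-sided squeeze to compute $\Nz$ explicitly on $C_\psef$.

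First I would observe that since $\Nef(S)=\Psef(S)$ and $S|_S\notin\Nef(S)$, the restriction $S|_S$ is not pseudoeffective. By Lemma~\ref{lem:movrestr}, $S$ is non-movable, and Theorem~\ref{thm:Zarexc}~(i) then makes $S$ Zariski exceptional. Thus $\R_{\ge 0}[S]$ is an extremal Zariski exceptional face of $\Psef(X)$ on which $\env_X$ vanishes identically; in particular, since $\theta$ is a nonzero nef class, $\env_X(\theta)=\theta\ne 0$ rules out $\theta\in\R_{\ge 0}[S]$. For~(i), I would write $[S]=c\theta+c'\theta_\psef\in C_\psef$ with $c,c'\ge 0$; extremality of $\R_{\ge 0}[S]$ in $\Psef(X)$ forces $c\theta\in\R_{\ge 0}[S]$, so $c=0$ and $\theta_\psef\in\R_{>0}[S]$.

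For~(ii), the inequality $\la_\nef\le\la^S_\nef$ is immediate, while $\la^S_\nef<\la_\psef$ holds because $\theta_{\la_\psef}|_S=t\,S|_S$ fails to be nef. Using~(i), I would expand $\theta_\la=(1-\la/\la_\psef)\theta+(\la t/\la_\psef)[S]$ for $\la\in[0,\la_\psef]$: for any irreducible curve $C\not\subset S$, both $\theta\cdot C\ge 0$ and $S\cdot C\ge 0$ yield $\theta_\la\cdot C\ge 0$, while for $C\subset S$ we have $\theta_\la\cdot C=(\theta_\la|_S)\cdot C$. Hence $\theta_\la|_S\in\Nef(S)$ forces $\theta_\la$ nef on $X$, giving $\la^S_\nef\le\la_\nef$.

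These two steps yield the \emph{rigidity lemma} $\Mov(X)\cap C_\psef=C_\nef$: any movable class in $C_\psef$ restricts to a pseudoeffective, hence nef, class on $S$ by Lemma~\ref{lem:movrestr}, and~(ii) then places it in $C_\nef$. From here I would prove~(iii) by a two-sided squeeze on $\Nz_X$. On $C_\nef$ every class is nef, so $\Nz\equiv 0$. On $R:=\R_{\ge 0}\theta_\nef+\R_{\ge 0}[S]$, for $\theta'=a\theta_\nef+b[S]$ the movability of $\theta'-bS=a\theta_\nef$ combined with Theorem~\ref{thm:Zarvar} gives $\Nz_X(\theta')\le bS$, so $\Nz_X(\theta')=cS$ for some $c\in[0,b]$. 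Then $\env_X(\theta')=a\theta_\nef+(b-c)[S]$ is movable and lies in $R\subset C_\psef$, hence in $C_\nef$ by the rigidity lemma; an elementary computation in the basis $\{\theta,\theta_\nef\}$, using that $[S]$ has strictly negative $\theta$-coefficient in that basis, forces $b-c\le 0$, so $c=b$. Consequently $\Nz_X(\theta')=bS$ and $\env_X(\theta')=a\theta_\nef$ is nef on $X$, so Lemma~\ref{lem:bZarCar} upgrades this to $\Nz(\theta')=b\overline{S}$ as an $\R$-Cartier $b$-divisor. The two linear formulas on $C_\nef$ and $R$ agree on the common ray $\R_{\ge 0}\theta_\nef$ (both giving zero), establishing the PL structure on $C_\psef$.

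The main obstacle lies in the lower bound $\Nz_X(\theta')\ge bS$ on $R$: while the upper bound and the behavior on $C_\nef$ follow formally, forcing $c=\ord_S(\theta')$ to equal $b$ rather than being strictly smaller requires combining the rigidity lemma with the explicit two-dimensional geometry of the cones generated by $\theta,\theta_\nef,\theta_\psef,[S]$.
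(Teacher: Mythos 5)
Your argument is correct and essentially the same as the paper's: (i) via Zariski exceptionality of $S$ and extremality of $\R_{\ge 0}[S]$, (ii) by the same curve-by-curve computation based on (i), and (iii) by squeezing $\Nz_X(a\theta_\nef+b[S])$ between $cS$ and $bS$, where nefness of the restriction to $S$ together with (ii) forces $c=b$; your ``rigidity lemma'' and the combination of Theorem~\ref{thm:Zarvar} with Lemma~\ref{lem:movrestr} simply unwind the paper's Corollary~\ref{cor:restrpsef}, and Lemma~\ref{lem:bZarCar} substitutes for the paper's direct two-sided bound $c\overline{S}\le\Nz(a\theta_\nef+b[S])\le b\overline{S}$. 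One cosmetic caveat: in the degenerate case $\la_\nef=0$ the set $\{\theta,\theta_\nef\}$ is not a basis, so the final step should be phrased as in the paper, namely that $C_\nef$ meets $\R_{\ge 0}\theta_\nef+\R_{\ge 0}[S]$ only along $\R_{\ge 0}\theta_\nef$, which holds because $\la_\nef=\la^S_\nef<\la_\psef$.
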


\begin{proof} The assumptions imply that $S|_S$ is not psef. By Theorem~\ref{thm:Zarexc}~(i), $S$ is thus Zariski exceptional, and $[S]$ generates an extremal ray of $\Psef(X)$. This ray is also extremal in $C_\psef$, which proves (i). 

Next, note that $\la_\nef\le\la^S_\nef\le\la_\psef$, by (i). Pick a curve $\g\subset X$. We need to show $(\theta+\la^S_\nef\a)\cdot\g\ge 0$. This is clear if $\g\subset S$ (since $(\theta+\la^S_\nef\a)|_S$ is nef), or if $\a\cdot\g\ge 0$ (since $\theta\cdot\g\ge 0$ and $\la^S_\nef\ge 0$). Otherwise, we have $S\cdot\g\ge 0$ and $\a\cdot\g\le 0$, and we get again $(\theta+\la^S_\nef \a)\cdot\g\ge 0$ since 
$$
\theta+\la^S_\nef \a\equiv \theta_\psef+(\la^S_\nef-\la_\psef)\a= t[S]+(\la^S_\nef-\la_\psef) \a
$$
with $\la^S_\nef-\la_\psef\le 0$. This proves (ii). 

For~(iii), note that $\Nz\equiv 0$ on $\Nef(X)\supset C_\nef$ (see Theorem~\ref{thm:bZarvar}). Further, $\Nz([S])=\overline S$ (see Lemma~\ref{lem:bZex}), and hence $\Nz(a\theta_\nef+b[S])\le b\overline S$ for $a,b\ge 0$. In particular, $c:=\ord_S(a\theta_\nef+b[S])\le b$. On the other hand, \eqref{equ:negZar} yields 
$$
\Nz(a\theta_\nef+b[S])\ge\overline{\Nz(a\theta_\nef+b[S])}\ge c\overline S, 
$$
and it thus remains to see $c=b$. By Corollary~\ref{cor:restrpsef}, 
$\left((a\theta_\nef+b[S])-c[S]\right)|_S$ lies in $\Psef(S)=\Nef(S)$. By (b), we infer $a\theta_\nef+(b-c)[S]\in C_\nef$, and hence $b-c=0$, since $C_\nef=\R_{\ge 0}\theta+\R_{\ge 0}\theta_\nef$ intersects $\R_{\ge 0}\theta_\nef+\R_{\ge 0}[S]$ only along $\R_{\ge 0}\theta_\nef$. 
\end{proof}

%
%
\section{Green's functions and Zariski decompositions}\label{sec:greenzar}
In this section we fix an ample class $\om\in\Amp(X)$. 
%
%
\subsection{Green's functions and equilibrium measures}\label{sec:equmeas}
A subset $\Sigma\subset X^\an$ is \emph{pluripolar} if $\Sigma\subset\{\f=-\infty\}$ for some $\f\in\PSH(\om)$. By~\cite[Theorem~4.5]{trivval}, $\Sigma$ is nonpluripolar iff
\begin{equation*}
\te(\Sigma):=\sup_{\f\in\PSH(\om)}(\sup\f-\sup_\Sigma\f)\in [0,+\infty]
\end{equation*}
is finite. The invariant $\te(\Sigma)$, which plays an important role in~\cite{BlJ,nakstab1}, is modeled in the Alexander-Taylor capacity (which corresponds to  $e^{-\te(\Sigma)}$)
in complex analysis.

\begin{defi}\label{defi:extpp} For any subset $\Sigma\subset X^\an$ we set 
\begin{equation}\label{equ:Vsig}
\f_\Sigma=\f_{\om,\Sigma}:=\sup\{\f\in\PSH(\om)\mid \f|_{\Sigma}\le 0\}.
\end{equation}
\end{defi}
Note that $\f_\Sigma(v_\triv)=\sup\f_\Sigma=\te(\Sigma)$, and hence 
\begin{equation}\label{equ:SigPL}
\f_\Sigma\in\PL(X)\Longrightarrow\te(\Sigma)\in\Q.
\end{equation}

\begin{thm}\label{thm:extpp} For any compact subset $\Sigma\subset X^\an$, the following holds: 
\begin{itemize}
\item[(i)] $\f_\Sigma=\sup\{\f\in\CPSH(\om)\mid\f|_\Sigma\le 0\}$; in particular, $\f_\Sigma$ is lsc; 
\item[(ii)] if $\Sigma$ is pluripolar then $\f_\Sigma^\star\equiv+\infty$; 
\item[(iii)] if $\Sigma$ is nonpluripolar, then $\f_\Sigma^\star$ is $\om$-psh and nonnegative; further, $\mu_\Sigma:=\MA(\f_\Sigma^\star)$ is supported in $\Sigma$, $\int\f_\Sigma^\star\,\mu_\Sigma=0$, and $\mu_\Sigma$ is characterized as the unique minimizer of the energy $\|\mu\|$ over all Radon probability measures $\mu$ with support in $\Sigma$. 
\end{itemize}
\end{thm}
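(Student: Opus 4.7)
I prove each part in turn, using the envelope property for $\om$ (valid by the standing smoothness and characteristic zero assumptions) together with the standard regularization of $\om$-psh functions, in the spirit of the variational approach of~\cite{trivval}.

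For~(i), the inequality $\f_\Sigma\ge\sup\{\f\in\CPSH(\om)\mid \f|_\Sigma\le 0\}$ is immediate. For the reverse inequality, given any competitor $\f\in\PSH(\om)$ with $\f|_\Sigma\le 0$, I would invoke the regularization of~\cite{trivval} to obtain a decreasing net $(\f_j)\subset\CPSH(\om)$ with $\f_j\searrow\f$. Compactness of $\Sigma$ and continuity of each $\f_j$ yield, via a Dini-type argument, $\e_j:=\max(0,\sup_\Sigma\f_j)\searrow 0$, so that $\f_j-\e_j\in\CPSH(\om)$ is a continuous competitor decreasing to $\f$. The lsc property of $\f_\Sigma$ is then automatic, as a pointwise sup of continuous functions. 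For~(ii), pluripolarity of $\Sigma$ gives $\psi\in\PSH(\om)$ with $\psi|_\Sigma\equiv-\infty$; then $\psi+C$ is a competitor in~\eqref{equ:Vsig} for every $C\in\R$, so $\f_\Sigma\ge\psi+C$ pointwise. Since any $\om$-psh function is finite on the dense set $X^{\div}$, sending $C\to+\infty$ forces $\f_\Sigma\equiv+\infty$ on $X^{\div}$, whence $\f_\Sigma^\star\equiv+\infty$.

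For~(iii), when $\te(\Sigma)<\infty$ every competitor satisfies $\sup\f\le\sup_\Sigma\f+\te(\Sigma)\le\te(\Sigma)$, so $\f_\Sigma$ is uniformly bounded above; the envelope property puts $\f_\Sigma^\star$ in $\PSH(\om)$, and $\f_\Sigma^\star\ge 0$ because the constant function $0$ is a competitor. The support condition $\supp\mu_\Sigma\subset\Sigma$ together with the orthogonality $\int\f_\Sigma^\star\,\mu_\Sigma=0$ would then be proved jointly via a domination-principle argument: on any open $U\subset X^{\an}\setminus\Sigma$ the maximality of $\f_\Sigma^\star$ (inherited from~(i) and~\eqref{equ:Vsig}) forces $\mu_\Sigma|_U=0$, while $\f_\Sigma^\star=0$ on $\Sigma$ outside a pluripolar set, which carries no mass for $\mu_\Sigma$. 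The characterization of $\mu_\Sigma$ as the unique minimizer of $\|\mu\|$ over Radon probability measures supported in $\Sigma$ is then convex duality: $\f_\Sigma^\star$ is the Legendre transform of the obstacle $\mu\mapsto\|\mu\|$ restricted to measures on $\Sigma$, and strict convexity of the dual energy functional (as in~\cite{trivval}) delivers uniqueness.

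The main obstacle is the joint proof of the support and orthogonality statements in~(iii): unlike in the rational / model-function situations studied earlier in the paper, $\f_\Sigma^\star$ is merely lsc and vanishes on $\Sigma$ only quasi-everywhere, so the argument must be run in the full generality of the pluripotential theory of~\cite{trivval}. I expect the cleanest route is to invoke the domination principle from~\loccit\ directly and translate the extremal characterization of $\f_\Sigma^\star$ into the corresponding characterization of $\mu_\Sigma$.
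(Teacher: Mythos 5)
Your treatment of (i) is essentially the paper's argument: regularize a competitor $\f\in\PSH(\om)$ by a decreasing net in $\CPSH(\om)$ and use a Dini-type argument on the compact set $\Sigma$ to shift by a small constant; this is exactly how the paper proceeds. Your direct arguments for (ii) (adding arbitrarily large constants to a $\psi\in\PSH(\om)$ with $\psi|_\Sigma\equiv-\infty$, using finiteness of psh functions on the dense set $X^\div$) and for the first half of (iii) (uniform upper bound via $\te(\Sigma)<\infty$, envelope property, and $0$ being a competitor) are correct; the paper instead quotes these from~\cite[Lemma~13.15]{trivval}, but your versions are fine.

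The genuine gap is the support statement in (iii). You assert that "on any open $U\subset X^\an\setminus\Sigma$ the maximality of $\f_\Sigma^\star$ forces $\mu_\Sigma|_U=0$", but no argument is given, and the tool you name---the domination principle---cannot supply it: the domination principle lets you compare functions \emph{given} knowledge of $\supp\MA(\f_\Sigma^\star)$, so invoking it here is circular. In the complex setting one would prove local maximality by a balayage/local Dirichlet-problem modification, but no such local solvability is available in the trivially valued non-Archimedean theory; the actual mechanism (encapsulated in~\cite[Lemma~13.15]{trivval}, which the paper cites for precisely this point) is the orthogonality property of psh envelopes, i.e.\ that the Monge--Amp\`ere measure of an envelope is concentrated on the contact set with the obstacle. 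Once the support statement is granted, the rest of your sketch is in substance the paper's route, provided you make the citations explicit: $\{\f_\Sigma<\f_\Sigma^\star\}$ is pluripolar by~\cite[Theorem~13.17]{trivval} (it is a negligible set, and negligibility implying pluripolarity is itself a nontrivial theorem), pluripolar sets carry no mass for measures of finite energy by~\cite[Lemma~9.2]{trivval}, so $\int\f_\Sigma^\star\,\mu=0$ for every finite-energy $\mu$ supported in $\Sigma$; then the energy formula~\eqref{equ:enmes} gives $\|\mu\|\ge\en(\f_\Sigma^\star)=\|\mu_\Sigma\|$ directly, which is cleaner than the Legendre-duality phrasing you propose, and uniqueness follows from strict convexity of the energy as you say.
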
 

Since the energy of a Radon probability measure $\mu$ only appears in this statement, we simply recall here that it is defined as 
\begin{equation}\label{equ:enmes}
\|\mu\|=\sup_{\f\in\cE^1(\om)}\left(\en(\f)-\int\f\,\mu\right)\in [0,+\infty], 
\end{equation}
and refer to~\cite[\S 9.1]{trivval} for more details. 

\begin{defi} Assuming $\Sigma$ is nonpluripolar, we call $\mu_\Sigma$ its \emph{equilibrium measure}, and $\f_\Sigma^\star$ its \emph{Green's function}. 
\end{defi}
The latter is characterized as the normalized potential of $\mu_\Sigma$ (in the terminology of~\cite[\S 1.6]{nakstab2}), \ie the unique $\f\in\cE^1(\om)$ such that $\MA(\f)=\mu_\Sigma$ and $\int\f\,\mu_\Sigma=0$.

\begin{proof}[Proof of Theorem~\ref{thm:extpp}] Denote by $\f'_\Sigma$ the right-hand side in (i), which obviously satisfies $\f'_\Sigma\le\f_\Sigma$. Pick $\f\in\PSH(\om)$ with $\f|_\Sigma\le 0$, and write $\f$ as the limit of a decreasing net $(\f_i)$ in $\CPSH(\om)$. For any $\e>0$, a Dini type argument shows that $\f_i<\e$ on $\Sigma$ for $i$ large enough. Thus $\f_i\le\f'_\Sigma+\e$, and hence $\f\le\f'_\Sigma+\e$. This shows $\f_\Sigma\le\f'_\Sigma$, which proves (i). 

Next, (ii) and the first half of (iii) follow from~\cite[Lemma~13.15]{trivval}. Since the negligible set $\{\f_\Sigma<\f_\Sigma^\star\}$ is pluripolar (see~\cite[Theorem~13.17]{trivval}), it has zero measure for any measure $\mu$ of finite energy~\cite[Lemma~9.2]{trivval}. If $\mu$ has support in $\Sigma$, this yields $\int \f_\Sigma^\star\,\mu=\int \f_\Sigma\,\mu=0$. By~\eqref{equ:enmes} we infer $\|\mu\|\ge\en(\f_\Sigma^\star)=\|\mu_\Sigma\|$. This proves that $\mu_\Sigma$ minimizes the energy, while uniqueness follows from the strict convexity of the energy~\cite[Proposition~10.10]{trivval}. 
\end{proof}

Further mimicking classical terminology in the complex analytic setting, we introduce: 

\begin{defi} We say that a compact subset $\Sigma\subset X^\an$ is \emph{regular} if $\f_\Sigma\in\CPSH(\om)$. 
\end{defi}
In particular, $\Sigma$ is nonpluripolar (see Theorem~\ref{thm:extpp}).  
\begin{lem}\label{lem:reg} For any compact subset $\Sigma\subset X^\an$, the following hold: 
\begin{itemize}
\item[(i)] $\Sigma$ is regular iff $\f_\Sigma^\star\le 0$ on $\Sigma$; 
\item[(ii)] the regularity of $\Sigma$ is independent of $\om\in\Amp(X)$; 
\item[(iii)] if $\Sigma\subset X^\lin$ then $\Sigma$ is regular.
\end{itemize}
\end{lem}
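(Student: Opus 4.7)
The plan is to prove the three items in the natural order, with~(i) serving as the criterion that drives the other two.

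For~(i), one direction is formal: if $\f_\Sigma\in\CPSH(\om)$, then $\f_\Sigma$ is usc so $\f_\Sigma^\star=\f_\Sigma$, and each admissible function in~\eqref{equ:Vsig} being $\le 0$ on $\Sigma$ forces $\f_\Sigma\le 0$ on $\Sigma$. Conversely, the hypothesis $\f_\Sigma^\star\le 0$ on $\Sigma$ rules out $\f_\Sigma^\star\equiv+\infty$, so Theorem~\ref{thm:extpp}~(ii) tells us that $\Sigma$ is nonpluripolar; Theorem~\ref{thm:extpp}~(iii) then yields $\f_\Sigma^\star\in\PSH(\om)$, and the hypothesis makes $\f_\Sigma^\star$ itself an admissible candidate in~\eqref{equ:Vsig}. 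Combined with the standard inequality $\f_\Sigma\le\f_\Sigma^\star$, we get equality, so $\f_\Sigma$ is both usc and lsc (the latter by Theorem~\ref{thm:extpp}~(i)), hence continuous.

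For~(ii), given two ample classes $\om,\om'$, I will choose $c>0$ so large that $c\om-\om'$ is ample. Any $\om'$-psh function is then $c\om$-psh, which yields an inclusion of competitors and the pointwise bound $\f_{\om',\Sigma}\le\f_{c\om,\Sigma}=c\,\f_{\om,\Sigma}$, where the last equality is a direct consequence of $\PSH(c\om)=c\,\PSH(\om)$. Taking usc regularizations (which commute with positive scaling), we get $\f_{\om',\Sigma}^\star\le c\,\f_{\om,\Sigma}^\star$, so the criterion from~(i) for $\om$ transfers to $\om'$. Swapping the two classes gives the converse.

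For~(iii), the key input from~\cite{trivval} is that every $\f\in\PSH(\om)$ is finite-valued on the space $X^\lin$ of valuations of linear growth; this is essentially the defining property of $X^\lin$. Consequently no nonempty subset of $X^\lin$ is pluripolar, and in particular $\Sigma\subset X^\lin$ is nonpluripolar, so $\f_\Sigma^\star\in\PSH(\om)$ by Theorem~\ref{thm:extpp}~(iii). The negligible set $\{\f_\Sigma<\f_\Sigma^\star\}$, which is pluripolar by the standard result invoked in the proof of Theorem~\ref{thm:extpp}~(iii), is therefore disjoint from $X^\lin$, and in particular from $\Sigma$. Hence $\f_\Sigma^\star=\f_\Sigma\le 0$ on $\Sigma$, and~(i) yields regularity.

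The only point requiring any care is the appeal in~(iii) to the two inputs from~\cite{trivval}: finiteness of $\PSH(\om)$-functions on $X^\lin$, and pluripolarity of the negligible set $\{\f_\Sigma<\f_\Sigma^\star\}$. Both are standard in the trivially valued pluripotential theory developed in~\cite{trivval}; once they are in place, everything else is a formal manipulation of the criterion~(i).
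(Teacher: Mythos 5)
Your proof is correct and follows essentially the same route as the paper: (i) by showing $\f_\Sigma^\star$ is itself a competitor in~\eqref{equ:Vsig} (using Theorem~\ref{thm:extpp}~(ii),(iii)) and combining upper semicontinuity with the lower semicontinuity from Theorem~\ref{thm:extpp}~(i); (ii) by comparing psh cones after scaling one ample class by a large constant and invoking criterion~(i); (iii) by using that the negligible set $\{\f_\Sigma<\f_\Sigma^\star\}$ is pluripolar, hence disjoint from $X^\lin$. The only cosmetic differences (ample rather than nef for $c\om-\om'$, and making explicit that nonempty subsets of $X^\lin$ are nonpluripolar) do not change the argument.
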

\begin{proof} If $\Sigma$ is regular, then $\f_\Sigma^\star=\f_\Sigma$ vanishes on $\Sigma$. Conversely, assume $\f^\star_\Sigma\le 0$ on $\Sigma$. By (ii) and (iii) of Theorem~\ref{thm:extpp}, $\Sigma$ is necessarily nonpluripolar, and $\f_\Sigma^\star$ is $\om$-psh. It is thus a competitor in~\eqref{equ:Vsig}, which implies that $\f_\Sigma=\f_\Sigma^\star$ is $\om$-psh, and also continuous by Theorem~\ref{thm:extpp}~(i). 

Assume $\Sigma$ is regular for $\om$, and pick $\om'\in\Amp(X)$. Then $t\om-\om'$ is nef for $t\gg 1$, and hence $\PSH(\om')\subset t\PSH(\om)$. This implies $\f_{\om',\Sigma}\le t \f_{\om,\Sigma}$, and hence $\f^\star_{\om',\Sigma}\le t \f_{\om,\Sigma}$. In particular, $\f^\star_{\om',\Sigma}|_{\Sigma}\le 0$, which proves that $\Sigma$ is regular for $\om'$, by (i). 

Finally, assume $\Sigma\subset X^\lin$. Since $\{\f_\Sigma<\f_\Sigma^\star\}$ is pluripolar (see~\cite[Theorem~13.17]{trivval}), it is disjoint from $X^\lin$. As a result, $\f_\Sigma^\star\in\PSH(\om)$ vanishes on $\Sigma$, and it again follows from (i) that $\Sigma$ is regular. 
\end{proof}

%
\subsection{The Green's function of a real divisorial set}
In what follows, we consider a \emph{real divisorial set}, by which we mean a finite set $\Sigma\subset X^\div_\R$ of real divisorial valuations. By Lemma~\ref{lem:reg}~(iii), $\Sigma\subset X^\lin$ is regular, \ie $\f_\Sigma\in\CPSH(\om)$. When $\Sigma=\{v\}$ for a single $v\in X^\div_\R$, we simply write $\f_v:=\f_\Sigma$. 

\begin{exam}\label{exam:dreamy} Assume $\om=c_1(L)$ with $L\in\Pic(X)_\Q$ ample and $v\in X^\div$. Then $v$ is \emph{dreamy} (with respect to $L$) in the sense of K.Fujita iff $\f_v\in\cH(L)$; see~\cite[\S1.7,Appendix~A]{nakstab1}. 
\end{exam} 

If $v_\triv\in\Sigma$, then $\f_\Sigma\equiv 0$, and we henceforth assume $v_\triv\notin\Sigma$. Pick a smooth birational model $\pi\colon Y\to X$ which extracts each $v\in\Sigma$, \ie $v=t_v\ord_{E_v}$ for a prime divisor $E_v\subset Y$ and $t_v\in\R_{>0}$. We then introduce the effective $\R$-divisor on $Y$
$$
D:=\sum_\a t_\a^{-1} E_\a, 
$$
whose set of Rees valuations $\Ga_D$ coincides with $\Sigma$ (see Definition~\ref{defi:ReesR}). 

\begin{thm}\label{thm:Greenb} With the above notation, the following holds:
\begin{itemize}
\item[(i)] $\sup \f_\Sigma=\te(\Sigma)$ coincides with the pseudoeffective threshold
$$
\la_\psef:=\max\left\{\la\ge 0\mid\pi^\star\om-\la D\in\Psef(Y)\right\}; 
$$
\item[(ii)] $\f_{\Sigma}\in\CPSH(\om)$ is of divisorial type, and the associated family of $b$-divisors $(B_\la)_{\la\le\la_\psef}$ (see Theorem~\ref{thm:pshb}) is given by 
$$
-B_\la=\left\{
\begin{array}{ll}
\Nz(\pi^\star\om-\la D)+\la\overline{D} & \text{for }\la\in[0,\la_\psef]\\
0  & \text{for }\la\le 0.
\end{array}
\right. 
$$ 
\end{itemize}
\end{thm}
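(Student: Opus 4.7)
The plan is to verify that the family $(\widetilde B_\la)_{\la\le\la_\psef}$ prescribed by~(ii) satisfies the hypotheses of Theorem~\ref{thm:pshb}, and then to identify the associated $\om$-psh function $\widetilde\f$ with $\f_\Sigma$. Set $-\widetilde B_\la:=\Nz(\pi^\star\om-\la D)+\la\overline D$ for $\la\in[0,\la_\psef]$, and $\widetilde B_\la:=0$ for $\la\le 0$; the two formulas agree at $\la=0$ because $\pi^\star\om$ is nef, so $\Nz(\pi^\star\om)=0$. Effectivity of $\Nz$ and $\overline D$ gives $\widetilde B_\la\le 0$, and (using $\overline\om=\overline{\pi^\star\om}$ and Lemma~\ref{lem:Zarbir})
\[
\overline\om+[\widetilde B_\la]=\overline{\pi^\star\om-\la D}-[\Nz(\pi^\star\om-\la D)]=\env(\pi^\star\om-\la D),
\]
which is nef by Theorem~\ref{thm:bZarvar}. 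Concavity of $\la\mapsto\widetilde B_\la$ follows from convexity of $\Nz$ on $\Psef(Y)$ (Corollary~\ref{cor:Zarconv}); the monotonicity $\la_1\le\la_2\Rightarrow\widetilde B_{\la_1}\ge\widetilde B_{\la_2}$ is the subadditivity estimate $v(\pi^\star\om-\la_1 D)\le v(\pi^\star\om-\la_2 D)+(\la_2-\la_1)v(D)$, valid for all $v\in X^\div$ by Proposition~\ref{prop:Vmin}~(i); and continuity of $\la\mapsto v(\pi^\star\om-\la D)$ holds by Lemma~\ref{lem:minpoly}~(i), applied to the $2$-dimensional polyhedral subcone of $\Psef(Y)$ generated by $\pi^\star\om$ and $\pi^\star\om-\la_\psef D$. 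Theorem~\ref{thm:pshb} then produces $\widetilde\f\in\PSH(\om)$, of divisorial type, with $\sup\widetilde\f=\la_\psef$ and $\widetilde\f=\sup_{\la\le\la_\psef}\{\p_{\widetilde B_\la}+\la\}$.

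For the inequality $\widetilde\f\le\f_\Sigma$, note that at each $v_\alpha\in\Sigma=\Ga_D$ one has $v_\alpha(D)=1$, so $\p_{\widetilde B_\la}(v_\alpha)+\la=-v_\alpha(\pi^\star\om-\la D)$ for $\la\in[0,\la_\psef]$ and $\la$ for $\la\le 0$. All these terms are $\le 0$, while the $\la=0$ term equals $-v_\alpha(\pi^\star\om)=0$ by nefness of $\pi^\star\om$. Hence $\widetilde\f(v_\alpha)=0$, so $\widetilde\f$ is a competitor in~\eqref{equ:Vsig} and $\widetilde\f\le\f_\Sigma$.

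For the reverse inequality, pick $\f\in\PSH(\om)$ with $\f|_\Sigma\le 0$ and apply the homogeneous decomposition $\f=\sup_{\mu\le\sup\f}(\hf^\mu+\mu)$ from~\eqref{equ:hom2}. For $\mu>0$, evaluating $\hf^\mu+\mu\le\f$ on $\Sigma$ gives $\mu^{-1}\hf^\mu\le-1$ on $\Ga_D$, so Proposition~\ref{prop:homdom} (applied to $\mu^{-1}\hf^\mu\in\PSH_\hom(\mu^{-1}\pi^\star\om)$) yields $\hf^\mu+\mu\p_D\in\PSH_\hom(\pi^\star\om-\mu D)$. This homogeneous psh function is automatically nonpositive, so Proposition~\ref{prop:extfun} forces $\mu\le\la_\psef$, and $\hf^\mu+\mu\p_D\le V_{\pi^\star\om-\mu D}=-\p_{\Nz(\pi^\star\om-\mu D)}$; rearranging, $\hf^\mu\le\p_{\widetilde B_\mu}$, hence $\hf^\mu+\mu\le\widetilde\f$. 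For $\mu\le 0$ this bound is immediate since $\hf^\mu\le 0$ and $\widetilde\f\ge\p_{\widetilde B_0}=0$ on $X^\div$. Taking the supremum over $\mu$ yields $\f\le\widetilde\f$, so $\widetilde\f=\f_\Sigma$, and both (i) and (ii) follow.

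The main technical point is this reverse inequality: Proposition~\ref{prop:homdom} must be applied with real scaling parameter $\mu>0$ so as to simultaneously extract the sharp bound $\mu\le\la_\psef$ (which encodes the equality $\te(\Sigma)=\la_\psef$ in~(i)) and the quantitative upper bound $\hf^\mu\le\p_{\widetilde B_\mu}$ pinpointing the homogeneous parts of $\f_\Sigma$ in~(ii).
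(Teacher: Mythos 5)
Your proof is correct, and it rests on the same pillars as the paper's --- Proposition~\ref{prop:homdom}, Proposition~\ref{prop:extfun}, and the identity $V_{\pi^\star\om-\la D}=-\p_{\Nz(\pi^\star\om-\la D)}$ from Theorem~\ref{thm:bZarvar} --- but it organizes them differently. The paper never constructs a candidate family: it notes that $\p+\la\le\f_\Sigma\Leftrightarrow\p|_\Sigma\le-\la$ for $\p\in\PSH_\hom(\om)$, so that $\hf^\la_\Sigma=\sup\{\p\in\PSH_\hom(\om)\mid\p|_\Sigma\le-\la\}$, and then Proposition~\ref{prop:homdom} gives directly $\pi^\star\hf^\la_\Sigma=V_{\pi^\star\om-\la D}-\la\p_D$; (i) follows by comparing when the two sides are $\not\equiv-\infty$, and (ii) by Theorem~\ref{thm:Vdivtype}. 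Because the family is read off from $\f_\Sigma$ through Theorem~\ref{thm:pshb}, its concavity, monotonicity and continuity are automatic, whereas you must verify them for your candidate; you do so correctly, though note that concavity across the junction $\la=0$ uses the decreasing property together with constancy on $(-\infty,0]$, and your subadditivity estimate also silently uses $v([D])\le v(D)$, i.e.~\eqref{equ:vnum}. Your two inequalities are in effect the two inclusions hidden in the paper's one-line identity, so the approaches are close in substance; yours makes explicit why each hypothesis of Theorem~\ref{thm:pshb} holds, at the cost of extra bookkeeping. One step deserves more careful wording: the points $v_\a\in\Sigma$ are only \emph{real} divisorial valuations, while the formula $\widetilde\f=\sup_{\la}\{\p_{\widetilde B_\la}+\la\}$ in Theorem~\ref{thm:pshb} is, strictly speaking, a usc envelope that is only asserted to agree with the pointwise supremum on $X^\div$. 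To conclude $\widetilde\f\le 0$ on $\Sigma$ you should either invoke that the negligible set where the supremum differs from its usc regularization is pluripolar, hence disjoint from $X^\lin\supset X^\div_\R$, or, more simply, observe that each candidate $\p_{\widetilde B_\la}+\la$ is itself $\om$-psh with $\pi^\star\p_{\widetilde B_\la}+\la=V_{\pi^\star\om-\la D}+\la(1-\p_D)$, which equals $V_{\pi^\star\om-\la D}\le 0$ at each $v_\a$, so that every candidate is a competitor in~\eqref{equ:Vsig} and $\widetilde\f\le\f_\Sigma$ follows from upper semicontinuity (indeed continuity) of $\f_\Sigma$. With that small repair the argument is complete.
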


\begin{proof} Pick $\la\in\R$. For any $\p\in\PSH(\om)$, we have $\p+\la\le\f\Leftrightarrow\p|_\Sigma\le-\la$, and hence 
$$
\hf_\Sigma^\la=\sup\{\p\in\PSH_\hom(\om)\mid\p|_{\Sigma}\le-\la\}. 
$$
When $\la\le 0$ this yields $\hf_\Sigma^\la=0$. Assume now $\la>0$. Using Proposition~\ref{prop:homdom} and $\PSH_\hom(\pi^\star\om)=\pi^\star\PSH_\hom(\om)$, we get
\begin{equation}\label{equ:green1}
  \pi^\star\hf^\la_\Sigma
  =\sup\{\tau\in\PSH_\hom(\pi^\star\om-\la D)\}-\la\p_D=V_{\pi^\star\om-\la D}-\la\p_D. 
\end{equation}
Now the left-hand side is not identically $-\infty$ iff $\la\le\sup\f$, while for the right-hand side this holds iff $\la\le\la_\psef$, by Proposition~\ref{prop:extfun}. This proves (i), and also (ii), by Theorem~\ref{thm:Vdivtype}.
\end{proof}
\begin{cor}\label{cor:Greenactive} The center of $\f_\Sigma$ satisfies
$$
Z_X(\f_\Sigma)=\pi\left(\B_-(\pi^\star\om-\la_\psef D)\right)\cup Z_X(\Sigma).
$$
In particular, $Z_X(\f_\Sigma)$ is Zariski dense in $X$ iff $\B_-(\pi^\star\om-\la_\psef D)$ is Zariski dense in $Y$. 
\end{cor}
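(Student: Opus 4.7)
My first step is to reduce from $\f_\Sigma$ to its homogeneous part: by Lemma~\ref{lem:centhom}, $Z_X(\f_\Sigma)=Z_X(\hf_\Sigma^{\max})$, so it suffices to compute the latter. I will work on the birational model $Y$, relying on the formula obtained in the proof of Theorem~\ref{thm:Greenb}, see~\eqref{equ:green1}, which at $\la=\la_\psef$ reads
\[
\pi^{\star}\hf_\Sigma^{\max}=V_{\pi^{\star}\om-\la_\psef D}-\la_\psef\,\p_D,
\]
together with the general compatibility $\pi(Z_Y(\pi^{\star}\p))=Z_X(\p)$ valid for any $\p\in\PSH(\om)$. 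The latter follows from the identity $c_X\circ\pi^{\an}=\pi\circ c_Y$ of center maps, the surjectivity of $\pi^{\an}\colon Y^{\an}\to X^{\an}$, and the definition of $Z_X$ as a union of orbit closures $\overline{\{c_X(v)\}}$.

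I will then view $\pi^{\star}\hf_\Sigma^{\max}$ as the sum of the $(\pi^{\star}\om-\la_\psef D)$-psh function $V_{\pi^{\star}\om-\la_\psef D}$ (Proposition~\ref{prop:extfun}) and the $\la_\psef[D]$-psh function $-\la_\psef\p_D$ (Example~\ref{exam:pdiv}). Lemma~\ref{lem:centsum} then gives
\[
Z_Y(\pi^{\star}\hf_\Sigma^{\max})=Z_Y(V_{\pi^{\star}\om-\la_\psef D})\cup Z_Y(-\p_D).
\]
By Theorem~\ref{thm:Vdivtype} the first summand equals $\B_-(\pi^{\star}\om-\la_\psef D)$. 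For the second, I claim $Z_Y(-\p_D)=\supp D$: if $c_Y(v)\notin\supp D$ then a local defining section of $D$ is a unit near $c_Y(v)$, forcing $\p_D(v)=0$, so $\{-\p_D<0\}\subset c_Y^{-1}(\supp D)$ and hence $Z_Y(-\p_D)\subset\supp D$; the reverse inclusion comes from $\ord_{E_\a}\in\{-\p_D<0\}$ for each component $E_\a$ of $D$. Pushing forward by $\pi$ and using $\pi(E_\a)=Z_X(\ord_{E_\a})=Z_X(v_\a)$ then produces exactly $Z_X(\Sigma)$, giving the displayed formula.

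For the density statement, since $v_\triv\notin\Sigma$, the set $Z_X(\Sigma)=\bigcup_\a Z_X(v_\a)$ is a proper Zariski closed subset of $X$. Therefore $Z_X(\f_\Sigma)$ is Zariski dense iff $\pi(\B_-(\pi^{\star}\om-\la_\psef D))$ is. Because $\pi$ is surjective birational between smooth projective varieties of the same dimension, any closed proper $Z\subset Y$ satisfies $\dim\overline{\pi(Z)}\le\dim Z<\dim X$, so $\overline{\pi(Z)}$ is closed proper in $X$; conversely, $\pi^{-1}$ of any closed proper subset of $X$ is closed proper in $Y$, yielding the desired equivalence. The main (minor) subtlety I expect is the verification $Z_Y(-\p_D)=\supp D$: since $-\p_D$ is not literally of the form $\log|\fb|$ treated in Example~\ref{exam:ideal}, one must argue directly via the interpretation $\p_D(v)=v(D)$ through local defining sections and exploit that $\supp D$ is closed under specialization on $Y$.
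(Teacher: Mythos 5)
Your proof is correct and follows essentially the same route as the paper: reduce to $\hf_\Sigma^{\max}$ via Lemma~\ref{lem:centhom}, use the identity $\pi^\star\hf_\Sigma^{\max}=V_{\pi^\star\om-\la_\psef D}-\la_\psef\p_D$ from the proof of Theorem~\ref{thm:Greenb}, split the center with Lemma~\ref{lem:centsum}, and identify the two pieces via Theorem~\ref{thm:Vdivtype} and the support of $D$. The only remark is that your worry about Example~\ref{exam:ideal} is unnecessary: since $-\p_D=\sum_\a t_\a^{-1}\log|\cO_Y(-E_\a)|$, it is literally of the form $\sum_i t_i\log|\fb_i|$ treated there, which is how the paper concludes $Z_Y(-\la_\psef\p_D)=Z_Y(\Sigma)$; your direct verification is a harmless substitute.
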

\begin{proof}
  By Lemma~\eqref{lem:centhom}, we have
  \[
    Z_X(\f_\Sigma)
    =Z_X(\hf_\Sigma^{\max})
    =\pi(Z_Y(\pi^*\hf_\Sigma^{\max})).
  \]
  It follows from Theorem~\ref{thm:Greenb} and its proof that
  \[
    \pi^\star\hf^{\max}_\Sigma=V_{\pi^\star\om-\la_\psef D}-\la_\psef\p_D.
  \]
  Now $Z_Y(V_{\pi^\star\om-\la_\psef D})=\B_-(\pi^\star\om-\la_\psef D)$ by 
  Theorem~\ref{thm:Vdivtype}, whereas we see from Example~\ref{exam:ideal} that $Z_Y(-\la_\psef\p_D)=Z_Y(\Sigma)$, so we conclude using Lemma~\ref{lem:centsum}.
\end{proof}
%
%
\subsection{Dimension one and two}\label{sec:Greensurf}
In this section we consider the case $\dim X\le 2$.
\begin{prop}\label{prop:greencurve1} If $\dim X=1$, then for any real divisorial set $\Sigma\subset X^\div_\R$, we have $\f_\Sigma\in\RPL^+(X)$. If $\om$ is rational and $\Sigma\subset X^\div$, then we further have $\f_\Sigma\in\PL^+(X)$.
\end{prop}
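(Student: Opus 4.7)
The plan is to apply Theorem~\ref{thm:Greenb} directly, exploiting the fact that on a curve pseudoeffectivity coincides with nefness, so that the $b$-divisorial Zariski decomposition in the statement of that theorem becomes trivial.

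First I would set $Y=X$ and $\pi=\id$, which is legal since every divisorial valuation on a smooth curve is already of the form $t\ord_p$ for a closed point $p\in X$. Writing $\Sigma=\{v_\alpha=t_\alpha\ord_{p_\alpha}\}$ with $t_\alpha\in\R_{>0}$ and setting
\[
D:=\sum_\alpha t_\alpha^{-1}\,p_\alpha,
\]
we have $\Gamma_D=\Sigma$. Since $\dim X=1$ one has $\Psef(X)=\Nef(X)=\{\deg\ge 0\}$, so $\om-\lambda D$ is psef iff
\[
\lambda\le\lambda_\psef:=\deg\om/\deg D,
\]
and whenever psef it is automatically nef. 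By Theorem~\ref{thm:bZarvar} this yields $\Nz(\om-\lambda D)=0$ for all $\lambda\in[0,\lambda_\psef]$, so the family of $b$-divisors provided by Theorem~\ref{thm:Greenb}(ii) degenerates to
\[
B_\lambda=\begin{cases}-\lambda\,\overline{D}&\text{if }0\le\lambda\le\lambda_\psef,\\ 0&\text{if }\lambda\le 0.\end{cases}
\]

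Next I would recover $\f_\Sigma$ from the correspondence of Theorem~\ref{thm:pshb}: $\f_\Sigma=\sup_{\lambda\le\lambda_\psef}(\p_{B_\lambda}+\lambda)$. Since $\p_{B_\lambda}=-\lambda\p_D$ on $[0,\lambda_\psef]$ and $\p_{B_\lambda}=0$ for $\lambda\le 0$, a pointwise optimization in $\lambda$ gives the closed-form expression
\[
\f_\Sigma=\max\bigl\{-\lambda_\psef\,\p_D+\lambda_\psef,\ 0\bigr\}.
\]
Expanding $-\p_D=\sum_\alpha t_\alpha^{-1}\log|\fm_{p_\alpha}|$, where $\fm_{p_\alpha}\subset\cO_X$ is the maximal ideal of $p_\alpha$, shows that $-\lambda_\psef\p_D\in\PL^+_\hom(X)_\R$ since the coefficients $\lambda_\psef/t_\alpha$ are non-negative reals. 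Lemma~\ref{lem:RPL}, applied with $\psi_1=-\lambda_\psef\p_D$, $\lambda_1=\lambda_\psef$ and $\psi_2=0$, $\lambda_2=0$, then places $\f_\Sigma$ in $\RPL^+(X)$. In the rational case, $\lambda_\psef\in\Q$ and every $t_\alpha\in\Q$, so the coefficients $\lambda_\psef/t_\alpha$ are non-negative rationals, $-\lambda_\psef\p_D\in\PL^+_\hom(X)$, and the same formula exhibits $\f_\Sigma$ directly as an element of $\PL^+(X)$.

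There is essentially no obstacle: the entire content is that $\Nz\equiv 0$ on $\Psef(X)$ in dimension one, after which Theorem~\ref{thm:Greenb} produces the explicit PL formula above. The only care required is bookkeeping of signs and rationality when identifying $-\p_D$ with a linear combination of $\log|\fm_{p_\alpha}|$'s, and observing (trivially) that the case $\Sigma=\emptyset$ or $v_\triv\in\Sigma$ is already handled by $\f_\Sigma\equiv 0$.
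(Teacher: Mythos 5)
Your argument is correct in substance, but it takes a different route from the paper's own proof, which simply quotes the closed formula $\f_\Sigma=A\max\{1+\sum_i t_i^{-1}\log|\fm_{p_i}|,0\}$ with $A\sum_i t_i^{-1}=\deg\om$ from~\cite[Example~3.19]{trivval} and reads off both conclusions. What you do instead is rederive that formula from the Zariski-decomposition machinery: since $\Psef(X)=\Nef(X)$ in dimension one, $\Nz\equiv 0$ on the psef cone, the family of Theorem~\ref{thm:Greenb}~(ii) collapses to $B_\la=-\la\overline D$ on $[0,\la_\psef]$, and the reconstruction~\eqref{equ:pshb} gives $\f_\Sigma=\la_\psef\max\{1-\p_D,0\}$ with $\la_\psef=\deg\om/\deg D$, i.e.\ exactly the paper's formula with $A=\la_\psef$. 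This is in effect the same argument the paper uses for Proposition~\ref{prop:nefpsef} (whose hypothesis $\Nef(X)=\Psef(X)$ is automatic for curves), so your proof buys a self-contained derivation within the paper's framework at the cost of invoking Theorems~\ref{thm:pshb} and~\ref{thm:Greenb}, while the paper's proof is shorter but leans on an external computation. Your handling of rationality ($\la_\psef\in\Q$ and $\la_\psef/t_\alpha\in\Q_{\ge0}$ when $\om$ is rational and $\Sigma\subset X^\div$) matches the paper's.

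One step does need repair: the claim $\Gamma_D=\Sigma$ fails if two elements of $\Sigma$ have the same center. For instance $\Sigma=\{\ord_p,\,2\ord_p\}$ gives $D=\tfrac32\,p$ and $\Gamma_D=\{\tfrac23\ord_p\}\ne\Sigma$, and your formula then computes the Green's function of $\Gamma_D$ rather than of $\Sigma$ (the correct answer is $\f_{\{\ord_p\}}=\deg\om\cdot\max\{1+\log|\fm_p|,0\}$, not $\tfrac23\deg\om\cdot\max\{1+\tfrac32\log|\fm_p|,0\}$). So before applying Theorem~\ref{thm:Greenb} you must, as the paper does, discard at each point all but the valuation with the smallest $t$; this is harmless because $t\mapsto\f(t\ord_p)$ is decreasing for $\f\in\PSH(\om)$, so the constraint at the smallest $t$ implies the others. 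Also, a trivial erratum: if $\Sigma=\emptyset$ then $\f_\Sigma\equiv+\infty$, not $0$; the statement implicitly assumes $\Sigma\ne\emptyset$, and the genuinely degenerate case to set aside is $v_\triv\in\Sigma$, which you do treat correctly.
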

\begin{proof}
  We may assume $v_\triv\not\in\Sigma$, or else $\f_\Sigma\equiv0$. Thus assume
  $\Sigma=\{v_i\}_{i\in I}$, where $v_i=t_i\ord_{p_i}$, $t_i\in\R_{>0}$, and $p_i\in X$ is a closed point. We may assume $p_i\ne p_j$ for $i\ne j$, or else $\f_\Sigma=\f_{\Sigma'}$ for  $\Sigma'=\{v_i\}_{i\in I'}$, where $I'\subset I$ is defined by $i\in I'$ iff for all $j\ne i$, either $p_j\ne p_i$ or $t_j>t_i$.
  Under these assumptions,
  \[\f_\Sigma=A\max\{1+\sum_it_i^{-1}\log|\fm_{p_i}|,0\},
  \]
  where $A>0$ satisfies $A\sum_it_i^{-1}=\deg\om$, see~\cite[Example~3.19]{trivval}. Thus $\f_\Sigma\in\RPL^+(X)$. Further, if $\Sigma\subset X^\div$, then $t_i\in\Q_{>0}$ for all $i$, so if $\om$ is rational, then $A\in\Q_{>0}$, and hence $\f_\Sigma\in\PL^+(X)$.
\end{proof}
\begin{thm}\label{thm:Greensurf} If $\dim X=2$, then for any real divisorial set $\Sigma\subset X^\div_\R$, we have $\f_\Sigma\in\RPL^+(X)$. If $\om$ is rational and $\Sigma\subset X^\div$, then we further have 
\begin{equation}\label{equ:fPL1}
\f_\Sigma\in\PL(X)\Leftrightarrow\f_\Sigma\in\PL^+(X)\Leftrightarrow\te(\Sigma)\in\Q.
\end{equation}
\end{thm}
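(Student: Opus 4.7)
The plan is to apply Theorem~\ref{thm:Greenb} and exploit the piecewise linearity of Zariski decompositions on surfaces afforded by Corollary~\ref{cor:bZarsurf} and Proposition~\ref{prop:ZarPL}. Fixing a smooth model $\pi\colon Y\to X$ extracting each $v\in\Sigma$ and setting $D:=\sum_\a t_\a^{-1}E_\a$, Theorem~\ref{thm:Greenb} yields, with $\la_\psef:=\te(\Sigma)$, the representation
\[
\f_\Sigma=\sup_{0\le\la\le\la_\psef}\bigl(\p_{B_\la}+\la\bigr),
\qquad -B_\la=\Nz(\pi^\star\om-\la D)+\la\overline{D}.
\]

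Since $\dim Y=2$, Corollary~\ref{cor:bZarsurf} identifies $\Nz(\pi^\star\om-\la D)$ with the $\R$-Cartier $b$-divisor attached to the classical negative part $N_\la$ on $Y$, so each $B_\la$ is $\R$-Cartier and determined on $Y$. Proposition~\ref{prop:ZarPL} combined with Lemma~\ref{lem:minpoly}~(ii) ensures that $\Nz$ is $\Q$-PL on the polyhedral subcone of $\Psef(Y)$ generated by $\pi^\star\om$ and $\pi^\star\om-\la_\psef D$; accordingly there exist breakpoints $0=\la_0<\la_1<\dots<\la_N=\la_\psef$ such that $\la\mapsto B_\la$ is affine on each $[\la_{i-1},\la_i]$. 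Since $\la\mapsto\p_{B_\la}(v)+\la$ is then affine in $\la$ at every $v\in X^\div$, its supremum on each subinterval is attained at an endpoint, giving
\[
\f_\Sigma=\max_{0\le i\le N}\bigl(\p_{B_{\la_i}}+\la_i\bigr).
\]

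By Lemma~\ref{lem:RPL}, it remains to show $\p_{B_{\la_i}}\in\PL^+_\hom(X)_\R$ for each $i$; via Proposition~\ref{prop:CarPL}, this amounts to checking that the antieffective $\R$-Cartier $b$-divisor $B_{\la_i}$ is relatively semiample over $X$, \ie that $(B_{\la_i})_Y=-(N_{\la_i}+\la_i D)$ is $\pi$-semiample. Since $\env_Y(\pi^\star\om-\la_i D)=\pi^\star\om+(B_{\la_i})_Y$ is nef on $Y$ by Theorem~\ref{thm:Zarsurf}~(ii) and $\pi^\star\om\cdot C=0$ for every $\pi$-contracted curve $C$, the divisor $(B_{\la_i})_Y$ is already $\pi$-nef; the main technical step of the proof is upgrading this to $\pi$-semiampleness, which I expect to follow from the classical Artin/Zariski contractibility theorem for birational morphisms of smooth surfaces.

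For the rational equivalences~\eqref{equ:fPL1}, the implications $\f_\Sigma\in\PL^+(X)\Rightarrow\f_\Sigma\in\PL(X)$ and $\f_\Sigma\in\PL(X)\Rightarrow\te(\Sigma)\in\Q$ are respectively trivial and given by~\eqref{equ:SigPL}. Conversely, if $\om$ is rational, $\Sigma\subset X^\div$, and $\te(\Sigma)=\la_\psef\in\Q$, then the ray $\la\mapsto\pi^\star\om-\la D$ is rational in $\Num(Y)$, forcing every breakpoint $\la_i$ to be rational; the rational linearity of $\Nz$ on each subcone then makes each $B_{\la_i}$ a $\Q$-Cartier $b$-divisor, giving $\p_{B_{\la_i}}\in\PL^+_\hom(X)$ and hence $\f_\Sigma\in\PL^+(X)$.
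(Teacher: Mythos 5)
Your argument is correct and follows essentially the same route as the paper: Theorem~\ref{thm:Greenb} plus the $\Q$-piecewise linearity of the surface Zariski decomposition (Proposition~\ref{prop:ZarPL}, with Corollary~\ref{cor:bZarsurf} making the $B_\la$ $\R$-Cartier and determined on $Y$) gives the finite maximum over breakpoints, and Proposition~\ref{prop:CarPL} reduces everything to relative semiampleness of the $B_{\la_i}$, with the rational case handled exactly as in the paper via~\eqref{equ:SigPL} and rationality of the cone and breakpoints. The one step you defer---that a relatively nef antieffective divisor is relatively semiample in dimension $\le 2$---is precisely the paper's Lemma~\ref{lem:Lip}, proved by writing the relatively nef $\R$-divisor as a positive combination of rational relatively nef divisors (the relative nef cone being rational polyhedral) and applying Lipman's theorem to each, so your appeal to classical surface contractibility is justified.
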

We will see in Example~\ref{exam:absurf2} that $\te(\Sigma)$ can be irrational.
\begin{lem}\label{lem:Lip} Assume $\dim X\le 2$, and pick $B\in\Carb(X)_\R$. Then $B$ is relatively nef iff it is relatively semiample.
\end{lem}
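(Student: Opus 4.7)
The implication ``relatively semiample $\Rightarrow$ relatively nef'' is immediate: any $\R_{\ge 0}$-combination of $\pi$-globally generated $\Q$-Cartier divisors pairs non-negatively with every $\pi$-contracted curve. For the converse, fix a smooth determination $\pi\colon Y\to X$ of $B$, so that $B_Y\in\Div(Y)_\R$ is $\pi$-nef by assumption. When $\dim X\le 1$, any birational morphism of smooth curves is an isomorphism, so there are no contracted curves and the statement is vacuous; I therefore focus on the case $\dim X=2$.

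My first step is a reduction from $\R$-Cartier to $\Q$-Cartier by polyhedrality. Let $E_1,\dots,E_r$ denote the $\pi$-exceptional prime divisors of $Y$, and let $V\subset\Div(Y)_\R$ be the finite-dimensional subspace spanned by $E_1,\dots,E_r$ together with the components of $B_Y$. Since $V$ is spanned by prime divisors and the defining inequalities have integer coefficients, the relative nef cone $V_{\pi\text{-nef}}:=\{D\in V\mid D\cdot E_j\ge 0\text{ for all }j\}$ is a rational polyhedral cone in $V$, and it contains $B_Y$. Consequently $B_Y$ decomposes as an $\R_{\ge 0}$-combination of $\pi$-nef $\Q$-Cartier divisors, and since relative semiampleness in the $\R$-Cartier sense is closed under such combinations by definition, it suffices to treat $\Q$-Cartier $\pi$-nef divisors.

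The substance of the proof is then the following: every $\pi$-nef $\Q$-Cartier divisor $D$ on the smooth surface $Y$, with birational $\pi\colon Y\to X$ to the smooth surface $X$, is $\pi$-semiample. This is the relative base-point-free theorem in dimension two. I would prove it by factoring $\pi$ as a composition of point blowups and inducting on the number of blowups. In the base case, a single blowup $\tau\colon\tilde X\to X$ at a closed point $p$ with exceptional $E\cong\P^1$, any $\tau$-nef $\Q$-Cartier divisor has the form $\tau^\star D_X+aE$ with $a\le 0$ (forced by $(\tau^\star D_X+aE)\cdot E=-a\ge 0$), and this is $\tau$-semiample since $\tau^\star D_X$ is $\tau$-trivial and $\cO_{\tilde X}(-E)=\fm_p\cdot\cO_{\tilde X}$ is $\tau$-globally generated by the universal property of blowup. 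The inductive step descends semiampleness one blowup at a time via push-forward. Alternatively, and more expediently, one may invoke Grauert--Riemenschneider together with relative Kawamata--Viehweg vanishing on smooth surfaces to obtain the conclusion in a single stroke. The main technical obstacle is precisely this $\Q$-Cartier base-point-free statement in dimension two; granting it, the lemma follows from the polyhedral reduction above.
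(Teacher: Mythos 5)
Your reduction from $\R$- to $\Q$-coefficients is exactly the paper's: the relative nef cone in the (rational) span of the exceptional primes and the components of $B_Y$ is rational polyhedral, so $B_Y$ is a positive combination of relatively nef $\Q$-divisors, and $\R$-semiampleness is by definition stable under such combinations. The divergence is in the key $\Q$-Cartier statement (a relatively nef $\Q$-divisor on a surface, birational over the smooth surface $X$, is relatively semiample): the paper does not prove this but cites Lipman's Theorem~12.1~(ii) on rational surface singularities, whereas you propose to prove it, and both of your proposed routes have genuine gaps.

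For the induction on blowups, the step you call ``descends semiampleness one blowup at a time via push-forward'' does not close. Writing $\sigma\colon Y\to Y'$ for the last blowup with exceptional curve $E$, one has $D=\sigma^\star(\sigma_\star D)+aE$ with $a\le 0$, and $\sigma_\star D$ is indeed nef (hence, by induction, semiample) relative to $X$; but $aE$ is only $\sigma$-semiample, not nef relative to $X$ (its intersection with the strict transform of a $\pi$-contracted curve through the blown-up point is $\ge 0$ only if $a=0$), so the two pieces cannot simply be added to conclude that $D$ is $\pi$-semiample. The ``single stroke'' via Grauert--Riemenschneider and relative Kawamata--Viehweg is also not available as stated: relative bigness is automatic for birational $\pi$, but the relative base-point-free theorem requires $aD-K_Y$ to be $\pi$-nef, which fails whenever $D\cdot E_i=0$ for an exceptional curve with $E_i^2\le -2$ (then $K_Y\cdot E_i=-2-E_i^2>0$). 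What is really needed is Lipman's theorem that over a surface with rational singularities, relatively nef implies relatively generated; its proof is a nontrivial induction exploiting $R^1\pi_\star\cO_Y=0$ (or, alternatively, one contracts the $D$-trivial exceptional curves and descends $D$ across the resulting rational singularity), not a formal consequence of vanishing. So either supply such an argument in full or, as the paper does, invoke Lipman's Theorem~12.1~(ii).
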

\begin{proof} Assume $B$ is relative nef, and pick a determination $\pi\colon Y\to X$ of $B$. The relatively nef cone of $\Num(Y/X)$ is dual to the cone generated by the (finite) set of $\pi$-exceptional prime divisors, and is thus a rational polyhedral cone. As a consequence, we can write $B_Y=\sum_i t_i D_i$ with $t_i>0$ and $D_i\in\Div(Y)_\Q$ relatively nef. By~\cite[Theorem~12.1~(ii)]{Lip}, each $D_i$ is relatively semiample, and the result follows. 
\end{proof}

\begin{proof}[Proof of Theorem~\ref{thm:Greensurf}] Use the notation of Theorem~\ref{thm:Greenb}. By Proposition~\ref{prop:ZarPL}, the Zariski decomposition is $\Q$-PL on the cone 
$$
C=(\R_+\pi^\star\om+\R_+[-D])\cap\Psef(Y)=\R_+\pi^\star\om+\R_+(\pi^\star\om-\la_\psef[D]).
$$
We can thus find $0=\la_1<\la_2<\dots<\la_N=\la_\psef$ such that 
$$
\la\mapsto B_\la=-(\Nz(\pi^\star\om-\la[D])+\la\overline{D})
$$ 
is affine linear on $[\la_i,\la_{i+1}]$ for $1\le i<N$. Setting $B_i:=B_{\la_i}$, it follows that 
$$
\f_\Sigma=\sup_{\la\in[0,\la_\psef]}\{\p_{B_\la}+\la\}=\max_{1\le i\le N}\{\p_{B_i}+\la_i\}. 
$$
Since $\overline{\om}+[B_i]$ is nef, the antieffective divisor $B_i$ is relatively nef, and hence relatively semiample (see Lemma~\ref{lem:Lip}). By Proposition~\ref{prop:CarPL}, we infer $\p_{B_i}\in\PL^+_\hom(X)_\R$, and hence $\f_\Sigma\in\RPL^+(X)$. 

Now assume $\om$ and $\te(\Sigma)=\la_\psef$ are both rational, and that $\Sigma\subset X^\div$. Then $D$ is rational as well, and $C$ is thus a rational polyhedral cone. Since the Zariski decomposition on $C$ is the restriction of a $\Q$-PL map on $\Num(Y)$, this implies that the $\la_i$ above can be chosen rational. Using again that the Zariski decomposition is $\Q$-PL on $C$, we infer that $B_i$ is a $\Q$-divisor, hence $\p_{B_i}\in\PL^+_\hom(X)$, which shows $\f_\Sigma\in\PL^+(X)$. The rest follows from~\eqref{equ:SigPL}. 
\end{proof}
%
%
\section{Examples of Green's functions}\label{sec:exgreen}
We now exhibit examples of Green's functions with various types of behavior. These examples serve as the underpinnings of Theorems~A and~B of the introduction.
%
%
\subsection{Divisors on abelian varieties}
As a direct application of Theorem~\ref{thm:Greenb}, we show: 

\begin{prop}\label{prop:nefpsef} Assume $\Nef(X)=\Psef(X)$. Consider a real divisorial set $\Sigma=\{v_\a\}\subset$ with $v_\a=t_\a\ord_{E_\a}$ for $E_\a\subset X$ prime, and set $D:=\sum_\a t_\a^{-1} E_\a$. Then 
$$
\te(\Sigma)=\la_\psef=\sup\left\{\la\ge 0\mid \om-\la D\in\Psef(X)\right\}
$$
and 
$$
\f_\Sigma=\te(\Sigma)\max\left\{0,1-\p_D\right\}.
$$
In particular, $\f_\Sigma\in\RPL^+(X)$. If we further assume $\Sigma\subset X^\div$, then 
\begin{equation}\label{equ:fPL2}
\f_\Sigma\in\PL(X)\Leftrightarrow\f_\Sigma\in\PL^+(X)\Leftrightarrow\te(\Sigma)\in\Q.
\end{equation}
\end{prop}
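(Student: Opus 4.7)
The plan is to apply Theorem~\ref{thm:Greenb} to the trivial birational model $\pi = \id \colon X \to X$, which is admissible since each real divisorial valuation $v_\a \in \Sigma$ is already of the form $t_\a \ord_{E_\a}$ for a prime divisor $E_\a \subset X$. With this choice, the divisor appearing in Theorem~\ref{thm:Greenb} is exactly our $D = \sum_\a t_\a^{-1}E_\a$, and part~(i) of that theorem directly yields the formula $\te(\Sigma) = \sup\f_\Sigma = \la_\psef = \sup\{\la \ge 0 \mid \om - \la D \in \Psef(X)\}$.

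The key simplification enabled by the hypothesis $\Nef(X) = \Psef(X)$ is that the $b$-divisorial Zariski decomposition becomes trivial along the entire segment of psef classes involved. Indeed, for any $\la \in [0,\la_\psef]$ the class $\om - \la D$ is psef and hence nef by assumption, so Proposition~\ref{prop:extfun} gives $V_{\om - \la D} \equiv 0$, which by the definition of $\Nz$ (via Lemma~\ref{lem:Bdivtype}) forces $\Nz(\om - \la D) = 0$ as a $b$-divisor. Feeding this back into Theorem~\ref{thm:Greenb}(ii) the family of $b$-divisors reduces to $B_\la = -\la\overline{D}$ on $[0,\la_\psef]$ and $B_\la = 0$ for $\la \le 0$, so that $\p_{B_\la} = -\la\p_D$ on the first interval. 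Substituting into the homogeneous decomposition $\f_\Sigma = \sup_{\la \le \la_\psef}\{\p_{B_\la} + \la\}$ from Theorem~\ref{thm:pshb} reduces the computation to maximizing the linear function $\la \mapsto \la(1 - \p_D)$ over $[0,\la_\psef]$ (together with $\la \le 0$ which contributes $0$). Since the max of a linear function over an interval is attained at an endpoint, one reads off
\[
  \f_\Sigma = \max\{0,\la_\psef(1-\p_D)\} = \te(\Sigma)\,\max\{0,1-\p_D\},
\]
which is the claimed formula.

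The PL regularity then follows by inspection. The antieffective $\R$-Cartier $b$-divisor $-\overline{D}$ is relatively semiample over $X$ (it is determined on $X$), so Proposition~\ref{prop:CarPL} gives $-\p_D \in \PL^+_\hom(X)_\R$, hence $1 - \p_D \in \PL^+(X)_\R$. Since $\RPL^+(X)$ is stable under finite max and under positive scaling, and contains both $0$ and $\te(\Sigma)(1-\p_D)$, we conclude $\f_\Sigma \in \RPL^+(X)$. Assume now $\Sigma \subset X^\div$, so $t_\a \in \Q_{>0}$ and $D$ is a $\Q$-divisor; then $-\p_D \in \PL^+_\hom(X)$, so if $\te(\Sigma) \in \Q$ one checks directly that $\te(\Sigma)(1-\p_D) \in \PL^+(X)$ (using closure of $\PL^+(X)$ under positive rational scaling and addition of a rational constant), whence $\f_\Sigma = \max\{0,\te(\Sigma)(1-\p_D)\} \in \PL^+(X) \subset \PL(X)$. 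The converse implications $\f_\Sigma \in \PL(X) \Rightarrow \te(\Sigma) \in \Q$ and $\PL^+(X) \subset \PL(X)$ are immediate from~\eqref{equ:SigPL} and the definitions.

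The argument is fairly mechanical once Theorem~\ref{thm:Greenb} is in hand; the only real point of vigilance is making sure the conversion between $b$-divisors and PL-type homogeneous functions (through Proposition~\ref{prop:CarPL}) handles the sign of effectivity correctly, i.e.\ that one works with $-\overline{D}$ rather than $\overline{D}$ when invoking $\Carbp(X)_\R$.
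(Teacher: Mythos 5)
Your proposal is correct and follows essentially the same route as the paper: apply Theorem~\ref{thm:Greenb} with $\pi=\id$, observe that $\Nef(X)=\Psef(X)$ forces $\Nz(\om-\la D)=0$ so that $\hf_\Sigma^\la=-\la\p_D$, and read off $\f_\Sigma=\la_\psef\max\{0,1-\p_D\}$, with the PL statements then following from $-\p_D\in\PL^+_\hom(X)_\R$ (resp.\ $\PL^+_\hom(X)$ in the rational case) and~\eqref{equ:SigPL}. The only cosmetic difference is that you invoke Proposition~\ref{prop:CarPL} for the membership $-\p_D\in\PL^+_\hom(X)_\R$, where the paper simply writes $-\p_D=\sum_\a t_\a^{-1}\log|\cO_X(-E_\a)|$ directly.
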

\begin{proof} Using the notation of Theorem~\ref{thm:Greenb}, we have $\Nz(\om-\la D)=0$ for $\la\le\la_{\psef}=\te(\Sigma)$. Thus $\hf_\Sigma^\la=-\la\p_D$, and hence 
$$
\f_\Sigma=\sup_{0\le\la\le\la_{\psef}}\{\la-\la\p_D\}=\la_{\psef}\max\left\{0,1-\p_D\right\}.
$$
Since $-\p_D=\sum_\a t_\a^{-1}\log|\cO_X(-E_\a)|$ lies in $\PL^+(X)_\R$, it follows that $\f_\Sigma\in\RPL^+(X)$. If $\Sigma\subset X^\div$, then $D$ is a $\Q$-divisor, and hence $-\p_D\in\PL^+_\hom(X)$. If we further assume $\te(\Sigma)\in\Q$, we get $\f_\Sigma\in\PL^+(X)$, and the remaining implication follows from~\eqref{equ:SigPL}. 
\end{proof}

\begin{exam}\label{exam:absurf2} Suppose $X$ is an abelian surface, $\om=c_1(L)$ with $L\in\Pic(X)_\Q$ ample, and $v=\ord_E$ with $E\subset X$ a prime divisor. Then $\Nef(X)=\Psef(X)$, and $\te(v)=\la_\psef$ is the smallest root of the quadratic equation $(L-\la E)^2=0$. If $X$ has Picard number $\rho(X)\ge 2$, then $\la_\psef$ is irrational for a typical choice of $L$ and $E$, and hence $\f_v\notin\PL(X)$. In particular, $v$ is not dreamy (with respect to $L$) in the sense of Fujita, see Example~\ref{exam:dreamy}. 
\end{exam}
%
\subsection{The Cutkosky example}
Building on a construction of Cutkosky~\cite{Cut} and Proposition~\ref{prop:Zar2D} (itself based on~\cite[\S 6.5]{Per}), we provide an example of a divisorial valuation on $\P^3$ for which~\eqref{equ:fPL1} fails. This relies on the following general result. 

\begin{prop}\label{prop:Cut} Consider a flag of smooth subvarieties $Z\subset S\subset X$ with $\codim S=1$, $\codim Z=2$ and ideals $\fb_S\subset\fb_Z\subset\cO_X$, and assume that 
\begin{itemize}
\item[(i)] $S\equiv \om$; 
\item[(ii)] $\Nef(S)=\Psef(S)$; 
\item[(iii)] $\om|_S-Z$ is not nef on $S$, \ie $\la^S_\nef:=\sup\{\la\ge 0\mid \om|_S-\la [Z]\in\Nef(S)\}<1$. 
\end{itemize}
The Green's function of $v:=\ord_Z\in X^\div$ is then given by 
$$
\f_v=\max\left\{0,\la^S_\nef(\log|\fb_Z|+1),\log|\fb_S|+1\right\}.
$$
In particular, $\te(v)=1$, $\f_v\in\RPL^+(X)$, and 
$$
\f_v\in\PL(X)\Leftrightarrow\f_v\in\PL^+(X)\Leftrightarrow\la^S_\nef\in\Q.
$$
\end{prop}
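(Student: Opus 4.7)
The plan is to apply Theorem~\ref{thm:Greenb} with $\Sigma=\{v\}$, and to compute the Zariski decomposition of the ray that appears there via Proposition~\ref{prop:Zar2D}. Let $\pi\colon Y:=\mathrm{Bl}_Z X\to X$ be the blowup of $X$ along the smooth subvariety $Z$, with exceptional prime divisor $E$. Then $v=\ord_Z=\ord_E$, and with $D=E$ one has $\Gamma_D=\{v\}$, so everything reduces to analyzing the ray $\pi^\star\om-\la E\in\Num(Y)$, $\la\ge 0$.

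The plan is to apply Proposition~\ref{prop:Zar2D} to the cone $C:=\R_{\ge 0}\pi^\star\om+\R_{\ge 0}(-E)$ with distinguished prime divisor $\tilde S$, the strict transform of $S$. Since $Z$ is a smooth divisor in the smooth variety $S$, the map $\pi|_{\tilde S}\colon\tilde S\to S$ is an isomorphism and $\pi^\star S=\tilde S+E$ on $Y$. Under $\tilde S\simeq S$, hypothesis~(ii) gives $\Nef(\tilde S)=\Psef(\tilde S)$, while $\tilde S|_{\tilde S}=(\pi^\star S-E)|_{\tilde S}$ corresponds to $\om|_S-Z$, which fails to be nef by~(iii). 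The proposition then yields $\la_\psef=1$ (hence $\te(v)=1$), the equality $\la_\nef^{\tilde S}=\la^S_\nef$, and the piecewise-linear Zariski decomposition
\begin{equation*}
\Nz(\pi^\star\om-\la E)=
\begin{cases}
0 & \text{for }\la\in[0,\la^S_\nef],\\
\tfrac{\la-\la^S_\nef}{1-\la^S_\nef}\,\overline{\tilde S} & \text{for }\la\in[\la^S_\nef,1].
\end{cases}
\end{equation*}

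Combining this with Theorem~\ref{thm:Greenb}(ii), and using $\overline{\tilde S}=\overline{S}-\overline{E}$ together with the identities $\log|\fb_S|=-\p_{\overline{S}}$ and $\log|\fb_Z|=-\p_{\overline{E}}$ (immediate from $\fb_S=\cO_X(-S)$ and $\fb_Z\cdot\cO_Y=\cO_Y(-E)$), I would get affine expressions for $\hf_v^\la$ on each of the intervals $[0,\la^S_\nef]$ and $[\la^S_\nef,1]$. For every $w\in X^\an$, the map $\la\mapsto\hf_v^\la(w)+\la$ is then piecewise linear in $\la$ with breakpoints at $0$, $\la^S_\nef$ and $1$, so $\f_v(w)=\sup_{\la\le 1}\{\hf_v^\la+\la\}(w)$ equals the maximum of its values at those three breakpoints, producing the claimed formula. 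Since $\log|\fb_S|,\log|\fb_Z|\in\PL^+_\hom(X)$, the formula immediately places $\f_v$ in $\RPL^+(X)$, and in $\PL^+(X)$ when $\la^S_\nef\in\Q$.

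For the converse direction of the $\Q$-PL equivalence, the plan is to evaluate the explicit formula at the divisorial valuation $w:=\ord_{\tilde S}\in X^\div$. Since $\tilde S$ and $E$ are distinct prime divisors on $Y$, one has $w(\fb_Z)=0$ (because $\pi^\star\fb_Z=\cO_Y(-E)$) and $w(\fb_S)=1$ (because $\pi^\star\fb_S=\cO_Y(-\tilde S-E)$), hence $\f_v(w)=\la^S_\nef$; as any $\f\in\PL(X)$ takes rational values on $X^\div$, this forces $\la^S_\nef\in\Q$. The main technical point of the proof will be the geometric identification of $\tilde S|_{\tilde S}$ with $\om|_S-Z$ under $\tilde S\simeq S$, which uses that $Z$ is a smooth codimension-one subvariety of $S$ so that blowing up $X$ along $Z$ leaves $S$ unchanged; everything else is a direct combination of Theorem~\ref{thm:Greenb} and Proposition~\ref{prop:Zar2D}.
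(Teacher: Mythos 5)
Your proposal is correct and follows essentially the same route as the paper: blow up $Z$, identify the strict transform $\tilde S$ with $S$ so that hypotheses (ii)--(iii) feed into Proposition~\ref{prop:Zar2D} applied to the cone spanned by $\pi^\star\om$ and $-[E]$, and then read off $\f_v$ from the breakpoints of the concave family $(B_\la)$ via Theorem~\ref{thm:Greenb} and~\eqref{equ:pshb}. Your explicit verification of the converse implication $\f_v\in\PL(X)\Rightarrow\la^S_\nef\in\Q$ by evaluating at $\ord_{\tilde S}$ (where $\f_v=\la^S_\nef$) is a nice touch that the paper leaves implicit; the only small imprecision is asserting piecewise affineness of $\la\mapsto\hf_v^\la(w)+\la$ for all $w\in X^\an$, which is cleanly justified on $X^\div$ and then suffices since both sides are $\om$-psh and agree on divisorial points.
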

\begin{proof} Let $\pi\colon Y\to X$ be the blowup along $Z$, with exceptional divisor $E$, and denote by $S'=\pi^\star S-E$ the strict transform of $S$. Since $Z$ has codimension $1$ on $S$, $\pi$ maps $S'$ isomorphically onto $S$, and takes $S'|_{S'}=\pi^\star S|_{S'}-E|_{S'}$ to $S|_S-Z\equiv\om|_S-[Z]$. By (ii) and (iii), we thus have $\Nef(S')=\Psef(S')$, and $S'|_{S'}$ is not nef. 

Consider the cone $C\subset\Num(Y)$ generated by $\theta:=\pi^\star\om\in\Nef(Y)$ and $\a:=-[E]\notin\Psef(Y)$. Since $C$ contains the class of $S'$, it follows from Proposition~\ref{prop:Zar2D} that 
$$
1=\la_\psef:=\sup\{\la\ge 0\mid\pi^\star\om-\la [E]\in\Psef(Y)\}
$$
and $\la\mapsto\Nz(\pi^\star\om-\la E)$ vanishes on $[0,\la^S_\nef]$, and is affine linear on $[\la^S_\nef,1]$, with value $S'$ at $\la=1$. 
By Theorem~\ref{thm:Greenb}, the concave family $(B_\la)_{\la\le 1}$ of $b$-divisors associated to $\f_v$ is affine linear on $(-\infty,0]$, $[0,\la^S_\nef]$ and $[\la^S_\nef,1]$, with value 
$$
B_\la=0,\quad\la^S_\nef\overline E\quad\text{and}\quad\overline{S'+E}=\overline S 
$$ 
at $\la=0$, $\la^S_\nef$ and $1$, respectively. By~\eqref{equ:pshb}, the result follows, since $-\p_{\overline E}=\log|\fb_Z|$ and $-\p_{\overline S}=\log|\fb_S|$. 
\end{proof}


\begin{exam}\label{exam:Cut} Assume $k=\C$, and set $(X,L)=(\P^3,\cO(4))$. By~\cite{Cut}, there exists a smooth quartic surface $S\subset X$ without $(-2)$-curves, and hence such that $\Nef(S)=\Psef(S)$, containing a smooth curve $Z$ such that $\la^S_\nef$ is irrational and less than $1$. By Proposition~\ref{prop:Cut}, we infer $\te(v)=1$ and 
$\f_v\in\RPL^+(X)\setminus\PL(X)$ (in contrast with~\eqref{equ:fPL1}). 
\end{exam}
%
%
\subsection{The Lesieutre example} 
Based on an example by Lesieutre~\cite{Les}, we now exhibit a Green's function that is not $\R$-PL\@. This forms the basis for Theorem~B in the introduction.
\begin{prop}\label{prop:Les} Suppose that $X$ admits a class $\theta\in\Psef(X)$ whose diminished base locus $\B_-(\theta)$ is Zariski dense. Then there exist $\om\in\Amp(X)$ and $v\in X^\div$ such that $Z_X(\f_{\om,v})$ is Zariski dense in $X$. In particular, $\f_{\om,v}\notin\RPL(X)$. 
\end{prop}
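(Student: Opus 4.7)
The plan is to cook up $\om$ and $v$ directly from $\theta$, without passing to a birational model. The hypothesis forces $\dim X \geq 2$ (for curves $\Psef=\Nef$, so $\B_-(\theta)=\emptyset$), so Bertini supplies a prime divisor $E\subset X$ with $[E]$ ample (e.g., a smooth irreducible member of a very ample linear system). I then take $\om:=\theta+[E]\in\Amp(X)$ (psef plus ample is ample) and $v:=\ord_E\in X^\div$.

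Applying Corollary~\ref{cor:Greenactive} with $\pi=\id_X$ and $\Sigma=\{v\}$ (so $t=1$ and $D=E$), I obtain
\[
  Z_X(\f_{\om,v}) = \B_-(\om - \la_\psef [E]) \cup E,
  \qquad \la_\psef := \sup\{\la \geq 0 \mid \om - \la[E] \in \Psef(X)\}.
\]
Since $\om-[E]\equiv\theta\in\Psef(X)$, we have $\la_\psef\geq 1$, and $\theta':=\om-\la_\psef[E]=\theta-(\la_\psef-1)[E]$ is again pseudoeffective. The problem thus reduces to showing that $\B_-(\theta')$ is Zariski dense in $X$.

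The crucial point is the inclusion $\B_-(\theta')\supset\B_-(\theta)\setminus E$. Given $p\in\B_-(\theta)\setminus E$, Theorem~\ref{thm:Vdivtype} produces $w\in X^\div$ with $p\in Z_X(w)$ and $w(\theta)>0$. Since $p\notin E$ while $p\in Z_X(w)=\overline{\{c_X(w)\}}$, the center $c_X(w)$ is not contained in $E$, so $w(E)=0$; Proposition~\ref{prop:Vmin}(ii) then forces $w([E])\leq w(E)=0$, i.e., $w([E])=0$. Subadditivity of $\tau\mapsto w(\tau)$ on $\Psef(X)$ (from homogeneity and convexity, Proposition~\ref{prop:Vmin}(i)) yields
\[
  w(\theta) \leq w(\theta') + (\la_\psef-1)\, w([E]) = w(\theta'),
\]
so $w(\theta')>0$ and $p\in Z_X(w)\subset\B_-(\theta')$.

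Because $\B_-(\theta)$ is Zariski dense and $E\subsetneq X$ is closed, $\B_-(\theta)\setminus E$, and therefore $\B_-(\theta')$, is Zariski dense. This makes $Z_X(\f_{\om,v})$ Zariski dense, and Lemma~\ref{lem:PLcent} gives $\f_{\om,v}\notin\RPL(X)$. The conceptual hurdle---that the hypothesis provides a general $\theta\in\Psef(X)$ while the conclusion demands a single divisorial valuation---is overcome by shifting $\theta$ by a multiple of a single ample prime divisor $E$: the density of $\B_-$ is transported essentially intact because minimal vanishing orders along valuations centered off $E$ are insensitive to this shift.
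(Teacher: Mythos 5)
Your argument has one genuine gap, and it occurs at the very first step: the parenthetical claim that ``psef plus ample is ample'' is false, so $\om:=\theta+[E]$ need not lie in $\Amp(X)$. A pseudoeffective class plus an ample class is big (it lies in the interior of $\Psef(X)$), but it can fail to be nef: on a surface containing a curve $C$ with $C^2<0$, the class $[C]+[H]$ with $H$ ample satisfies $([C]+[H])\cdot C=C^2+H\cdot C<0$ as soon as $H\cdot C<-C^2$. Since the statement you are proving requires $\om\in\Amp(X)$, and the Green's function machinery you invoke (\S\ref{sec:equmeas}, Theorem~\ref{thm:Greenb}, Corollary~\ref{cor:Greenactive}) is developed for an ample class, the choice $\om=\theta+[E]$, $v=\ord_E$ does not work as written. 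The repair is exactly the paper's choice: take $c\in\Q_{>0}$ large enough that $\om:=\theta+c[E]$ is ample (ampleness is open and $c^{-1}\theta\to 0$), and set $v:=c^{-1}\ord_E\in X^\div$, so that $D=cE$ in the notation of Theorem~\ref{thm:Greenb}; your subsequent argument then goes through verbatim with $[E]$ replaced by $c[E]$.

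Apart from this, your proof is correct, and it diverges from the paper's at the point where one must identify $\B_-(\om-\la_\psef c[E])$. The paper first observes that $\theta$ cannot be big (a dense diminished base locus is incompatible with $\theta$ being represented by an effective divisor), so $\theta$ lies on the boundary of $\Psef(X)$; this forces $\la_\psef=1$ in the normalization above, hence $\om-\la_\psef c[E]=\theta$ exactly and the density is immediate. You instead allow $\la_\psef>1$ and prove $\B_-(\theta')\supset\B_-(\theta)\setminus E$ for $\theta':=\theta-(\la_\psef-1)c[E]$ via minimal vanishing orders; this is fine, and can even be streamlined: since $c[E]$ is nef, $w(c[E])=0$ for \emph{every} $w\in X^\div$ by Proposition~\ref{prop:extfun}, so subadditivity gives $w(\theta)\le w(\theta')$ for all divisorial $w$ and hence $\B_-(\theta)\subset\B_-(\theta')$ outright, with no need to excise $E$ or to discuss centers off $E$. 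Either route works; the paper's boundary observation simply makes the threshold computation trivial, while your argument shows the conclusion is insensitive to the exact value of the threshold.
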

\begin{proof} Note first that $\theta$ cannot be big. Otherwise, there would exist an effective $\R$-divisor $D\equiv\theta$, and hence $\B_-(\theta)$ would be contained in $\supp D$. Pick an ample prime divisor $E$ on $X$, choose $c\in\Q_{>0}$ large enough such that $\om:=\theta+c[E]$ is ample, and set $v:=c^{-1}\ord_E\in X^\div$. 
Since $\om$ is ample and $\om-c[E]=\theta$ lies on the boundary of $\Psef(X)$, the threshold $\la_\psef=\sup\{\la\ge 0\mid\om-\la[E]\in\Psef(X)\}$ is equal to $c$. Thus $\B_-(\om-\la_\psef[E])$ is Zariski dense, and hence so is $Z_X(\f_{\om,v})$, by Corollary~\ref{cor:Greenactive}. The last point follows from Lemma~\ref{lem:PLcent}. 
\end{proof}

\begin{exam}\label{exam:Les1} By~\cite[Theorem~1.1]{Les}, the assumptions in Proposition~\ref{prop:Les} are satisfied when $k=\C$ and $X$ is the blowup of $\P^3$ at nine sufficiently general points. 
\end{exam}

If $\theta$ in Proposition~\ref{prop:Les} is rational, then the proof shows that $\om$ can be taken rational as well, \ie $\om=c_1(L)$ for an ample $\Q$-line bundle. While no such rational example appears to be known at present, we can nevertheless exploit the structure of Lesieutre's example to get: 

\begin{prop}\label{prop:Les2} Set $(X,L):=(\P^3,\cO(1))$. Then there exists a finite set $\Sigma\subset X^\div_\R$ such that $Z_X(\f_{L,\Sigma})$ is Zariski dense in $X$, and hence $\f_{L,\Sigma}\notin\RPL(X)$. 
\end{prop}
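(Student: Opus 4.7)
The plan is to combine the Lesieutre construction recalled in Example~\ref{exam:Les1} with the center computation of Corollary~\ref{cor:Greenactive}, and then invoke Lemma~\ref{lem:PLcent}.

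Let $\rho\colon Y'\to X=\P^3$ be the blowup at nine very general points, with exceptional divisors $E_1,\dots,E_9$, and set $\theta=c_1(\cO(1))$. By Example~\ref{exam:Les1}, one obtains positive real numbers $c_1,\dots,c_9$ such that, setting $D=\sum_{i=1}^9 c_i E_i$, the diminished base locus $\B_-(\rho^\star\theta-D)$ is Zariski dense in $Y'$. Rescaling $D$ by a positive constant if necessary---and using that shifting a psef class toward the boundary of $\Psef(Y')$ can only enlarge the diminished base locus, as seen from~\eqref{equ:Bminplus}---we may further arrange that $\rho^\star\theta-D$ lies on the boundary of $\Psef(Y')$, so that
\[
\lambda_\psef := \sup\{\lambda\ge 0 \mid \rho^\star\theta - \lambda D\in\Psef(Y')\} = 1.
\]

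Next, define $v_i := c_i^{-1}\ord_{E_i}\in X^{\div}_{\R}$ for $1\le i\le 9$, and set $\Sigma:=\{v_1,\dots,v_9\}$, a finite set of real divisorial valuations on $X$. In the notation of Theorem~\ref{thm:Greenb} applied with $\om = c_1(L) = \theta$ and $\pi = \rho$, the Rees divisor $\sum_\alpha t_\alpha^{-1}E_\alpha$ attached to $\Sigma$ equals precisely $D$, and the pseudoeffective threshold is $\lambda_\psef = 1$ by construction. Corollary~\ref{cor:Greenactive} then yields
\[
Z_X(\f_{L,\Sigma}) = \rho\bigl(\B_-(\rho^\star\theta - D)\bigr)\,\cup\, Z_X(\Sigma),
\]
and since $\B_-(\rho^\star\theta-D)$ is Zariski dense in $Y'$ while $\rho$ is surjective, its image $\rho(\B_-(\rho^\star\theta-D))$ is Zariski dense in $X$; hence so is $Z_X(\f_{L,\Sigma})$. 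Lemma~\ref{lem:PLcent}, read contrapositively, then forces $\f_{L,\Sigma}\notin\RPL(X)$.

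The main obstacle is the very first step: one must extract from Lesieutre's construction an output precise enough to guarantee simultaneously that $\B_-(\rho^\star\theta-D)$ is Zariski dense and that $\rho^\star\theta - D$ lies on the boundary of the pseudoeffective cone, so that $\lambda_\psef = 1$. Once this is arranged, the remaining content is a direct application of the machinery developed in Section~\ref{sec:greenzar}.
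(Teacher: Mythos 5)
Your route is the same as the paper's: blow up nine very general points, set $D=\sum_i c_iE_i$ and $\Sigma=\{c_i^{-1}\ord_{E_i}\}$, identify $\la_\psef=1$, and conclude via Corollary~\ref{cor:Greenactive} and Lemma~\ref{lem:PLcent}. The one genuine gap is exactly the step you flag as the ``main obstacle'': Example~\ref{exam:Les1} (\ie \cite[Theorem~1.1]{Les} fed into Proposition~\ref{prop:Les}) only asserts that the blowup $Y'$ carries \emph{some} psef class with Zariski dense diminished base locus; it does not tell you that this class may be taken of the form $\rho^\star c_1(\cO(1))-\sum_{i=1}^9c_iE_i$ with all $c_i>0$, which is what you need both to manufacture $\Sigma\subset X^\div_\R$ and to keep $\om=c_1(\cO(1))$ on $\P^3$ itself. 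The paper fills this by invoking the finer statements \cite[Remark~4.5, Lemma~5.2]{Les}, which produce precisely such an effective $D$ supported on the exceptional divisors. With that citation in place of Example~\ref{exam:Les1}, your argument closes; as written, the crucial input is assumed rather than established.

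Two smaller points. The rescaling maneuver is unnecessary, and its justification is off: \eqref{equ:Bminplus} concerns perturbations by \emph{ample} classes, and does not say that moving along $-D$ (an effective exceptional direction) enlarges the diminished base locus. What is true, and what the paper uses, is that a class with Zariski dense diminished base locus cannot be big (otherwise $\B_-$ would be contained in the support of an effective representative), so $\rho^\star\theta-D$ automatically lies on the boundary of $\Psef(Y')$ (in fact it spans an extremal ray, see \cite[Lemma~5.1]{Les}); and then $\la_\psef=1$ because $\rho^\star\theta$ is big: if $\rho^\star\theta-\la D$ were psef for some $\la>1$, then $\rho^\star\theta-D$, being a convex combination of the big class $\rho^\star\theta$ and a psef class with positive weight on the former, would be big. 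The remaining steps (density of the image under the surjective proper map $\rho$, and the contrapositive of Lemma~\ref{lem:PLcent}) are correct and identical to the paper's.
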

\begin{proof} Let $\pi\colon Y\to X$ be the blowup at nine sufficiently general points, and denote by $\sum_{i=1}^9 E_i$ the exceptional divisor. By~\cite[Remark~4.5, Lemma 5.2]{Les}, we can pick $D=\sum_i c_i E_i$ with $c_i\in\R_{>0}$ such that the diminished base locus of $\pi^\star L-D$ is Zariski dense. As above, this implies that this class lies on the boundary of the psef cone (it even generates an extremal ray, see~\cite[Lemma~5.1]{Les}), and the psef threshold 
$$
\la_\psef=\sup\{\la\ge 0\mid \pi^\star L-\la D\in\Psef(Y)\}
$$ 
is thus equal to $1$. The result now follows from Corollary~\ref{cor:Greenactive}, with $\Sigma=\{c_i^{-1}\ord_{E_i}\}_{1\le i\le 9}$. 
\end{proof}

It is natural to ask:
\begin{qst} Can an example as in Proposition~\ref{prop:Les2} be found with $\Sigma\subset X^\div$?
\end{qst}
%
%
%
%
\section{The non-trivially valued case}\label{sec:nontriv}
In this section, we work over the non-Archimedean field $K=k\lau{\unipar}$ of formal Laurent series,  with valuation ring $K^\circ:=k\cro{\unipar}$.  We use~\cite{siminag} as our main reference.

Thus $X$ now denotes a smooth projective variety of dimension $n$ over $K$. (In~\S\ref{sec:iso}, it will be obtained as the base change of a smooth projective $k$-variety.)
Working `additively', we view the elements of the analytification $X^\an$ as valuations $w\colon K(Y)^\times\to\R$ for subvarieties $Y\subset X$, restricting to the given valuation on $K$.
%
%
\subsection{Models}\label{sec:models}
We define a \emph{model} of $X$ to be a normal, flat, projective $K^\circ$-scheme $\cX$ together with the data of an isomorphism $\cX_K\simeq X$. The \emph{special fiber} of $\cX$ is the projective $k$-scheme $\cX_0:=\cX\times_{\Spec K}\Spec k$. Each $w\in X^\an$ can be viewed as a semivaluation on $\cX$, whose center is denoted by $\redu_\cX(w)\in\cX_0$. This defines a surjective, continuous \emph{reduction map} $\redu_\cX\colon X^\an\to\cX_0$. For each $w\in X^\an$ we also set 
$$
Z_\cX(w):=\overline{\{\redu_\cX(w)\}}\subset\cX_0. 
$$
The preimage under $\redu_\cX$ of the set of generic points of $\cX_0$ is finite. We denote it by $\Ga_\cX\subset X^\an$, and call its elements the \emph{Shilov points} of $\cX$. As $\cX$ is normal, each irreducible component $E$ of $\cX_0$ defines a \emph{divisorial valuation} $w_E\in X_K^\an$ given by
$$
w_E:=b_E^{-1}\ord_E,\,b_E:=\ord_E(\unipar);
$$
it is the unique preimage under $\redu_\cX$ of the generic point of $E$, and the Shilov points of $\cX$ are exactly these valuations $w_E$.

One says that another model $\cX'$ \emph{dominates} $\cX$ if the canonical birational map $\cX'\dashrightarrow\cX$ extends to a morphism (necessarily unique, by separatedness). In that case, $\redu_\cX$ is the composition of $\redu_{\cX'}$ with the induced projective morphism $\cX'_0\to\cX_0$. The set of models forms a filtered poset with respect to domination. The set
$$
X^\div=\bigcup_\cX\Ga_\cX
$$
of all divisorial valuations is a dense subset of $X^\an$.
%
%
\subsection{Piecewise linear functions}\label{sec:PLnontriv}
A $\Q$-Cartier $\Q$-divisor $D$ on a model $\cX$ of $X$ is \emph{vertical} if it is supported in $\cX_0$; it then defines a continuous function on $X^\an$ called a \emph{model function}.
The $\Q$-vector space $\PL(X)$ of such functions is stable under max, and dense in $\Cz(X^\an)$.  

\begin{defi} We define the space $\RPL(X)$ of \emph{real piecewise linear functions} on $X^\an$ (\emph{$\R$-PL functions} for short) as the smallest $\R$-linear subspace of $\Cz(X^\an)$ that is stable under max (and hence also min) and contains $\PL(X)$.
\end{defi}

Fix a model $\cX$. An ideal $\fa\subset\cO_\cX$ is \emph{vertical} if its zero locus $V(\fa)$ is contained in $\cX_0$. This defines a nonpositive function $\log|\fa|\in\PL(X)$, determined by minus the exceptional divisor of the blowup of $\cX$ along $\fa$, and such that
\begin{equation}\label{equ:logfa}
\log|\fa|(w)<0\Longleftrightarrow Z_\cX(w)\subset V(\fa).
\end{equation}
Functions of the form $\log|\fa|$ for a vertical ideal $\fa\subset\cO_\cX$ span the $\Q$-vector space $\PL(X)$ (see~\cite[Proposition~2.2]{siminag}). As in~\S\ref{sec:PL}, it follows that any function in $\RPL(X)$ can be written as a difference of finite maxima of $\R_+$-linear combinations of functions of the form $\log|\fa|$.  
%
%
\subsection{Dual complexes and retractions}\label{sec:retr}
We use~\cite{MN,siminag} as references. 

 An \emph{snc model} $\cX$ is a regular model $\cX$ such that the Cartier divisor $\cX_0$ has simple normal crossing support. Denote by $\cX_0=\sum_{i\in I} b_i E_i$ its irreducible decomposition. A \emph{stratum} of $\cX_0$ is defined as a non-empty irreducible component of $E_J:=\bigcap_{j\in J} E_j$ for some $J\subset I$. By resolution of singularities, the set of snc models is cofinal in the poset of all models.
  
 The \emph{dual complex} $\D_\cX$ of an snc model $\cX$ is defined as the dual intersection complex of $\cX_0$. Its faces are in 1--1 correspondence with the strata of $\cX_0$, and further come with a natural integral affine structure. In particular, the vertices of $\D_\cX$ are in 1--1 correspondence with the $E_i$'s, and admit a natural realization in $X^\an$ as the set $\Ga_\cX$ of Shilov points $x_{E_i}$. 
 
 This extends to a canonical embedding $\D_\cX\hto X^\an$ onto the set of monomial points with respect to $\sum_i E_i$. The reduction $\redu_\cX(w)\in\cX_0$ of a point $w\in\D_\cX\subset X^\an$ is the generic point of the stratum of $\cX_0$ associated with the unique simplex of $\D_\cX$ containing $x$ in its relative interior. In particular, $Z_\cX(w)$ is a stratum of $\cX_0$.  This embedding is further compatible with the PL structures, in the sense that the $\Q$-vector space $\PL(\D_\cX)$ of piecewise rational affine functions on $\D_\cX$ is precisely the image of $\PL(X)$ under restriction. 
 
 If another snc model $\cX'$ dominates $\cX$, then $\D_\cX$ is contained in $\D_{\cX'}$, and $\PL(\D_{\cX'})$ restricts to $\PL(\D_\cX)$. Furthermore, the set
 \[
   X^{\mathrm{qm}}:=\bigcup_\cX\D_\cX\subset X^\an
 \]
 of \emph{quasimonomial valuations} coincides with the set of Abhyankar points of $X$, see~\cite[Remark~3.8]{siminag} and~\cite[Proposition~3.7]{JM}, while the subset of rational points $\bigcup_\cX\D_\cX(\Q)$ coincides with the set $X^\div$ of divisorial valuations. For later use, we also note:

\begin{lem}\label{lem:centfinite} If $\cX$ is an snc model, then the image $\redu_{\cX'}(\D_\cX)\subset\cX'_0$ of the dual complex of $\cX$ under the reduction map of any other model $\cX'$ is finite.
\end{lem}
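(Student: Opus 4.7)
The plan is to reduce to the case where the second model dominates the first snc one, where the statement becomes almost tautological from the combinatorial description of reduction on a dual complex.

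First I would pick a third model $\cX''$ that dominates both $\cX$ and $\cX'$; using resolution of singularities as in the discussion of snc models in~\S\ref{sec:retr}, I can moreover arrange for $\cX''$ to be an snc model. Since $\cX''$ dominates $\cX'$, the induced morphism $\pi\colon\cX''_0\to\cX'_0$ satisfies $\redu_{\cX'}=\pi\circ\redu_{\cX''}$, so it suffices to prove that $\redu_{\cX''}(\D_\cX)$ is a finite subset of $\cX''_0$.

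Now, since $\cX''$ is an snc model dominating the snc model $\cX$, the embedding of dual complexes recalled above gives $\D_\cX\subset\D_{\cX''}\subset X^\an$. By the explicit description of the reduction map on a dual complex quoted from~\S\ref{sec:retr}, for any $w\in\D_{\cX''}$ the point $\redu_{\cX''}(w)$ is the generic point of the stratum of $\cX''_0$ corresponding to the unique open face of $\D_{\cX''}$ containing $w$. In particular,
\[
\redu_{\cX''}(\D_\cX)\subset\redu_{\cX''}(\D_{\cX''})\subset\{\text{generic points of strata of }\cX''_0\},
\]
and the right-hand side is a finite set because $\cX''_0$ has only finitely many strata (it has finitely many irreducible components $E''_i$, and each stratum is an irreducible component of some finite intersection $E''_J$). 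Applying $\pi$ yields that $\redu_{\cX'}(\D_\cX)=\pi(\redu_{\cX''}(\D_\cX))$ is also finite, which is what we wanted.

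There is no real obstacle here: the only thing to verify carefully is the existence of a common snc refinement, which is standard, and the compatibility $\redu_{\cX'}=\pi\circ\redu_{\cX''}$, which follows from the fact that centers of valuations are natural under dominant morphisms of models.
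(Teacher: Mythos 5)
Your proposal is correct and follows essentially the same route as the paper: pass to a common snc model $\cX''$, use the inclusion $\D_\cX\subset\D_{\cX''}$ together with the compatibility of reduction maps under domination, and conclude since points of $\D_{\cX''}$ reduce to generic points of strata of $\cX''_0$, of which there are only finitely many.
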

\begin{proof} Pick an snc model $\cX''$ that dominates both $\cX$ and $\cX'$. Then $\D_{\cX}$ is contained in $\D_{\cX''}$, and $\redu_{\cX'}(\D_\cX)$ is thus contained in the image of $\redu_{\cX''}(\D_{\cX''})$ under the induced morphism $\cX''_0\to\cX_0$. After replacing both $\cX$ and $\cX'$ with $\cX''$, we may thus assume without loss that $\cX=\cX'$. For any $w\in\D_\cX$, $\redu_\cX(w)$ is then the generic point of some stratum of $\cX_0$, and $\redu_\cX(\D_\cX)$ is thus a finite set. 
\end{proof}

Dually, each snc model $\cX$ comes with a canonical \emph{retraction} $p_\cX\colon X^\an\to\D_\cX$ that takes $w\in X^\an$ to the unique monomial valuation $w'=p_\cX(w)$ such that
\begin{itemize}
\item $Z_\cX(w')$ is the minimal stratum containing $Z_\cX(w)$; 
\item $w$ and $w'$ take the sames values on the $E_i$'s. 
\end{itemize}

This induces a homeomorphism $X^\an\simto\varprojlim_\cX\D_\cX$, which is compatible with the PL structures in the sense that 
\begin{equation}\label{equ:PLmodel}
\PL(X)=\bigcup_\cX p_\cX^\star\PL(\D_\cX). 
\end{equation}
This implies
\begin{equation}\label{equ:RPLmodel}
\RPL(X)=\bigcup_\cX p_\cX^\star\RPL(\D_\cX),
\end{equation} 
where $\RPL(\D_\cX)$ is the space $\R$-PL functions on $\D_\cX$, \ie functions that are real affine linear on a sufficiently fine decomposition of each face into real simplices. 
%
%
\subsection{Psh functions and Monge--Amp\`ere measures}
We use~\cite{siminag,nama,GM} as references. 

A \emph{closed $(1,1)$-form} $\theta\in\cZ^{1,1}(X)$ in the sense of~\cite[\S 4.2]{siminag} is represented by a relative numerical equivalence class on some model $\cX$, called a \emph{determination} of $\theta$. It induces a numerical class $[\theta]\in\Num(X)$. We say that $\theta$ is \emph{semipositive}, written $\theta\ge 0$, if $\theta$ is determined by a nef numerical class on some model. In that case, $[\theta]$ is nef as well. 

To each tuple $\theta_1,\dots,\theta_n$ in $\cZ^{1,1}(X)$ is associated a signed Radon measure $\theta_1\wedge\dots\wedge\theta_n$ on $X^\an$ of total mass $[\theta_1]\inter[\theta_n]$, with finite support in $X^\div$. More precisely, if all $\theta_i$ are determined by a normal model $\cX$, then $\theta_1\wedge\dots\wedge\theta_n$ has support in $\Ga_\cX$ (see~\cite[\S 2.7]{nama}). 

Each $\f\in\PL(X)$ is determined by a vertical $\Q$-Cartier divisor $D$ on some model $\cX$, whose numerical class defines a closed $(1,1)$-form $\ddc\f\in\cZ^{1,1}(X)$. We say that $\f$ is \emph{$\theta$-psh} for a given $\theta\in\cZ^{1,1}(X)$ if $\theta+\ddc\f\ge 0$. 

From now on, we fix a semipositive form $\om\in\cZ^{1,1}(X)$ such that $[\om]$ is ample. A function $\f\colon X^\an\to\R\cup\{-\infty\}$ is \emph{$\om$-plurisubharmonic} (\emph{$\om$-psh} for short) if $\f\not\equiv-\infty$ and $\f$ can be written as the pointwise limit of a decreasing net of $\om$-psh PL functions. The space $\PSH(\om)$ is closed under max and under decreasing limits. 

By Dini's lemma, the space $\CPSH(\om)$ of continuous $\om$-psh functions coincides with the closure in $\Cz(X)$ (with respect to uniform convergence) of the space of $\om$-psh PL functions. 

Each $\f\in\PSH(\om)$ satisfies the `maximum principle' 
\begin{equation}\label{equ:maxpp}
\sup_X\f=\max_{\Ga_\cX}\f
\end{equation}
for any model $\cX$ determining $\om$ (see~\cite[Proposition~4.22]{GM}). For snc models, \cite[\S 7.1]{siminag} more precisely yields: 

\begin{lem}\label{lem:pshretr} Pick $\f\in\PSH(\om)$ and an snc model $\cX$ on which $\om$ is determined. Then: \begin{itemize}
\item[(i)] the restriction of $\f$ to any face of $\D_\cX$ is continuous and convex;
\item[(iii)] the net $(\f\circ p_\cX)_\cX$ is decreasing and converges pointwise to $\f$. 
\end{itemize}
\end{lem}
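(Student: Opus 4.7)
The plan is to reduce both assertions to the case of PL $\om$-psh functions, using the definition of $\PSH(\om)$ as the class of pointwise decreasing limits of nets in $\PL(X)\cap\PSH(\om)$ recalled just above.

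For~(i), I would first handle $\f\in\PL(X)\cap\PSH(\om)$: choosing a common snc determination $\cX'\succeq\cX$ of $\om$ and $\f$, the condition $\om+\ddc\f\ge 0$ translates, via the standard dictionary between vertical $\Q$-Cartier divisors and rational piecewise affine functions on $\D_{\cX'}$, into the convexity of $\f|_\sigma$ on each face $\sigma$ of $\D_{\cX'}$. Restriction to a face $\tau$ of $\D_\cX\subset\D_{\cX'}$ inherits both convexity and continuity. For a general $\f\in\PSH(\om)$, writing $\f=\lim\f_i$ as the pointwise decreasing limit of a net $\f_i\in\PL(X)\cap\PSH(\om)$, the restriction $\f|_\tau$ is the pointwise decreasing limit of the continuous convex functions $\f_i|_\tau$, hence convex, upper semicontinuous, and finite valued (finiteness using that $\tau\subset X^{\qm}$, on which all $\om$-psh functions are finite). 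The delicate step, which I regard as the main obstacle, is upgrading usc to continuity on the compact simplex $\tau$: decreasing limits of continuous convex functions are in general only usc, so this requires the ample regularization theory for $\om$-psh functions, which I would import from~\cite[\S 7.1]{siminag}.

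For~(iii), I would organize the argument in four steps. Step~(a): verify that $\f\circ p_\cX\in\PSH(\om)$ by reducing to the PL case, where the assertion follows because $p_\cX$ is affine on each face of any snc refinement $\cX''\succeq\cX$, so pre-composition preserves the convexity on faces established in~(i). Step~(b): prove $\f\le\f\circ p_\cX$. Reducing again to PL $\om$-psh $\f$ determined on some $\cX''\succeq\cX$, one observes that for $w\in X^\an$ lying in a face $\sigma$ of $\D_{\cX''}$, the retract $p_\cX(w)$ has the same coordinates as $w$ on the components $E_i\subset\cX_0$, and convexity of $\f|_\sigma$ along the affine segment from $p_\cX(w)$ to a suitable vertex yields the inequality.

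Step~(c): monotonicity. If $\cX''\succeq\cX$, then $p_\cX=p_\cX\circ p_{\cX''}$, so applying (b) to the point $p_{\cX''}(w)$ gives
\[
\f(p_{\cX''}(w))\le\f\bigl(p_\cX(p_{\cX''}(w))\bigr)=\f(p_\cX(w)),
\]
which is the desired monotonicity of $(\f\circ p_\cX)_\cX$. Step~(d): pointwise convergence. For any $w\in X^\an$, the homeomorphism $X^\an\simto\varprojlim_\cX\D_\cX$ gives $p_\cX(w)\to w$; upper semicontinuity of $\f$ then yields $\limsup_\cX\f(p_\cX(w))\le\f(w)$, while step~(b) provides the reverse inequality $\f(w)\le\f(p_\cX(w))$ for every $\cX$. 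Hence the decreasing net $\f(p_\cX(w))$ converges to $\f(w)$, as desired.
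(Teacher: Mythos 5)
The paper does not actually prove this lemma: it is quoted from~\cite[\S 7.1]{siminag}, with the accompanying remark that Theorem~8.7 of \loccit\ reconciles the definition of $\PSH(\om)$ used here with the one used there. Your proposal tries to re-derive it, and the purely formal parts are fine: the reduction to decreasing nets of PL $\om$-psh functions, the compatibility $p_\cX=p_\cX\circ p_{\cX''}$ giving the monotonicity in step~(c), and step~(d) combining upper semicontinuity of $\f$ with step~(b). But the two statements about PL $\om$-psh functions on which everything rests are exactly the nontrivial content of the cited results, and your arguments for them do not go through.

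For~(i), the ``standard dictionary'' only says that an $\om$-psh model function determined on $\cX'$ is \emph{affine} on each face of $\D_{\cX'}$; convexity on the faces of $\D_{\cX'}$ is therefore vacuous and uses no positivity whatsoever. The substance of~(i) is convexity on the faces of the \emph{coarser} complex $\D_\cX$: a face $\tau$ of $\D_\cX$ is in general not contained in a single face of $\D_{\cX'}$, but is only a union of pieces on which $\f$ is affine, and a function affine on each piece of a subdivision need not be convex on $\tau$ --- for an arbitrary vertical divisor it is not. Ruling out kinks in the wrong direction across this subdivision is precisely where relative nefness must enter, via intersection numbers with curves in the special fiber; this is what~\cite[\S 7.1]{siminag} proves, and your sketch skips it. The same problem affects step~(b): the inequality $\f\le\f\circ p_\cX$ is again a positivity statement (it fails for general model functions), and ``convexity of $\f|_\sigma$ along the affine segment from $p_\cX(w)$ to a suitable vertex'' is not a proof, since $w$ and $p_\cX(w)$ need not lie in a common face of $\D_{\cX''}$ and no point of your argument uses nefness. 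Finally, step~(a) is both unnecessary for~(iii) and false as stated: convexity on faces is necessary but not sufficient for psh-ness, and if $\f\circ p_\cX$ were $\om$-psh for every $\om$-psh $\f$ (as would follow from the PL case by taking decreasing limits), then Theorem~\ref{thm:envdual} would force every $\f\in\CPSH(\om)$ with $\MA(\f)$ supported in some dual complex to be invariant under retraction, contradicting Theorem~B of this paper. Since you also defer the continuity assertion in~(i) to~\cite[\S 7.1]{siminag}, what the proposal actually establishes is only the formal skeleton of the lemma, while its substantive parts are either cited or asserted.
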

\begin{rmk}
  The definition of $\PSH(\om)$ given here differs from the one in~\cite{siminag}, but Theorem~8.7 in~\loccit\ implies that the two definitions are equivalent.
\end{rmk}
To each continuous $\om$-psh function $\f$ (or, more generally, any $\om$-psh function of finite energy) is associated its \emph{Monge--Amp\`ere measure} $\MA(\f)=\MA_\om(\f)$, a Radon probability measure on $X$ uniquely determined by the following properties: 
\begin{itemize}
\item $\f\mapsto\MA(\f)$ is continuous along decreasing nets; 
\item if $\f$ is PL, then $\MA(\f)=V^{-1}(\om+\ddc\f)^n$ with $V:=[\om]^n$. 
\end{itemize}
By the main result of~\cite{nama}, any Radon probability measure $\mu$ with support in the dual complex $\D_\cX$ of some snc model can be written as $\mu=\MA(\f)$ for some $\f\in\CPSH(\om)$, unique up to an additive constant. 
%
%
\subsection{Green's functions}\label{sec:green}
As in the trivially valued case, we can consider the Green's function associated to a nonpluripolar set $\Sigma\subset X^\an$. Here we will only consider the following case. Suppose $w\in X^\div$ is a divisorial point, and define
\[
  \f_w:=\f_{\om,w}:=\sup\{\f\in\PSH(\om)\mid \f(w)\le 0\}.
\]
It follows from~\cite[\S8.4]{nama} that $\f_w\in\CPSH(\om)$ satisfies $\MA(\f_w)=\d_w$ and $\f_w(w)=0$.
\begin{prop}\label{prop:greencurve2}
  If $\dim X=1$ and $[\om]$ is a rational class, then $\f_w\in\PL(X)$.
\end{prop}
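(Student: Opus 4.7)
The plan is to exhibit $\f_w$ explicitly as a model function on a suitable snc model. By uniqueness of the continuous $\om$-psh solution to $\MA(\f) = \d_w$ up to an additive constant, it suffices to construct some $\f \in \PL(X)\cap\CPSH(\om)$ with $\MA(\f) = \d_w$, because then $\f_w = \f - \f(w)$ automatically lies in $\PL(X)$.

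First I would fix a sufficiently fine snc model $\cX$ such that $w$ is a Shilov point, that is $w = w_{E_1} = b_1^{-1}\ord_{E_1}$ for a component $E_1$ of the special fiber $\cX_0 = \sum_{i=1}^r b_i E_i$, and such that $L$ (representing $[\om]$, after replacement by a multiple) extends to a line bundle $\cL$ on $\cX$ whose relative numerical class determines $\om$; both requirements can be met since $w\in X^\div$ arises from an snc model by definition, and line bundles extend to any sufficiently high model. I would then look for $\f = \f_D$ associated to a vertical $\Q$-divisor $D = \sum_i a_i E_i$ on $\cX$, normalized so that $a_1 = 0$ by shifting $D$ by a rational multiple of the principal divisor $\cX_0$.

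The conditions we want can be read off the Chambert--Loir intersection-theoretic formula: in dimension one the measure $\om + \ddc\f$ is a combination of Diracs at the Shilov points $w_{E_i}$ with masses proportional to $(\cL+D)\cdot E_i$, so that requiring $\MA(\f) = \d_w$ together with $\om+\ddc\f\ge 0$ amounts to
\[
  (\cL+D)\cdot E_i = 0 \quad (i\ne 1)\qquad\text{and}\qquad (\cL+D)\cdot E_1 \ge 0.
\]
The second condition is automatic once the first holds: using $(\cL+D)\cdot\cX_0 = V > 0$ (the degree of $L$ on the generic fiber, since $D$ is vertical) and intersecting $\cL+D$ with $\cX_0 = \sum_j b_j E_j$, we obtain $b_1(\cL+D)\cdot E_1 = V$, hence $(\cL+D)\cdot E_1 = V/b_1 > 0$.

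The equalities constitute a linear system in the unknowns $(a_j)_{j\ne 1}$ with coefficient matrix $M := (E_i\cdot E_j)_{i,j\ne 1}$ and rational right-hand side coming from $-\cL\cdot E_i\in\Q$. By Zariski's lemma for arithmetic surfaces the intersection form on $\bigoplus_i\R E_i$ is negative semidefinite with kernel spanned on each connected component by the corresponding part of $\cX_0$; since $E_1$ appears with positive coefficient $b_1$, removing it leaves a strictly negative definite, hence invertible, form $M$, and the system therefore admits a unique rational solution $D$. This produces the desired $\f\in\PL(X)\cap\CPSH(\om)$ with $\MA(\f)=\d_w$. The main bookkeeping obstacle is being careful about the combinatorial weights $b_i$ in Chambert--Loir's formula for non-reduced special fibers; once this is settled, the proof reduces to invertibility of $M$, which is a standard consequence of Zariski's lemma, and rationality of the solution is automatic from the rationality of $V$ and of the intersection numbers.
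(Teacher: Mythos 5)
Your construction is correct and is essentially the argument the paper has in mind: its proof simply cites Thuillier (Prop.~3.3.7) and notes that the statement "can also be deduced from properties of the intersection form on $\cX_0$", which is exactly the Zariski-lemma/negative-definiteness computation you carry out, with the same reduction via uniqueness of solutions to $\MA_\om(\f)=\d_w$ up to a constant. The only implicit ingredient worth making explicit is the connectedness of the special fiber $\cX_0$ (guaranteed since $X$ is geometrically integral and $\cX$ is normal, flat and projective over $K^\circ$), which is what ensures that the kernel of the intersection form is spanned by $\cX_0$ alone, so that deleting the row and column of $E_1$ indeed yields an invertible (negative definite) matrix.
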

\begin{proof}
  This follows from Proposition~3.3.7 in~\cite{thuillierthesis}, and can also be deduced from properties of the intersection form on $\cX_0$ for any snc model $\cX$, as in~\cite[Theorem~7.17]{Gub03}.
\end{proof}
This proves part~(i) of Theorem~A in the introduction. We will prove~(ii) in~\S\ref{sec:examiso}.
%
%
\subsection{Invariance under retraction}\label{sec:retrinv}
It will be convenient to introduce the following terminology: 

\begin{defi}\label{defi:retrinv} We say that a function $\f$ on $X^\an$ is \emph{invariant under retraction} if $\f=\f\circ p_\cX$ for some (and hence any sufficiently high) snc model $\cX$ of $X$.
\end{defi}

\begin{exam}\label{exam:retrinvPL} By~\eqref{equ:PLmodel} and~\eqref{equ:RPLmodel}, a function $\f\in\Cz(X^\an)$ lies in $\PL(X)$ (resp.~$\RPL(X)$) iff $\f$ is invariant under retraction and restricts to a $\Q$-PL (resp.~$\R$-PL) function on the dual complex associated to any (equivalently, any sufficiently high) snc model.
\end{exam}

\begin{rmk}\label{rmk:compprop} The condition $\f=\f\circ p_\cX$ in Definition~\ref{defi:retrinv} is stronger than the `comparison property' of~\cite[Definition~3.11]{Li20}, which merely requires $\f=\f\circ p_\cX$ to hold on the preimage under $p_\cX$ of the $n$-dimensional open faces of some dual complex $\D_\cX$, \ie the preimage of the $0$-dimensional strata  of $\cX_0$ under the reduction map. 
\end{rmk}

\begin{prop}\label{prop:invMA} If $\f\in\PSH(\om)$ is invariant under retraction, then $\f\in\CPSH(\om)$, and $\MA(\f)$ is supported in some dual complex. 
\end{prop}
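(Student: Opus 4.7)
The first step is to reduce to the case where the snc model making $\f$ invariant under retraction also determines $\om$. Pick any snc model $\cX_0$ dominating $\cX$ and determining $\om$, which exists by cofinality of snc models in the poset of models that also determine $\om$. The compatibility $p_\cX=p_\cX\circ p_{\cX_0}$ of nested retractions gives $\f\circ p_{\cX_0}=\f\circ p_\cX\circ p_{\cX_0}\le\f\circ p_\cX=\f$ (using that the net $(\f\circ p_{\cX'})_{\cX'}$ is decreasing by Lemma~\ref{lem:pshretr}(iii)), while the same lemma gives $\f\circ p_{\cX_0}\ge\f$. Hence $\f=\f\circ p_{\cX_0}$, and after replacing $\cX$ by $\cX_0$ I may assume $\om$ is determined on $\cX$.

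To prove continuity, Lemma~\ref{lem:pshretr}(i) tells us that the restriction of $\f$ to each (closed) face of $\D_\cX$ is continuous. Since $\D_\cX$ is a compact polyhedral complex with finitely many faces that intersect compatibly, the gluing principle for continuous maps on closed covers gives continuity of $\f|_{\D_\cX}$ on all of $\D_\cX$. The continuity of the retraction $p_\cX\colon X^\an\to\D_\cX$ and the identity $\f=\f|_{\D_\cX}\circ p_\cX$ then yield continuity of $\f$ on $X^\an$, so $\f\in\CPSH(\om)$.

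For the support of $\MA(\f)$, I invoke the definition of $\CPSH(\om)$ as the uniform closure of $\PL(X)\cap\PSH(\om)$ to choose $\f_n\in\PL(X)\cap\PSH(\om)$ with $\f_n\to\f$ uniformly, and set $\tilde\f_n:=\f_n\circ p_\cX$. Then $\tilde\f_n\in\PSH(\om)$ by Lemma~\ref{lem:pshretr}(iii) applied to $\f_n$; moreover, since restriction $\PL(X)\to\PL(\D_\cX)$ is surjective, $\f_n|_{\D_\cX}\in\PL(\D_\cX)$, and hence $\tilde\f_n=\f_n|_{\D_\cX}\circ p_\cX\in p_\cX^\star\PL(\D_\cX)\subset\PL(X)$ by~\eqref{equ:PLmodel}, so $\tilde\f_n$ is determined by a vertical $\Q$-Cartier divisor on $\cX$. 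The estimate $\|\tilde\f_n-\f\|_\infty=\|(\f_n-\f)\circ p_\cX\|_\infty\le\|\f_n-\f\|_\infty$ shows $\tilde\f_n\to\f$ uniformly, whence $\MA(\tilde\f_n)\to\MA(\f)$ weakly by continuity of $\MA$ along uniform limits of continuous $\om$-psh functions. Since both $\om$ and $\tilde\f_n$ are determined on the (normal, snc) model $\cX$, the signed measure $\MA(\tilde\f_n)=V^{-1}(\om+\ddc\tilde\f_n)^n$ is supported in $\Ga_\cX\subset\D_\cX$ by the statement recalled just before Lemma~\ref{lem:pshretr}, so the weak limit $\MA(\f)$ is supported in $\D_\cX$ as well.

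No step is genuinely hard; the only technical content is the preservation of $\om$-psh-ness under composition with a retraction $p_\cX$ (when $\cX$ determines $\om$) and the fact that PL functions pulled back from $\D_\cX$ are determined on $\cX$, both of which are encoded in the preliminary material of~\S\ref{sec:PLnontriv}--\S\ref{sec:retr}.
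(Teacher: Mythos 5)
The continuity half of your argument is fine and is exactly what the paper means by ``direct consequence of Lemma~\ref{lem:pshretr}'': continuity on each face of $\D_\cX$, gluing over the finitely many faces, and composing with the continuous retraction $p_\cX$. The problem is the second half. Your key claim that $\tilde\f_n:=\f_n\circ p_\cX$ lies in $\PSH(\om)$ is not what Lemma~\ref{lem:pshretr}~(iii) says: that lemma only asserts that the net $(\f\circ p_{\cX'})_{\cX'}$ decreases to $\f$; it says nothing about $\f\circ p_{\cX'}$ being $\om$-psh. In fact, composition with a fixed retraction does \emph{not} preserve $\om$-psh-ness in general, and this is precisely the delicate point behind Question~\ref{qst:inv1} (= \cite[Question~2]{GJKM}). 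Concretely: if $\psi\circ p_\cX$ were $\om$-psh for every $\om$-psh PL function $\psi$, then by the uniform-approximation description of $\CPSH(\om)$ (and closedness of $\PSH(\om)$ under uniform limits, via the usual $\pm\e$ trick) the same would hold for every $\f\in\CPSH(\om)$; combined with Theorem~\ref{thm:envdual} this would force every $\f\in\CPSH(\om)$ whose Monge--Amp\`ere measure is supported in $\D_\cX$ to satisfy $\f=\env(\f\circ p_\cX)=\f\circ p_\cX$, contradicting Theorem~B (Example~\ref{exam:Lesglob}). So the step cannot be repaired by a better citation; it is false for some PL approximants, and you have no control over which approximants Dini's lemma hands you.

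There is also a secondary slip: membership in $p_\cX^\star\PL(\D_\cX)\subset\PL(X)$ (see~\eqref{equ:PLmodel}) does \emph{not} mean that $\tilde\f_n$ is determined by a vertical $\Q$-Cartier divisor on $\cX$ itself. Functions determined on $\cX$ restrict to functions that are affine on each face of $\D_\cX$, whereas $\f_n|_{\D_\cX}$ is only piecewise affine; $\tilde\f_n$ is in general determined only on some higher model $\cX'$, and the support statement you would get is $\supp\MA(\tilde\f_n)\subset\Ga_{\cX'}$, which need not lie in $\D_\cX$. The paper instead deduces the support statement from Theorem~\ref{thm:envdual}: since $\f=\f\circ p_\cX$ one trivially has $\f=\env(\f\circ p_\cX)$, and the implication (ii)$\Rightarrow$(i) there is proved by approximating $\f$ from above by the envelopes $\f_\Sigma$ over finite sets $\Sigma\subset\D_\cX(\Q)$, whose Monge--Amp\`ere measures are supported in $\Sigma$, and using continuity of $\MA$ along decreasing nets. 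If you want to salvage your scheme, you would in effect have to reprove that statement, so the cleanest fix is to argue as in Theorem~\ref{thm:envdual} rather than via PL approximation composed with $p_\cX$.
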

The first point is a direct consequence of Lemma~\ref{lem:pshretr}, while the second one is a special case of the following more precise result. Recall first that the \emph{$\om$-psh envelope} of $f\in\Cz(X^\an)$ is defined as 
$$
\env(f)=\env_\om(f):=\sup\{\f\in\PSH(\om)\mid\f\le f\}. 
$$
By~\cite{siminag}, it lies in $\CPSH(\om)$.

\begin{thm}\label{thm:envdual} For any $\f\in\CPSH(\om)$ and any snc model $\cX$ on which $\om$ is determined, the following properties are equivalent:
\begin{itemize}
\item[(i)] $\MA(\f)$ is supported in $\D_\cX$; 
\item[(ii)] $\f=\env(\f\circ p_\cX)$. 
\end{itemize}
\end{thm}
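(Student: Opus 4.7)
The plan is to establish the two implications separately. The easier direction (i)$\Rightarrow$(ii) follows from the domination principle, while (ii)$\Rightarrow$(i) requires a PL approximation reducing to an analysis of envelopes of $p_\cX$-invariant PL functions.

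For (i)$\Rightarrow$(ii), set $g := \f\circ p_\cX \in \Cz(X^\an)$. By Lemma~\ref{lem:pshretr}(iii) we have $\f \leq g$, so $\f$ is a competitor in the envelope, giving $\f \leq \psi := \env_\om(g) \in \CPSH(\om)$. For the reverse inequality, observe that $p_\cX$ acts as the identity on $\D_\cX$, so $\psi \leq g|_{\D_\cX} = \f|_{\D_\cX}$. Since $\supp\MA(\f) \subseteq \D_\cX$ by hypothesis, $\psi \leq \f$ holds $\MA(\f)$-almost everywhere, and the domination principle (standard in non-Archimedean pluripotential theory, and used throughout~\cite{nama,siminag}) yields $\psi \leq \f$ everywhere. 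Thus $\f = \env_\om(g)$, proving~(ii).

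For (ii)$\Rightarrow$(i), I would prove the stronger general assertion: for any $h \in \Cz(X^\an)$ with $h = h\circ p_\cX$, the measure $\MA(\env_\om(h))$ is supported in $\D_\cX$. Applied to $h := \f\circ p_\cX$ together with~(ii), which identifies $\env_\om(h)=\f$, this gives~(i). To prove the general assertion, I would uniformly approximate $h$ by PL functions $h_n = \tilde h_n\circ p_\cX$ with $\tilde h_n \in \PL(\D_\cX)$, possible since $\PL(\D_\cX)$ is dense in $\Cz(\D_\cX)$. The envelope operator is $1$-Lipschitz in sup norm and the Monge--Amp\`ere operator is continuous along uniformly convergent sequences in $\CPSH(\om)$, so the claim reduces to the PL case. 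For PL $h$ invariant under $p_\cX$, the core step is to show that $\env_\om(h)$ inherits the invariance $\env_\om(h) = \env_\om(h)\circ p_\cX$: from $\env_\om(h) \leq h = h\circ p_\cX$, the retracted function $\env_\om(h)\circ p_\cX$ is bounded above by $h$, and exploiting the determination of $\om$ on $\cX$ together with the simplicial structure of $\D_\cX$ one verifies that $\env_\om(h)\circ p_\cX$ is itself $\om$-psh, whence $\env_\om(h)\circ p_\cX \leq \env_\om(h)$; combined with the reverse inequality from Lemma~\ref{lem:pshretr}(iii), this gives equality. Once $\env_\om(h)$ is known to be a $p_\cX$-invariant PL function in $\CPSH(\om)$, a standard intersection-theoretic computation on $\cX_0$ shows its Monge--Amp\`ere measure is supported on the Shilov set $\Ga_\cX \subseteq \D_\cX$.

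The main obstacle is the $\om$-plurisubharmonicity of $\env_\om(h)\circ p_\cX$ in the PL, $p_\cX$-invariant case. The retraction operation does not preserve $\om$-plurisubharmonicity for arbitrary $\om$-psh functions, so one must exploit specific properties of the envelope---notably a concavity of $\env_\om(h)|_{\D_\cX}$ along the faces of $\D_\cX$ inherited from the $p_\cX$-invariance of $h$ and the fact that $\om$ is determined on $\cX$. This step is the heart of the result and draws on the local PL pluripotential theory developed in~\cite{siminag}.
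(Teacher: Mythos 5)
Your proof of (i)$\Rightarrow$(ii) is correct and is essentially the paper's argument (the domination principle applied to competitors bounded by $\f$ on $\D_\cX$, using $\p\le\p\circ p_\cX$ from Lemma~\ref{lem:pshretr}). The problem is the converse direction. Your reduction to PL data is fine, but the ``core step'' you rely on --- that for a $p_\cX$-invariant PL function $h$ the retracted envelope $\env_\om(h)\circ p_\cX$ is again $\om$-psh, so that $\env_\om(h)=\env_\om(h)\circ p_\cX$ --- is not merely unproven, it is false in general, and your own approximation argument shows why. If it held for every invariant PL $h$, then for an arbitrary invariant $g\in\Cz(X^\an)$ you could take $\tilde h_n\in\PL(\D_\cX)$ with $\tilde h_n\to g|_{\D_\cX}$ uniformly, set $h_n=\tilde h_n\circ p_\cX$, and use the $1$-Lipschitz property of $\env_\om$ to conclude that $\env_\om(g)$, being a uniform limit of $p_\cX$-invariant functions, is itself $p_\cX$-invariant. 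Applying this to $g=\f\circ p_\cX$ for a continuous $\om$-psh $\f$ with $\MA(\f)$ supported in $\D_\cX$ (for which $\f=\env_\om(\f\circ p_\cX)$ by the direction you already proved) would force $\f=\f\circ p_\cX$; but this is exactly what fails in the example of~\cite[Appendix~A]{GJKM} quoted in the introduction, and fails even for all sufficiently high models in the examples behind Theorem~B. The auxiliary claims in your last step are also unsupported: there is no reason for $\env_\om(h)$ to be PL (this is a deep open issue), nor, even if it were, for its Monge--Amp\`ere measure to sit on $\Ga_\cX$ rather than on the Shilov points of some higher determination.

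The paper avoids any invariance statement for the envelope. Given (ii), it considers, for each finite set of rational points $\Sigma\subset\D_\cX(\Q)\subset X^\div$, the envelope $\f_\Sigma=\sup\{\p\in\PSH(\om)\mid\p\le\f\text{ on }\Sigma\}$; by~\cite[Lemma~8.5]{nama} this lies in $\CPSH(\om)$ with $\MA(\f_\Sigma)$ supported in $\Sigma$. The net $(\f_\Sigma)$ is decreasing and bounded below by $\f$, and its limit is $\le\f$ on $\D_\cX(\Q)$, hence on $\D_\cX$ by continuity, hence $\le\env(\f\circ p_\cX)=\f$ by~\eqref{equ:envcX} and hypothesis (ii); so $\f_\Sigma\searrow\f$. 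Continuity of $\MA$ along decreasing nets then gives $\MA(\f_\Sigma)\to\MA(\f)$, and since each $\MA(\f_\Sigma)$ is supported in the compact set $\D_\cX$, so is $\MA(\f)$. If you want to salvage your strategy, this is the mechanism to substitute for your PL core step: approximate the constraint set inside $\D_\cX$ by finite divisorial sets rather than trying to propagate $p_\cX$-invariance through the envelope operator.
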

\begin{proof} For any $\p\in\PSH(\om)$, we have $\p\le\p\circ p_\cX$ (see Lemma~\ref{lem:pshretr}~(iii)), and hence 
\begin{equation}\label{equ:envcX}
\env(\f\circ p_\cX)=\sup\left\{\p\in\PSH(\om)\mid\p\le\f\text{ on }\D_\cX\right\}. 
\end{equation}
Assume (i). By the domination principle (see~\cite[Lemma~8.4]{nama}), any $\p\in\PSH(\om)$ such that $\p\le\f$ on $\supp\MA(\f)\subset\D_\cX$ satisfies $\p\le\f$ on $X$. In view of~\eqref{equ:envcX} this yields (ii). Conversely, assume (ii). For any finite set of rational points $\Sigma\subset\D_\cX(\Q)\subset X^\div$, consider the envelope
$$
\f_\Sigma:=\sup\{\p\in\PSH(\om)\mid\p\le \f\text{ on }\Sigma\}.
$$
Then $\f_\Sigma$ lies in $\CPSH(\om)$, and $\MA(\f_\Sigma)$ is supported in $\Sigma$ (see~\cite[Lemma~8.5]{nama}). The net $(\f_\Sigma)$, indexed by the filtered poset of finite subsets $\Sigma\subset\D_\cX(\Q)$, is clearly decreasing, and bounded below by $\f$. Its limit $\p:=\lim_\Sigma\f_\Sigma$ is thus $\om$-psh, and we claim that it coincides with $\f$. Indeed, we have $\p\le f$ on $\bigcup_\Sigma\Sigma=\D_\cX(\Q)$, and hence on $\D_\cX$, where both $\p$ and $\f$ are continuous. By~\eqref{equ:envcX}, this yields $\p\le\env(\f\circ p_\cX)=\f$. By continuity of the Monge--Amp\`ere operator along decreasing nets, we infer $\MA(\f_\Sigma)\to\MA(\f)$ weakly on $X$, which yields (i) since each $\MA(\f_\Sigma)$ is supported in $\D_\cX$. 
\end{proof}

In view of Proposition~\ref{prop:invMA} and Example~\ref{exam:retrinvPL}, it is natural to conversely ask:  

\begin{qst}\label{qst:inv1} If the Monge--Amp\`ere measure $\MA_\om(\f)$ of $\f\in\CPSH(\om)$ is supported in some dual complex, is $\f$ invariant under retraction? 
\end{qst}
This question appears as~\cite[Question~2]{GJKM}, and is equivalent to asking whether $\f\circ p_\cX$ is $\om$-psh for some high enough model $\cX$, by Theorem~\ref{thm:envdual}. In Example~\ref{exam:Green1} below (see also Theorem~A) we show that the answer is negative. In this example, the support of $\MA_\om(\f)$ is even a finite set. One can nevertheless ask:
\begin{qst}\label{qst:inv2} Assume that $\f\in\CPSH(\om)$ is such that the support of the Monge--Amp\`ere measure $\MA_\om(\f)$ is a finite set contained in some dual complex.
  \begin{itemize}
  \item[(i)]
    is $\f$ $\R$-PL on each dual complex?
  \item[(ii)]
    if $\om$ is rational, is $\f$ $\Q$-PL on each dual complex?
\end{itemize}
\end{qst}
Example~\ref{exam:Green1} below provides a negative answer to (ii). Indeed the function $\f$ in this example is $\R$-PL but not $\Q$-PL, and by~\eqref{equ:PLmodel},~\eqref{equ:RPLmodel}, this implies that $\f$ fails to be $\Q$-PL on some dual complex $\D_\cX$. The answer to (i) is also likely negative in general, as suggested by Nakayama's counterexample to the existence of Zariski decompositions on certain toric bundles over an abelian suface~\cite[IV.2.10]{Nak}.
\begin{qst}\label{qst:inv4}
  Suppose $X$ is a toric variety, and let $\f\in\CPSH(\om)$ be a torus invariant $\om$-psh function such that $\MA_\om(\f)$ supported on a compact subset of $N_\R\subset X^\an$. Is $\f$ invariant under retraction?
\end{qst}
\begin{qst}\label{qst:inv3}
  If $\f\in\CPSH(\om)$ is invariant under retraction, is the same true for $\f|_{Z^\an}$, if $Z\subset X$ is a smooth subvariety?
\end{qst}
%
%
\subsection{The center of a plurisubharmonic function}
We end this section by a version of Theorem~\ref{thm:centpsh1}. In analogy with~\eqref{equ:ZS}, for any subset $S\subset X^\an$ and any model $\cX$ we set 
$$
Z_\cX(S):=\bigcup_{w\in S}Z_\cX(y). 
$$
This is thus the smallest subset of $\cX_0$ that is invariant under specialization and contains the image $\redu_\cX(S)$ of $S$ under the reduction map $\redu_\cX\colon X^\an\to\cX_0$. For any higher model $\cX'$, the induced proper morphism $\cX'_0\to\cX_0$ maps $Z_{\cX'}(S)$ onto $Z_{\cX}(S)$. 

We say that $S\subset X$ is \emph{invariant under retraction} if $p_\cX^{-1}(S)=S$ for some (and hence any sufficiently high) snc model $\cX$. 
\begin{lem}\label{lem:centerpsh} If $S\subset X^\an$ is invariant under retraction, then $Z_\cX(S)$ is Zariski closed for any model $\cX$. 
\end{lem}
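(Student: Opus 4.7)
The plan is to reduce the assertion to a finiteness statement on a suitable dual complex. First I would pick an snc model $\cX'$ that dominates $\cX$ and is high enough that $S=p_{\cX'}^{-1}(S\cap\D_{\cX'})$, which is possible by the definition of invariance under retraction. Let $\pi\colon\cX'_0\to\cX_0$ denote the induced morphism on special fibers; since both models are projective over $K^\circ$, the map $\pi$ is proper, and $\redu_\cX=\pi\circ\redu_{\cX'}$.

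The key geometric input is to compare $Z_\cX(w)$ with $Z_\cX(p_{\cX'}(w))$ for $w\in X^\an$. Because $\pi$ is proper, $Z_\cX(w)=\overline{\{\pi(\redu_{\cX'}(w))\}}=\pi(Z_{\cX'}(w))$, and analogously for $p_{\cX'}(w)$. By the defining property of the retraction $p_{\cX'}$, the subvariety $Z_{\cX'}(p_{\cX'}(w))$ is the minimal stratum of $\cX'_0$ containing $Z_{\cX'}(w)$; in particular,
\[
Z_\cX(w)=\pi(Z_{\cX'}(w))\subset\pi(Z_{\cX'}(p_{\cX'}(w)))=Z_\cX(p_{\cX'}(w)).
\]

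Next, the invariance hypothesis $S=p_{\cX'}^{-1}(S\cap\D_{\cX'})$ guarantees that $p_{\cX'}(w)\in S\cap\D_{\cX'}$ whenever $w\in S$. Combining this with the previous inclusion gives
\[
Z_\cX(S)=\bigcup_{w\in S}Z_\cX(w)=\bigcup_{w\in S\cap\D_{\cX'}}Z_\cX(w)=Z_\cX(S\cap\D_{\cX'}).
\]
Finally, I would invoke Lemma~\ref{lem:centfinite} applied to the snc model $\cX'$: the image $\redu_{\cX'}(\D_{\cX'})$ consists of the (finitely many) generic points of strata of $\cX'_0$, so $\redu_\cX(\D_{\cX'})=\pi(\redu_{\cX'}(\D_{\cX'}))$ is also a finite subset of $\cX_0$. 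Consequently $Z_\cX(S\cap\D_{\cX'})$ is a finite union of Zariski closures of single points of $\cX_0$, and therefore Zariski closed, which concludes the argument.

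No step here looks especially delicate; the main point to verify carefully is the inclusion $Z_\cX(w)\subset Z_\cX(p_{\cX'}(w))$, which rests on the properness of $\pi$ (so that it commutes with Zariski closures of points) and on the defining property of the retraction. Everything else is bookkeeping combined with the finiteness lemma.
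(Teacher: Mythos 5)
Your proposal is correct and follows essentially the same route as the paper: pass to a high snc model $\cX'$ on which $S$ is retraction-invariant, use properness of $\cX'_0\to\cX_0$ to reduce to points of the dual complex via the inclusion $Z_{\cX'}(w)\subset Z_{\cX'}(p_{\cX'}(w))$ (the defining property of the retraction), and conclude from the finiteness of the reduction image of $\D_{\cX'}$. The only cosmetic difference is that the paper first replaces $\cX$ by $\cX'$ and argues with the finitely many strata of $\cX'_0$ directly, whereas you push forward by $\pi$ and invoke Lemma~\ref{lem:centfinite}; these are interchangeable.
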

\begin{proof} Pick an snc model $\cX'$ dominating $\cX$ such that $S=p_{\cX'}^{-1}(S)$. Since $Z_\cX(S)$ is the image of $Z_{\cX'}(S)$ under the proper morphism $\cX'_0\to\cX_0$, we may replace $\cX$ with $\cX'$ and assume without loss that $\cX=\cX'$. The set $Z_\cX(S)$ obviously contains $Z_\cX(S\cap\D_\cX)$, which is Zariski closed since $Z_\cX(w)$ is a stratum of $\cX_0$ for any $w\in\D_\cX$. Conversely, pick $w\in S$, and set $y:=p_\cX(w)\in\D_\cX$. Then $y\in p_\cX^{-1}(S)=S$, and $Z_\cX(w)\subset Z_\cX(y)$ since it follows from the definition of $p_\cX$ that  $\redu_\cX(w)$ is a specialization of $\redu_\cX(y)$. This shows, as desired, that $Z_\cX(S)=Z_\cX(S\cap\D_\cX)$ is Zariski closed. 
\end{proof}

\begin{defi} Given $\f\in\PSH(\om)$ and a model $\cX$, we define the \emph{center of $\f$ on $\cX$} as 
$$
Z_\cX(\f):=Z_\cX(\{\f<\sup\f\})=\bigcup\{Z_\cX(w)\mid w\in X,\,\f(w)<\sup\f\}.
$$
\end{defi}

\begin{exam} If $\f=\log|\fa|$ for a vertical ideal $\fa\subset\cO_\cX$, then $Z_\cX(\f)=V(\fa)$.
\end{exam}

\begin{thm}\label{thm:centpsh2} For any $\f\in\PSH(\om)$ and any model $\cX$, the following holds: 
\begin{itemize}
\item[(i)] $Z_\cX(\f)$ is an at most countable union of subvarieties of $\cX_0$; 
\item[(ii)] if $\f$ is invariant under retraction, then $Z_\cX(\f)$ is Zariski closed; 
\item[(iii)] $Z_\cX(\f)=\redu_\cX(\{\f<\sup\f\})$; 
\item[(iv)] if $\cX$ determines $\om$, then $Z_\cX(\f)$ is a strict subset of $\cX_0$. 
\end{itemize}
\end{thm}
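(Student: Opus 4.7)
I establish the four parts in the order~(iv), (ii), (i), (iii), from easiest to most technical.

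For~(iv), the maximum principle~\eqref{equ:maxpp} gives a Shilov point $w_E\in\Gamma_\cX$ with $\f(w_E)=\sup\f$ and $\redu_\cX(w_E)=\eta_E$, the generic point of an irreducible component of $\cX_0$. If $\eta_E$ belonged to $\overline{\{\redu_\cX(w)\}}$ for some $w\in\{\f<\sup\f\}$, generic-point considerations force $\redu_\cX(w)=\eta_E$, hence $w=w_E$ since $\redu_\cX^{-1}(\eta_E)=\{w_E\}$, contradicting $\f(w_E)=\sup\f$. For~(ii), if $\f=\f\circ p_{\cX'}$ for some snc model $\cX'$, then $\{\f<\sup\f\}=p_{\cX'}^{-1}(U)$ for $U=\{\f|_{\Delta_{\cX'}}<\sup\f\}$ open in $\Delta_{\cX'}$, hence invariant under retraction, and Lemma~\ref{lem:centerpsh} finishes the job.

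For~(i), fix a cofinal countable family $\{\cX_m\}$ of snc models dominating $\cX$ and determining $\om$, and set $\f_m:=\f\circ p_{\cX_m}$. Lemma~\ref{lem:pshretr}(iii) gives $\f_m\searrow\f$ pointwise, and one checks $\sup\f_m=\sup\f$: the maximum of $\f|_{\Delta_{\cX_m}}$, which is convex on each face by Lemma~\ref{lem:pshretr}(i), is attained at a vertex---a Shilov point of $\cX_m$---which is fixed by $p_{\cX_m}$ and achieves $\sup\f$ by~\eqref{equ:maxpp}. Pointwise monotone convergence then yields $\{\f<\sup\f\}=\bigcup_m\{\f_m<\sup\f\}$. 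Each set in this union is invariant under $p_{\cX_m}$-retraction (being $p_{\cX_m}^{-1}$ of an open subset of $\Delta_{\cX_m}$), so has Zariski closed $\cX$-center by Lemma~\ref{lem:centerpsh}; decomposing each into irreducible components proves~(i).

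Part~(iii) is the most delicate. The inclusion $\supset$ is tautological. For the reverse, given $z\in Z_\cX(w_0)$ with $w_0\in\{\f<\sup\f\}$, so that $z$ is a specialization of $z':=\redu_\cX(w_0)$ in $\cX_0$, the plan is to exhibit $w'\in X^\an$ with $\redu_\cX(w')=z$ and $\f(w')\le\f(w_0)$. Standard valuation theory (Chevalley-style extension, composing $w_0$ with a valuation of its residue field) produces such $w'$ with valuation ring $\cO_{w'}\subset\cO_{w_0}$ dominating $\cO_{\cX,z}$; in particular $w'(\fa)\ge w_0(\fa)$ for every vertical ideal $\fa\subset\cO_\cX$. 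This yields the desired monotonicity $\psi(w')\le\psi(w_0)$ for every PL function of ideal type $\psi=c^{-1}\log|\fa|$, and extends to arbitrary $\f\in\PSH(\om)$ via the decreasing approximation by PL $\om$-psh functions. The principal obstacle is precisely this extension of monotonicity: while direct for ideal-type PL functions, propagating it to general $\om$-psh PL functions (which can involve differences and maxima) requires careful use of the approximation structure of $\PSH(\om)$.
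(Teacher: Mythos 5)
Parts~(ii) and~(iv) of your proposal are correct and coincide with the paper's arguments (retraction-invariance of $\{\f<\sup\f\}$ plus Lemma~\ref{lem:centerpsh}, resp.\ the maximum principle~\eqref{equ:maxpp} and the fact that the Shilov point $w_E$ is the unique preimage of the generic point of $E$). The problems are in~(i) and~(iii).

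For~(i), your argument hinges on "a cofinal countable family $\{\cX_m\}$ of snc models", and no such family exists when the residue field $k$ is uncountable (e.g.\ $k=\C$, the case of main interest). Indeed, a single model can dominate the blowups of $\cX_\triv$ at only finitely many closed points of the special fiber: dominating such a blowup forces a fiber of dimension $\ge n$ over the blown-up point, and $\cX'_0$ has only finitely many $n$-dimensional components; since there are uncountably many closed points, countably many models cannot be cofinal. Consequently your sets $\{\f_m<\sup\f\}$ with $\f_m=\f\circ p_{\cX_m}$ need not exhaust $\{\f<\sup\f\}$, and the union you obtain from the full net of models is uncountable. The paper avoids this by invoking the (nontrivial) regularization theorem \cite[Proposition~4.7]{nama}, which produces a decreasing \emph{sequence} of $\om$-psh PL functions $\f_m\searrow\f$; with that sequence your argument goes through verbatim, but the countability must come from this result, not from a countable family of models.

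For~(iii), the step you yourself flag as the "principal obstacle" is a genuine gap, and the route you sketch does not lead to a repair. First, composing $w_0$ with a valuation of its residue field produces a higher-rank valuation, not a point of $X^\an$; you can get a point $w'$ with $\redu_\cX(w')=z$ simply from surjectivity of the reduction map, but then there is no relation between $w'$ and $w_0$. Second, even granting $w'(\fa)\ge w_0(\fa)$ for all vertical ideals $\fa\subset\cO_\cX$ on the \emph{fixed} model $\cX$, this only gives $\psi(w')\le\psi(w_0)$ for positive combinations and maxima of the functions $\log|\fa|$; a general $\om$-psh PL function is determined by a vertical divisor on a \emph{higher} model and, written in terms of ideals on $\cX$, involves differences, so the inequality does not propagate -- and the centers of $w'$ and $w_0$ on higher models are no longer comparable, so no information on $\cX$ alone can suffice. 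The paper's proof is genuinely different: it first passes to a high snc model $\cX'$ (depending on $\f$) so that $x':=p_{\cX'}(w_0)$ still satisfies $\f(x')<\sup\f$, uses properness of $\cX'_0\to\cX_0$ to reduce to the case of a quasimonomial point on an snc model, then picks any $y$ with $\redu_\cX(y)=\xi$ and shows $\f(p_\cX(y))<\sup\f$ by convexity of $\f$ on the face of $\D_\cX$ containing $p_\cX(y)$ in its relative interior and $x'$ on its closure (a convex function attaining its maximum at an interior point is constant), concluding with $\f(y)\le\f(p_\cX(y))$ from Lemma~\ref{lem:pshretr}. Note that this yields only $\f(y)<\sup\f$, not the stronger $\f$-independent monotonicity $\f(w')\le\f(w_0)$ you were aiming for; that stronger statement is not needed and is not established.
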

\begin{qst}
  Is it true that $\{\f<\sup\f\}=\redu_\cX^{-1}(Z_\cX(\f))$ as in Theorem~\ref{thm:centpsh1}?
\end{qst}
\begin{proof} By~\cite[Proposition~4.7]{nama}, $\f$ can be be written as the pointwise limit of a decreasing sequence $(\f_m)_{m\in\N}$ of $\om$-psh PL functions. Since each $\f_m$ is in particular invariant under retraction (see Example~\ref{exam:retrinvPL}), Lemma~\ref{lem:centerpsh} implies that $Z_\cX\{(\f_m<\sup\f\})$ is Zariski closed for each $m$. On the other hand, since $\f_m\searrow\f$ pointwise on $X$, we have $\{\f<\sup\f\}=\bigcup_m\{\f_m<\sup\f\}$, and hence  $Z_\cX(\f)=\bigcup_m Z_\cX(\{\f_m<\sup\f\})$. This proves (i), while (ii) is a direct consequence of Lemma~\ref{lem:centerpsh}. 

 Pick $w\in X^\an$ such that $\f(w)<\sup\f$. To prove (iii), we need to show that any $\xi\in Z_\cX(w)$ lies in $\redu_\cX(\{\f<\sup\f\})$. By Lemma~\ref{lem:pshretr}, we can find a high enough snc model $\cX'$ such that $x':=p_{\cX'}(w)$ satisfies $\f(x')<\sup\f$. By properness of $\cX'_0\to\cX_0$, $Z_{\cX}(w)$ is the image of $Z_{\cX'}(w)$, which is itself contained in $Z_{\cX'}(x')$. After replacing $\cX$ with $\cX'$ and $x$ with $x'$, we may thus assume without loss that $\cX$ is snc and $x$ lies in $\D_\cX$. Pick $y\in X^\an$ with $\redu_\cX(y)=\xi$ (which exists by surjectivity of the reduction map, see~\cite[Lemma~4.12]{Gub13}). Set  $y':=p_\cX(y)$, and denote by $\sigma$ the unique face of $\D_\cX$ that contains $y'$ in its relative interior, the corresponding stratum of $\cX_0$ being the smallest one containing $\xi$. Since the latter lies on the stratum $Z_\cX(w)$, it follows that $\sigma$ contains $x$ (possibly on its boundary). Since $\f$ is convex and continuous on $\sigma$ (see Lemma~\ref{lem:pshretr}), it can only achieve its supremum at the interior point $y'$ if it is constant on $\sigma$. As $w\in\sigma$ satisfies $\f(w)<\sup\f$, it follows that $\f(y')<\sup\f$ as well. Since $y'=p_\cX(y)$, this implies $\f(y)\le\f(y')<\sup\f$ (again by Lemma~\ref{lem:pshretr}). Thus $\xi=\redu_\cX(y)\in\redu_\cX(\{\f<\sup\f\})$, which proves (iii). 

Finally, assume that $\cX$ is normal and determines $\om$. By~\eqref{equ:maxpp}, we can find an irreducible component $E$ of $\cX_0$ whose corresponding Shilov point $w_E\in\Ga_\cX$ satisfies $\f(w_E)=\sup\f$. Since $w_E$ is the only point of $X^\an$ whose reduction on $\cX_0$ is the generic point of $E$, it follows that the latter does not belong to $Z_\cX(\f)$, which is thus a strict subset of $\cX_0$. 
\end{proof}
%
%
\section{The isotrivial case}\label{sec:iso}
We now consider the \emph{isotrivial} case, in which the variety over $K=k\lau{\unipar}$ is the base change $X_K$ of a smooth projective variety $X$ over the (trivially valued) field $k$.
%
%
\subsection{Ground field extension}\label{sec:gfext}
We have a natural projection 
$$
\pi\colon X_K^\an\to X^\an,
$$
while Gauss extension provides a continuous section 
$$
\sigma\colon X^\an\hto X_K^\an
$$
onto the set of $k^\times$-invariant points (see~\cite[Proposition~1.6]{trivvalold}). By~\cite[Corollary~1.5]{trivvalold}, we further have: 
\begin{lem}\label{lem:Gaussdiv} If $v\in X^\an$ is divisorial (resp real divisorial) then  $\sigma(v)\in X_K^\an$ is divisorial (resp.\ quasimonomial).
\end{lem}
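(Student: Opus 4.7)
The strategy is to unpack the definition of the Gauss extension and exhibit $\sigma(v)$ as an explicit monomial valuation on a suitable snc model of $X_K$. Concretely, for a valuation $v\in X^\an$ on $k(Y)^\times$ (with $Y\subset X$ a subvariety), the Gauss extension should be the monomial valuation on $K(Y_K)^\times=k(Y)\lau{\unipar}^\times$ defined by
\[
\sigma(v)\left(\sum_i a_i\unipar^i\right)=\min_i\bigl(v(a_i)+i\bigr).
\]
First I would record this formula (as given in~\cite{trivvalold}) and write $v=t\,\ord_E$ with $t\in\R_{\ge 0}$ and $E$ a prime divisor on some smooth birational model $Y'\to X$. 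The trivial case $t=0$ gives the Shilov point of the trivial model $X_{K^\circ}$, which is divisorial. Since $X_K^\an$ only depends on $X$ up to birational equivalence, I would replace $X$ by $Y'$ and assume $E\subset X$ is a smooth prime divisor.

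Near a generic point of $E$, pick local parameters $u,\unipar$ on $X_{K^\circ}$, where $u$ is a local equation for $E$; then $E\cup X$ cuts out an snc codimension-$2$ stratum on $X_{K^\circ}$. When $t=p/q\in\Q_{>0}$ with $p,q$ coprime, a weighted blowup of $X_{K^\circ}$ along $E$ with weights $(p,q)$ (or any sequence of ordinary blowups realizing the same toric star subdivision) yields an snc model $\cX$ with an exceptional divisor $F$ satisfying $\ord_F(u)=p$ and $\ord_F(\unipar)=q$. The associated Shilov point $w_F=b_F^{-1}\ord_F$ of $\cX$, with $b_F=q$, then satisfies
\[
w_F\!\left(\sum_i a_i\unipar^i\right)=\tfrac{1}{q}\min_i\bigl(\ord_F(a_i)+i\ord_F(\unipar)\bigr)=\min_i\bigl(\tfrac{p}{q}\ord_E(a_i)+i\bigr)=\sigma(v)\!\left(\sum_i a_i\unipar^i\right),
\]
so $\sigma(v)=w_F\in X_K^\div$.

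For general $t\in\R_{>0}$, I would instead take any snc model $\cX$ of $X_K$ extracting both the strict transform $\tE$ of $E$ and the special fiber $\cX_0$ so that they meet transversely along a codimension-$2$ stratum corresponding to the generic point of $E$; then the edge of $\D_\cX$ joining the Shilov points of $\tE$ and of that component of $\cX_0$ parametrises monomial valuations on $(u,\unipar)$ with arbitrary positive real weight ratios, and the point with weights $(t,1)$ is precisely $\sigma(v)$, which therefore lies in $X_K^\qm$. The main thing to be careful about is the local toric computation verifying the orders of $F$ on $u$ and $\unipar$, plus ensuring the weighted blowup can be refined to an snc model without disturbing $F$; both are routine but need to be checked on the affine chart where $u,\unipar$ are coordinates.
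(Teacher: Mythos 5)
The paper gives no argument for this lemma: it is quoted directly from~\cite[Corollary~1.5]{trivvalold}, so your explicit construction is by necessity a different route, and the rational case of it is essentially fine. Indeed, the Gauss extension of $v=t\ord_E$ is the monomial valuation with weights $(t,1)$ in $(u,\unipar)$ at the generic point of $E$ inside the special fiber of the trivial model, and for $t=p/q$ a toroidal (weighted) blowup with vertical center produces a vertical divisor $F$ with $\ord_F(u)=p$, $\ord_F(\unipar)=q$, so that $\sigma(v)=w_F$ is a Shilov point of a (normalized) model; the toric verifications you flag are indeed routine.

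There are, however, two genuine problems. First, the irrational case does not go through as written: the strict transform $\tE$ is a \emph{horizontal} divisor, hence is not a component of $\cX_0$, carries no Shilov point of $X_K^\an$ (one has $\ord_{\tE}(\unipar)=0$, so no rescaling of $\ord_{\tE}$ restricts to the given valuation on $K$), and contributes no vertex to $\D_\cX$, which by definition (\S\ref{sec:retr}) is the dual intersection complex of the special fiber only. So the ``edge joining the Shilov points of $\tE$ and of the component of $\cX_0$'' does not exist; on the trivial model, for instance, the dual complex is a single point. The statement is still true, and can be repaired either by invoking the characterization $X_K^{\mathrm{qm}}=\{\text{Abhyankar points}\}$ recalled in \S\ref{sec:retr} (for $t$ irrational, $\sigma(v)$ has value group $\Z+t\Z$ and residue field $k(E)$, so the Abhyankar equality holds), or by first blowing up the \emph{vertical} stratum $E\subset\cX_{\triv,0}$ and then successive vertical codimension-two strata, so that $\sigma(v)$ ends up on an edge of a dual complex spanned by two vertical divisors with rational monomial weights bracketing $t$. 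Second, the reduction ``replace $X$ by $Y'$'' is justified by the false claim that $X_K^\an$ depends on $X$ only up to birational equivalence; what you actually need is that divisoriality (resp.\ quasimonomiality) of $\sigma(v)$, viewed as a valuation on $K(X_K)=K(Y'_K)$, can be checked using models of $X_K$ itself. For quasimonomiality this is immediate from the Abhyankar characterization, which is intrinsic to the valued field extension; for divisoriality one should observe that all the centers being blown up lie in special fibers (since $\sigma(v)(\unipar)=1>0$), so the blowups can be performed on $X_{K^\circ}$ without changing the generic fiber, producing a genuine model of $X_K$ on which $\sigma(v)$ is a Shilov point.
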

The base change of $X$ to $K^\circ$ defines the \emph{trivial model} 
$$
\cX_\triv:=X_{K^\circ}
$$
of $X_K$, whose central fiber will be identified with $X$. More generally, each \emph{test configuration} $\cX\to\A^1$ for $X$ induces via base change $\Spec K^\circ=\Spec k\cro{\unipar}\to\A^1=\Spec k[\unipar]$ a $k^\times$-invariant model of $X_K$, that shares the same vertical ideals and vertical divisors as $\cX$, and will be simply be denoted by $\cX$, for simplicity. 
%
%
\subsection{Psh functions}\label{sec:psh}
For any $\theta\in\Num(X)$, we denote by $\pi^\star\theta\in\cZ^{1,1}(X_K)$ the induced closed $(1,1)$-form, determined by $\theta$ on the trivial model. If $\om\in\Amp(X)$, then $[\pi^\star\om]\in\Num(X_K)$ coincides with the base change of $\om$, and hence is ample. 

\begin{thm}\label{thm:pshpull} Pick $\om\in\Amp(X)$ and $\f\in\PSH(\om)$. Then:
\begin{itemize}
\item[(i)] $\pi^\star\f\in\PSH(\pi^\star\om)$;
\item[(ii)] if $\f$ is further continuous, then $\MA_{\pi^\star\om}(\pi^\star\f)=\sigma_\star\MA_\om(\f)$. 
\end{itemize}
\end{thm}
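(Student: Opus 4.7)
The plan is to establish (i) and (ii) by the same two-step scheme: verify the statements at the Fubini--Study/PL level, then extend to arbitrary $\om$-psh (resp.\ continuous $\om$-psh) functions by taking decreasing (resp.\ uniform) limits.

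The key compatibility at the PL level is the following: for any flag ideal $\fa\subset\cO_{X\times\A^1}$, the trivially-valued PL function $\f_\fa$ on $X^\an$ pulls back under $\pi$ to the model function on $X_K^\an$ associated to the vertical ideal $\fa\cdot\cO_{\cX_\triv}$ obtained by base change along $k[\unipar]\hookrightarrow K^\circ$. This is a direct valuation-theoretic check using $\pi\circ\sigma=\id$. Consequently, if $\f=m^{-1}\f_\fa\in\cH^\gf(L)$ is a generically finite FS function for $L\in\Pic(X)_\Q$, then $\pi^\star\f$ is a PL function on $X_K^\an$, and the semipositivity condition $c_1(L)+\ddc\f\ge 0$ in the trivial sense translates, via this correspondence, to $\pi^\star c_1(L)+\ddc(\pi^\star\f)\ge 0$ on $X_K^\an$, so that $\pi^\star\f\in\CPSH(\pi^\star c_1(L))$.

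For (i), write $\f\in\PSH(\om)$ as a decreasing limit of $\f_i\in\cH^\gf(L_i)$ with $c_1(L_i)\to\om$ in $\Num(X)$. Fix an ample class $A\in\Num(X)$. For each $\e>0$, the class $\om+\e A-c_1(L_i)$ is ample for all $i$ large enough, so $\f_i\in\PSH(\om+\e A)$, and by the PL case $\pi^\star\f_i\in\PSH(\pi^\star\om+\e\pi^\star A)$. Since $\PSH$ in the discretely valued setting is closed under decreasing limits, $\pi^\star\f\in\PSH(\pi^\star\om+\e\pi^\star A)$. Letting $\e\searrow 0$ yields $\pi^\star\f\in\PSH(\pi^\star\om)$. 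For (ii), first verify the identity $\MA_{\pi^\star\om}(\pi^\star\f)=\sigma_\star\MA_\om(\f)$ for PL $\om$-psh $\f$: both sides are atomic and supported on the (finite) set of Shilov points of a common test configuration $\cX/\A^1$ and its base change $\cX_{K^\circ}$, with weights given by intersection numbers that agree by flatness of $K^\circ\to k[\unipar]$. The identity then extends to $\f\in\CPSH(\om)$ by uniform approximation by PL $\om$-psh functions, combined with continuity of the Monge--Amp\`ere operator under uniform convergence of continuous psh functions and weak continuity of $\sigma_\star$.

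The main obstacle I anticipate is the PL-level compatibility, where one must reconcile the two Gauss-type extensions appearing in the paper---the map $X^\an\to(X\times\A^1)^\an$ used in~\S\ref{sec:PL} to define $\f_\fa$ from a flag ideal, and the section $\sigma\colon X^\an\to X_K^\an$ of $\pi$ used here---and verify cleanly that a flag ideal on $X\times\A^1$, after base change to $\cX_\triv$, yields a vertical ideal whose associated model function on $X_K^\an$ coincides with $\pi^\star\f_\fa$. Once this compatibility is in hand, the propagation through limits and the PL-level Monge--Amp\`ere computation are essentially formal, relying on standard closure properties of $\PSH$ under decreasing limits and the compatibility of intersection numbers under flat base change.
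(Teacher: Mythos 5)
Your proposal follows essentially the same route as the paper: its proof also rests on a PL-level compatibility lemma (the pullback under $\pi$ of a model/flag-ideal function is a model function on $X_K^\an$, with psh-ness and Monge--Amp\`ere masses preserved because the central fiber of a test configuration and of its base change to $K^\circ$ are identified), followed by passage to limits, exactly as you do. The only minor imprecisions --- generically finite Fubini--Study functions are not PL (they may take the value $-\infty$, so your "PL case" must be applied to maxima of functions $\log|s_i|+\la_i$ directly), and the steps "$\e\searrow 0$" in (i) and the equality of intersection numbers in (ii) deserve a word of justification --- are routine to repair and do not affect the argument.
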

\begin{lem}\label{lem:PLpull} For any $\f\in\PL(X)$ and $\theta\in\Num(X)$, the following holds:
\begin{itemize}
\item[(i)] $\pi^\star\f\in\PL(X_K)$;
\item[(ii)] $(\pi^\star\theta+\ddc\pi^\star\f)^n=\sigma_\star(\theta+\ddc\f)^n$;
\item[(iii)] $\f$ is $\theta$-psh iff $\pi^\star\f$ is $\pi^\star\theta$-psh. 
\end{itemize}
\end{lem}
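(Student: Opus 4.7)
My strategy is to reduce all three parts to the case of generating flag ideals and work on a common algebraic model. Recall from~\S\ref{sec:PL} that any $\f\in\PL(X)$ is a $\Q$-linear combination of functions $\f_\fa$ for flag ideals $\fa=\sum_\la\fa_\la\unipar^{-\la}$ on $X\times\A^1$, and from~\S\ref{sec:gfext} that any test configuration $\cX\to\A^1$ for $X$ base-changes via $\Spec K^\circ\to\A^1$ to a model of $X_K$, with the same underlying vertical divisors, vertical ideals, and irreducible components of central fibers; crucially, the Gauss extension matches Shilov points, sending the divisorial valuation $v_E\in X^\div$ associated to a component $E\subset\cX_0$ to the corresponding divisorial valuation $w_E\in X_K^\div$.

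For (i), I would compute directly: given a flag ideal $\fa$, its base change $\tilde\fa\subset\cO_{\cX_\triv}$ is a vertical fractional ideal, and using $\sigma(v)|_{k(X)}=v$, $\sigma(v)(\unipar)=1$, and the fact that $\pi$ is restriction,
\[
\pi^\star\f_\fa(w)=\sup_\la\bigl(-\pi(w)(\fa_\la)+\la\bigr)=-\inf_\la\bigl(w(\fa_\la)-\la\bigr)=-w(\tilde\fa)=\log|\tilde\fa|(w)
\]
for every $w\in X_K^\an$. Since $\log|\tilde\fa|$ is a model function associated to a vertical fractional ideal on the trivial model, $\pi^\star\f_\fa\in\PL(X_K)$, and (i) follows by $\Q$-linearity.

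For (ii) and (iii), I would fix a test configuration $\cX\to\A^1$ dominating $X\times\A^1$ on which $\f$ is determined by a vertical $\Q$-Cartier divisor $D$ and $\theta$ is represented by a relative class $\tilde\theta\in\Num(\cX/\A^1)$; the base change $\tilde\cX\to\Spec K^\circ$ is then a model of $X_K$ determining $\pi^\star\f$ and $\pi^\star\theta$ by the same data. For (ii), the signed measure $(\theta+\ddc\f)^n$ on $X^\an$ is a finite atomic sum supported at the Shilov points $v_E$ with masses built from intersection numbers of $\tilde\theta+D$ along the components $E\subset\cX_0$, and the very same formula computes $(\pi^\star\theta+\ddc\pi^\star\f)^n$ via intersection numbers on $\tilde\cX$; since flat base change along $\Spec K^\circ\to\A^1$ preserves these intersection numbers and $\sigma_\star\d_{v_E}=\d_{w_E}$, the identity follows. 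For (iii), PL $\theta$-plurisubharmonicity of $\f$ translates via Proposition~\ref{prop:homogb} and the homogeneous decomposition~\eqref{equ:hom1} into the relative nefness on $\cX/\A^1$ of the Cartier $b$-divisor associated with $\tilde\theta+D$, and this condition is preserved and reflected by the flat base change.

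The main obstacle is to pin down the identification, on the trivially valued side, between PL $\theta$-plurisubharmonicity and the relative nefness condition on the test configuration: on the discretely valued side this is essentially built into the definition of $\cZ^{1,1}(X_K)$, but on the trivially valued side it requires carefully invoking the Cartier $b$-divisor machinery of~\cite[\S6]{trivval}. Once this bridge is built, both (ii) and (iii) reduce to standard compatibilities of intersection theory and nefness of vertical Cartier divisors under flat base change between one-dimensional bases.
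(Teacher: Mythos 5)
Your overall plan coincides with the paper's: reduce to data on a test configuration dominating the trivial one, and transport it through the base change $\Spec K^\circ\to\A^1$. Parts (i) and (ii) are fine and essentially identical to the paper's argument: for (i) the paper works with the determining vertical $\Q$-Cartier divisor rather than with flag ideals, but your computation $\pi^\star\f_\fa=\log|\fa\cdot\cO_{\cX_\triv}|$ is correct and amounts to the same thing; for (ii) the paper likewise just compares the two atomic intersection-theoretic formulas (\cite[(2.2)]{nama} and \cite[(3.6)]{trivval}) at matching Shilov points, using that Gauss extension sends the divisorial valuation of a component $E\subset\cX_0$ to the corresponding Shilov point of the induced model.

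The weak point is your justification of (iii) on the trivially valued side. Proposition~\ref{prop:homogb} concerns \emph{homogeneous} $\theta$-psh functions of divisorial type and identifies them with $b$-divisors $B\le 0$ over $X$ such that $\overline\theta+[B]$ is nef in $\Numb(X)$ (a condition about movability on all birational models of $X$); it says nothing directly about relative nefness of $\theta_\cX+D$ on a test configuration $\cX\to\A^1$, and a PL function determined by a vertical divisor is in general not homogeneous, so passing through the slices $\hf^\la$ of~\eqref{equ:hom1} would require a further (unwritten, and nontrivial) translation. The bridge you actually need is the characterization: a PL function $\f$ determined by a vertical $\Q$-Cartier divisor $D$ on a test configuration $\cX$ is $\theta$-psh iff $(\theta_\cX+[D])|_{\cX_0}$ is nef. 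This is exactly what the paper invokes (it is available in~\cite{trivval}, in the part of that paper dealing with Fubini--Study/PL functions on test configurations, not in the $b$-divisor machinery of~\S 6 there), and note that the implication ``nef on $\cX_0$ $\Rightarrow$ psh'' is not formal--it uses the approximation of nef data by ample/semiample data. Once that equivalence is quoted, (iii) is immediate, since the induced model of $X_K$ has the same central fiber $\cX_0$ and the same class $\theta_\cX+[D]$ restricted to it, so the two nefness conditions literally coincide; your appeal to ``flat base change preserving and reflecting nefness'' is then correct but should be anchored to this identification rather than to Proposition~\ref{prop:homogb}.
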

\begin{proof} The function $\f$ is determined by a vertical $\Q$-Cartier divisor $D$ on a test configuration $\cX$, that may be taken to dominate the trivial one (see~\cite[Theorem~2.7]{trivval}). The induced vertical divisor on the induced model of $X_K$ then determines $\pi^\star\f$. This proves (i), and also (ii), by comparing~\cite[(2.2)]{nama} and~\cite[(3.6)]{trivval}. Finally, denote by $\theta_\cX$ the pullback of $\theta$ to $\Num(\cX/\A^1)$. Then $\f$ is $\theta$-psh iff $(\theta_\cX+[D])|_{\cX_0}$ is nef, which is also equivalent to $\pi^\star\f$ being $\pi^\star\theta$-psh. This proves (iii). 
\end{proof}
\begin{proof}[Proof of Theorem~\ref{thm:pshpull}] Write $\f$ as the limit on $X^\an$ of a decreasing net of $\om$-psh PL functions $\f_i$. By Lemma~\ref{lem:PLpull}, $\pi^\star\f_i$ is PL and $\pi^\star\om$-psh. Since it decreases pointwise on $X_K^\an$ to $\pi^\star\f$, the latter is $\pi^\star\om$-psh, which proves (i). For each $i$, Lemma~\ref{lem:PLpull}~(ii) further implies $\MA_{\pi^\star\om}(\pi^\star\f_i)=\sigma_\star\MA_\om(\f_i)$. If $\f$ is continuous, then $\MA_\om(\f)$ and $\MA_{\pi^\star\om}(\pi^\star\f)$ are both defined, and are the limits of $\MA_\om(\f_i)$ and $\MA_{\pi^\star\om}(\pi^\star\f_i)$, respectively. This proves (ii). 
\end{proof}
%
%
\subsection{PL structures}\label{sec:gfPL}
As a direct consequence of Lemma~\ref{lem:PLpull}, the projection $\pi\colon X_K^\an\to X^\an$ is compatible with the PL structures:

\begin{cor}\label{cor:PLpull} We have $\pi^\star\PL(X)\subset\PL(X_K)$ and $\pi^\star\RPL(X)\subset\RPL(X_K)$. 
\end{cor}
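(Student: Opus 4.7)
The first inclusion $\pi^\star\PL(X)\subset\PL(X_K)$ is precisely the content of Lemma~\ref{lem:PLpull}(i), so no further work is needed there. For the second inclusion, my plan is to exploit the minimality in the definition of $\RPL$. Recall that $\RPL(X)$ is the smallest $\R$-linear subspace of $\Cz(X^\an)$ that contains $\PL(X)$ and is stable under finite max and min.

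First, I would observe that the pullback operator
\[
\pi^\star\colon \Cz(X^\an)\longrightarrow \Cz(X_K^\an),\quad \f\mapsto \f\circ\pi
\]
is well-defined (since $\pi$ is continuous), $\R$-linear, and compatible with the pointwise lattice operations: $\pi^\star\max\{\f,\p\}=\max\{\pi^\star\f,\pi^\star\p\}$, and similarly for min. Next, I would introduce the preimage
\[
S:=\{\f\in\Cz(X^\an)\mid \pi^\star\f\in\RPL(X_K)\}.
\]
By the $\R$-linearity and lattice-compatibility of $\pi^\star$, together with the fact that $\RPL(X_K)$ is itself an $\R$-linear subspace stable under max and min, $S$ is an $\R$-linear subspace of $\Cz(X^\an)$ that is stable under max and min. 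Moreover, the first inclusion guarantees $\PL(X)\subset S$.

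By the minimality characterization of $\RPL(X)$, we conclude $\RPL(X)\subset S$, which is precisely the desired inclusion $\pi^\star\RPL(X)\subset\RPL(X_K)$. There is no real obstacle here; all the substance lies in Lemma~\ref{lem:PLpull}(i), and the passage to $\RPL$ is a formal abstract-nonsense argument based on the universal property of the smallest lattice-stable $\R$-linear subspace containing $\PL$.
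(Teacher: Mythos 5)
Your proof is correct and matches the paper's (implicit) argument: the paper states the corollary as a direct consequence of Lemma~\ref{lem:PLpull}, the point being exactly that $\pi^\star$ is linear and commutes with max, so the inclusion for $\RPL$ follows formally from the $\PL$ case via the characterization of $\RPL(X)$ as the smallest max-stable $\R$-linear subspace of $\Cz(X^\an)$ containing $\PL(X)$. Your preimage-subspace formulation of this formal step is a clean way to write it out.
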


As we next show, this is also the case for Gauss extension. 
\begin{thm}\label{thm:PLGauss} We have $\sigma^\star\PL(X_K)=\PL(X)$   and $\sigma^\star\RPL(X_K)=\RPL(X)$.
\end{thm}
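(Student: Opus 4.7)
The $\supset$ inclusions are immediate: for $f\in\PL(X)$ (resp.\ $\RPL(X)$), Corollary~\ref{cor:PLpull} gives $\pi^\star f\in\PL(X_K)$ (resp.\ $\RPL(X_K)$), and then $f=\sigma^\star\pi^\star f$ since $\pi\circ\sigma=\id_{X^\an}$.

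For the $\subset$ inclusion in the $\PL$ case, recall from~\S\ref{sec:PLnontriv} that $\PL(X_K)$ is the $\Q$-vector space spanned by functions $\log|\fa|$ for vertical coherent ideals $\fa$ on models $\cX$ of $X_K$, so it suffices to prove $\sigma^\star\log|\fa|\in\PL(X)$. The key reduction is that any model $\cX$ of $X_K$ is dominated by a model of the form $\cY_{K^\circ}$ for a test configuration $\cY\to\A^1$ of $X$ (that we may further arrange to dominate the trivial test configuration $X\times\A^1$): this follows by noetherian approximation for $K^\circ=k\cro{\unipar}$, since any finite-type $K^\circ$-scheme descends to $k[\unipar]_{(\unipar)}$ and extends along a projective embedding to an $\A^1$-scheme. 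Pulling $\fa$ back to this dominating model leaves $\log|\fa|$ unchanged as a function on $X_K^\an$, so we may assume $\cX=\cY_{K^\circ}$. By~\S\ref{sec:gfext}, vertical ideals on $\cY_{K^\circ}$ coincide with those on $\cY$, and via the birational morphism $\cY\to X\times\A^1$ these are encoded by a flag ideal $\tilde\fa$ on $X\times\A^1$ in the sense of~\S\ref{sec:PL}. A direct computation using the Gauss extension formula $\sigma(v)(f)=\min_m(v(f_m)+m)$ for $f=\sum_m f_m\unipar^m$ then yields
\[
\sigma^\star\log|\fa|=\f_{\tilde\fa}=\max_{\la\in\Z}(\log|\tilde\fa_\la|+\la)\in\PL^+(X)\subset\PL(X).
\]

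For the $\RPL$ case, consider the set $S:=\{g\in\Cz(X_K^\an):\sigma^\star g\in\RPL(X)\}$. Since $\sigma^\star$ is $\R$-linear and commutes pointwise with $\max$ and $\min$, and $\RPL(X)$ is closed under these operations, $S$ is an $\R$-linear max-stable subspace of $\Cz(X_K^\an)$; by the $\PL$ case, $S\supset\PL(X_K)$. As $\RPL(X_K)$ is by definition the smallest such subspace containing $\PL(X_K)$, we conclude $\sigma^\star\RPL(X_K)\subset\RPL(X)$.

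The main obstacle is the algebro-geometric reduction to a model coming from a test configuration dominating $X\times\A^1$; once this is granted, the remainder is a direct computation using the Gauss extension formula combined with the flag-ideal description of $\PL^+(X)$ from~\S\ref{sec:PL}.
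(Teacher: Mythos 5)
Your two easy steps are fine and agree with the paper: the inclusions $\supset$ follow from Corollary~\ref{cor:PLpull} and $\sigma^\star\pi^\star=\id$, and the $\RPL$ half is a formal consequence of the $\PL$ half, since $\sigma^\star$ is linear, commutes with max, and $\RPL(X_K)$ is the smallest max-stable subspace containing $\PL(X_K)$. The problem is your ``key reduction''. The claim that any finite-type $K^\circ$-scheme descends to $k[\unipar]_{(\unipar)}$ is false: $K^\circ=k\cro{\unipar}$ is the \emph{completion} of $k[\unipar]_{(\unipar)}$, and a projective model is cut out by equations whose coefficients are genuine power series, not rational functions regular at $\unipar=0$; noetherian approximation only descends such a scheme to some finitely generated subalgebra of $K^\circ$, which gives no morphism to $\A^1$ and no test configuration. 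Moreover, even if one grants a dominating model of the form $\cY_{K^\circ}$, a vertical ideal $\fa$ on it is not ``encoded by a flag ideal on $X\times\A^1$'': flag ideals are by definition $\Gm$-invariant, whereas $\fa$ need not be, and replacing $\fa$ by an invariant ideal changes the function $\log|\fa|$ on $X_K^\an$ — the assertion that only its restriction along $\sigma$ is unchanged is exactly the non-trivial content that must be proved, not assumed. So as written, the $\subset$ inclusion for $\PL$ rests on a false descent statement and on an unproved identification.

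The detour is also unnecessary, and the repair is the route the paper takes. The spanning statement recalled in \S\ref{sec:PLnontriv} (from \cite[Proposition~2.2]{siminag}) is stronger than the form you quote: for any \emph{single fixed} model — in particular the trivial model $\cX_\triv=X_{K^\circ}$ — the functions $\log|\fa|$ with $\fa\subset\cO_{\cX_\triv}$ vertical already span $\PL(X_K)$, so no domination or descent of models is needed. What remains is precisely the paper's Lemma~\ref{lem:PLGauss}: for a vertical ideal $\fa\subset\cO_{\cX_\triv}$, not necessarily $\Gm$-invariant, one has $\sigma^\star\log|\fa|=\f_{\widetilde\fa}$, where $\widetilde\fa=\Gm\cdot\fa$ is the smallest flag ideal containing $\fa$. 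The paper proves this by twisting by an ample $L$ so that $\cL_\triv\otimes\fa$ is globally generated, writing generating sections as $s_i=\sum_{\la}s_{i,\la}\unipar^{\la}$ with $s_{i,\la}\in\Hnot(X,L)$, and applying the Gauss formula — essentially your final computation, but carried out on the trivial model where it is actually justified. With that lemma in place, your formal argument for the $\RPL$ statement goes through as written.
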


Any vertical ideal $\fa$ on $\cX_\triv$, being trivial outside the central fiber, can be viewed as a vertical ideal on $X\times\A^1$, and $\widetilde\fa:=\Gm\cdot\fa$ is then the smallest flag ideal containing $\fa$. 

\begin{lem}\label{lem:PLGauss} With the above notation we have $\sigma^\star\log|\fa|=\f_{\widetilde\fa}$. 
\end{lem}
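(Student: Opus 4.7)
The plan is to unwind both sides locally via the explicit formula for the Gauss extension. First I would observe that the two Gauss extensions in play---$\sigma\colon X^\an\to X_K^\an$ from the isotrivial setup, and $\sigma\colon X^\an\to (X\times\A^1)^\an$ used to define $\f_{\widetilde\fa}$---are compatible: both extend a given $v$ on $k(X)$ and send $\unipar\mapsto 1$, so on any Laurent polynomial $f=\sum_i f_i\unipar^i$ both assign the value $\min_i(v(f_i)+i)$. I will use the same symbol $\sigma$ for both. The lemma then amounts to the pointwise identity $\sigma(v)(\fa)=\sigma(v)(\widetilde\fa)$ for every $v\in X^\an$, from which the conclusion follows on negation.

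Next I would work on an affine chart $U=\Spec A\subset X$ containing the center of $v$. Because $\fa$ is vertical, $\unipar^N\in\fa$ for some $N$; this allows one to identify $\fa|_U\subset A\cro{\unipar}$ (as an ideal on $\cX_\triv$) with the corresponding ideal in $A[\unipar]$ (as an ideal on $X\times\A^1$), both being preimages of the same ideal in $A[\unipar]/(\unipar^N)=A\cro{\unipar}/(\unipar^N)$. In particular, we may choose polynomial generators $g_1,\dots,g_r\in A[\unipar]$ of $\fa|_U$ in either interpretation (including, say, $\unipar^N$ among them). Writing $g_j=\sum_i g_{j,i}\unipar^i$ (a finite sum), the $\Gm$-action $\unipar\mapsto t\unipar$ scales the $i$-th term by $t^i$, so $\widetilde\fa|_U$ is generated by the $\Gm$-weight components $\{g_{j,i}\unipar^i:1\le j\le r,\ i\ge 0\}$.

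With both descriptions in hand, I would compute both sides directly:
\[
\sigma(v)(\fa)=\min_j\sigma(v)(g_j)=\min_j\min_i(v(g_{j,i})+i)=\min_{j,i}(v(g_{j,i})+i)=\sigma(v)(\widetilde\fa),
\]
where the first equality uses that a valuation centered in a ring is computed on an ideal by minimizing over any set of generators, the second is the Gauss extension formula applied to the polynomial $g_j$, and the last comes from evaluating $\sigma$ directly on each monomial generator of $\widetilde\fa$. The only point that needs care is the identification of vertical ideals on $\cX_\triv$ with vertical ideals on $X\times\A^1$: the presence of $\unipar^N$ in $\fa$ is what makes $\fa$ recoverable from its reduction modulo $\unipar^N$, and hence renders the identification canonical; once this is in place, the remainder is a tautological unwinding of the two Gauss extension formulas.
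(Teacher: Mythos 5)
Your argument is correct, and it reaches the same core identity as the paper --- decompose generators of $\fa$ into $\unipar$-weight components and apply the Gauss extension formula to each component --- but by a more elementary, purely local route. The paper globalizes: it twists by an ample line bundle so that $\cL_\triv\otimes\fa$ is generated by finitely many global sections $s_i$, writes $\log|\fa|=\max_i\log|s_i|$ via the model metric, weight-decomposes the sections to identify $\widetilde\fa=\sum_\la\fb_\la\unipar^\la$ with $\fb_\la$ generated by the components $s_{i,\la}$, and then applies the Gauss formula. You instead localize at the center of $v$, identify the vertical ideal on $\cX_\triv$ with its avatar on $X\times\A^1$ through reduction modulo $\unipar^N$ (this is exactly the identification the paper invokes when it says a vertical ideal on $\cX_\triv$ ``can be viewed as'' a vertical ideal on $X\times\A^1$), and evaluate both Gauss points by minimizing over a common set of polynomial generators $g_j\in A[\unipar]$. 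What your version buys is the elimination of the ample twist and global generation step; what it requires in exchange are the two local verifications you indeed supply, namely that $\fa|_U$ and its polynomial counterpart are both recovered as preimages of the same ideal modulo $\unipar^N$ (so they share generators over $A\cro{\unipar}$ and $A[\unipar]$, respectively), and that the $\Gm$-weight components of those generators generate $\widetilde\fa|_U$ --- the latter being the standard fact that a $\Gm$-invariant ideal is graded, so it contains the components of any of its elements. Combined with the observation that both Gauss extensions are characterized by $\sum_if_i\unipar^i\mapsto\min_i(v(f_i)+i)$ and that a semivaluation evaluates an ideal as the minimum over generators at its center, both sides equal $\min_{j,i}(v(g_{j,i})+i)$, which is precisely the paper's conclusion.
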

\begin{proof} Pick an ample line bundle $L$ on $X$, and denote by $\cL_\triv$ the trivial model of $L_K$, \ie the pullback of $L$ to the trivial model $\cX_\triv=X_{K^\circ}$. After replacing $L$ with a large enough multiple, we may assume $\cL_\triv\otimes\fa$ is generated by finitely many sections $s_i\in\Hnot(\cX_\triv,\cL_\triv)$. Then $\log|\fa|=\max_i\log|s_i|$, where $|s_i|$ denotes the pointwise length of $s_i$ in the model metric induced by $\cL_\triv$. For each $i$ write 
$s_i=\sum_{\la\in\Z} s_{i,\la} \unipar^{\la}$ where $s_{i,\la}\in\Hnot(X,L)$, and denote by $\fb_\la\subset\cO_X$ the ideal locally generated by $(s_{i,\la})_i$. Then $\widetilde\fa=\sum_{\la\in\Z}\fb_\la\unipar^\la$. By definition of Gauss extension, we have for any $v\in X^\an$ 
$$
\log|s_i|(\sigma(v))=\max_{\la\in\Z}\{\log|s_{i,\la}|+\la\}. 
$$
Thus $\sigma^\star\log|\fa|=\max_{\la\in\Z}\{\p_\la-\la\}$ with $\p_\la:=\max_i\log|s_{i,\la}|=\log|\fb_\la|$, and hence $\sigma^\star\log|\fa|=\max_\la\{\log|\fb_\la|-\la\}=\f_{\widetilde\fa}$. 
\end{proof}

\begin{proof}[Proof of Theorem~\ref{thm:PLGauss}]
  By Corollary~\ref{cor:PLpull} we have $\pi^\star\PL(X)\subset\PL(X_K)$.
  Since $\PL(X_K)$ is generated by functions of the form $\log|\fa|$ for a vertical ideal $\fa\subset\cO_{\cX_\triv}$, Lemma~\ref{lem:PLGauss} yields $\sigma^\star\PL(X_K)\subset\PL(X)$, and hence also $\sigma^\star\RPL(X_K)\subset\RPL(X)$. This completes the proof, since $\sigma^\star\pi^\star=\id$.
\end{proof}
%
%
\subsection{Centers}\label{sec:gfcent}
Next we study the relationships between the two center maps $Z_X\colon X^\an\to X$ and $Z_{\cX_\triv}\colon X_K^\an\to\cX_{\triv,0}=X$.
\begin{lem}\label{lem:centers} For all $w\in X_K^\an$ and $v\in X^\an$ we have 
$$
Z_{\cX_\triv}(w)\subset Z_X(\pi(w)),\quad Z_X(v)=Z_{\cX_\triv}(\sigma(v)). 
$$ 
\end{lem}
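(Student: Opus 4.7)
The plan is to prove the stronger pointwise identity
\[
\redu_{\cX_\triv}(w) = c_X(\pi(w))
\]
for every $w \in X_K^\an$, from which both assertions follow by taking Zariski closures in $\cX_{\triv,0} = X$. For the first assertion this directly yields the equality $Z_{\cX_\triv}(w) = Z_X(\pi(w))$, which is stronger than the claimed inclusion. For the second, applying the identity with $w = \sigma(v)$ and using that $\sigma$ is a section of $\pi$ (so $\pi(\sigma(v)) = v$) gives $\redu_{\cX_\triv}(\sigma(v)) = c_X(v)$, and hence $Z_{\cX_\triv}(\sigma(v)) = Z_X(v)$.

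To establish the identity, I would fix $w \in X_K^\an$, let $y' \in X_K$ be the point whose function field $K(y')$ carries $w$, and let $y \in X$ be the image of $y'$ under the projection $X_K \to X$. The valuation ring $\cO_w \subset K(y')$ contains $K^\circ$, and its intersection with $k(y) \subset K(y')$ is, by construction of $\pi(w)$, the valuation ring $\cO_{\pi(w)} \subset k(y)$; the inclusion $\cO_{\pi(w)} \hookrightarrow \cO_w$ is manifestly local. By the valuative criterion of properness applied to the projective morphism $\cX_\triv \to \Spec K^\circ$, the $K^\circ$-morphism $\Spec K(y') \to \cX_\triv$ classifying $y'$ extends uniquely to $\Spec \cO_w \to \cX_\triv$, and $\redu_{\cX_\triv}(w)$ is by definition the image of the closed point. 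Composing with the projection $\cX_\triv = X \times_k \Spec K^\circ \to X$ and unraveling the fiber product structure, this composite coincides with the evident factorization $\Spec \cO_w \to \Spec \cO_{\pi(w)} \to X$, whose final arrow is the extension of $\Spec k(y) \to X$ supplied by the valuative criterion for the proper morphism $X \to \Spec k$. Since $\cO_{\pi(w)} \hookrightarrow \cO_w$ is local, the closed point maps to the closed point, so the image in $X$ is precisely $c_X(\pi(w))$ by definition of the center.

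The only step requiring care is the commutativity claim underlying the previous paragraph: namely, that the two a priori distinct extensions of $\Spec K(y') \to X$ to $\Spec \cO_w \to X$---one going through $\cX_\triv$, the other through $\Spec \cO_{\pi(w)}$---coincide. This is however immediate from uniqueness in the valuative criterion, as both extensions agree on the generic point of $\Spec \cO_w$. No further difficulty is anticipated; the lemma is essentially a compatibility statement between center maps under base change, whose content is captured entirely by the valuative criterion for $\cX_\triv / K^\circ$ together with the fiber product identity $\cX_\triv = X \times_k \Spec K^\circ$.
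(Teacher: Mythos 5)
Your argument is correct, and it in fact proves more than the lemma asserts: the pointwise identity $\redu_{\cX_\triv}(w)=c_X(\pi(w))$ holds for every $w\in X_K^\an$, so the first statement is actually an equality of centers (one can double-check this directly on an affine chart $\Spec A$ of $X$: for $g=\sum_i a_i c_i\in A\otimes_k K^\circ$ one has $w(g)\ge\min_i(\pi(w)(a_i)+w(c_i))$, and splitting off the reduction $\bar g$ of $g$ modulo $\unipar$ shows that $\{w>0\}$ cuts out exactly the prime of $c_X(\pi(w))$ in the special fiber). Your route is genuinely different from the paper's. The paper argues ideal-theoretically: for the inclusion it evaluates $\log|\fa|$ at $w$ for the vertical ideal $\fa=\fb+(\unipar)$, with $\fb$ the ideal of $Z_X(\pi(w))$, and invokes~\eqref{equ:logfa}; for the reverse inclusion at Gauss points it uses that the ideal of $Z_{\cX_\triv}(\sigma(v))$ is a flag ideal (by $k^\times$-invariance of $\sigma(v)$) together with the formula $Z_X(\f_\fa)=V(\fa_{\la_{\max}})$ of Corollary~\ref{cor:centflag}. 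Your proof instead runs the valuative criterion for $\cX_\triv/K^\circ$ and $X/k$ and uses separatedness of $X$ to identify the two extensions of $\Spec K(y')\to X$, plus locality of $\cO_{\pi(w)}\hookrightarrow\cO_w$; this buys the stronger equality in one stroke and makes the second assertion an immediate consequence of $\pi\circ\sigma=\id$, at the cost of unwinding the scheme-theoretic definitions of the center and reduction maps (which the paper's quick computation with vertical and flag ideals avoids, but which then forces it to treat the two inclusions by separate arguments and to settle for an inclusion in the first statement). The only points to make explicit are the standard identifications you implicitly use: that $\redu_{\cX_\triv}(w)$, respectively $c_X(\pi(w))$, is the image of the closed point of $\Spec\cO_w$, respectively $\Spec\cO_{\pi(w)}$, under the valuative-criterion extension, and that the projection $\cX_\triv\to X$ restricts to the identity on $\cX_{\triv,0}=X$.
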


\begin{proof} Denote by $\fb\subset\cO_X$ the ideal of the subvariety $Z_X(\pi(w))$. Then $\fa:=\fb+(\unipar)$ is a vertical ideal on $\cX_\triv$ such that $V(\fa)=V(\fb)=Z_X(\pi(w))$ under the identification $\cX_{\triv,0}=X$. Further,
$$
\log|\fa|(w)=\max\{\log|\fb|(\pi(w)),-1\}<0,
$$
and hence $Z_{\cX_\triv}(w)\subset V(\fa)=Z_X(\pi(w))$,  see~\eqref{equ:logfa}. 

In particular, $Z_{\cX_\triv}(\sigma(v))\subset Z_X(v)$.
Conversely, denote by $\fa\subset\cO_{\cX_\triv}$ the ideal of $Z_{\cX_\triv}(\sigma(v))$. Since $\sigma(v)$ is $k^\times$-invariant, $\fa=\sum_{\la\in\Z}\fa_\la\unipar^{-\la}$ is (induced by) a flag ideal.  Further, $\f_\fa(v)=\log|\fa|(\sigma(v))<0$, and hence $Z_X(v)\subset Z_X(\f_\fa)$. By Example~\ref{exam:PLhom} we have $Z_X(\f_\fa)=V(\fa_0)$. The latter is also equal to the zero locus of $\fa_0+(\unipar)$ on $\cX_\triv$, which is contained in $V(\fa)=Z_{\cX_\triv}(\sigma(v))$ since $\fa\subset\fa_0+(\unipar)$. Thus $Z_X(v)\subset Z_{\cX_\triv}(\sigma(v))$, which concludes the proof.
\end{proof}
\begin{prop}\label{prop:gfcent} If $\om\in\Amp(X)$ and $\f\in\PSH(\om)$, then
$Z_{\cX_\triv}(\pi^\star\f)=Z_X(\f)$.
\end{prop}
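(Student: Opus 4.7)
The proof should be quite direct, amounting to combining the two containments in Lemma~\ref{lem:centers} with an observation about suprema. Here is my plan.

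First, I would record the equality $\sup_{X_K^{\an}}\pi^\star\f=\sup_X\f$, which holds because $\pi^\star\f=\f\circ\pi$ and $\pi$ is surjective (indeed $\pi\circ\sigma=\id$). Consequently
\[
\{\pi^\star\f<\sup\pi^\star\f\}=\pi^{-1}\bigl(\{\f<\sup\f\}\bigr).
\]

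Now unwind the two centers by definition:
\[
Z_{\cX_\triv}(\pi^\star\f)=\bigcup_{\substack{w\in X_K^{\an}\\ \pi^\star\f(w)<\sup\f}}Z_{\cX_\triv}(w),
\qquad
Z_X(\f)=\bigcup_{\substack{v\in X^{\an}\\ \f(v)<\sup\f}}Z_X(v).
\]
For the inclusion $Z_{\cX_\triv}(\pi^\star\f)\subset Z_X(\f)$, take any $w$ with $\pi^\star\f(w)<\sup\f$. Then $\f(\pi(w))<\sup\f$, so $\pi(w)$ lies in $\{\f<\sup\f\}$, and the first containment in Lemma~\ref{lem:centers} gives $Z_{\cX_\triv}(w)\subset Z_X(\pi(w))\subset Z_X(\f)$.

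For the reverse inclusion, take any $v\in X^{\an}$ with $\f(v)<\sup\f$ and set $w:=\sigma(v)$. Since $\pi\circ\sigma=\id$, one has $\pi^\star\f(w)=\f(v)<\sup\f$, so $w\in\{\pi^\star\f<\sup\pi^\star\f\}$. The second identity in Lemma~\ref{lem:centers} then yields $Z_X(v)=Z_{\cX_\triv}(\sigma(v))\subset Z_{\cX_\triv}(\pi^\star\f)$. Taking the union over all such $v$ gives $Z_X(\f)\subset Z_{\cX_\triv}(\pi^\star\f)$, completing the proof.

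There is no real obstacle here: everything has been set up in Lemma~\ref{lem:centers}, and the only small subtlety is noting that the two suprema agree so that the level sets $\{\cdot<\sup\}$ correspond under $\pi$. The argument does not use that $\f$ is $\om$-psh beyond the fact that this hypothesis makes both centers well defined (in particular, $\pi^\star\f\in\PSH(\pi^\star\om)$ by Theorem~\ref{thm:pshpull}).
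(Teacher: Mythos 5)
Your argument is correct and is essentially the same as the paper's: both directions are obtained by combining the two containments of Lemma~\ref{lem:centers} with the Gauss section $\sigma$ (so that $\pi^\star\f(\sigma(v))=\f(v)$) and the equality $\sup\pi^\star\f=\sup\f$. No difference in substance; your explicit remark that the level sets correspond under $\pi$ is implicit in the paper's proof.
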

\begin{proof}
  Pick $v\in X^\an$ such that $\f(v)<\sup\f$, and set $w:=\sigma(v)$. Then  $\pi^\star\f(w)=\f(v)$ and $\sup\pi^\star\f=\sup\f$, so $w$ lies in $\{\pi^\star\f<\sup\pi^\star\f\}$, and hence $Z_X(v)=Z_{\cX_\triv}(w)\subset Z_{\cX_\triv}(\pi^\star\f)$ by Lemma~\ref{lem:centers}. This implies $Z_X(\f)\subset Z_{\cX_\triv}(\pi^\star\f)$. Conversely, assume $w\in X_K^\an$ satisfies $\pi^\star\f(w)<\sup\pi^\star\f$. Then $v:=\pi(w)$ lies in $\{\f<\sup\f\}$, and hence $Z_X(v)\subset Z_X(\f)$. In view of Lemma~\ref{lem:centers}, this implies  $Z_{\cX_\triv}(w)\subset Z_X(\f)$, and hence $Z_{\cX_\triv}(\pi^\star\f)\subset Z_X(\f)$. \end{proof}
Combining Proposition~\ref{prop:gfcent} and Theorem~\ref{thm:centpsh2}, we obtain
\begin{cor}\label{cor:critinvretr}
  Let $\f\in\PSH(\om)$, where $\om\in\Amp(X)$, and suppose that $\pi^\star\f\in\PSH(\pi^\star\om)$ is invariant under retraction. Then $Z_X(\f)\subset X$ is a Zariski closed proper subset of $X$. \end{cor}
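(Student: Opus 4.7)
The plan is to assemble the corollary directly from the three results proved just before it, namely Proposition~\ref{prop:gfcent} and parts (ii) and (iv) of Theorem~\ref{thm:centpsh2}; essentially no new ingredients should be needed.

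First, I would apply Proposition~\ref{prop:gfcent} to identify the two centers under consideration, giving
\[
Z_X(\f)=Z_{\cX_\triv}(\pi^\star\f),
\]
where we recall that $\cX_{\triv,0}$ is identified with $X$. This reduces the statement to showing that the right-hand side is a Zariski closed strict subset of $\cX_{\triv,0}$.

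Next, I would observe that, by the definition of $\pi^\star\om$ given in~\S\ref{sec:psh}, the trivial model $\cX_\triv$ is a determination of the closed $(1,1)$-form $\pi^\star\om$, and $[\pi^\star\om]\in\Num(X_K)$ is ample since $\om$ is. Since by Theorem~\ref{thm:pshpull}(i) we have $\pi^\star\f\in\PSH(\pi^\star\om)$, and $\pi^\star\f$ is assumed invariant under retraction, Theorem~\ref{thm:centpsh2}(ii) applied with $\cX=\cX_\triv$ shows that $Z_{\cX_\triv}(\pi^\star\f)$ is Zariski closed in $\cX_{\triv,0}=X$. Moreover, since $\cX_\triv$ determines $\pi^\star\om$, Theorem~\ref{thm:centpsh2}(iv) ensures that this center is a strict subset of $\cX_{\triv,0}=X$.

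Combining these two facts with the equality from Proposition~\ref{prop:gfcent}, we conclude that $Z_X(\f)$ is a Zariski closed proper subset of $X$, which is exactly the claim. There is essentially no obstacle here beyond ensuring that $\cX_\triv$ is indeed a determination of $\pi^\star\om$ in the sense required by Theorem~\ref{thm:centpsh2}(iv); this is built into the definitions.
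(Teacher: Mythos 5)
Your proposal is correct and follows exactly the paper's route: the paper derives the corollary by combining Proposition~\ref{prop:gfcent} with Theorem~\ref{thm:centpsh2}, applying parts (ii) and (iv) of the latter to $\pi^\star\f\in\PSH(\pi^\star\om)$ on the trivial model, which indeed determines $\pi^\star\om$. Nothing is missing.
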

%
%
\subsection{Examples}\label{sec:examiso}
We are now ready to prove Theorems~A and~B in the introduction, and also provide additional examples. As in the previous section, $X$ denotes a smooth projective variety over $k$. Pick a class $\om\in\Amp(X)$, a $k^\times$-invariant divisorial point $w\in X_K^\div$, and denote as in~\S\ref{sec:green} by $\f_w\in\CPSH(\pi^\star\om)$ the Green's function associated to $w$; this is the unique solution to the Monge--Amp\`ere equation 
\[
  \MA_{\pi^\star\om}(\f_w)=\d_w\quad\text{and}\quad
  \f_w(w)=0.
\]
By Lemma~\ref{lem:Gaussdiv}, we have $w=\sigma(v)$ with $v:=\pi(w)\in X^\div$. If  $\f_v\in\CPSH(\om)$ denotes the Green's function of $\{v\}$, see~\S\ref{sec:equmeas}, then we have
\[
  \f_w=\pi^\star\f_v.
\]
Indeed,  $\pi^\star\f_v(w)=\f_v(v)=0$, and by Theorem~\ref{thm:pshpull}, we have $\MA_{\pi^\star\om}(\pi^\star\f_v)=\sigma_\star\d_v=\d_w$.

Our goal is to investigate the regularity of $\f_w$. 
\begin{cor}\label{cor:Greenglob}
  If $\dim X=1$, then $\f_w\in\PL(X_K)$. If $\dim X=2$, then $\f_w\in\RPL(X_K)$.
\end{cor}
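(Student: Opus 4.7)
The argument is a direct synthesis of the trivially valued results of Section~\ref{sec:greenzar} with the base-change compatibility established in~\S\ref{sec:psh}--\S\ref{sec:gfPL}, and the preceding paragraph has already done most of the work.

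The plan is to reduce entirely to the trivially valued situation. Since $w \in X_K^{\div}$ is $k^\times$-invariant, Lemma~\ref{lem:Gaussdiv} gives that $w = \sigma(v)$ with $v := \pi(w) \in X^{\div}$, and the identity
\[
\f_w = \pi^\star \f_v
\]
was verified in the paragraph preceding the corollary by checking that $\pi^\star\f_v(w)=\f_v(v)=0$ and, via Theorem~\ref{thm:pshpull}(ii), that $\MA_{\pi^\star\om}(\pi^\star\f_v) = \sigma_\star \d_v = \d_w$, so that $\pi^\star\f_v$ is the unique normalized solution to the Monge--Amp\`ere equation defining $\f_w$. Thus the regularity problem for $\f_w$ on $X_K^{\an}$ is transported to a regularity problem for $\f_v$ on $X^{\an}$.

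Next I would invoke the trivially valued regularity of $\f_v$. For $\dim X = 1$, Proposition~\ref{prop:greencurve1} applied to the singleton real divisorial set $\Sigma = \{v\} \subset X^{\div}$ yields $\f_v \in \PL^+(X)$ (the rationality hypothesis on $\om$ being the relevant one for the PL conclusion). For $\dim X = 2$, Theorem~\ref{thm:Greensurf} applied to $\Sigma = \{v\}$ yields $\f_v \in \RPL^+(X)$; the substantial input here is Proposition~\ref{prop:ZarPL}, giving the piecewise-linearity of the surface Zariski decomposition on polyhedral subcones, combined with Theorem~\ref{thm:Greenb} expressing $\f_v$ in terms of this decomposition.

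Finally, the transfer from $X^{\an}$ back to $X_K^{\an}$ is provided by Corollary~\ref{cor:PLpull}, which records $\pi^\star\PL(X) \subset \PL(X_K)$ and $\pi^\star\RPL(X) \subset \RPL(X_K)$. Applying this to $\f_v$ gives $\f_w = \pi^\star \f_v \in \PL(X_K)$ in the curve case and $\f_w \in \RPL(X_K)$ in the surface case. There is no genuine obstacle at this stage: the essential content sits upstream, in the Zariski-decomposition analysis of~\S\ref{sec:Zariski}--\S\ref{sec:greenzar} and in the PL-compatibility of ground-field extension (Lemma~\ref{lem:PLpull}). The corollary is merely a packaging of these ingredients.
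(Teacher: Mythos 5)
Your argument is correct, and in the surface case it is exactly the paper's proof: Theorem~\ref{thm:Greensurf} gives $\f_v\in\RPL(X)$, and Corollary~\ref{cor:PLpull} together with the identity $\f_w=\pi^\star\f_v$ (established just before the corollary) yields $\f_w\in\RPL(X_K)$. The only divergence is in the curve case. You stay inside the isotrivial framework, applying the trivially valued result (Proposition~\ref{prop:greencurve1}) to $\Sigma=\{v\}$ and then pulling back via Corollary~\ref{cor:PLpull}, whereas the paper simply quotes Proposition~\ref{prop:greencurve2} for the curve $X_K$ over $K$, which rests on Thuillier's potential theory (or the intersection form on models of the curve) and makes no use of the isotrivial structure or of $\f_w=\pi^\star\f_v$. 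Your route has the merit of treating both dimensions uniformly by the same reduction; the paper's route is more general in the curve case, since it applies to any curve over $K$, not just isotrivial ones. Note also that both routes need a rationality hypothesis that the corollary does not state: Proposition~\ref{prop:greencurve2} assumes $[\om]$ rational, and Proposition~\ref{prop:greencurve1} only upgrades $\RPL^+(X)$ to $\PL^+(X)$ when $\om$ is rational, a point you flag explicitly; for irrational $\om$ on a curve one only gets $\f_w\in\RPL(X_K)$. Since this caveat is shared by the paper's own proof, it is not a gap in your argument relative to it.
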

\begin{proof}
  The first statement follows from Proposition~\ref{prop:greencurve2}. Now suppose $\dim X=2$. 
  By Theorem~\ref{thm:Greensurf}, $\f_v\in\RPL(X)$, so that $\f_w\in\RPL(X_K)$, see Corollary~\ref{cor:PLpull}.
\end{proof}
However, even when $\om$ is rational, $\f_w$ is in general not $\Q$-PL:
\begin{exam}\label{exam:Green1}
  Example~\ref{exam:absurf2} gives an example of an abelian surface $X$, a rational class $\om\in\Amp(X)$, and a divisorial valuation $v\in X^\div$ such that $\f_v\in\RPL(X)\setminus\PL(X)$. If $w=\sigma(v)$, then $\f_w:=\pi^\star\f_v\in\RPL(X_K)\setminus\PL(X_K)$, by Theorem~\ref{thm:PLGauss}. 
\end{exam}
\begin{exam}\label{exam:Green3}
  Similarly, Example~\ref{exam:Cut} gives an example of a divisorial valuation $v\in \P^{3,\div}$ such that if we set $\om=c_1(\cO(4))$, then $\f_v:=\f_{\om,v}\in\RPL(X)\setminus\PL(X)$. If $w=\sigma(v)$, then $\f_w:=\pi^\star\f_v\in\RPL(X_K)\setminus\PL(X_K)$, by Theorem~\ref{thm:PLGauss}. 
\end{exam}
Examples~\ref{exam:Green1} and~\ref{exam:Green3} establish Theorem~A~(ii). They  also provide a negative answer to Question~\ref{qst:inv2}~(ii). Indeed, a function $\f\in\Cz(X_K^\an)$ lies in $\RPL(X_K)$ (resp.~$\PL(X_K)$) iff $\f$ is invariant under retraction and restricts to an $\R$-PL (resp.~$\Q$-PL) function on each dual complex, see Example~\ref{exam:retrinvPL}.

\smallskip
As the next example shows, if $\dim X=3$, then $\f_w$ need not be $\R$-PL\@. In fact, it may not even be invariant under retraction.
\begin{exam}\label{exam:Green2}
  Example~\ref{exam:Les1} shows that we may have $\dim X=3$ and $Z_X(\f_v)$ Zariski dense in $X$. It follows that $Z_{\cX_{\triv}}(\f_w)$ is Zariski dense in $\cX_{\triv,0}=X$, see
Theorem~\ref{thm:pshpull}~(ii). Thus Theorem~\ref{thm:centpsh2}~(ii) shows that $\f_w$ cannot be invariant under retraction.  
\end{exam}
It could, however, a priori be the case that the restriction $\f_w$ to any dual complex is $\R$-PL, see Question~\ref{qst:inv2}~(i). 

In Example~\ref{exam:Green2}, based on Lesieutre's work, the class $\om$ is irrational. We do not know of an example for which the class $\om$ is rational. However, the following example provides a proof of Theorem~B in the introduction.
\begin{exam}\label{exam:Lesglob}
  Set $X=\P^3_k$ and $\om:=c_1(\cO(1))\in\Num(X)$.
By Proposition~\ref{prop:Les2}, there exists $\p\in\CPSH(\om)$ such that $\MA_\om(\p)$ is supported in a finite subset $\Sigma\subset X^\div_\R$, and $Z_X(\p)$ is Zariski dense in $X$.
Theorem~\ref{thm:pshpull} then shows that $\f:=\pi^\star\p$ lies in $\CPSH(\pi^\star\om)$, $\MA_{\pi^\star\om}(\f)=\sigma_\star\MA_\om(\p)$ has finite support in some dual complex (see Lemma~\ref{lem:Gaussdiv}), and the center of $\f$ on the trivial model of $X_K^\an$ is Zariski dense. By Theorem~\ref{thm:centpsh2}, it follows that $\f$ cannot be invariant under retraction. 
\end{exam}
%
%
%
%

%
%
%
%
%
%

\begin{thebibliography}{BJGKM20}

\bibitem[AH23]{AH23} 
R.~Andreasson and J.~Hultgren.
\newblock\emph{Monge--Amp\`ere equations and tropical affine structures on reflexive polytopes}.
\newblock Preprint, 2023.
  
\bibitem[Berk90]{BerkBook}
V.~G.~Berkovich.
\newblock \emph{Spectral theory and analytic
geometry over non-Archi\-medean fields.}
\newblock  Mathematical Surveys and Monographs, 33. 
\newblock American Mathematical Society, Providence, RI, 1990.

\bibitem[BBGZ13]{BBGZ}
R.~J.~Berman, S.~Boucksom, V.~Guedj and A.~Zeriahi.
\newblock \emph{A variational approach to complex Monge--Amp\`ere equations}.
\newblock Publ. Math. Inst. Hautes {\'E}tudes Sci. 117 (2013), 179--245.




\bibitem[BKS04]{BKS}
T.~Bauer, A.~K\"uronya, T.~Szemberg. 
\newblock \emph{Zariski chambers, volumes, and stable base loci}.
\newblock J. reine angew. Math. \textbf{576} (2004) 209--233.

\bibitem[BlJ20]{BlJ}
H.~Blum, M.~Jonsson. 
\newblock \emph{Thresholds, valuations, and K-stability}.
\newblock Adv. Math. \textbf{365} (2020), 107062.




\bibitem[Bou04]{Bou}
S.~Boucksom.
\newblock\emph{Divisorial Zariski decompositions on compact complex manifolds}.
\newblock Ann. Scient. {\'E}c. Norm. Sup. (4), \textbf{37} (2004), 45--76.


\bibitem[BDPP13]{BDPP} 
S.~Boucksom, J.-P.~Demailly, M.~Paun, T.~Peternell.
\newblock\emph{The pseudoeffective cone of a compact K\"ahler manifold and varieties of negative Kodaira dimension}. 
\newblock J. Algebraic Geom. \textbf{22} (2013), 201--248. 

\bibitem[BdFF12]{BdFF} 
S.~Boucksom, T.~de Fernex, C.~Favre.  
\newblock\emph{The volume of an isolated singularity}. 
\newblock Duke Math. J. \textbf{161} (2012), 1455--1521. 




\bibitem[BFJ09]{diskant} 
S.~Boucksom, C.~Favre, M.~Jonsson.
\newblock\emph{Differentiability of volumes of divisors and a problem of Teissier}. 
\newblock J. Algebraic Geom. \textbf{18} (2009), 279--308. 

\bibitem[BFJ15]{nama}
S.~Boucksom, C.~Favre, M.~Jonsson.
\newblock \emph{Singular semipositive metrics in non-Archimedean geometry}.
\newblock J. Amer. Math. Soc. \textbf{28} (2016), 617--667. 

\bibitem[BFJ16]{siminag}
S.~Boucksom, C.~Favre, M.~Jonsson.
\newblock \emph{Singular semipositive metrics in non-Archimedean geometry}.
\newblock J. Algebraic Geom. \textbf{25} (2015), 77--139. 



\bibitem[BoJ18a]{trivvalold}
S.~Boucksom, M.~Jonsson. 
\newblock \emph{Singular semipositive metrics on line bundles on varieties over trivially valued fields}. 
\newblock \texttt{arXiv:1801.08229}. 


\bibitem[BoJ21]{nakstab1}
S.~Boucksom, M.~Jonsson. 
\newblock \emph{A non-Archimedean approach to K-stability, I: Metric geometry of spaces of test configurations and valuations}.
\newblock \texttt{arXiv:2107.11221v2}.

\bibitem[BoJ22a]{nakstab2}
S.~Boucksom, M.~Jonsson. 
\newblock \emph{A non-Archimedean approach to K-stability, II: Divisorial stability and openness}.
\newblock \texttt{arXiv:2206.09492v2}. 

\bibitem[BoJ22b]{trivval}
S.~Boucksom, M.~Jonsson. 
\newblock \emph{Global pluripotential theory over a trivially valued field}.
\newblock Ann. Fac. Sci. Toulouse. \textbf{31} (2022), 647--836.

\bibitem[BoJ22c]{trivadd}
S.~Boucksom, M.~Jonsson. 
\newblock Addendum to \emph{Global pluripotential theory over a trivially valued field}.
\newblock \texttt{arXiv:2206.07183}.
 
 \bibitem[BKMS15]{BKMS}
S.~Boucksom, A.~K\"uronya, C.~Maclean, T.~Sz\'emberg.
\newblock\emph{Vanishing sequences and Okounkov bodies}.
\newblock Math. Ann. \textbf{361} (2015), 811--834.

\bibitem[BGJKM20]{BGJKM20}
J.~I.~Burgos Gil, W.~Gubler, P.~Jell, K.~K\"unnemann and F.~Martin.
\newblock\emph{Differentiability of non-archimedean volumes and non-archimedean Monge-Amp\`ere equations}.
\newblock Algebraic Geometry \textbf{7} (2) (2020) 113--152.

\bibitem[CL06]{CL06} 
A.~Chambert-Loir.
\newblock \emph{Mesures et {\'e}quidistribution
 sur les espaces de Berkovich}.
\newblock  J. Reine Angew. Math. \textbf{595} (2006), 215--235.

\bibitem[CLD12]{CLD} 
 A.~Chambert-Loir and A.~Ducros.
\newblock \emph{Formes diff{\'e}rentielles r{\'e}elles et courants 
  sur les espaces de Berkovich}.
\newblock \texttt{arXiv:1204.6277}.


%

\bibitem[Cut00]{Cut}
S.D.~Cutkosky. 
\newblock\emph{Irrational asymptotic behaviour of Castelnuovo--Mumford regularity}. 
\newblock J. reine angew. Math. \textbf{522} (2000), 93--103. 







\bibitem[ELMNP06]{ELMNP}
L.~Ein, R.~Lazarsfeld, M.~Mustata, ,M.~Nakamaye, M.~Popa. 
\newblock\emph{Asymptotic invariants of base loci}.
\newblock Ann. Inst. Fourier \textbf{56} (2006), 1701--1734. 







\bibitem[Gub03]{Gub03}
W.~Gubler. 
\newblock\emph{Local and canonical heights of subvarieties}.
\newblock Ann. Scuola Norm. Sup. Pisa Cl. Sci. (5),  \textbf{2} (2003), 711--760.

\bibitem[Gub13]{Gub13}
W.~Gubler. 
\newblock\emph{A guide to tropicalizations}.
\newblock Contemporary Mathematics, \textbf{589}, 2013, 125--189.

\bibitem[GJKM19]{GJKM}
W.~Gubler, P.~Jell, K.~K\"unnemann, F.~Martin.
\newblock\emph{Continuity of plurisubharmonic envelopes in non-archimedean geometry and test ideals}. 
\newblock Ann. Inst. Fourier \textbf{69} (2019), 2331--2376. 

\bibitem[GM19]{GM}
W.~Gubler, F.~Martin. 
\newblock\emph{On Zhang's semipositive metrics}. 
\newblock Doc. Math. \textbf{24} (2019), 331--372. 


\bibitem[How88]{How}
R.~Howe. 
\newblock\emph{Automatic continuity of concave functions}.
\newblock Proc. AMS \textbf{103} (1988), 1196--1200.

\bibitem[HJMM22]{HJMM}
J.~Hultgren, M.~Jonsson, E.~Mazzon, N.~McCleerey.
\newblock\emph{Tropical and non-Archimedean Monge-Ampère equations for a class of Calabi--Yau hypersurfaces}.
\newblock \texttt{arXiv:2208.13697}. 



\bibitem[JM12]{JM}
M.~Jonsson, M.~Musta\c{t}\v{a}. 
\newblock \emph{Valuations and asymptotic invariants for sequences of ideals}.
\newblock Ann. Inst. Fourier \textbf{62} (2012), 2145--2209.

\bibitem[Ko\l98]{Kol98} 
S.~Ko{\l}odziej.
\newblock \emph{The complex Monge--Amp{\`e}re equation}.
\newblock  Acta Math. \textbf{180} (1998), 69--117.

\bibitem[KS06]{KS06}
M.~Kontsevich and Y.~Soibelman.
\newblock\emph{Affine structures and non-Archimedean analytic spaces}.
\newblock In \emph{The unity of mathematics}.
\newblock Progr. Math. 244, 321--385.
\newblock  Birkh\"auser Boston, Boston, MA, 2006.

\bibitem[KT]{KT}
M.~Kontsevich and Y.~Tschinkel.
\newblock \emph{Non-Archimedean K\"ahler geometry}.
\newblock Unpublished.

\bibitem[KM13]{KM}
A.~K\"uronya, C.~Maclean.
\newblock \emph{Zariski decomposition of $b$-divisors}. 
\newblock Math. Z. \textbf{273} (2013), 427--436. 



\bibitem[Les14]{Les}
J.~Lesieutre. 
\newblock\emph{The diminished base locus is not always closed}.
\newblock Compositio Math. \textbf{150} (2014), 1729--1741. 





\bibitem[Li20]{Li20}
Y.~Li. 
\newblock\emph{Metric SYZ conjecture and non-archimedean geometry}.
\newblock To appear in Duke Math. J.
\newblock \texttt{arXiv:2007.01384}. 


\bibitem[Li23]{Li23}
Y.~Li. 
\newblock\emph{Metric SYZ conjecture for certain toric Fano hypersurfaces}.
\newblock \texttt{arXiv:2301.12983}.

\bibitem[Lip69]{Lip}
J.~Lipman. 
\newblock\emph{Rational singularities with applications to algebraic surfaces and unique factorization}.
\newblock Publ. Math. Inst. Hautes Etudes Sci. \textbf{36} (1969), 195--279. 



 
\bibitem[MN15]{MN}
M.~Musta\c{t}\v{a}, J.~Nicaise. 
\newblock \emph{Weight functions on non-archimedean analytic spaces and the Kontsevich--Soibelman skeleton}. 
\newblock Algebraic Geometry \textbf{2} (2015), 365--404. 

\bibitem[Nak04]{Nak}
 N.~Nakayama. 
\newblock\emph{Zariski-decomposition and abundance}.
\newblock Math. Soc. Japan Memoirs \textbf{14} (2004). 

 

\bibitem[Per21]{Per}
M.~Perera. 
\newblock\emph{La d\'ecomposition de Zariski : une approche valuative}. 
\newblock PhD thesis, Sorbonne Universit\'e, 2021. 






\bibitem[Thu05]{thuillierthesis}
A.~Thuillier.
\newblock\emph{Th\'eorie du potentiel sur les courbes en g\'eom\'etrie
analytique non archim\'edienne. Applications \'a la th\'eorie d'Arakelov.}
\newblock  Ph.D. Thesis (2005), Universit\'e de Rennes I, available at \texttt{http://tel.archives-ouvertes.fr/docs/00/04/87/50/PDF/tel-00010990.pdf}. 


\bibitem[Yau78]{Yau78}
S.~T.~Yau.
\newblock\emph{On the Ricci curvature of a compact K\"ahler 
  manifold and the complex Monge--Amp{\`e}re equation}.
\newblock  Comm. Pure Appl. Math.  \textbf{31}  (1978), 339--411. 

\bibitem[YZ17]{YZ17}
X.~Yuan and S.-W.~Zhang.
\newblock \emph{The arithmetic Hodge Theorem for adelic line bundles}.
\newblock Math. Ann. \textbf{367} (2017), 1123--1171.




\end{thebibliography}
\end{document}